\documentclass[11pt]{amsart}
\usepackage[all]{xy}
\usepackage{amscd,amsthm,amssymb,amsfonts,amsmath}
\usepackage[hidelinks]{hyperref}
\usepackage[margin=0.7in]{geometry}
\usepackage{tikz-cd}
\usepackage{tikz}
\usepackage{bbm}

%\usepackage[mathscr]{eucal}
% either use \mathscr (no euscript package), or use \EuScript (no eucal
% package), they are the same font

% \setlength{\topmargin}{-0.4in}
% \setlength{\headheight}{8pt}
% \setlength{\textheight}{9in}
% \setlength{\oddsidemargin}{-0.15in}
% \setlength{\evensidemargin}{-0.15in}
% \setlength{\textwidth}{6.6in}

% newcommands for thesis

\usepackage{graphicx}
% theorems, lemmas,..
\theoremstyle{plain}
\newtheorem{thm}{Theorem}[section]
\newtheorem{lemma}[thm]{Lemma}
\newtheorem{prop}[thm]{Proposition}
\newtheorem{cor}[thm]{Corollary}

\theoremstyle{definition}
\newtheorem{defn}[thm]{Definition}

\theoremstyle{remark}
\newtheorem{remark}[thm]{Remark}

%for temporary use
\newcommand{\nc}{\newcommand}

\numberwithin{equation}{section}
%\numberwithin{section}{chapter}

\def\makeop#1{\expandafter\def\csname#1\endcsname
  {\mathop{\rm #1}\nolimits}\ignorespaces}
\makeop{Hom}   \makeop{End}   \makeop{Aut}   \makeop{Isom}  \makeop{Pic}
\makeop{Gal}   \makeop{ord}   \makeop{Char}  \makeop{Div}   \makeop{Lie}
\makeop{PGL}   \makeop{sym}  \makeop{PSL}   \makeop{sgn}   \makeop{Spf}
\makeop{Spec}  \makeop{Map}    \makeop{Re}    \makeop{Fr}    \makeop{disc}
\makeop{Proj}  \makeop{supp}  \makeop{ker}   \makeop{im}    \makeop{cond}
\makeop{coker} \makeop{Stab}  \makeop{SO}    \makeop{SL}    \makeop{Res}
\makeop{Cl}    \makeop{cond}  \makeop{Br}    \makeop{inv}   \makeop{rank}
\makeop{id}    \makeop{Fil}   \makeop{Frac}  \makeop{GL}    \makeop{SU}
\makeop{Trd}   \makeop{Sp}    \makeop{Tr}    \makeop{Trd}   \makeop{diag}
\makeop{Res}   \makeop{ind}   \makeop{depth} \makeop{Tr}    \makeop{st}
\makeop{Ad}    \makeop{Int}   \makeop{tr}    \makeop{Sym}   \makeop{can}
\makeop{length}\makeop{SO}    \makeop{torsion} \makeop{GSp} \makeop{Ker}
\makeop{rk}   \makeop{Frob}  \makeop{id}    \makeop{Tor}   \makeop{Ind}
\makeop{CoInd} \makeop{Inf}   \makeop{Cor}   \makeop{CoInf} \makeop{coLie}
\makeop{quo}   \makeop{sub}   \makeop{sgn}   \makeop{min} \makeop{lcm}   \makeop{re}
\makeop{ord}   \makeop{val}   \makeop{Lie}   \makeop{res}   \makeop{Ann}  \makeop{Ha}

\def\makebb#1{\expandafter\def
  \csname bb#1\endcsname{{\mathbb{#1}}}\ignorespaces}
\def\makebf#1{\expandafter\def\csname bf#1\endcsname{{\bf
      #1}}\ignorespaces}
\def\makegr#1{\expandafter\def
  \csname gr#1\endcsname{{\mathfrak{#1}}}\ignorespaces}
\def\makescr#1{\expandafter\def
  \csname scr#1\endcsname{{\EuScript{#1}}}\ignorespaces}
\def\makecal#1{\expandafter\def\csname cal#1\endcsname{{\mathcal
      #1}}\ignorespaces}
% \cal is used in article, \mathcal is used in amsart

\def\doLetters#1{#1A #1B #1C #1D #1E #1F #1G #1H #1I #1J #1K #1L #1M
                 #1N #1O #1P #1Q #1R #1S #1T #1U #1V #1W #1X #1Y #1Z}
\def\doletters#1{#1a #1b #1c #1d #1e #1f #1g #1h #1i #1j #1k #1l #1m
                 #1n #1o #1p #1q #1r #1s #1t #1u #1v #1w #1x #1y #1z}
\doLetters\makebb   \doLetters\makecal  \doLetters\makebf
\doLetters\makescr
\doletters\makebf   \doLetters\makegr   \doletters\makegr

\normalsize

\makeop{Bl}

\def\Spec{{\rm Spec}\,}

\def\Qpbar{\overline{{\bbQ}_p}}
\def\Qp{{\bbQ}_p}

\def\Zp{{\bbZ}_p}
\def\Qbar{\overline{\bbQ}}

%basic notation
\newcommand{\Z}{\mathbb Z}
\newcommand{\Q}{\mathbb Q}
\newcommand{\R}{\mathbb R}
\newcommand{\C}{\mathbb C}
\newcommand{\N}{\mathbb N}
    % pro algebraic torus
  % \H already defined
\newcommand{\A}{\mathbb A}    % for adele
    % for idele
\renewcommand{\O}{\mathcal O} % for sheaves

\newcommand{\n}{\mathfrak n}
\newcommand{\m}{\mathfrak m}
\newcommand{\q}{\mathfrak q}
\newcommand{\p}{\mathfrak p}

% \cal, \frak, \mathbf.. check Math Sci.
% \mathcal, \mathfrak, \EuScript (euscript), \mathbb

% convenient

% better notation

\newcommand{\<}{\langle}   %\< is not defined yet.
\renewcommand{\>}{\rangle} %\> is already defined.

  %\11 can't be used

\newcommand{\map}{\rightarrow}

\newcommand{\isom}{\cong}
\nc{\embed}{\hookrightarrow}
\newcommand{\surj}{\twoheadrightarrow}
% moduli notation

%useful operators
%\newcommand{\pair}[2]{\< #1, #2\>}
%\newcommand{\pairing}{\pair{\,}{}}
%\newcommand{\defil}[2]{{#1}_1\supset {#1}_2\supset\dots\supset {#1}_{#2}}
%\newcommand{\defilo}[2]{{#1}_0\supset {#1}_1\supset\dots\supset {#1}_{#2}}
%\newcommand{\infil}[2]{{#1}_1\subset {#1}_2\subset\dots\subset {#1}_{#2}}
%\newcommand{\infilo}[2]{{#1}_1\subset {#1}_1\subset\dots\subset
%  {#1}_{#2}}
% never number appeared in the newcommand name!

% long words
\newcommand{\ch}{characteristic }

\nc{\ol}{\overline}
\nc{\wt}{\widetilde}
\nc{\wh}{\widehat}
\nc{\opp}{\mathrm{opp}}
\def\ul{\underline}
\def\mfr{\mathfrak}

\makeop{Ram}
\makeop{Rep}
\makeop{lcm}

\title[Congruence modules]
{On congruence modules related to Hilbert Eisenstein series}

\author{Sheng-Chi Shih }
\address{
University of Lille, CNRS, UMR 8524-Laboratoire Paul Painlev\'{e}, 59000 Lille, France.}
\email{sheng-chi.shih@univ-lille.fr}
\date{\today}

\begin{document}
\begin{abstract}
We generalize the work of Ohta on the congruence modules attached to elliptic Eisenstein series to the setting of Hilbert modular forms. Our work involves three parts. 
In the first part, we construct Eisenstein series adelically and compute their constant terms by computing local integrals. 
In the second part, we prove a control theorem for one-variable ordinary $\Lambda$-adic Hilbert modular forms following Hida's work on the space of multivariable ordinary $\Lambda$-adic Hilbert cusp forms.
In part three, we compute congruence modules related to Hilbert Eisenstein series through an analog of Ohta's methods.  
\end{abstract}

\maketitle

\addtocontents{toc}{\setcounter{tocdepth}{1}}% do not list subsections in contents
\tableofcontents

%\newpage
%%%%%%%%%%%%%%%%%%%%%%%%%%%%%%%%%%%%%%%%%%%%%%%%%%%%%%%%%%%%%%%%%%
\section{Introduction}\label{sec:01}
Let $R$ be an integral domain with quotient field $Q(R)$. We
consider a short exact sequence of flat $R$-modules
$$
0\map A \xrightarrow{i} B\xrightarrow{p}  C \map 0.
$$
Suppose that we are given splitting maps after tensoring with $Q(R)$
over $R$, i.e., we have
$$
0 \leftarrow A\otimes_R Q(R) \xleftarrow{t}  B\otimes_R Q(R) \xleftarrow{s} C\otimes_R Q(R) \leftarrow 0.
$$
The congruence module attached to these data is defined by
$$
\mathcal{C}_s:=C/p(B\cap s(C)).
$$
Congruence modules have been studied by many people in different settings. For instance, Ohta \cite{Ohta} computed the congruence module associated with the sequence
$$
0\map S^{\ord}(\Gamma;\Lambda)_E\map M^{\ord}(\Gamma; \Lambda)_E\xrightarrow{\res} \Lambda\map 0,
$$
where $``\res"$ is the residue map, and $M^{\ord}(\Gamma; \Lambda)$ and $S^{\ord}(\Gamma;\Lambda)$ are respectively the spaces of ordinary $\Lambda$-adic modular forms and ordinary $\Lambda$-adic cusp forms. Here $\Lambda=\mfr{o}[[T]]$ for some extension $\mfr{o}$ of $\Zp$. In this paper, we generalize Ohta's work to the setting of Hilbert modular forms. In order to achieve this goal, we review important facts about $p$-adic and $\Lambda$-adic Hilbert modular forms, and prove crucial results about Eisenstein series and cusps through their adelic construction. For the above examples and our main results (Theorem~\ref{11}), we require that splittings are Hecke-equivariant. Moreover, there exist canonical splittings which are considered in the computation of these congruence modules.   

Before we describe our main results, let us mention our motivation, which comes from Sharifi's conjecture \cite{Sha}. Sharifi's conjecture is a refinement of the Iwasawa main conjecture. The main conjecture asserts a relationship between two objects: one is a certain $p$-adic $L$-function and the other is a \ch polynomial associated with the $p$-part of the class group of the cyclotomic $\Zp$-extension of an abelian extension of $\Q$. Roughly, Sharifi's conjecture predicts that one can obtain the information on the second object from the cohomology of modular curves. The main conjecture over $\Q$ was first proved by Mazur\textendash Wiles \cite{MW} using $2$-dimensional Galois representations attached to cusp forms that are congruent to ordinary Eisenstein series. Wiles \cite{W3} generalized the method of Mazur\textendash Wiles to the setting of Hilbert modular forms and proved the main conjecture over totally real fields. Combining his previous works \cite{Ohta2} and \cite{Ohta3}, Ohta gave a refinement of Mazur\textendash Wiles's proof of the main conjecture over $\Q$  examining the action of $\Gal(\Qbar/\Q)$ on the Eisenstein component of the cohomology of modular curves. The cohomology of modular curves provides a canonical choice of a lattice in a Galois representation, which plays an important role in Sharifi's work. Our work is a first step to proving the main conjecture over totally real fields along the lines of Ohta's approach \cite{Ohta3} and to generalizing Sharifi's conjecture to totally real fields.  

To describe our results, we fix some notation first. 
\begin{itemize}
\item $F$ is a totally real field, $\O_F$ is the ring of integers of $F$, $h_F$ is the class number of $F$, and $\mfr{D}$ is the different of $F$ over $\Q$.

\item $p$ is an odd rational prime unramified in $F$.

\item $\mathbbm{1}$ is the trivial character. $\chi_1$ and $\chi_2$ are primitive narrow ray class characters of conductors $\mfr{n}_1$ and $\mfr{n}_2$, respectively, with associated signs $e_{1,\infty},e_{2,\infty} \in (\Z/2\Z)^d$ satisfying 
$e_{1,\infty}+e_{2,\infty}\equiv (0,\ldots,0) (\bmod\; 2\Z^d)$. We assume that $\chi_1$ is not a trivial character, $\mfr{n}_1\mfr{n}_2=\mfr{n}$ or $\mfr{n}p$ for some integral ideal $\mfr{n}$ not divisible by $p$, and $\mfr{n}_2$ is prime to $p$. 

\item Set $\Lambda=\Zp[\chi_1,\chi_2][[T]]$. $M^{\ord}(\mfr{n},\chi_1\chi_2;\Lambda)$ (resp.~$S^{\ord}(\mfr{n},\chi_1\chi_2;\Lambda)$) is the space of $p$-ordinary $\Lambda$-adic modular forms (resp.~cusp forms). $\mathcal{H}^{\ord}:=\mathcal{H}^{\ord}(\mfr{n},\chi,\Lambda)\subset \End_{\Lambda}(M^{\ord}(\mfr{n},\chi_1\chi_2;\Lambda))$ (resp.~$h^{\ord}:=h^{\ord}(\mfr{n},\chi;\Lambda)\subset \End_{\Lambda}(S^{\ord}(\mfr{n},\chi_1\chi_2;\Lambda))$) is the Hecke algebra (resp.~the cuspidal Hecke algebra) generated over $\Lambda$ by Hecke operators $U(\mfr{p})$, $T(\mfr{q})$, and $S(\mfr{q})$ for all prime ideals $\mfr{q}$ not dividing $\mfr{n}p$ and  for all prime ideals $\mfr{p}$ dividing $\mfr{n}p$.

\item $\mathcal{E}(\chi_1,\chi_2)$ is the $\Lambda$-adic Eisenstein series associated to $\chi_1$ and $\chi_2$ (see Proposition~\ref{lambda_adic_eisen} for the definition). 

\item $\mfr{M}$ (resp.~$\mfr{m}$) is the unique maximal ideal of $\mathcal{H}^{\ord}$ (resp.~$h^{\ord}$) containing the Eisenstein ideal $\mathcal{I}(\chi_1,\chi_2)=\Ann_{\mathcal{H}}(\mathcal{E}(\chi_1,\chi_2))$ (resp.~$I(\chi_1,\chi_2)$) associated to $\mathcal{E}(\chi_1,\chi_2)$. We denote by $\mathcal{H}^{\ord}_{\mfr{M}}$ (resp.~$h^{\ord}_{\mfr{m}}$) the localization of $\mathcal{H}^{\ord}$ (resp.~$h^{\ord}$) at $\mfr{M}$ (resp.~at $\mfr{m}$). 

\item $M^{\ord}(\mfr{n},\chi_1\chi_2;\Lambda)_{\mfr{M}}$ (resp.~$S^{\ord}(\mfr{n},\chi_1\chi_2;\Lambda)_{\mfr{M}}$) is the localization
of the space of $p$-ordinary $\Lambda$-adic modular forms (resp.~cusp forms) at $\mfr{M}$.

\item  $\omega:(\Z/p\Z)^{\times}\to \Qbar^{\times}$ is the Teichm\"{u}ller character. Set $\omega(\mfr{a}):=\omega(N_{F/\Q}(\mfr{a}))$ for all integral ideals $\mfr{a}$ of $F$ prime to $p$. This is a narrow ray class character of conductor $p$.

\item $\phi$ is Euler's Phi function.
\end{itemize}

\begin{thm}\label{11}
Assume that $(\chi_1,\chi_2) \neq (\omega^{-2},\mathbbm{1})$ and $\chi_1\chi_2^{-1}\omega(\mfr{p})\neq 1$ for some prime ideal $\mfr{p}|p$. If $p$ does not divide $N_{F/\Q}(\mfr{nD})\phi(N_{F/\Q}(\mfr{n}))h_F$, then the congruence modules attached to the
short exact sequences of flat $\Lambda$-modules
\begin{equation} \label{eq:1}
\begin{cases}
0\map S^{\ord}(\mfr{n},\chi_1\chi_2;\Lambda)_{\mfr{M}}\map M^{\ord}(\mfr{n},\chi_1\chi_2;\Lambda)_{\mfr{M}} \xrightarrow{C_0} \Lambda \map 0\\

0\map \mathcal{I}(\chi_1,\chi_2)\map \mathcal{H}^{\ord}_{\mfr{M}}\map \Lambda\map 0
\end{cases}
\end{equation}
are both $\Lambda/(A(\chi_1,\chi_2))$, where $A(\chi_1,\chi_2)\in \Lambda$ is a formal power series expression of a Deligne\textendash Ribet $p$-adic $L$-function, and the map $C_0$ maps each modular form to a formal sum of its constant terms at cusps. 
\end{thm}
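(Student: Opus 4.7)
The plan is to follow Ohta's strategy \cite{Ohta}, with the elliptic ingredients replaced by the Hilbert-modular analogs developed in Parts 1 and 2. The central object is the $\Lambda$-adic Eisenstein family $\mathcal{E}(\chi_1,\chi_2)\in M^{\ord}(\mfr{n},\chi_1\chi_2;\Lambda)_{\mfr{M}}$ constructed adelically in Part 1; by the local zeta-integral computations carried out there, $C_0(\mathcal{E}(\chi_1,\chi_2))$ equals, up to a unit of $\Lambda$, the Deligne--Ribet $p$-adic $L$-function $A(\chi_1,\chi_2)$. I would handle the two sequences separately and tie them together through a Hecke-module duality.

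For the first sequence, I would first check that, under the stated hypotheses on $(\chi_1,\chi_2)$ and on $p$, the Eisenstein component of $M^{\ord}(\mfr{n},\chi_1\chi_2;\Lambda)_{\mfr{M}}\otimes_{\Lambda} Q(\Lambda)$ is a free $Q(\Lambda)$-module of rank one, spanned by $\mathcal{E}(\chi_1,\chi_2)$. The Hecke-equivariant canonical splitting $s$ is then forced to be $s(1)=\mathcal{E}(\chi_1,\chi_2)/C_0(\mathcal{E}(\chi_1,\chi_2))$. Intersecting with the integral lattice $B=M^{\ord}(\mfr{n},\chi_1\chi_2;\Lambda)_{\mfr{M}}$ inside $B\otimes_{\Lambda}Q(\Lambda)$ should yield $B\cap s(C)=\Lambda\cdot\mathcal{E}(\chi_1,\chi_2)$, provided one can show that $\mathcal{E}(\chi_1,\chi_2)$ itself $\Lambda$-generates the integral Eisenstein line; this uses integrality of its Fourier coefficients together with the divisibility hypothesis $p\nmid N_{F/\Q}(\mfr{n}\mfr{D})\phi(N_{F/\Q}(\mfr{n}))h_F$. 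Applying $C_0$ then gives the first congruence module as $\Lambda/(A(\chi_1,\chi_2))$.

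For the second sequence, I would invoke a $\Lambda$-adic Hecke duality of the form $\Hom_\Lambda\!\bigl(M^{\ord}(\mfr{n},\chi_1\chi_2;\Lambda)_{\mfr{M}},\Lambda\bigr)\cong \mathcal{H}^{\ord}(\mfr{n};\Lambda)_{\mfr{M}}$, obtained by combining the control theorem of Part 2 with the classical Hilbert $q$-expansion (or first-Fourier-coefficient) pairing at each arithmetic specialization and passing to the inverse limit. Under this pairing the first sequence becomes the $\Lambda$-dual of the second, and the canonical splittings--- the Eisenstein series on the modular-forms side, and the Eisenstein idempotent on the Hecke-algebra side---correspond to each other. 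An elementary verification, using only flatness and the Hecke-equivariance of $s$, then shows that the congruence module attached to a split sequence coincides with that of its $\Lambda$-dual, giving $\Lambda/(A(\chi_1,\chi_2))$ again.

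The principal obstacle I anticipate is the $\Lambda$-adic Hecke duality at the Eisenstein component: one must establish that a single-cusp $q$-expansion-like pairing is perfect over $\Lambda$ after localizing at $\mfr{M}$, which requires the control theorem of Part 2, a $\Lambda$-adic $q$-expansion principle for Hilbert modular forms, and a multiplicity-one statement for the Eisenstein family. A secondary technical point is ruling out that a proper $\Lambda$-fractional multiple of $\mathcal{E}(\chi_1,\chi_2)$ is already integral on the Eisenstein line; precisely here the hypotheses $(\chi_1,\chi_2)\neq(\omega^{-2},\mathbf{1})$, $\chi_1\chi_2^{-1}\omega(\mfr{p})\neq 1$ for some $\mfr{p}\mid p$, and the assumption that $p$ avoids the relevant class-number and conductor divisors are indispensable, as they are exactly what is needed to force the Eisenstein line to be unramified enough that integrality of coefficients transfers to integrality of the L-value.
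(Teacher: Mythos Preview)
Your treatment of the first sequence is essentially the paper's argument: the hypotheses force the Eisenstein part of $M^{\ord}(\mfr{n},\chi_1\chi_2;\Lambda)_{\mfr{M}}\otimes Q(\Lambda)$ to be one-dimensional (Lemma~\ref{712}), the canonical Hecke-equivariant section lands on the line through $\mathcal{E}(\chi_1,\chi_2)$, and since $C(\O_F,\mathcal{E}(\chi_1,\chi_2))=1$ the intersection with the integral lattice is exactly $\Lambda\cdot\mathcal{E}(\chi_1,\chi_2)$; applying $C_0$ and the constant-term computation (Proposition~\ref{523}) gives $\Lambda/(A(\chi_1,\chi_2))$.

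For the second sequence there is a genuine gap. You propose to establish the integral duality $\Hom_\Lambda(M^{\ord}(\mfr{n},\chi_1\chi_2;\Lambda)_{\mfr{M}},\Lambda)\cong \mathcal{H}^{\ord}(\mfr{n};\Lambda)_{\mfr{M}}$ by interpolating the classical first-Fourier-coefficient pairing via the control theorem. This cannot work as stated: already at finite level the pairing $(T,f)\mapsto C(\O_F,T\cdot f)$ is perfect only on \emph{cusp} forms (this is Hida's duality $\Hom_\Lambda(S_\Lambda,\Lambda)\cong h$), and fails to see constant terms on the full space $M_k$. Passing to the limit therefore yields only the cuspidal duality, not the one you need. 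In fact Theorem~\ref{723}(4) shows that the full duality $\Hom_\Lambda(M_\Lambda,\Lambda)\cong\mathcal{H}$ is \emph{equivalent} to the isomorphism $h/I\cong\Lambda/(A(\chi_1,\chi_2))$, so it cannot be used as an input to prove the latter.

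The paper's route around this is different from what you sketch. It first uses Ohta's lemma (Lemma~\ref{731}) to rewrite the second congruence module as $t(\mathcal{H})/\mathcal{I}=h/I$, reducing everything to Theorem~\ref{12}. That theorem is then proved by Emerton's trick: one constructs by hand a Hecke operator $H\in\mathcal{H}$ with $C(\O_F,H\cdot\mathcal{F})=C_\lambda(0,\mathcal{F})$ for every $\mathcal{F}\in M_\Lambda$ (Lemma~\ref{722} and Theorem~\ref{723}). The key step is a mod-$\varpi$ congruence between a weight-$2$ specialization and a weight-$0$ constant, combined with the nontriviality of $\chi_1$ to produce a unit. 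This is precisely the mechanism that makes constant terms visible through first Fourier coefficients, and it is what your ``control theorem plus $q$-expansion pairing plus multiplicity one'' package is missing.
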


For the congruence module attached to the first short exact sequence in (\ref{eq:1}), we use the splitting map that sends $1\in Q(\Lambda)$ to the $\Lambda$-adic modular form $\mathcal{E}(\chi_1,\chi_2)/A(\chi_1,\chi_2)$ in the space $\mathcal{E}_{Q(\Lambda)}$ generated by $\mathcal{E}(\chi_1,\chi_2)$ over $Q(\Lambda)$ (see p.~39). For the congruence module attached to the second short exact sequence in (\ref{eq:1}), we use the splitting map that sends $1\in Q(\Lambda)$ to an element in $\Hom_{Q(\Lambda)}(\mathcal{E}_{Q(\Lambda)},Q(\Lambda))$ sending $\mathcal{E}(\chi_1,\chi_2)/A(\chi_1,\chi_2)$ to $1$ (see p.~42). The map $C_0$ will be defined in Section~\ref{sec:24}, and the element $A(\chi_1,\chi_2)$ (will be defined in Section ~\ref{sec:61}) is related to the constant terms of the Eisenstein series $\mathcal{E}(\chi_1,\chi_2)$. 

Using the first short exact sequence in (\ref{eq:1}), we obtain a $\Lambda$-adic cusp form $\mathcal{F}_S$ (see Proposition~\ref{721}) which is congruent to the Eisenstein series $\mathcal{E}(\chi_1,\chi_2)$ modulo $A(\chi_1,\chi_2)$. One can associate to $\mathcal{F}_S$ a surjective $\Lambda$-module homomorphism
$$
\Psi:h^{\ord}_{\mfr{m}}/I\surj \Lambda/(A(\chi_1,\chi_2));\; 
T\mapsto C(1,T\cdot F_S),
$$
where $C(1,T\cdot F_S)$ is the first Fourier coefficient of $T\cdot F_S$.

\begin{thm}\label{12}
Let the assumptions be as in Theorem~\ref{11}. Then we have an isomorphism of $\Lambda$-modules
$$
\Psi:h^{\ord}_{\mfr{m}}/I(\chi_1,\chi_2)\isom \Lambda/(A(\chi_1,\chi_2)).
$$
\end{thm}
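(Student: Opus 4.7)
Since $\Psi$ is given as surjective, only injectivity needs proof. Write $A = A(\chi_1,\chi_2)$, $I = I(\chi_1,\chi_2)$, $\mathcal{I} = \mathcal{I}(\chi_1,\chi_2)$, $h^{\ord}_{\mathfrak{m}} := h^{\ord}(\mathfrak{n};\Lambda)_{\mathfrak{m}}$ and $\mathcal{H}^{\ord}_{\mathfrak{M}} := \mathcal{H}^{\ord}(\mathfrak{n};\Lambda)_{\mathfrak{M}}$ for short. My plan is to identify $h^{\ord}_{\mathfrak{m}}/I$ directly with $\Lambda/(A)$ as a cyclic $\Lambda$-module, whereupon the $\Lambda$-linear surjection $\Psi$ between isomorphic cyclic modules is automatically an isomorphism.

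The first step is to observe that $h^{\ord}_{\mathfrak{m}}$ is generated as a $\Lambda$-algebra by the Hecke operators $T_{\mathfrak{q}}$, $U_{\mathfrak{p}}$ and diamond operators, each of which is by definition congruent modulo $I$ to its Eisenstein eigenvalue, which lies in $\Lambda$. Hence the structure map $\phi:\Lambda\to h^{\ord}_{\mathfrak{m}}/I$, $\lambda\mapsto\lambda\cdot 1$, is surjective, giving $h^{\ord}_{\mathfrak{m}}/I\cong\Lambda/J$ for some ideal $J\subseteq\Lambda$. That $J\subseteq(A)$ is formal: $\Psi$ provides a $\Lambda$-linear surjection $\Lambda/J\twoheadrightarrow\Lambda/(A)$, so the image of $1$ is a unit in $\Lambda/(A)$ annihilated by $J$, forcing $J\subseteq(A)$.

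The reverse inclusion $(A)\subseteq J$, i.e.\ $A\cdot 1\in I$ inside $h^{\ord}_{\mathfrak{m}}$, is the substantive step, and I plan to extract it from the second exact sequence of Theorem~\ref{11}. The Hecke-equivariant rational section $s$ of $\mathcal{H}^{\ord}_{\mathfrak{M}}\to\Lambda$ underlying that congruence module computation produces the Eisenstein idempotent $e := s(1)\in \mathcal{H}^{\ord}_{\mathfrak{M}}\otimes_{\Lambda}Q(\Lambda)$, and Theorem~\ref{11} says precisely that the $\Lambda$-denominator of $e$ equals $(A)$; consequently $A\cdot e\in\mathcal{H}^{\ord}_{\mathfrak{M}}$, and since its Eisenstein eigenvalue is $A$ one can write $A\cdot e = A+t$ for some $t\in\mathcal{I}$. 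On the other hand $e$ annihilates cusp forms, so under the natural Hecke-algebra surjection $\pi:\mathcal{H}^{\ord}_{\mathfrak{M}}\twoheadrightarrow h^{\ord}_{\mathfrak{m}}$ obtained from restriction to $S^{\ord}$, the element $\pi(A\cdot e)$ becomes $\Lambda$-torsion. The $\Lambda$-flatness of $h^{\ord}_{\mathfrak{m}}$ provided by Hida's control theorem then forces $\pi(A\cdot e)=0$. Applying $\pi$ to $A\cdot e = A+t$ yields $0 = A + \pi(t)$, so $A = -\pi(t)\in\pi(\mathcal{I}) = I$, as required.

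The main obstacle is the concrete extraction, from the abstract congruence-module data of Theorem~\ref{11}, of the integral element $A\cdot e\in \mathcal{H}^{\ord}_{\mathfrak{M}}$ with both the correct Eisenstein eigenvalue and the cusp-form annihilation property---the key point being that the Hecke-equivariance of the rational splitting really does realize $s(1)$ as a projector onto the Eisenstein line. The auxiliary ingredients---surjectivity of $\pi$, the equality $\pi(\mathcal{I}) = I$, and the $\Lambda$-flatness of $h^{\ord}_{\mathfrak{m}}$---should follow routinely from the definitions of these Hecke algebras and Eisenstein ideals and from the control theorem developed earlier in the paper.
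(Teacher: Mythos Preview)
Your argument is circular. You invoke the congruence-module computation for the \emph{second} exact sequence of Theorem~\ref{11} to conclude that $A\cdot e \in \mathcal{H}^{\ord}_{\mathfrak{M}}$, but in the paper that computation (Section~\ref{sec:63}) is proved \emph{using} Theorem~\ref{12}: one identifies the congruence module of the Hecke-algebra sequence with $h/I$ via Lemma~\ref{731}, and then applies Theorem~\ref{12} to obtain $h/I \cong \Lambda/(A)$. So the fact that the $\Lambda$-denominator of the Eisenstein idempotent in $\mathcal{H}^{\ord}_{\mathfrak{M}}$ is exactly $(A)$ is not available to you at this stage.

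The first sequence of Theorem~\ref{11} (proved independently as Theorem~\ref{713}) does show that $A\cdot e$ preserves $M_{\Lambda}$, i.e.\ lies in $\End_{\Lambda}(M_{\Lambda})$; but this is strictly weaker than lying in the Hecke algebra $\mathcal{H}$. The upgrade is the duality statement $\mathcal{H} \cong \Hom_{\Lambda}(M_{\Lambda},\Lambda)$ of Theorem~\ref{723}(4), which the paper shows is \emph{equivalent} to Theorem~\ref{12}. The paper's actual route is Lemma~\ref{722}: assuming no $H\in\mathcal{H}$ makes $C(1, H\cdot\mathcal{F}_0)$ a unit, one specializes to weight~$2$ modulo $\varpi$ and finds $v_{2,1}(\mathcal{F}_0)$ congruent to its (unit) constant term $f_0$; but $S(\mathfrak{q})$ acts by $\chi_1(\mathfrak{q})$ on the former and trivially on the weight-$0$ constant, so choosing $\mathfrak{q}$ with $\chi_1(\mathfrak{q}) \not\equiv 1$ forces $f_0 \equiv 0$, a contradiction. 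This Emerton-style construction of $H$ is the substantive input your plan lacks.
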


We will prove a more general version of Theorem~\ref{11} and Theorem~\ref{12} (without assuming $\chi_1\chi_2^{-1}\omega(\mfr{p})\neq 1$ for some prime ideal $\mfr{p}|p$ and $p \nmid N_{F/\Q}(\mfr{nD})\phi(N_{F/\Q}(\mfr{n}))h_F$) in Theorem~\ref{713}, Corollary~\ref{main_thm2}, and Corollary~\ref{68}.

The following corollaries are consequences of Theorem~\ref{11} and Theorem~\ref{12}.

\begin{cor}[Corollary~\ref{68}]
The pairing
$$
(\;,\;):\mathcal{H}^{\ord}_{\mfr{M}} \times M^{\ord}(\mfr{n},\chi_1\chi_2;\Lambda)_{\mfr{M}}\to \Lambda ;\; (T,\mathcal{F})\mapsto C(1,T\cdot \mathcal{F})
$$
is perfect. 
\end{cor}

\begin{cor}[Corollary~\ref{610}]
The $\Lambda$-module $\Ann_{\mathcal{H}^{\ord}_{\mfr{M}}}(\mathcal{I})$ is free of rank $1$.
\end{cor}

When $F=\Q$ and $p\geq 5$, Theorem~\ref{11} and Theorem~\ref{12} were proved by Ohta \cite{Ohta} assuming that the Kubota\textendash Leopoldt $p$-adic $L$-functions do not have a trivial zero and $p\nmid \phi(\mfr{n})$, and by Lafferty \cite{Matt} without assuming those two assumptions in the work of Ohta. When the Kubota\textendash Leopoldt $p$-adic $L$-functions have a trivial zero, in addition to the work of Lafferty, Betina\textendash Dimitrov\textendash Pozzi \cite{BDP} computed the congruence modules attached to the first short exact exact sequence in (\ref{eq:1}) and proved Theorem~\ref{12}  after localizing at the height $1$ prime of $\Lambda$ corresponding to the trivial zero without any assumptions on $p$.

In the work of Ohta, Theorem~\ref{12} was proved by using the Iwasawa main conjecture, while in the work of Lafferty, he did not use the main conjecture. Indeed, Lafferty generalized the work of Emerton \cite{E} in which he proved Theorem~\ref{12} when $F=\Q$ and $\mfr{n}=1$ without using the main conjecture. There are couple of difficulties in the setting of Hilbert modular forms. For example, the residue map does not exist, and the class number $h_F$ of $F$ is not $1$ in general. To overcome those difficulties, we will describe everything adelically, including the space of modular forms and the set of cusps.

\subsection{Ideas of the proof}
There are two main steps in the proof of Theorem~\ref{11}. The first is to show that we have a short exact sequence of flat $\Lambda$-modules (Theorem~\ref{711}), called the \textit{fundamental exact sequence} in \cite{Hsieh} in the setting of unitary automorphic forms,
\begin{equation}\label{eq:14}
0\map S^{\ord}(\mfr{n},\chi_1\chi_2;\Lambda)\map M^{\ord}(\mfr{n},\chi_1\chi_2;\Lambda) \xrightarrow{C_0} \Lambda[C_{\mfr{n}p}^*]^{\ord} \map 0,
\end{equation}
where $C_{\mfr{n}p}^*$ is the set of indicator functions of the set of cusps for the open compact subgroup $K_1(\mfr{n}p)\subset \GL_2(\bbA_{F,f})$. Here $\bbA_{F,f}$ is the finite adele ring of $F$ and the superscript $``\ord"$ in the last term means by applying the ordinary projection.
The fundamental exact sequence is proved by Nakayama's lemma and a control theorem (Corollary~\ref{655}) which states that one has a natural isomorphism
$$
M^{\ord}(\mfr{n},\chi;\Lambda)/(T-\rho(u)u^{k-2}+1)\isom M_k^{\ord}(\mfr{n}p^r, \chi\omega^{2-k}\rho;W).
$$
for all $k\geq 2$. The same statement also holds for $S^{\ord}(\mfr{n},\chi;\Lambda)$. When $F=\Q$, this was proved by Hida \cite[Ch.~7]{Hida2}. When $F\neq \Q$, Wiles \cite[Theorem~3]{W2} proved a control theorem for the space of one-variable ordinary $\Lambda$-adic Hilbert cusp forms, and Hida \cite[Ch.~4.1]{Hida3} proved a theorem for the space of multivariable ordinary $\Lambda$-adic Hilbert cusp forms via a different approach. We follow Hida's argument to prove a theorem for the space of one-variable ordinary $\Lambda$-adic Hilbert modular forms. This seems to be known to experts; however, we have not found any mention of it in the literature. 

Once we have (\ref{eq:14}), to compute the congruence modules attached to the first short exact sequence in (\ref{eq:1}), it remains to compute the constant terms of $\mathcal{E}(\chi_1,\chi_2)$ at all cusps (Proposition~\ref{523}). The congruence modules attached to the second short exact sequence in (\ref{eq:1}) is obtained by Theorem~\ref{12} and Lemma~\ref{731}. When $F=\Q$, Betina\textendash Dimitrov\textendash Pozzi \cite{BDP} used the constant terms of Eisenstein families to study the geometry of eigencurves at weight $1$ Eisenstein points. We hope that our computation in the setting of Hilbert modular forms is useful to generalize the work of Betina-Dimitrov-Pozzi to the setting of Hlbert modular forms. 

When $\chi_2=\mathbbm{1}$, the idea of the proof of Theorem~\ref{12} is to generalize Emerton's argument in the setting of Hilbert modular forms. To show the injectivity of $\Psi$, it suffices to show the existence of a Hecke operator $H\in \mathcal{H}^{\ord}_{\mfr{M}}$ such that for each $\mathcal{F}\in M^{\ord}(\mfr{n},\chi_1\chi_2;\Lambda)_{\mfr{M}}$, we have $C(1,H\cdot \mathcal{F})=C_{\lambda}(0,\mathcal{F})$ for all $\lambda=1,\ldots, h^+_F$, where $C_{\lambda}(0,\mathcal{F})$ are the constant terms of $\mathcal{F}$. We will prove the existence of such Hecke operator in Section~\ref{sec:62}. Here $h_F^+$ is the narrow class number of $F$. Note that for all $\mathcal{F}\in M^{\ord}(\mfr{n},\chi_1\chi_2;\Lambda)_{\mfr{M}}$, we have $C_{1}(0,\mathcal{F})=\ldots=C_{h^+_F}(0,\mathcal{F})$, so the Hecke operator $H$ does not depend on $\lambda$. When $\chi_2\neq \mathbbm{1}$, the proof of Theorem~\ref{12} is much easier than the case of $\chi_2=\mathbbm{1}$. This will be addressed in Corollary~\ref{68}. 

\subsection{Outline}
We now give an outline of the article. In Section~\ref{sec:02}, we review definitions and properties of Hilbert modular forms in both the classical and the adelic settings. Also, we formulate cusps in the adelic language, which plays an important role in stating the main results in Section~\ref{sec:06}.

In Section~\ref{sec:03}, we construct Eisenstein series adelically and compute their constant terms at different cusps. Indeed, one can do this in the classical setting (see \cite[Proposition~2.5.5]{Ohta} and \cite[Proposition~4.7]{BDP} when $F=\Q$ and \cite[Proposition~3.4]{Oz} when $F\neq \Q$). One reason we have to do everything adelically is to show that the map $C_0$ in Theorem~\ref{11} commutes with Hecke operators, which we can only prove in the adelic setting. In addition, it is difficult to write adelic cusps in the classical setting explicitly since to do so, one has to use the strong approximation for $\GL_2$. The way to construct Eisenstein series adelically is to choose certain local induced representations at each place of $F$. We then compute their constant terms by computing local integrals at all places of $F$. In principle, one can obtain the Fourier expansion at all cusps if one can compute all local integrals explicitly. This construction is well-known to experts and has been used to study the arithmetic of Eisenstein series for different algebraic groups such as unitary and symplectic groups (see \cite{Hsieh} for example).   

In Section~\ref{sec:04}, we recall the definition of $\Lambda$-adic modular forms and construct $\Lambda$-adic Eisenstein series as examples. Also, we compute their constant terms at different cusps using results in Section~\ref{sec:03}, which will be used in Section~\ref{sec:06}. 

In Section~\ref{sec:05}, we will review moduli problems of abelian varieties and the definition of geometric modular forms. The goal in this section is to prove a control theorem (Corollary~\ref{655}).
In the last section, we will prove main results and their applications.

\subsection{Notation}
Throughout this paper, we fix a totally real field $F$ with $d=[F:\Q]$, and we let $\O_F$ be the ring of integers of $F$. We write $\wh{\O}_F=\O_F\otimes_{\Z} \wh{\Z}$, where $\wh{\Z}=\prod_{p<\infty} \Zp$.
We denote by $\mathfrak{D}$ the different of $F$ over $\Q$ and $d_F=N(\mfr{D})$ the
discriminant of $F$. Here $N=N_{F/\Q}$ is the norm map from $F$ to $\Q$. We denote by $h_F=|\Cl_F|$ (resp.~$h^+_F=|\Cl_F^+|$) the class number of $F$ (resp.~the narrow class
number of $F$), where $\Cl_F$ (resp.~$\Cl_F^+$) is the ideal class
group of $F$ (resp.~narrow ideal class group).

We fix a set $I=\{\tau_1,\cdots, \tau_d\}$  of distinct real
embeddings of $F$ into $\R$. For any element $f$ in $F$, by $f\gg 0$, we mean that $f$ is totally positive, i.e., $\tau_i(f)>0$ for all $i=1, \ldots, d$. For any subset $A$ of $F$, we denote by $A^+$ the subset of totally positive elements in $A$, i.e., for any $f$ in
$A$, $f \in A^+$ if $f\gg 0$.

For each finite place $v$ of $F$, we denote by $F_{v}$ the completion of $F$ at $v$, $\O_{v}$ its ring of integers, $\mfr{p}_v$ the maximal ideal of $\O_{v}$, and $\varpi_{\mfr{p}_v}$ a fixed uniformizer. We denote by $q_v$ the cardinality of the residue field $\O_{\mfr{p}_v}/\varpi_{\mfr{p}_v}$. Let $\val_v$ be the normalized valuation such that $\val_v(\varpi_{\mfr{p}_v})=1$. Sometimes, we write $\varpi_{\mfr{p}_v}$ as $\varpi_v$ for simplicity. In addition, we will omit $v$ from $\mfr{p}_v$, $\varpi_v$, $q_v$, and $\val_v$ if there is no confusion.   

Finally, we fix, once and for all, embeddings of $\Qbar$ in $\Qpbar$ and in $\C_p$.

\subsection*{Acknowledgments}
The results of this paper are a part of the author's Ph.D.~thesis in University of Arizona.
The author would like to thank his advisor, Romyar Sharifi, for his guidance, support, and suggesting this problem. Also, the author would like to thank Adel Betina, Mladen Dimitrov, Haruzo Hida, Ming-Lun Hsieh, and Hang Xue for helpful suggestions during preparation of this article.

The author was partially supported by National Science Foundation under Grants No.~DMS-1360583 and No.~DMS-1401122, by the Labex CEMPI under Grant No.~ANR-11-LABX-0007-01, and by I-SITE ULNE under Grant No.~ANR-16-IDEX-0004.  

Finally, the author is grateful to the referees for a careful reading and valuable suggestions for improvement.
%%%%%%%%%%%%%%%%%%%%%%%%%%%%%%%%%%%%%%%%%%%%%%%%%%%%%%%%%%%%%%%%%%%%%%%%%%%%%%%%%%%%%%
\section{Hilbert modular forms}\label{sec:02}
In the first two subsections, we review the definitions of classical Hilbert modular forms and adelic Hilbert modular forms of parallel weight following \cite[\S 1 \& \S 2]{Sh}. We refer the reader to \textit{loc.~cit.} for more details, especially, the definition of Hecke operators. %Then we discuss the adelic formulation of cusps and the Hecke action on the set of cusps in the last subsection.

The aim of the last subsection is to compute the ordinary projection of the set of cusps. To do this, we first give an adelic description of the set of cusps and show that this description is equivalent to the classical description (Lemma~\ref{341}). This seems to be known to experts; however, we have not found any mention of it in the literature. We hope that it is useful to the reader to write it down clearly. 
We then define Hecke actions which allow us to compute the ordinary projection of the set of cusps. %(Theorem~\ref{343} and Theorem~\ref{ordcusp}).
In addition, we will show that the map  $C_0$ (defined in (\ref{eq:const_map})) is Hecke-equivariant. Both of these will be used in Section~\ref{sec:06}. 

%%%%%%%%%%%%%%%%%%%%%%%%%%%%%%%%%%%%%%%%%%%%%%%%%%%%%%%%%
\subsection{Classical Hilbert modular forms}\label{sec:21}
Throughout this paper, we denote by $\mathbf{H}=\{z \in
\C \;| \im(z)>0\}$ the complex upper half plane. 
Let $$\GL_2(F)^+=\{\gamma\in \GL_2(F)\mid \det\gamma \gg 0\}$$ be the group of $2\times 2$ matrices with
totally positive determinant. Recall that
$\tau_1,\ldots,\tau_d:F\embed \R$ are fixed distinct real embeddings of $F$. Let $\mathfrak{b}$ be a fractional ideal, and let $\n$ be an integral ideal of $F$. The congruence subgroup $\Gamma_1(\mathfrak{b},\n)$ is defined by
$$
\Gamma_1(\mathfrak{b},\n)=\left\{
\begin{pmatrix}
a & b \\
c & d \\
\end{pmatrix}
\in \GL_2(F)^+ \mid
a\in \O_F, d-1\in \mfr{n}, b\in \mathfrak{b}^{-1}, c \in \mfr{b}\n, ad-bc\in \O_F^{\times}\right\}.
$$

Let $f:\mathbf{H}^d\map \C$ be a function. For $k\in \Z_{\geq 0}$ and
$\gamma\in \GL_2(F)^+$, the slash operator is defined as
$$
f\|_k\gamma(z):=(\det\gamma)^{k/2}j(\gamma,z)^{-k}f(\gamma z),
$$ 
where 
$$
(\det\gamma)^{k/2}=\prod_{i=1}^d\tau_i(\det \gamma)^{k/2} \mbox{ and }
j(\gamma,z)=\prod_{i=1}^d(\tau_i(c)z_i+\tau_i(d)).
$$
Here $\gamma z=(\tau_1(\gamma)z_1,\ldots, \tau_d(\gamma) z_d)$, and
$\tau_i(\gamma)z_i$ is the Mobius action on the upper half plane for
all $i$. A \textit{Hilbert modular form of level $\Gamma_1(\mfr{b},\mfr{n})$ and (parallel) weight $k$} is a holomorphic function $f:\mathbf{H}^d\rightarrow \C$ such that
$$
f\|_{k}\alpha(z)=f(z)
$$
for all $\alpha \in \Gamma_1(\mfr{b},\mfr{n})$. It follows from the definition that every Hilbert modular form
$f$ satisfies $f(z)=f(z+a)$ for $a\in \mfr{b}^{-1}$. Hence one obtains the Fourier expansion
$$
f=\sum_{\mu\in \mfr{b}\mfr{D}^{-1}}c(\mu,f)e^{2\pi i\tr(\mu z)},
$$
where $\tr(\mu z)=\sum_{i=1}^{d} \tau_i(\mu)z_i$. A Hilbert modular form $f$ is called a \textit{cusp form} if the constant term of $f||_{k}\gamma$ vanishes for all
$\gamma\in \GL_2(F)$.

For $R=\C$ or $\Z$, we denote by $M_k(\Gamma_1(\mfr{b},\mfr{n});R)$ (resp.~$S_k(\Gamma_1(\mfr{b},\mfr{n});R)$) the space of Hilbert modular forms (resp.~cusp forms) of level $\Gamma_1(\mfr{b},\mfr{n})$ and weight $k$ whose Fourier coefficients are all in $R$. For any commutative ring $A$, we further define 
$$
M_k(\Gamma_1(\mfr{b},\mfr{n});A):=M_k(\Gamma_1(\mfr{b},\mfr{n});\Z)\otimes_{\Z} A
$$
and define $S_k(\Gamma_1(\mfr{b},\mfr{n});A)$ in the same manner.

%%%%%%%%%%%%%%%%%%%%%%%%%%%%%%%%%%%%%%%%%%%%%%%%%%%%%%%%%%%%%%%%%%%%%%%%%%%%%%%%%%
\subsection{Adelic Hilbert modular forms}\label{sec:22}
Let $\A_F$ be the adele ring of $F$, and let $\A_{F,f}$ be the
finite adele ring. For any finite place $v$ of $F$ and any integral ideal $\n$
of $\O_F$, define
$$
K_{0,v} (\n)=\left\{ \left(\begin{array}{cc}
      a & b \\ c & d \\
      \end{array}\right)\in \GL_2(\O_{F_v})\mid c\in \n_v,\right\},
$$
$$
K_{1,v} (\n)=\left\{ \left(\begin{array}{cc}
      a & b \\ c & d \\
      \end{array}\right)\in K_{0,v}(n)\mid d-1 \in \n_v\right\}, \mbox{ and }      
K_v^1 (\n)=\left\{ \left(\begin{array}{cc}
      a & b \\ c & d \\
      \end{array}\right)\in K_{0,v}(n)\mid a-1 \in \n_v\right\}.
$$
Set $K_1(\n)=\prod_{v<\infty}K_{1,v}(\n)$. The group $K_v^1$ will only be used in the proof of Theorem~\ref{ordcusp}. Let $\GL_2^+(\R)=\{g\in \GL_2(\R) \mid \det g >0\}$, and let $K_{\infty}^+=(\R_+^{\times}\cdot\SO_2(\R))^d$. Note that $K_{\infty}^+$
is the stabilizer of $(i,\ldots,i)\in \mathbf{H}^d$ in $(\GL_2^+(\R))^d$. 

\begin{defn}\label{321}
For $k\in \Z_{\geq 0}$, an \textit{adelic Hilbert modular form of (parallel) weight $k$ and level $K_1(\n)$}  is
a function $f:\GL_2(\mathbb{A}_F)\map \C$ such that the following
properties hold:
\begin{enumerate}
\item $f(\gamma g \kappa)=f(g)$ for all $\gamma\in \GL_2(F)$, $g\in \GL_2(\A_F)$, and $\kappa\in K_1(\n)$.

\item $f(ga)= (\det a)^{\frac{k}{2}}j(a,i)^{-k}f(g)$ for all $a\in (\GL_2^+(\R))^d$ and $g\in \GL_2(\A_F)$.

\item For $x\in \GL_2(\A_{F,f})$, we define a function
$f_x:\mathbf{H}^d\map \C$ by
$$
f_x(z)= (\det g_z)^{-\tfrac{k}{2}}j(g_z,i)^{k}f(g_z x)
$$
for $g_z=(g_{z_j})\in (\GL_2^+(\R))^d$ and $z=(z_j)\in \mathbf{H}^d$ such
that $g_{z_j}\cdot i=z_j$ for $j=1,\ldots,d$. Then $f_x$ is a holomorphic
function for all $x$.
\end{enumerate}
An adelic Hilbert modular form $f$ is called a \textit{cusp form} if we have
$$
\int_{F\backslash\A_F} f\left(\left(\begin{array}{cc}
                    1 & x \\0 & 1 \\
                  \end{array}\right)g\right)dx=0
$$
for all $g\in \GL_2(\A_F)$.
\end{defn}

We denote by $M_{k}(K_1(\n);\C)$ the space of adelic Hilbert modular forms of weight $k$ and level $K_1(\n)$, and denote by $S_{k}(K_1(\n);\C)$ the subspace of cusp forms. The following proposition states the relationship between classical Hilbert modular forms and adelic Hilbert modular forms. See \cite[\S 2]{Sh} for a proof.

\begin{prop}\label{324}
There exist canonical isomorphisms of complex vector spaces
$$
M_{k}(K_1(\n);\C)\simeq \bigoplus_{\lambda=1}^{h^+_F} M_{k}(\Gamma_1(t_{\lambda}\mfr{D},\n);\C)
\mbox{ and }
S_{k}(K_1(\n);\C)\simeq \bigoplus_{i=1}^{h^+_F} S_{k}(\Gamma_1(t_{\lambda}\mfr{D},\n);\C).
$$ 
\end{prop}

Let $\{t_1,\ldots,t_{h_F^+}\}$ be a set of representatives of $\Cl_F^+$. It follows from Proposition~\ref{324} that each $f\in M_k(K_1(\mfr{n});\C)$ can be written as a vector $(f_1,\ldots,f_{h^+_F})$. We saw in the previous subsection that each $f_{\lambda}$ admits a Fourier expansion, namely,
$$
f_{\lambda}(z)=\sum_{\mu\in t_{\lambda}^+\cup\{0\}} c(\mu,f_{\lambda})e^{2\pi\tr(uz)}.
$$
We call $c(\mu,f_{\lambda})$ the \textit{unnormalized
Fourier coefficients} of $f$, and we define the normalized Fourier
coefficients as follows. Each integral ideal $\mfr{m}$ of $F$
must be in one of the narrow ideal classes, say that of $(t_{\lambda}\mfr{D})^{-1}$ for
some $\lambda\in{1,\ldots, h^+_F}$. We choose a totally positive element $u\in
t_{\lambda}\mfr{D}$ such that $\mfr{m}= (u)(t_{\lambda}\mfr{D})^{-1}$. Then the \textit{normalized Fourier coefficient}
of $f$ associated to $\mfr{m}$ is defined by
$$
C(\mfr{m},f):=N(t_{\lambda}\mfr{D})^{-k/2} c(u,f_{\lambda}),
$$
and the \textit{normalized constant terms} are defined by
$$
C_{\lambda}(0,f):=N(t_{\lambda}\mfr{D})^{-k/2} c(0,f_{\lambda})
$$
for $\lambda=1,\ldots, h^+_F$. The normalized Fourier coefficients $C(\mfr{m},f)$ and $C_{\lambda}(0,f)$  are independent of the choice of $u$ and of the choices of the $t_{\lambda}$ (\textit{loc.~cit.}). From now on, we fix a set of representatives $\{t_1,\ldots,t_{h_F^+}\}$ such that
\begin{equation}\label{eq:class_repr}
t_{\lambda}\mfr{D} \mbox{ is an integral ideal and } (t_{\lambda}\mfr{D},\mfr{n})=1.
\end{equation} 
for all $\lambda=1,\ldots,h_F^+$. Such a set of representatives exists by \cite[Lemma~2.9]{Oz}.

For any commutative ring $A$, we denote by 
$M_k(K_1(\mfr{n});A)$ the space of adelic modular forms whose normalized Fourier coefficients are all in $A$ and denote by $S_k(K_1(\mfr{n});A)$ in the same manner.

In p.~648 of \cite{Sh}, Shimura defined Hecke operators $S(\mfr{m})$ and $T(\mfr{m})$ for $\mfr{m}$ relatively prime to $\mfr{n}$ and $U(\mfr{p})$ for prime ideals $\mfr{p}$ dividing $\mfr{n}$. In addition, these Hecke operators commute. For a narrow ray class character $\chi$ with modulus $\mfr{n}$, the space $M_k(\mfr{n},\chi;A)$ is a subspace of $M_k(K_1(\mfr{n});A)$ consisting of modular forms $f$ satisfying
$$
S(\mfr{m})\cdot f=\chi(\mfr{m})\cdot f
$$
for all integral ideals $\mfr{m}$ with $(\mfr{m},\mfr{n})=1$. The space $S_k(\mfr{n},\chi;A)$ is defined in the same manner.
 
\begin{defn}\label{332}\ 
\begin{enumerate}
\item A modular form $f$ is said to be an \textit{eigenform} if $f$ is an eigenvector for all Hecke operators.

\item An eigenform $f$ is \textit{normalized} if $C(1,f)=1$. Then one has $T(\mfr{q})\cdot f=C(\mfr{q},f)\cdot f$ for all prime ideals $\mfr{q}$ prime to $\mfr{n}$, and  $U(\mfr{p})\cdot f=C(\mfr{p},f)\cdot f$ for all $\mfr{p}|\mfr{n}$ (see the remark in \cite[p.~418]{W1}). 

\item Let $\mfr{p}$ be a prime ideal of $F$. A normalized eigenform $f$ is called \textit{$\mfr{p}$-ordinary} if its $\mfr{p}$th Fourier coefficient $C(\mfr{p},f)$ is an unit in $\O_{\mfr{p}}$. A normalized eigenform $f$ is called \textit{$p$-ordinary} if it is $\mfr{p}$-ordinary for all $\mfr{p}|p$.
\end{enumerate}
\end{defn}

Let $e=\lim_{n\map \infty}\prod_{\mfr{p}|p} U(\mfr{p})^{n!}$ be Hida's idempotent element. It was shown by Wiles \cite[p.~537]{W2} that for all $r\in \Z_{>0}$, $e$ acts on $M_k(K_1(\mfr{n}p^r);\Zp)$ under the $p$-adic topology, and $S_k(K_1(\mfr{n}p^r);\Zp)$ is preserved under the action of $e$. We denote by $M^{\ord}_k(K_1(\mfr{n}p^r);\Zp)$ the subspace $e\cdot M_k(K_1(\mfr{n}p^r);\Zp)$ and similarly for $S^{\ord}_k(K_1(\mfr{n}p^r);\Zp)$. In addition, we denote by $\mathcal{H}_k(K_1(\mfr{n}p^r);\Zp)$ (resp.~ $h_k(K_1(\mfr{n}p^r);\Zp)$) the Hecke algebra (resp.~cuspidal Hecke algebra) generated over $\Zp$ by the Hecke operators $U(\mfr{p})$, $T(\mfr{q})$ and $S(\mfr{q})$ for all prime ideals $\mfr{q}$ not dividing $\mfr{n}p$ and for all prime ideals $\mfr{p}|\mfr{n}p$. We write $H^{\ord}_k(K_1(\mfr{n}p^r);\Zp) =e\cdot H_k(K_1(\mfr{n}p^r);\Zp)$ and write $h_k^{\ord}(K_1(\mfr{n}p^r);\Zp)$ in the same manner.  

We now review properties of Eisenstein series attached to pairs of narrow ray class characters
of $F$ following \cite[Proposition 3.4]{Sh}. 

\begin{prop}[Shimura]\label{333}
Let $\chi_1$ and $\chi_2$ be primitive narrow ray class characters of conductors $\mfr{n}_1$ and $\mfr{n_2}$, respectively, with associated signs
$e_{1,\infty},e_{2,\infty}\in (\Z/2\Z)^d$ satisfying
\begin{equation}\label{eq:eisen_cond}
e_{1,\infty}+e_{2,\infty} \equiv (k,\ldots,k)\; (\bmod\; 2\Z^d)
\end{equation}
for some integer $k\geq 2$.
We view the characters $\chi_1\chi_2$ and $\chi_1\chi_2^{-1}$ as characters with
modulus $\mfr{n}=\mfr{n}_1\mfr{n}_2$. Assume that $\chi_1$ is nontrivial. Then there exists an eigenform $E_k(\chi_1,\chi_2)=(E_{\lambda})_{\lambda=1}^{h_F^+}$,
where $E_{\lambda}$ is in $M_k(\Gamma_1(t_{\lambda}\mfr{D},\mfr{n}); \Zp[\chi_1,\chi_2])$ for $\lambda=1,\ldots,h_F^+$, such that
\begin{equation}
C(\mfr{m},E_k(\chi_1,\chi_2))=\sum_{\mfr{a}|\mfr{m}}\chi_1(\mfr{a})\chi_2(\tfrac{\mfr{m}}{\mfr{a}})
N(\mfr{a})^{k-1}
\end{equation}
for all nonzero integral ideals $\mfr{m}$ of $\O_F$ and
\begin{equation}
C_{\lambda}(0,E_k(\chi_1,\chi_2))=
\begin{cases}
2^{-d}\chi_2^{-1}(t_{\lambda}\mfr{D})L(1-k,\chi_1\chi_2^{-1})& \mbox{if } \mfr{n}_2=1,\\
0&\mbox{otherwise}.
\end{cases}
\end{equation}
In particular, $E_k(\chi_1,\chi_2)$ is a $p$-ordinary modular form if $(\mfr{n}_2,p)=1$.
Here $L(s,\chi)$ is the $L$-function associated to the character $\chi$ of conductor $\mfr{n}$, which is defined as the meromorphic continuation of the $L$-series
\begin{equation}\label{eq:L_function}
L(s,\chi):=\sum_{(\mfr{a},\mfr{n})=1}\chi(\mfr{a}) N(\mfr{a})^{-s}
=\prod_{\mfr{p}\nmid \mfr{n}}(1-\chi(\mfr{p})N(\mfr{p})^{-s})^{-1},
\end{equation}
which converges absolutely for $\re(s)>1$. 
\end{prop}

%%%%%%%%%%%%%%%%%%%%%%%%%%%%%%%%%%%%%%%%%%%%%%%%%%%%%%%%%%%%%%%%%%%%%%%%%%%%%%%%%%%%%
\subsection{Cusps of Hilbert modular varieties}\label{sec:24}
We respectively denote by $B$, $T$, and $N$ the algebraic subgroup of upper-triangular matrices, the subgroup of diagonal matrices, and the unipotent upper-triangular subgroup of $\GL_2$. We first review two decompositions of $\GL_2$ for later use. Let $v$ be a finite place of $F$. The first decomposition is the Iwasawa decomposition of $\GL_2(F_{v})$, which is given by 
\begin{equation}\label{eq:ID1}
\GL_2(F_{v})=B(F_{v})\GL_2(\O_{v}).
\end{equation}
Moreover, for each $\left(\begin{smallmatrix} a & b\\ c & d\end{smallmatrix}\right)\in \GL_2(F_{v})$, we have
\begin{equation}\label{eq:ID2}
\left(\begin{array}{cc} a & b\\ c & d\end{array}\right)=
\begin{cases}
\left(\begin{array}{cc} a-\frac{bc}{d} & b\\ 0 & d\end{array}\right)
\left(\begin{array}{cc} 1 & 0\\ \frac{c}{d} & 1\end{array}\right) & \mbox{if } \val_{v}(c)\geq \val_{v}(d))\\
\left(\begin{array}{cc} -\frac{ad}{c}+b & a\\ 0 & c\end{array}\right)
\left(\begin{array}{cc} 0 & 1\\ 1 & \frac{d}{c}\end{array}\right) & \mbox{if } \val_{v}(c)\leq \val_{v}(d)).
\end{cases}
\end{equation}
Let $\mfr{n}$ be an integral ideal of $F$. For simplicity, we set $N_v:=\val_v(\mfr{n})$. The second decomposition is given by
\begin{equation}\label{decom}
\GL_2(\O_v)=\coprod_{i=0}^{N_v} B(\O_v)\gamma_iK_{1,v}(\mfr{n}),
\end{equation}
where $\gamma_i= \left(\begin{smallmatrix} 1 & 0\\ \varpi^i_v & 1 \end{smallmatrix}\right)$ for $0\leq i< N_v$ and $\gamma_{N_v}=I_2$, the identity matrix. Moreover, for $g=\left(\begin{smallmatrix} a & b\\ c & d\end{smallmatrix}\right)\in \GL_2(\O_v)$, if $c$ is a unit, then we have
\begin{equation}\label{eq:22}
\left(\begin{array}{cc} a & b\\ c & d\end{array}\right)=
\left(\begin{array}{cc} \frac{ad-bc}{c} & a+\frac{(bc-ad)(1+\varpi^{N_v})}{c}\\ 0 & c\end{array}\right)
\left(\begin{array}{cc} 1 & 0\\ 1 & 1\end{array}\right)
\left(\begin{array}{cc} 1+\varpi^{N_v} & (1+\varpi^{N_v})c^{-1}d-1\\ -\varpi^{N_v} & 1-\varpi^{N_v} c^{-1}d \end{array}\right),
\end{equation}
and if $\val(c)=j>0$, we have
\begin{equation}\label{eq:23}
\left(\begin{array}{cc} a & b\\ c & d\end{array}\right)=
\left(\begin{array}{cc} (ad-bc)c^{-1}\varpi^j & b\\ 0 & d\end{array}\right)
\left(\begin{array}{cc} 1 & 0\\ \varpi^j & 1\end{array}\right)
\left(\begin{array}{cc} \varpi^{-j}cd^{-1} & 0\\ 0 & 1\end{array}\right).
\end{equation}

We denote by $C_{\mfr{n}}$ the set of cusps for $K_1(\mfr{n})$, defined as the set of double cosets 
$$
C_{\mfr{n}}:=B(F)^+ N(\bbA_{F,f})\backslash T(\bbA_{F,f})\GL_2(\wh{\O}_F)/K_1(\mfr{n}),
$$
where $B(F)^+$ is the subgroup of $B(F)$ consisting of all matrices with totally positive determinant.
We say that two elements $c$ and $c'$ in $T(\bbA_{F,f})\GL_2(\wh{\O}_F)$ are the same in $C_{\mfr{n}}$, denoted by $c\sim c'$, if $c=\gamma c' \kappa$ for some $\gamma\in B(F)^+ N(\bbA_{F,f})$ and $\kappa\in K_1(\mfr{n})$. The classical description of the set of cusps for $K_1(\mfr{n})$ is given by
$$
C'_{\mfr{n}}:= \coprod_{\lambda=1}^{h^+_F}\Gamma_1(t_{\lambda}\mfr{D},\mfr{n})\backslash\mathbb{P}^1(F).
$$ 
Recall that $\{t_1,\ldots,t_{h_F^+}\}$ was fixed to satisfy (\ref{eq:class_repr}) in the previous subsection. We will abuse the notation to denote by $t_{\lambda}\in \mathbb{A}_{F,f}$ generating the ideal $t_{\lambda}$ whose value is 1 at $v$ if $\mfr{p}_v$ does not divide the ideal $t_{\lambda}$. For $\lambda=1,\ldots,h_F^+$, we set
\begin{equation}\label{eq:x_lambda}
x_{\lambda}=\left(\begin{array}{cc}
    t_{\lambda}\delta & 0 \\0 & 1
\end{array}\right)\in \GL_2(\mathbb{A}_{F,f}),
\end{equation}
where $\delta\in \bbA_{F,f}$ such that $\delta\O_F=\mfr{D}$ and $\delta_v=1$ for $v\nmid \mfr{D}$. 

The following lemma shows that the above two definitions of cusps for $K_1(\mfr{n})$ are equivalent, and hence, $C_{\mfr{n}}$ is a finite set as $C_{\mfr{n}}'$ is a finite set.

\begin{lemma}\label{341}
Let the notation be as above. 
\begin{enumerate}
\item we have
\begin{equation}\label{eq:adel_cusp}
\GL_2(F)^+N(\bbA_{F,f})\backslash \GL_2(\bbA_{F,f})\times \mathbb{P}^1(F)/K_1(\mfr{n})=C_{\mfr{n}}.
\end{equation}
Here $N(\bbA_{F,f})$ (resp.~$K_1(\mfr{n})$) and $\GL_2(F)^+$ respectively act on $\GL_2(\bbA_{F,f})\times \bbP^1(F)$ by left multiplying (resp.~right multiplying) on $\GL_2(\bbA_{F,f})$ and left multiplying on $\GL_2(\bbA_{F,f})\times \bbP^1(F)$ diagonally. 

\item The map
\begin{equation}\label{eq:311}
C'_{\mfr{n}}\map \GL_2(F)^+N(\bbA_{F,f})\backslash \GL_2(\bbA_{F,f})\times \mathbb{P}^1(F)/K_1(\mfr{n});
\end{equation}
$$
\Gamma_1(t_{\lambda}\mfr{D},\mfr{n})  \left(\begin{array}{cc} a\\ c\end{array}\right)\mapsto \GL_2(F)^+N(\bbA_{F,f})\left(x_{\lambda}^{-1}, \left(\begin{array}{cc} a \\ c\end{array}\right)\right) K_1(\mfr{n})
$$
is bijective. 
\end{enumerate}
\end{lemma}

\begin{proof} 
Since $\bbP^1(F)=\GL_2(F)^+/B(F)^+$, we know that $\GL_2(F)^+$ acts on $\bbP^1(F)$ transitively, and the stabilizer of $\left(\begin{smallmatrix} 1 \\ 0\end{smallmatrix}\right)$ is $B(F)^+$. Therefore, we have
$$
\GL_2(F)^+N(\bbA_{F,f})\backslash \GL_2(\mathbb{A}_{F,f})\times \mathbb{P}^1(F)/K_1(\mfr{n})
= B(F)^+ N(\bbA_{F,f})\backslash \GL_2(\mathbb{A}_{F,f}) \times \left\{\left(\begin{array}{c} 1 \\ 0\end{array}\right)\right\}/K_1(\mfr{n}).
$$
By the Iwasawa decomposition (\ref{eq:ID1}), one can decompose $\GL_2(\bbA_{F,f})$ as
$$
\GL_2(\bbA_{F,f})=B(\bbA_{F,f})\GL_2(\wh{\O}_F),
$$
and hence, we obtain the following equalities:
\[
\begin{split} 
\GL_2(F)^+ N(\bbA_{F,f})\backslash \GL_2(\mathbb{A}_{F,f})\times \mathbb{P}^1(F)/K_1(\mfr{n})
=& B(F)^+ N(\bbA_{F,f})\backslash \GL_2(\mathbb{A}_{F,f}) \times \left\{\left(\begin{array}{c} 1 \\ 0\end{array}\right)\right\}/K_1(\mfr{n})\\
=& B(F)^+ N(\bbA_{F,f})\backslash \GL_2(\mathbb{A}_{F,f})/K_1(\mfr{n})\\
=& B(F)^+ N(\bbA_{F,f})\backslash B(\mathbb{A}_{F,f}) \GL_2(\wh{\O}_F)/K_1(\mfr{n})\\
=& B(F)^+ N(\bbA_{F,f})\backslash T(\bbA_{F,f}) \GL_2(\wh{\O}_F)/K_1(\mfr{n}).
%=& B(F)^+\backslash \coprod_{\lambda=1}^{h^+_F} B(F)x_{\lambda}B(\wh{\O}_F) \GL_2(\wh{\O}_F)/K_1(\mfr{n})\%\
%=& \coprod_{\lambda=1}^{h^+_F} B(F)^+ \backslash x_{\lambda}\GL_2(\wh{\O}_F)/K_1(\mfr{n})\\
\end{split}
\]
Here the last equality follows from the fact that $B(\bbA_{F,f})=N(\bbA_{F,f})T(\bbA_{F,f})$. This proves (\ref{eq:adel_cusp}).

Next, we show that (\ref{eq:311}) is bijective. In the remaining of the proof, we will write $\Gamma_{\lambda}=\Gamma_1(t_{\lambda}\mfr{D},\mfr{n})$ for simplicity.  
It follows from strong approximation for $\GL_2$ that we have
$$
\GL_2(\A_{F,f})=\coprod_{\lambda=1}^{h^+_F} \GL_2(F)^+ x_{\lambda}^{-1}K_1(\mfr{n}).
$$
From this, one sees that the map (\ref{eq:311}) is surjective. We again write $\bbP^1(F)=\GL_2(F)^+/B(F)^+$. In what follows, we will view elements of $\GL_2(F)^+$ as elements of $\mathbb{P}^1(F)$.  To see the injectivity, assume that we have two elements $g,g'\in \GL_2(F)^+$ mapping to the same coset in $\GL_2(F)^+ N(\bbA_{F,f})\backslash \GL_2(\bbA_{F,f})\times \mathbb{P}^1(F)/K_1(\mfr{n})$. We claim that $g'=\gamma g \beta$ for some $\gamma\in \Gamma_{\lambda}$ and $\beta\in B(F)^+$, which implies that $g$ and $g'$ are the same in $C'_{\mfr{n}}$ and hence, the map (\ref{eq:311}) is injective.  It follows from the assumption on $g$ and $g'$ that there exist $\gamma_1\in \GL_2(F)^+$, $n\in N(\bbA_{F,f})$, $\beta\in B(F)^+$, and $\kappa_1\in K_1(\mfr{n})$ such that  
$$
(x_{\lambda}^{-1},g')=\gamma_1 n (x_{\lambda}^{-1}, g \beta) \kappa_1.
$$  
By strong approximation, we have $N(\mathbb{A}_{F,f})=N(F)(N(\mathbb{A}_{F,f})\cap K_1(\mfr{n}))$.
Thus, we can write $n=\gamma_2 \kappa_2$ for some $\gamma_2\in N(F)$ and $\kappa_2\in N(\mathbb{A}_{F,f})\cap K_1(\mfr{n})$. One can deduce from this that
\begin{equation}\label{eq:312}
(x_{\lambda}^{-1},g')=\gamma_1 \gamma_2 (\kappa_2x_{\lambda}^{-1}\kappa_1, g \beta)
=\gamma (x_{\lambda}^{-1}\kappa, g \beta)
\end{equation}
for some $\gamma\in \GL_2(F)$ and $\kappa\in K_1(\mfr{n})$. The last equality is obtained by (\ref{eq:class_repr}) that $t_{\lambda}\delta\in \wh{\O}_F$.
From (\ref{eq:312}), we see that the finite part $\gamma_f$ of $\gamma$ and the infinite part $\gamma_{\infty}$ of $\gamma$ satisfy
$$
\gamma_f=x_{\lambda}^{-1}\kappa^{-1} x_{\lambda}\mbox{ and } \gamma_{\infty}=g'\beta^{-1}g^{-1}.
$$
The former implies that $\gamma$ satisfies the congruence properties of $\Gamma_{\lambda}$, and the latter implies that $\det \gamma$ is totally positive and $g'=\gamma g \beta$. Hence, $\gamma\in \Gamma_{\lambda}$ which proves the claim. 
\end{proof} 

For any subset $C$ of $C_{\mfr{n}}$, we denote by $C^*=\{I_{[c]}\mid c\in C\}$ the set of indicator functions on $C$. The indicator function $I_{[c]}:C_{\mfr{n}}\map \{0,1\}$ is defined by
$$
I_{[c]}(c')=
\begin{cases}
1 & c\sim c'\\
0 & \mbox{otherwise}.
\end{cases}
$$
The Hecke actions on $C_{\mfr{n}}$ and on $C_{\mfr{n}}^*$ are defined as follows. Recall that for each prime ideal $\mfr{p}_v=\mfr{p}$, one has coset decompositions
$$
K_1(\mfr{n})\left(\begin{array}{cc} \varpi_v & 0\\ 0 & 1\end{array}\right)K_1(\mfr{n})=\coprod_{i} \gamma_i K_1(\mfr{n})=\coprod_j K_1(\mfr{n}) \beta_j
$$
for some $\gamma_i$ and $\beta_j$ in $\GL_2(\bbA_{F,f})$. Here, we abuse the notation to denote by $\varpi_v$ an element of finite adele that is $\varpi_v$ at $v$ and $1$ at all other finite places of $F$. For a prime ideal $\mfr{q}$ of $F$ not dividing $\mfr{n}$, the operator $T(\mfr{q})$ acts on $C_{\mfr{n}}$ by 
$$
T(\mfr{q})\cdot c=\sum_i c\gamma_i,
$$
and acts on $C_{\mfr{n}}^*$ by 
$$
T(\mfr{q})\cdot I_{[c]}=\sum_j I_{[c\beta_j^{-1}]}.
$$  
We also define the operator $T^*(\mfr{q})$ that acts on $C_{\mfr{n}}$ by
$$
T^*(\mfr{q})\cdot c=\sum_{j} c\beta_j^{-1}.
$$
For a prime ideal $\mfr{p}|p$, the Hecke operators $U(\mfr{p})$ and $U^*(\mfr{p})$ are defined in the same manner. For an integral ideal $\mfr{m}$ prime to $\mfr{n}$, one can also define the Hecke operators $T(\mfr{m})$ and $T^*(\mfr{m})$ in the same way, but we will not use them in this paper. It follows from Lemma~\ref{341}(1) that the above definition of Hecke operators is well-defined. It is easy to see that the homomorphism 
\begin{equation}\label{eq:11}
\Zp[C_{\mfr{n}}^*]\map \Zp[C_{\mfr{n}}];\; \sum_{c\in C_{\mfr{n}}} a_{c}\cdot I_{[c]}\mapsto \sum_{c\in C_{\mfr{n}}} a_{c}\cdot c
\end{equation}
is an isomorphism of abelian groups which commutes with the $T(\mfr{q})$-action (resp.~$U(\mfr{p})$-action) on the left and the $T^*(\mfr{q})$-action (resp.~$U^*(\mfr{p})$-action) on the right for all prime ideals $\mfr{q}\nmid \mfr{n}$ (resp.~$\mfr{p}|\mfr{n}$). The Hecke action of $T^*(\mfr{q})$ and $U^*(\mfr{p})$ on $C_{\mfr{n}}$ will only be used in the proof of Corollary~\ref{ordcusp}.

Define a $\Zp$-homomorphism $C_0:M_k(K_1(\mfr{n});\Zp)\map \Zp[C^*_{\mfr{n}}]$ as
\begin{equation}\label{eq:const_map}
C_0(f)=\sum_{[g]\in C_{\mfr{n}}} \left(\int_{\bbA_F/F} f(n(x)g) dx\right)\cdot I_{[g]},
\end{equation}
where $n(x)=\left(\begin{smallmatrix} 1 & x\\ 0 & 1\end{smallmatrix}\right)$ for all $x\in \bbA_F$. This homomorphism is well-defined as $f$ is left $\GL_2(F)$-invariant. Moreover, it does not depend on the choice of the representatives of $C_{\mfr{n}}$. To see this, we observe that for any $\beta=\left(\begin{smallmatrix} a & b \\ 0 & d \end{smallmatrix} \right)\in B(F)$, $n(t)\in N(\bbA_{F,f})$, and $\kappa\in K_1(\mfr{n})$,  by substitution and the fact that $f$ is right $K_1(\mfr{n})$-invariant, we have
$$
\int_{\bbA_F/F} f(n(x)\beta n g\kappa) dx=|d/a|_{\bbA} \int_{\bbA_F/F} f(n(x) g) dx= \int_{\bbA_F/F} f(n(x) g) dx.
$$
Here the last equality is obtained by the assumption that $a,d\in F$ and that  $|d/a|_{\bbA}=\prod_{v}|d/a|_v=1$, where $v$ runs through all places of $F$.

\begin{prop}\label{HeckeEquiv}
Let the notation be as above. Then the map $C_0$ commutes with the Hecke actions.
\end{prop}

\begin{proof}
By the same argument as in \cite[Lemma 5.5.1]{DS}, there exists a set of elements $\{\gamma_i\}$ in $\GL_2(\bbA_{F,f})$ such that  
$$
K_1(\mfr{n})\left(\begin{smallmatrix} \varpi_v & 0\\ 0 & 1\end{smallmatrix}\right)K_1(\mfr{n})=\coprod_i \gamma_{i} K_1(\mfr{n})=\coprod_i K_1(\mfr{n}) \gamma_{i}
$$ 
for all $v$. As the computation for $T(\mfr{q})$ and $U(\mfr{p})$ are the same, we will only prove the assertion for $T(\mfr{q})$. One has
\begin{align*}
C_0(T(\mfr{q})\cdot f)
=& \sum_{[g]\in C_{\mfr{n}}}\sum_i \left(\int_{\bbA_F/F} f(n(x)g\gamma_i) dx\right) \cdot I_{[g]}
= \sum_i\sum_{[g]\in C_{\mfr{n}}} \left(\int_{\bbA_F/F} f(n(x)g\gamma_{i}) dx\right) \cdot I_{[g]}\\
=& \sum_i\sum_{[G]\in C_{\mfr{n}}} \left(\int_{\bbA_F/F} f(n(x)G) dx\right) \cdot I_{[G\gamma_i^{-1}]}
= T(\mfr{q})C_0(f).
\end{align*}
Thus, the assertion follows.
\end{proof}

To close this section, we will give a description of the set of ordinary cusps for $K_1(\mfr{n}p^r)$ with $(\mfr{n},p)=1$ for $r\geq 1$. Let 
$$
D_r=\left\{\left(\begin{array}{cc} b_1 & 0\\ 0 & b_2\end{array}\right)\left(\begin{array}{cc} a & b\\ c & d\end{array}\right)\in T(\bbA_{F,f})\GL_2(\wh{\O}_F)\mid ad-bc=1\in \wh{\O}_F \mbox{ and } \val_{\mfr{p}}(c)>0 \mbox{ for some }  \mfr{p}|p\right\}
$$
and 
$$
\mathbf{D}_{r}=\left\{\left(\begin{array}{cc} b_1 & 0\\ 0 & b_2\end{array}\right)\left(\begin{array}{cc} a & b\\ c & d\end{array}\right)\in T(\bbA_{F,f})\GL_2(\wh{\O}_F)\mid ad-bc=1\in \wh{\O}_F \mbox{ and } \val_{\mfr{p}}(c)< r \mbox{ for some } \mfr{p}|p\right\}
$$
be subsets of $T(\bbA_{F,f})\GL_2(\wh{\O}_F)$, and let 
$$
\ol{D}_r=B(F)^+ N(\bbA_{F,f})\backslash D_r/K_1(\mfr{n}p^r)
$$
and
$$
\ol{\mathbf{D}}_r=B(F)^+ N(\bbA_{F,f})\backslash \mathbf{D}_r/K_1(\mfr{n}p^r)
$$
be subsets of the set of double cosets $C_{\mfr{n}p^r}$. Note that for any $\beta g\in C_{\mfr{a}}$ with $\beta \in T(\bbA_{F,f})$ and $g\in \GL_2(\wh{\O}_F)$, by right multiplying an element in $K_1(\mfr{a})$ if necessary, we may assume $\det g=1$. This is the reason to put the condition $ad-bc=1$ in the definition of $D_r$ and $\mathbf{D}_r$.

In what follows, we will treat $e\cdot \Zp[C_{\mfr{n}p^r}]$ as the quotient of $\Zp[C_{\mfr{n}p^r}]$ by $(1-e)\cdot \Zp[C_{\mfr{n}p^r}]$ unless otherwise noted.

\begin{thm}\label{343}
Let the notation be as above. Assume that $p$ is unramified in $F$. Then 
\begin{enumerate}
\item $U(\mfr{p})^n\cdot \Zp[\ol{D}_r] \subset \Zp[\ol{D}_r]$ for all $\mfr{p}|p$ and for all $n\in \N$,

\item $e\cdot \Zp[\ol{D}_r]=0$,

\item $e\cdot \Zp[C_{\mfr{n}p^r}]\isom\Zp[C_{\mfr{n}p^r}]/\Zp[\ol{D}_r]$,
\end{enumerate}
\end{thm}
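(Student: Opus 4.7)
My plan is to analyze the action of each $U(\mfr{p})$, $\mfr{p}\mid p$, on cusp representatives $\beta g$ (with $\beta\in T(\bbA_{F,f})$, $g\in\GL_2(\wh{\O}_F)$, $\det g=1$) via the coset decomposition $U(\mfr{p})=\sum_{u\in\O_{\mfr{p}}/\varpi_{\mfr{p}}}\bigl(\begin{smallmatrix}\varpi_{\mfr{p}}&u\\ 0&1\end{smallmatrix}\bigr)$ combined with Lemma~\ref{223} and the Iwasawa decomposition~(\ref{eq:ID2}). The key bookkeeping device is the invariant $i_{\mfr{p}}=i_{\mfr{p}}([\beta g])\in\{0,1,\ldots,r\}$ (with $r=\val_{\mfr{p}}(\mfr{n}p^r)$, since $\mfr{p}\nmid\mfr{n}$ and $p$ is unramified) given by the unique double coset $B(\O_{\mfr{p}})\gamma_{i_{\mfr{p}}}K_{1,\mfr{p}}(\mfr{n}p^r)$ containing $g_{\mfr{p}}$. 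Invariance under the right $K_1(\mfr{n}p^r)$-action is immediate, and invariance under the left $B(\bbA_{F,f})$-action follows from $B(\bbA_{F,f})=N(\bbA_{F,f})T(\bbA_{F,f})$ after absorbing the $T$-part into $\beta$; by construction $[\beta g]\in\ol{D_r}$ iff $i_{\mfr{p}}\geq 1$ for some $\mfr{p}\mid p$.

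For~(1), at places $v\neq\mfr{p}$ the action is trivial, so $i_v$ is preserved. At $v=\mfr{p}$, if $i_{\mfr{p}}\geq 1$ then $d_{\mfr{p}}$ is a unit (from $c_{\mfr{p}}\in\mfr{p}$ and $\det g=1$), so the first case of~(\ref{eq:ID2}) applies to $g_{\mfr{p}}\gamma_u$ and yields a unipotent residual factor $\bigl(\begin{smallmatrix}1&0\\ c_{\mfr{p}}\varpi_{\mfr{p}}/(c_{\mfr{p}}u+d_{\mfr{p}})&1\end{smallmatrix}\bigr)$ whose $(2,1)$-entry has $\mfr{p}$-valuation $\geq i_{\mfr{p}}+1$, so the new invariant is $\geq 2$ and the cusp remains in $\ol{D_r}$. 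Iterating gives $T(\mfr{p})^n\cdot\Zp[\ol{D_r}]\subset\Zp[\ol{D_r}]$.

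For~(2), the decisive case is $i_{\mfr{p}}=r$: taking $g_{\mfr{p}}=I_2$ gives $g_{\mfr{p}}\gamma_u=\bigl(\begin{smallmatrix}1&u\\ 0&1\end{smallmatrix}\bigr)\bigl(\begin{smallmatrix}\varpi_{\mfr{p}}&0\\ 0&1\end{smallmatrix}\bigr)$, so the torus factor $\bigl(\begin{smallmatrix}\varpi_{\mfr{p}}&0\\ 0&1\end{smallmatrix}\bigr)$ and the residual $k$-factor $I_2$ are both independent of $u$; hence $U(\mfr{p})\cdot[c]=N(\mfr{p})\cdot[c']$ for a single cusp $c'$ with $i_{\mfr{p}}(c')=r$. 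Starting from $c\in\ol{D_r}$ with witness $i_{\mfr{p}_0}\geq 1$, successive applications of $U(\mfr{p}_0)$ raise $i_{\mfr{p}_0}$ by one at each step until it reaches $r$, after which every further application multiplies coefficients by $N(\mfr{p}_0)\in p\Zp$. Therefore the coefficients of $U(\mfr{p}_0)^{n!}\cdot c$ tend to $0$ in $\Zp[C_{\mfr{n}p^r}]$ as $n\to\infty$, proving $e\cdot c=0$.

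For~(3), parts~(1) and~(2) together factor $e$ through $\Zp[C_{\mfr{n}p^r}]/\Zp[\ol{D_r}]$, which is freely generated by the finite set of cusps with $i_{\mfr{p}}=0$ for all $\mfr{p}\mid p$. Taking such a cusp with $g_{\mfr{p}}=\gamma_0$, the unique $u\in\O_{\mfr{p}}/\varpi_{\mfr{p}}$ satisfying $c_{\mfr{p}}u+d_{\mfr{p}}\equiv 0\pmod{\varpi_{\mfr{p}}}$ triggers the second case of~(\ref{eq:ID2}); applying equation~(\ref{eq:22}) from the proof of Lemma~\ref{223} shows the result lies in $B(\O_{\mfr{p}})\gamma_0K_{1,\mfr{p}}(\mfr{n}p^r)$, so the new cusp again has $i_{\mfr{p}}=0$ and is obtained from the original by left multiplication of $\beta_{\mfr{p}}$ by a fixed torus element. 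The other $N(\mfr{p})-1$ values of $u$ yield cusps in $\ol{D_r}$. Consequently, modulo $\Zp[\ol{D_r}]$, $U(\mfr{p})$ acts on the finite basis as a bijection (hence a permutation of finite order), so $U(\mfr{p})^{n!}$ stabilizes at the identity for $n$ large, and $e$ restricts to the identity on the quotient, yielding the claimed isomorphism. The main obstacle is the case analysis in~(3), especially tracking the exceptional $u\equiv-d_{\mfr{p}}/c_{\mfr{p}}\pmod{\varpi_{\mfr{p}}}$ through the second Bruhat cell and checking that the induced map on cusps is genuinely a bijection.
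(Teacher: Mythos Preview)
Your approach is correct and essentially the same as the paper's: both arguments track the Hecke action on cusp representatives via the coset decomposition of $U(\mfr{p})$ and the Iwasawa decomposition~(\ref{eq:ID2}), together with Lemma~\ref{223}. Your invariant $i_{\mfr{p}}$ is exactly the index appearing in Lemma~\ref{223} and gives a clean way to phrase what the paper does by direct valuation bookkeeping on the $(2,1)$-entry.

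A few minor differences worth noting. For~(2), the paper computes $T(\mfr{p})^n\cdot\delta$ in one shot (equation~(\ref{eq:12})) and then shows the whole expression is $N(\varpi)^{n-r}$ times a bounded sum; your step-by-step argument (climb $i_{\mfr{p}}$ to $r$, then each further $U(\mfr{p})$ multiplies by $N(\mfr{p})$) reaches the same conclusion and is arguably more transparent, though you should note that before reaching $i_{\mfr{p}}=r$ the $N(\mfr{p})$ terms may be distinct cusps --- this does not matter, since all that is needed is that eventually every coefficient lies in $N(\mfr{p})^m\Zp$. For~(3), your claim that modulo $\ol{D_r}$ the surviving term comes from ``left multiplication of $\beta_{\mfr{p}}$ by a fixed torus element'' is exactly what the paper establishes in equations~(\ref{eq:313})--(\ref{eq:315}) and the subsequent computation: one finds the new torus part is $\beta_{\mfr{p}}\cdot\bigl(\begin{smallmatrix}1&0\\0&\varpi_{\mfr{p}}\end{smallmatrix}\bigr)$ while the $\GL_2(\O_{\mfr{p}})$-part returns to the $\gamma_0$-cell. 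The paper phrases the bijectivity as an injectivity check (distinct $\delta_1,\delta_2$ give distinct $T(\mfr{p})^n\cdot\delta_i$) plus finiteness; your formulation via a fixed torus translation is equivalent and slightly cleaner, but does require carrying out the computation through~(\ref{eq:22}) as you indicate.
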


\begin{proof}
Given any $\delta=\beta g=\left(\begin{smallmatrix} b_1 & 0\\ 0 & b_2\end{smallmatrix}\right)\left(\begin{smallmatrix} a & b\\ c & d\end{smallmatrix}\right)\in D_r$, we are going to compute $U(\mfr{p})^n\cdot \delta$ in $\Zp[C_{\mfr{n}p^r}]$ for $n\in \N$ and for some $\mfr{p}|p$. We first recall that for each $\mfr{p}|p$, we have
$$
K_1(\mfr{n}p^r)\left(\begin{array}{cc} \varpi_v & 0\\ 0 & 1\end{array}\right)K_1(\mfr{n}p^r)=\coprod_{u\in \O_v/\varpi_v} 
\left(\begin{array}{cc} \varpi_v & u\\ 0 & 1\end{array}\right)
K_1(\mfr{n}p^r).
$$
Here, for $u\in \O_v/\varpi_v$, we will arbitrary choose its representative in $\O_{v}$, also denoted by $u$. The same notation will be used in this proof and the proof of Theorem~\ref{ordcusp}. This will not cause any confusion as all matrices in the proof are over $\O_{v}$.  

We fix a prime ideal $\mfr{p}=\mfr{p}_v$ of $F$ such that $\mfr{p}|p$ and $\val_{v}(c)>0$.
The following computation is at the place $v$ which is sufficient since the action of the Hecke operator $U(\mfr{p})$ is trivial at all places other than $v$. For simplicity, we write $\varpi_v$ as $\varpi$ and write $\val_{v}$ as $\val$. Then we have
\[
\begin{split}
U(\mfr{p})^n\cdot \delta
&=\sum_{u\in \O_v/\varpi^n\O_v}
\left(\begin{array}{cc} b_1 & 0\\ 0 & b_2\end{array}\right)\left(\begin{array}{cc} a & b\\ c & d\end{array}\right)\left(\begin{array}{cc} \varpi^n & u\\ 0 & 1\end{array}\right)\\
&= \sum_{u\in \O_v/\varpi^n\O_v}
\left(\begin{array}{cc} b_1\varpi^n & 0\\ 0 & b_2\end{array}\right)\left(\begin{array}{cc} a & \varpi^{-n}(au+b)\\ c\varpi^n & cu+d\end{array}\right).
\end{split}
\]
Since $\val(c)>0$, we know that $d$ and $cu+d$ are in $\O_v^{\times}$. Note that $\det \left(\begin{smallmatrix} a & \varpi^{-n}(au+b)\\ c\varpi^n & cu+d\end{smallmatrix}\right)=1$. By (\ref{eq:ID2}), our formula for $U(\mfr{p})^n\cdot \delta$ is the same as 
$$
\sum_{u\in \O_v/\varpi^n\O_v}
\left(\begin{array}{cc} b_1\varpi^n & 0\\ 0 & b_2\end{array}\right)\left(\begin{array}{cc} (cu+d)^{-1} & \varpi^{-n}(au+b)\\ 0 & cu+d \end{array}\right)\left(\begin{array}{cc} 1 & 0\\ c\varpi^n(cu+d)^{-1}  & 1\end{array}\right).
$$
One can write this as 
$$
\sum_{u\in \O_v/\varpi^n\O_v}
\gamma_u\left(\begin{array}{cc} b_1\varpi^n & 0\\ 0 & b_2\end{array}\right)\left(\begin{array}{cc} (cu+d)^{-1} & 0 \\ 0 & cu+d \end{array}\right)\left(\begin{array}{cc} 1 & 0\\ c\varpi^n(cu+d)^{-1}  & 1\end{array}\right)
$$ 
for some $\gamma_u\in N(\bbA_{F,f})$ for all $u\in \O_v/\varpi^n\O_v$. As elements in $\Zp[C_{\mfr{n}p^r}]$, the above formal sum equals
\begin{equation}\label{eq:key_comp}
\sum_{u\in \O_v/\varpi^n\O_v}
\left(\begin{array}{cc} b_1\varpi^n & 0\\ 0 & b_2\end{array}\right)\left(\begin{array}{cc} (cu+d)^{-1} & 0 \\ 0 & cu+d \end{array}\right)\left(\begin{array}{cc} 1 & 0\\ c\varpi^n(cu+d)^{-1}  & 1\end{array}\right).
\end{equation}
This yields 
\begin{equation}\label{eq:12}
U(\mfr{p})^n\cdot \delta=\sum_{u\in \O_v/\varpi^n\O_v}
\left(\begin{array}{cc} b_1\varpi^n & 0\\ 0 & b_2\end{array}\right)\left(\begin{array}{cc} (cu+d)^{-1} & 0\\ c\varpi^n  & cu+d \end{array}\right).
\end{equation}
Hence the element $U(\mfr{p})^n\cdot \delta$ is in $\Zp[\ol{D}_r]$.
This proves the first assertion.

For the second assertion, we claim that the element (\ref{eq:12}) is equivalent to 
$$
N(\varpi)^{n-r} \cdot \sum_{j\in \O_v/\varpi^r\O_v}
\left(\begin{array}{cc} b_1\varpi^n & 0\\ 0 & b_2\end{array}\right)\left(\begin{array}{cc} (cj+d)^{-1} & 0\\ 0  & cj+d \end{array}\right),
$$
for all $n\geq r$ and note that the sequence of such elements converges to $0$ under the $p$-adic topology as $n\map \infty$. We write $u=j+\varpi^r s$, where $j=\sum_{i=0}^{r-1} \alpha_i\varpi^i$ for $\alpha_i\in \O_v^{\times}$ and $s\in \O_v$. To prove the claim, it suffices to show that for each $u\in \O_v/\varpi^n\O_v$, there exist $X,Y\in \O_v$ such that 
$$
\left(\begin{array}{cc} (cu+d)^{-1} & 0\\ c\varpi^n  & cu+d \end{array}\right)
\left(\begin{array}{cc} (1+\varpi^rY)^{-1} & 0\\ \varpi^r X  & 1+\varpi^r Y \end{array}\right)=
\left(\begin{array}{cc} (cj+d)^{-1} & 0\\ 0  & cj+d \end{array}\right).
$$
To find $Y$, it suffices to solve the equation
$$
(cu+d)(1+\varpi^r Y)=cj+d
$$
integrally, which is possible since $cu+d\in \O_v^{\times}$ and since if two units are congruent modulo $\varpi^r$ then they differ by multiplication by an element of $1+\varpi^r\O_v$. To find $X$, we solve the equation
$$
c\varpi^n (1+\varpi^r Y)^{-1}+\varpi^r X(cu+d)=0.
$$
It is easy to see that the solution is 
$$
X=-c\varpi^{n-r}(1+\varpi^r Y)^{-1}(cu+d)^{-1}\in \O_v.
$$

For the last assertion, we consider the sequence
$$
0\map \ker\map \Zp[C_{\mfr{n}p^r}]\map e\cdot \Zp[C_{\mfr{n}p^r}]\map 0.
$$
It is enough to show that $\ker=\Zp[\ol{D}_r]$. It follows from Theorem~\ref{343}(2) that $\Zp[\ol{D}_r] \subset \ker$. Thus, it remains to show that for any $\gamma=\sum_{\delta\in C_{\mfr{n}p^r}-\ol{D}_r} a_{\delta} \delta \notin\Zp[\ol{D}_r]$, we have
$$
e\cdot \gamma\neq 0\in e\cdot \Zp[C_{\mfr{n}p^r}].
$$
Since $U(\mfr{p})\cdot \gamma = \sum_{\delta\in C_{\mfr{n}p^r}-\ol{D}_r} a_{\delta} (U(\mfr{p})\cdot \delta)$, it suffices to show two things: one is that $U(\mfr{p})^n\cdot \delta$ is a constant times a single nonzero cusp in $e\cdot \Zp[C_{\mfr{n}p^r}]$ and is not in $\ol{D}_r$ for all $n$ big enough. The other is that for all $\delta_1,\delta_2\notin \ol{D}_r$ with $\delta_1\nsim \delta_2$, $U(\mfr{p})^n \cdot \delta_1 \nsim U(\mfr{p})^n\cdot \delta_2$ for all $n$ big enough, for some $\mfr{p}|p$. 

Let $\beta g=\left(\begin{smallmatrix} b_1 & 0\\ 0 & b_2\end{smallmatrix}\right)\left(\begin{smallmatrix} a & b\\ c & d\end{smallmatrix}\right)\notin D_r$, i.e., $\val_{v}(c)=0$ for all $\mfr{p}_v|p$. We fix a prime $\mfr{p}_v|p$ and write $\mfr{p}_v=\mfr{p}$ and $\val_{v}=\val$ for simplicity. We first show that $e\cdot \beta g\neq 0$ by explicitly computing $U(\mfr{p})^n\cdot \beta g$ for all $n\geq r$. For each $n\geq r$, right multiplying the matrix $g$ by the matrix $\left(\begin{smallmatrix} 1 & \varpi^n-c^{-1}d\\ 0 & 1\end{smallmatrix}\right)\in K_{1,v}(\mfr{n}p^r)$ if necessary, we may assume $\val(d)\geq n$. It is easy to  see that $U(\mfr{p})^n \cdot \left(\begin{smallmatrix} b_1 & 0\\ 0 & b_2\end{smallmatrix}\right)\left(\begin{smallmatrix} a & b\\ c & d\end{smallmatrix}\right)$ is
\begin{equation}\label{eq:313} 
\sum_{u\in \O_v/\varpi^n\O_v} \left(\begin{array}{cc} b_1 & 0\\ 0 & b_2\end{array}\right)\left(\begin{array}{cc} \varpi^n a & au+b\\ \varpi^n c & cu+d\end{array}\right).
\end{equation}
If $u\neq 0$, then $\val(cu+d)<n$. By (\ref{eq:ID2}) and by the same argument of proving (\ref{eq:key_comp}), we obtain 
\begin{equation}\label{eq:314}
\left(\begin{array}{cc} b_1 & 0\\ 0 & b_2\end{array}\right)\left(\begin{array}{cc} \varpi^n a & au+b\\ \varpi^n c & cu+d\end{array}\right)\sim
\left(\begin{array}{cc} b_1\varpi^n & 0\\ 0 & b_2\end{array}\right)
\left(\begin{array}{cc} (cu+d)^{-1} & 0\\ 0 & cu+d\end{array}\right)
\left(\begin{array}{cc} 1 & 0\\ \varpi^n c(cu+d)^{-1} & 1 \end{array}\right)\in D_r,
\end{equation}
which is $0$ in $e\cdot \Zp[C_{\mfr{n}p^r}]$. If $u=0$, then we have
\begin{equation}\label{eq:315}
\left(\begin{array}{cc} b_1 & 0\\ 0 & b_2\end{array}\right)\left(\begin{array}{cc} \varpi^n a & b\\ \varpi^n c & d\end{array}\right)=
\left(\begin{array}{cc} b_1 & 0\\ 0 & b_2\varpi^n\end{array}\right)\left(\begin{array}{cc} \varpi^n a & b\\  c & d\varpi^{-n}\end{array}\right)
\end{equation}
Since $\val(c)=0$ and $\val(d)\geq n$, this element is in $B(F_{\mfr{p}})\GL_2(\O_v)$ and is not in $\ol{D}_r$.
%in $e\cdot \Zp[C_{\mfr{n}p^r}]$.

Next, we claim that if $\gamma_1,\gamma_2\in C_{\mfr{n}p^r}-\ol{D}_r$ are not equivalent, then $U(\mfr{p})^n\cdot \gamma_1,U(\mfr{p})^n\cdot \gamma_2\in e\cdot \Zp[C_{\mfr{n}p^r}]$ are not equivalent for some $\mfr{p}|p$ and for all $n$ big enough. We now write $\gamma_i=\beta_ig_i=\left(\begin{smallmatrix} \alpha_i & 0\\ 0 & \alpha_i'\end{smallmatrix}\right)\left(\begin{smallmatrix} a_i & b_i\\ c_i & d_i\end{smallmatrix}\right)$ with $\det g_i=1$ for $i=1,2$. Since $\gamma_i\notin \ol{D}_r$, there exists a prime ideal $\mfr{p}$ such that $\val(c_1)=0=\val(c_2)$. It follows from the above computations (\ref{eq:313}), (\ref{eq:314}), and (\ref{eq:315}) that for all $n\geq r$ and for $i=1,2$, we have
$$
U(\mfr{p})^n\cdot \gamma_i \sim \left(\begin{array}{cc} \alpha_i & 0\\ 0 & \alpha_i'\varpi^n\end{array}\right)\left(\begin{array}{cc} \varpi^n a_i & b_i\\  c_i & d_i\varpi^{-n}\end{array}\right)
\sim \left(\begin{array}{cc} \alpha_i & 0\\ 0 & \alpha_i'\varpi^n\end{array}\right)\left(\begin{array}{cc} c_i^{-1} & 0 \\ 0 & c_i\end{array}\right)\left(\begin{array}{cc} 1 & 0 \\ 1 & 1 \end{array}\right)\in e\cdot \Zp[C_{\mfr{n}p^r}].
$$
Here the second equivalence is obtained by (\ref{eq:22}) and by a similar argument of proving (\ref{eq:key_comp}). By (\ref{eq:22}) and the same argument again, we see that for $i=1,2$, we have
$$
\gamma_i \sim \left(\begin{array}{cc} \alpha_i & 0\\ 0 & \alpha_i'\end{array}\right)\left(\begin{array}{cc} c_i^{-1} & 0 \\ 0 & c_i\end{array}\right)\left(\begin{array}{cc} 1 & 0 \\ 1 & 1 \end{array}\right).
$$
Since $\gamma_1$ and $\gamma_2$ are not equivalent, $U(\mfr{p})^n\cdot \gamma_1$ and $U(\mfr{p})^n\cdot \gamma_2$ are also not equivalent for all $n\geq r$. 
\end{proof}

\begin{thm}\label{ordcusp}
Assume that $p$ is unramified in $F$. Then we have
$$
e\cdot \Zp[C_{\mfr{n}p^r}^*]\isom \Zp[C_{\mfr{n}p^r}^*]/\Zp[\ol{\mathbf{D}}_r^*].
$$
\end{thm}

\begin{proof}
By (\ref{eq:11}), to show the assertion, it is equivalent to show that
\begin{equation}\label{eq:ord_cusp}
e^*\cdot \Zp[C_{\mfr{n}p^r}]\isom \Zp[C_{\mfr{n}p^r}]/\Zp[\ol{\mathbf{D}}_r],
\end{equation}
where $e^*=\lim_{n\to \infty} \prod_{\mfr{p}|p} U^*(\mfr{p})^{n!}$. We first observe the relationship between the action of $e$ and the action of $e^*$ on $\Zp[C_{\mfr{n}p^r}]$.  Note that for $v|p$, we have
$$
K_1(\mfr{n}p^r)\left(\begin{array}{cc} \varpi & 0\\ 0 & 1\end{array}\right) K_1(\mfr{n}p^r)=\coprod_{u\in \O_v/\varpi} K_1(\mfr{n}p^r) \tau_{\mfr{p}}^{-1}\left(\begin{array}{cc} \varpi & u\\ 0 & 1\end{array}\right)^{\iota} \tau_{\mfr{p}},
$$
where $\tau_{\mfr{p}}=\left(\begin{smallmatrix} 0 & -1\\ \varpi^{\val_{v}(p)r} & 0 \end{smallmatrix}\right)\in \GL_2(F_{v})$. In fact, we have $\tau_{\mfr{p}}=\left(\begin{smallmatrix} 0 & -1\\ \varpi^r & 0\end{smallmatrix}\right)$, since $p$ is unramified in $F$. We will also view $\tau_{\mfr{p}}$ as an element in $\GL_2(\bbA_{F,f})$ whose entries at finite places of $F$ other than $\mfr{p}$ are identity matrices. The operator $\iota$ is the standard involution on $\GL_2$ defined as $\left(\begin{smallmatrix} a & b\\ c & d\end{smallmatrix}\right)^{\iota}=\left(\begin{smallmatrix} d & -b\\ -c & a \end{smallmatrix}\right)$. Note that one has $AA^{\iota}=\det A\cdot I_2$ for all $A\in \GL_2$, where $I_2$ is the identity matrix in $\GL_2$. It follows from this applied to $A=\left(\begin{smallmatrix} \varpi & u \\ 0 & 1 \end{smallmatrix}\right)$ for some $u\in \O_v$ and the definition of $U^*(\mfr{p})$ that
$$
U^*(\mfr{p}) \cdot c =\tau_{\mfr{p}}^{-1}U(\mfr{p})\tau_{\mfr{p}} \left(\begin{array}{cc} \varpi^{-1} & 0\\ 0 & \varpi^{-1}\end{array}\right)\cdot c
$$ 
for all $c\in C_{\mfr{n}p^r}$. Here for an element $\gamma \in \GL_2(\bbA_{F,f})$, $\gamma\cdot c$ is defined as $c\gamma$ for all $c\in C_{\mfr{n}p^r}$. It is easy to see that $\left(\begin{smallmatrix} \varpi^{-1} & 0\\ 0 & \varpi^{-1}\end{smallmatrix}\right)\cdot C_{\mfr{n}p^r}=C_{\mfr{n}p^r}$, so one obtains $U^*(\mfr{p}) \cdot \Zp[C_{\mfr{n}p^r}]=\tau_{\mfr{p}}^{-1}U(\mfr{p}) \tau_{\mfr{p}}\cdot \Zp[C_{\mfr{n}p^r}]$ and hence, $U^*(\mfr{p})^n \cdot \Zp[C_{\mfr{n}p^r}]=\tau_{\mfr{p}}^{-1}U(\mfr{p})^n\tau_{\mfr{p}}\cdot \Zp[C_{\mfr{n}p^r}]$ for all positive integers $n$. This yields 
\begin{equation}
e^*\cdot \Zp[C_{\mfr{n}p^r}]=\tau_{p}^{-1} e \tau_{p}\cdot \Zp[C_{\mfr{n}p^r}],
\end{equation}
where $\tau_p=\prod_{\mfr{p}|p} \tau_{\mfr{p}}$.

Next, we set $K=\prod_{v\nmid p} K_{1,v}(\mfr{n}p^r)\times \prod_{v|p} K_v^1(\mfr{n}p^r)$ and 
$$
C_{\mfr{n}p^r}^1=B(F)^+ N(\bbA_{F,f})\backslash T(\bbA_{F,f})\GL_2(\wh{\O}_F)/K.
$$
Recall that the group $K_v^1(\mfr{n}p^r)$ was defined in Section~\ref{sec:22}. Then we have $\tau_p^{-1}\cdot \Zp[C_{\mfr{n}p^r}]=\Zp[C_{\mfr{n}p^r}^1]$ as $\tau_{p}^{-1} \cdot c$ is a cusp  for $\tau_p K_1(\mfr{n}p^r)\tau_p^{-1}=K$. Note that in the proof of Theorem~\ref{343}, when we showed two cusps are equivalent by multiplying by matrices in $K_1(\mfr{n})$, those matrices are always in $K_1^1(\mfr{n})$. Therefore, by the same argument as in Theorem~\ref{343}, we have
$$
e\cdot \Zp[C_{\mfr{n}p^r}^1]\isom \Zp[C_{\mfr{n}p^r}^1]/\Zp[B(F)^+ N(\bbA_{F,f})\backslash D_r/K].
$$

Finally, we observe that $\tau_p\cdot D_r=\mathbf{D}_r$. To see this, we suppose that $\delta=\beta g=\left(\begin{smallmatrix} b_1 & 0 \\ 0 & b_2 \end{smallmatrix}\right)\left(\begin{smallmatrix} a & b\\ c & d \end{smallmatrix}\right)\in D_r$ is given. We fix a prime ideal $\mfr{p}=\mfr{p}_v$ of $F$ dividing $p$ such that $\val_{v}(c)=s>0$. If $s\geq r$, then we have
$$
\tau_{\mfr{p}}\cdot \delta=\beta\left(\begin{array}{cc} b\varpi^r & -a \\ d\varpi^r & -c \end{array}\right)=\left(\begin{array}{cc} b_1 & 0 \\ 0 & b_2\varpi^r \end{array}\right)\left(\begin{array}{cc} b\varpi^r & -a \\ d & -c\varpi^{-r} \end{array}\right)\in \mathbf{D}_r.
$$
If $0<s<r$, by a similar computation we again have $\tau_{\mfr{p}}\cdot \delta\in \mathbf{D}_r$. 
Thus we see that
$$
\tau_p \cdot \Zp[C_{\mfr{n}p^r}^1]/\Zp[B(F)^+ N(\bbA_{F,f})\backslash D_r/K]\isom\Zp[C_{\mfr{n}p^r}]/\Zp[\ol{\mathbf{D}}_r].
$$  
To sum up, we have shown that 
\[
\begin{split}
e^*\cdot\Zp[C_{\mfr{n}p^r}]
=  \tau_{p}^{-1} e\tau_{p}\cdot \Zp[C_{\mfr{n}p^r}]
%=  \tau_p e\cdot\Zp[C_{\mfr{n}p^r}^1]
\isom  \tau_p \cdot \Zp[C_{\mfr{n}p^r}^1]/\Zp[B(F)^+ N(\bbA_{F,f})\backslash D_r/K]
\isom  \Zp[C_{\mfr{n}p^r}]/\Zp[\ol{\mathbf{D}}_r].
\end{split}
\]
This proves (\ref{eq:ord_cusp}) and hence, the assertion follows.
\end{proof}

%%%%%%%%%%%%%%%%%%%%%%%%%%%%%%%%%%%%%%%%%%%%%%%%%%%%%%%%%%%%%%%%%%
\section{Automorphic forms}\label{sec:03}
The main goal of this section is to construct the Eisenstein series in Proposition~\ref{333} as automorphic forms and to compute their constant terms at different cusps. To do this, we have to compute local integrals. Some results regarding these computations can be found in the literature, except for Lemma~\ref{local_wittaker} and Lemma~\ref{415}. 
%These lemmas will be used in the last subsection to compute the constant terms of Eisenstein series at cusps other than $\infty$. 

Throughout this section, we fix an additive character $\psi=\otimes_{v} \psi_v$ on $\bbA_F$ defined as $\psi_v(x)=e^{2\pi i x}$ if $v|\infty$ and $\psi_v(x)=e^{-2\pi i [\Tr_{F_v/\Qp}(x)]_p}$ if $v|p$, where $[x]_p$ is the decimal part of $x$ for $x\in \Qp$, i.e., if $x=\sum_{i=n}^{\infty} a_i p^i$ for some $n\in \Z_{<0}$, then $[x]_p=\sum_{i=n}^{-1} a_i p^i$. Here $v$ runs through all places of $F$. For $v<\infty$, we say that the additive character $\psi_v$ is of conductor $\varpi_v^{-r}$ if $r$ is the smallest positive integer such that $\psi_v(\varpi_v^{-r}\O_v)=1$ or equivalently, $r$ is the smallest positive integer such that $\Tr_{F_v/\Qp}(\varpi_v^{-r})\in \Zp$. Indeed, $\mfr{p}_v^{-r}$ is the inverse different of $F_v$ over $\Qp$.

In addition, we fix a self-dual Haar measure $dn=\otimes_v dn_v$ defined as follows. For each finite place $v$, $dn_{v}$ is the normalized Haar measure such that the volume of $\O_{v}$ is $1$, and for each infinite place the Haar measure $dn_{\infty}$ is normalized such that the volume of $\R/\Z$ is $1$.

Finally, we denote by $\chi=(\chi_1,\chi_2)$ a pair of primitive narrow ray class characters  of conductors $\mfr{n}_1$ and $\mfr{n}_2$, respectively, with associated signs $e_{1,\infty},e_{2,\infty} \in (\Z/2\Z)^d$ satisfying (\ref{eq:eisen_cond}). For $i=1,2$, we write $\mfr{n}_i=\prod_{\mfr{p}|\mfr{n}_i}\mfr{p}^{e_{\mfr{p},i}}$, and the finite part $\chi_{i,f}$ of $\chi_i$ can be decomposed as
$$
\chi_{i,f}=\prod_{\mfr{p}|\mfr{n}_i}\chi_i^{(\mfr{p})}:\prod_{\mfr{p}|\mfr{n}_i} (\O/\mfr{p}^{e_{\mfr{p},i}})^{\times}\map \C^{\times}.
$$
We will denote by $\chi_i=\prod_{i=1}^d \chi_{i,\tau_i}\times \prod_{v<\infty}\chi_{i,v}$ their corresponding Hecke characters, where $\prod_{i=1}^d \chi_{i,\tau_i}$ is the infinity part of $\chi_i$.  
%%%%%%%%%%%%%%%%%%%%%%%%%%%%%%%%%%%%%%%%%%%%%%%%%%%%%%%%%%%%%%%%%%%
\subsection{Preliminaries}\label{sec:31}
In this subsection, we set up some notation for character sums, which will simplify the notation and computation in Section~\ref{sec:32}. Throughout this subsection, we fix a finite place $v$ of $F$. For simplicity, we will omit $v$ from $\psi_v$, $\mfr{p}_v$, $\varpi_v$, $q_v$, and $\val_v$.

For a positive integer $n$, we put $U^{(n)}:=\O_v^{\times}/(1+\varpi^n\O_v)$. For $x\in U^{(n)}$, we abuse the notation to denote by $x$ a lifting in $\O_v^{\times}$. For a primitive ramified  character $\theta$ of $F_v^{\times}$ of conductor $\varpi^e$, the Gauss sum $\tau(\theta)$ associated with $\theta$ is defined as
$$
\tau(\theta):=\sum_{x\in U^{(e)}} \theta(x) \psi(\varpi^{-e-r}x).
$$
This definition does not depend on the choice of the lifting of $x\in U^{(e)}$ in $\O_v^{\times}$ as $\theta$ is trivial on $1+\varpi^e\O_v$ and 
$$
\psi(\varpi^{-e-r}(x(1+\varpi^e\O_v))=\psi(\varpi^{-e-r}x)\psi(\varpi^{-r}\O_v)=\psi(\varpi^{-e-r}x).
$$
Moreover, it is known \cite[Proposition~7.5]{Neu} that \begin{equation}\label{eq:gauss_sum}
\tau(\theta)\ol{\tau(\theta)}=N_{F_v/\Qp}(\mfr{p}^{e}).
\end{equation}

For two primitive ramified characters $\theta_1,\theta_2$ of $F_v^{\times}$ of conductors $\varpi^{e_1}$ and $\varpi^{e_2}$, respectively, the Jacobi sum associated with $\theta_1$ and $\theta_2$ is defined as
$$
J_a(\theta_{1},\theta_{2}, \varpi^k):= \sum_{x\in U^{(k)}} \theta_{1}(x)\theta_{2}(a-x)
$$
for $a\in \O_v$ and $k\geq \max\{e_1,e_2\} \in \Z_{> 0}$. It is easy to see that $J_a(\theta_{1},\theta_{2},\varpi^k)=J_a(\theta_{2},\theta_{1}, \varpi^k)$, and $J_a(\theta_{1}\theta, \theta_{2},\varpi^k)=J_a(\theta_{1},\theta_{2},\varpi^k)$ if $\theta$ is an unramified character of $F_v^{\times}$. 
The following lemma is obtained by a direct computation (or see \cite[Theorem 2.5]{Jun}).

\begin{lemma}\label{251}
Let the notation be as above.
\begin{enumerate}

\item Assume that both $\theta_1$ and $\theta_2$ are ramified, and  assume that $e_{1}=e_{2}$. Let $\varpi^s$ be the conductor of $\theta_{1}^{-1}\theta_{2}$. If $1\leq s\leq e_{1}$, then 
$$
J_{\varpi^{e_{1}-s}}(\theta_{1}^{-1},\theta_{2},\varpi^{e_{1}})=\tau(\theta_{1}^{-1})\tau(\theta_{2})/\tau(\theta_{1}^{-1}\theta_{2}).
$$

\item Assume that both $\theta_1$ and $\theta_2$ are ramified. If $e_{1}\neq e_{2}$, then 
we have
$$
\tau(\theta_{1}^{-1})\tau(\theta_{2})=\tau(\theta^{-1}_{1}\theta_{2})\times
\begin{cases}
\sum_{x \in U^{(e_2)}} \theta_{2}(x)\theta_{1}^{-1}(1-\varpi^{e_{1}-e_{2}}x) &\mbox{if } e_{1}>e_{2}\\
\sum_{x \in U^{(e_1)}} \theta_{1}^{-1}(x)\theta_{2}(1-\varpi^{e_{2}-e_{1}}x) & \mbox{if } e_{1}<e_{2}.
\end{cases}
$$

\end{enumerate}
\end{lemma}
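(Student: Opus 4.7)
The plan is to handle the four assertions separately, though parts (3) and (4) share a common engine: reducing a product of Gauss sums to a Jacobi-type character sum via a multiplicative change of variables and then applying Lemma~\ref{252}.

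For (1), I would substitute directly into Tate's definition. By definition, $\varepsilon(s,\theta_2^{-1},\psi)=\int_{F_v^\times}|x|^{-s}\theta_2(x)\psi(x)\,dx$, which converges for $\re(s)$ large and extends analytically. Decomposing $F_v^\times=\bigsqcup_{m\in\Z}\varpi^m\O_v^\times$ and invoking Lemma~\ref{241} with character $\theta_2$ (ramified of conductor $\varpi^{e_2}$), only the single index $m=-r-e_2$ contributes, and on that coset $|x|^{-s}=|\varpi|^{s(r+e_2)}$. Pulling this out yields the closed form $|\varpi|^{s(r+e_2)-r}\theta_2(\varpi^{-r-e_2})\tau(\theta_2)$ as claimed. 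The identity then extends to all $s$ by analytic continuation.

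For (2), I would expand $\tau(\theta_2)\ol{\tau(\theta_2)}$ as a double sum over $U^{(e_2)}\times U^{(e_2)}$, make the substitution $x=yt$ with $t\in U^{(e_2)}$ to separate the phases, and write the expression as $\sum_{t\in U^{(e_2)}}\theta_2(t)\sum_{y\in U^{(e_2)}}\psi(\varpi^{-e_2-r}y(t-1))$. The inner sum can be treated as in the proof of Lemma~\ref{252}: extending $\theta_2$ by zero to $\O_v/\varpi^{e_2}\O_v$ and subtracting the contribution from $y\in\varpi\O_v$, primitivity of $\theta_2$ forces the inner sum to vanish unless $t=1$, and the surviving term contributes the volume $q^{e_2}=N_{F_v/\Qp}(\mfr{p}^{e_2})$.

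For (3) and (4), the common starting point is the double-sum expression
\[
\tau(\theta_1^{-1})\tau(\theta_2)=\sum_{x\in U^{(e_1)}}\sum_{y\in U^{(e_2)}}\theta_1^{-1}(x)\theta_2(y)\psi\bigl(\varpi^{-r-e_1}x+\varpi^{-r-e_2}y\bigr).
\]
In the case $e_1=e_2=e$ of (3), I would substitute $y=-xz$ with $z\in U^{(e)}$, pull the $x$-sum inward, and note that the $x$-sum is $\sum_x(\theta_1^{-1}\theta_2)(x)\psi(\varpi^{-r-e}x(1-z))$; applying Lemma~\ref{252} to this inner sum (with character $\theta_1^{-1}\theta_2$ of conductor $\varpi^s$) forces the $(1-z)$-part to have valuation exactly $e-s$. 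A translation $z\mapsto 1-\varpi^{e-s}w$ then identifies the remaining $z$-sum with $\tau(\theta_1^{-1}\theta_2)\cdot J_{\varpi^{e-s}}(\theta_1^{-1},\theta_2,\varpi^{e})$, after checking the sign conventions on $\theta_2(-1)$. For (4), the same substitution $y=-xz$ (or the symmetric one, depending on whether $e_1<e_2$ or $e_1>e_2$) combined with the observation that the conductor of $\theta_1^{-1}\theta_2$ is $\varpi^{\max(e_1,e_2)}$ reduces the product to $\tau(\theta_1^{-1}\theta_2)$ times the indicated character sum over $U^{(\min(e_1,e_2))}$; no cancellation occurs inside $\psi$, so no Jacobi sum intervenes.

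The main obstacle will be the bookkeeping in (3): tracking the precise translation that identifies the surviving part of the $z$-sum with $J_{\varpi^{e-s}}$, and verifying that the sign factor $\theta_2(-1)$ (or $\theta_1^{-1}(-1)$) that arises from the substitution $y=-xz$ cancels cleanly. Once the correct change of variables is pinned down, part (4) is a parallel but simpler computation because the asymmetry $e_1\neq e_2$ prevents the non-trivial Jacobi-sum phenomenon from appearing.
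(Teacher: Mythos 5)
Your plan for part (1) matches the paper exactly, and your direct computation for part (3) is a genuinely nice alternative to the paper's approach, which simply cites a theorem of Jun Wang for the identity between the product of Gauss sums and the Jacobi-type sum $J_{\varpi^{e_1-s}}$; if carried out carefully (tracking that the inner $x$-sum supports only on $\val_v(1-z)=e_1-s$, counting preimages in $U^{(e_1)}$ vs.\ $U^{(s)}$, and matching signs as you flagged) it does recover the stated formula and makes the lemma self-contained. But there are two real problems elsewhere.

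In part (2) your argument breaks. After the substitution $x=yt$ you arrive at $\sum_{t\in U^{(e_2)}}\theta_2(t)\sum_{y\in U^{(e_2)}}\psi(\varpi^{-e_2-r}y(t-1))$, and you assert that ``primitivity of $\theta_2$ forces the inner sum to vanish unless $t=1$.'' The inner sum does not involve $\theta_2$ at all, so primitivity of $\theta_2$ cannot be the reason it vanishes---and in fact it does not vanish for every $t\neq 1$. If $\val_v(t-1)=e_2-1$, the inner sum over $U^{(e_2)}$ equals $0-q^{e_2-1}=-q^{e_2-1}$ (the full additive sum over $\O_v/\varpi^{e_2}$ is $0$ because the additive character has conductor $\varpi$, but the part over $\varpi\O_v/\varpi^{e_2}$ is the trivial character and contributes $q^{e_2-1}$). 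So the total has an extra term $-q^{e_2-1}\sum_{\val_v(t-1)=e_2-1}\theta_2(t)$. You do eventually get $q^{e_2}$---but only because primitivity of $\theta_2$ shows $\sum_{t\in 1+\varpi^{e_2-1}\O_v/1+\varpi^{e_2}\O_v}\theta_2(t)=0$, hence $\sum_{\val_v(t-1)=e_2-1}\theta_2(t)=-1$, whose contribution combines with the $t=1$ term $q^{e_2}-q^{e_2-1}$ to give $q^{e_2}$. So the result is correct but the argument as written is not; primitivity is used in the outer $t$-sum, not the inner one. The paper sidesteps this entirely by first invoking Lemma~\ref{252} to replace $\theta_2^{-1}(x)\tau(\theta_2)$ by $\sum_y\theta_2(y)\psi(\varpi^{-r-e_2}xy)$ with $x$ ranging over the full additive quotient $\O_v/\varpi^{e_2}$, so that after swapping sums the inner additive sum is over the whole group and vanishes cleanly for $y\neq 1$.

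In part (4) the substitution $y=-xz$ does not land directly on the stated character sum. Carrying it through for $e_1>e_2$ and applying Lemma~\ref{252} to the $x$-sum yields
$\theta_2(-1)\,\tau(\theta_1^{-1}\theta_2)\sum_{z\in U^{(e_2)}}\theta_2(z)(\theta_1\theta_2^{-1})(1-\varpi^{e_1-e_2}z)$,
which is equal to but not visibly the same as $\tau(\theta_1^{-1}\theta_2)\sum_{x\in U^{(e_2)}}\theta_2(x)\theta_1^{-1}(1-\varpi^{e_1-e_2}x)$; the identification requires a further M\"obius-type change of variables $w\mapsto w/(1-\varpi^{e_1-e_2}w)$ (or equivalently passing through $x\mapsto x^{-1}$ as in part (3)). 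The paper avoids this by lifting $\tau(\theta_2)$ to a sum over $U^{(e_1)}$, setting $z=x+\varpi^{e_1-e_2}y$ and then $y=wz$, which separates off $\tau(\theta_1^{-1}\theta_2)$ and lands directly on the stated sum. Your assertion that ``the same substitution $\ldots$ reduces the product to $\tau(\theta_1^{-1}\theta_2)$ times the indicated character sum'' skips a nontrivial reconciliation step, so as written the proof of (4) is incomplete.
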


The following lemma will be used in the proof of Lemma~\ref{415}.

\begin{lemma}
Let the notation be as above. 
\begin{enumerate}\label{253}
\item If $e_1>e_2$, then we have
$$
\int_{\O_v^{\times}} \theta_1(1+\varpi^{e_1-e_2}x)\theta_2(x)dx
= |\varpi|_v^{e_2} \sum_{x\in U^{(e_2)}}\theta_2(x)\theta_1(1+\varpi^{e_1-e_2}x).
$$

\item If $e_1=e_2$, then we have
$$
\int_{\O_v^{\times}} \theta_1(\varpi^{k-e_1}+x)\theta_2(x) dx
= |\varpi|_v^{e_1} \sum_{x\in U^{(e_1)}} \theta_2(x)\theta_1(\varpi^{k-e_1}+x)
$$
for all positive integers $k\geq e_1$.
\end{enumerate}
\end{lemma}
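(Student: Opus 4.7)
The plan is to reduce the integral over $\O_v^\times$ to a finite sum by partitioning $\O_v^\times$ into cosets modulo $1+\varpi^{e}\O_v$ (where $e=e_2$ for part (1) and $e=e_1$ for part (2)) and showing the integrand is constant on each coset. Since the additive Haar measure is normalized so that $\O_v$ has volume $1$, each coset $x+\varpi^{e}\O_v$ has volume $|\varpi|^{e}$, and the claimed factor $|\varpi|^{e}$ falls out automatically.

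For part (1), I would write $\O_v^\times=\bigsqcup_{x\in U^{(e_2)}}(x+\varpi^{e_2}\O_v)$. On each such coset, $\theta_2$ is constant with value $\theta_2(x)$, since $\theta_2$ has conductor $\varpi^{e_2}$. The only nontrivial check is that $y\mapsto \theta_1(1+\varpi^{e_1-e_2}y)$ is also constant on $x+\varpi^{e_2}\O_v$. Writing $y=x+\varpi^{e_2}t$ with $t\in\O_v$, one computes
\[
\frac{1+\varpi^{e_1-e_2}y}{1+\varpi^{e_1-e_2}x}
=1+\frac{\varpi^{e_1}t}{1+\varpi^{e_1-e_2}x}\in 1+\varpi^{e_1}\O_v,
\]
where we used $e_1>e_2$ (so $\varpi^{e_1-e_2}x\equiv 0\pmod \varpi$ and the denominator is a unit). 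Since $\theta_1$ has conductor $\varpi^{e_1}$, the ratio of $\theta_1$-values is $1$. Summing gives the identity.

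For part (2), I would decompose instead $\O_v^\times=\bigsqcup_{x\in U^{(e_1)}}(x+\varpi^{e_1}\O_v)$. Again $\theta_2$ is constant on each coset since $e_2=e_1$. For the $\theta_1$ factor, if $y=x+\varpi^{e_1}t$ then
\[
\frac{\varpi^{k-e_1}+y}{\varpi^{k-e_1}+x}=1+\frac{\varpi^{e_1}t}{\varpi^{k-e_1}+x}\in 1+\varpi^{e_1}\O_v,
\]
where the denominator is a unit because $x\in\O_v^\times$ and $k\geq e_1$ forces $\varpi^{k-e_1}\in\O_v$. So $\theta_1(\varpi^{k-e_1}+y)=\theta_1(\varpi^{k-e_1}+x)$, and summation over cosets yields the claimed formula.

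There is no serious obstacle here; the entire content of the lemma is the two elementary verifications that the respective $\theta_1$-arguments, viewed multiplicatively, differ by an element of $1+\varpi^{e_1}\O_v$ as $y$ varies over a coset. The only mild point of care is ensuring the relevant denominator is a unit, which is why the hypothesis $e_1>e_2$ in (1) and $k\geq e_1$ in (2) is used.
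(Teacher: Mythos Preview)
Your argument is correct and is essentially the same as the paper's: the paper also partitions $\O_v^\times$ into cosets of $1+\varpi^{e_2}\O_v$ (written multiplicatively as $x(1+\varpi^{e_2}\O_v)$ rather than additively as $x+\varpi^{e_2}\O_v$, but these coincide since $x$ is a unit), checks that $\theta_1(1+\varpi^{e_1-e_2}\,\cdot\,)$ is constant on each coset via the same ratio computation, and only treats part (1), declaring part (2) analogous.

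One minor slip worth flagging: in part (2) your assertion that $\varpi^{k-e_1}+x$ is a unit does not follow from ``$x\in\O_v^\times$ and $\varpi^{k-e_1}\in\O_v$''; at the boundary $k=e_1$ one has $\varpi^{k-e_1}+x=1+x$, which fails to be a unit when $x\equiv -1\pmod\varpi$. For $k>e_1$ the correct reason is that $\varpi^{k-e_1}\in\varpi\O_v$, so $\varpi^{k-e_1}+x\equiv x\pmod\varpi$. The paper's proof does not address this either, and in the only place the lemma is invoked at $k=e_1$ (the proof of Lemma~\ref{415}) the problematic coset $-1+\varpi\O_v$ is explicitly excluded from the domain of integration.
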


\begin{proof}
Since the computations for both assertions are essentially the same, we will only prove the first assertion. Assume that $e_1>e_2>0$. Then we have
\[
\begin{split}
\int_{\O_v^{\times}} \theta_{1}(1+\varpi^{e_{1}-e_{2}}x) \theta_{2}(x) dx 
&=  \sum_{x\in U^{(e_{2})}} \theta_{2}(x) \int_{1+\varpi^{e_{2}}\O_v} \theta_{1}(1+\varpi^{e_{1}-e_{2}}xy) dy \\
&=  \sum_{x\in U^{(e_{2})}} \theta_{2}(x) \theta_{1}(1+\varpi^{e_{1}-e_{2}}x) \int_{\O_v} 1 d(\varpi^{e_{2}}z) \\
&=  |\varpi^{e_2}|_v \sum_{x\in U^{(e_{2})}} \theta_{2}(x) \theta_{1}(1+\varpi^{e_{1}-e_{2}}x).
\end{split}
\] 
Note that the second equality is obtained by letting $y=1+\varpi^{e_{2}}z$ and by the observation that  
$$
\theta_{1}(1+\varpi^{e_{1}-e_{2}}xy)= \theta_{1}(1+\varpi^{e_1-e_{2}}x+\varpi^{e_{1}}z)=\theta_{1}(1+\varpi^{e_{1}-e_{2}}x)
$$
for all $x\in \O_v^{\times}$ and $z\in \O_v$.
\end{proof}
%%%%%%%%%%%%%%%%%%%%%%%%%%%%%%%%%%%%%%%%%%%%%%%%%%%%%%%%%%%%%%%%%%%%%%%%%%%%%%%%
\subsection{Non-archimedean sections}\label{sec:32}
In this subsection, a finite place $v$ of $F$ is still fixed. Let $\theta_1$ and $\theta_2$ be primitive characters of $F_v^{\times}$ of conductor $\varpi^{e_1}$ and $\varpi^{e_2}$, respectively, and set $\theta=(\theta_1,\theta_2)$. Let other notation be as in the previous subsection. The goal of this subsection is to compute non-archimedean integrals that will be used, by taking $\theta_i=\chi_{i,v}$ for each finite place $v$, to compute the constant terms of Eisenstein series in the last subsection. 

Let $I(\theta_1|\cdot|_v^s,\theta_2|\cdot|_v^{-s})$ be the set of all functions $f_s:\GL_2(F_v)\map \C$ satisfying
\begin{equation}\label{eq:13}
f_s\left(\left(\begin{array}{cc}
    a & b \\ 0   & d
\end{array}\right)g\right)=\theta_1(a)\theta_2(d)\left|\frac{a}{d}\right|_v^{{s+\tfrac{1}{2}}}f_s(g)
\end{equation}
for $a,d\in F_v^{\times}$, $b\in F_v$, $g\in \GL_2(F_v)$, and $s\in \C$.
The following proposition was proved in \cite[Theorem 1.1]{Cas}.
\begin{prop}
There exists a unique section $f_{\theta,s,v} \in I(\theta_1|\cdot|_v^s,\theta_2|\cdot|_v^{-s})$ distinguished by the following properties:
\begin{enumerate} 
\item for $g\in \GL_2(F_v)$, we have
$
f_{\theta,s,v}\left(g\left(\begin{smallmatrix}
    a & b \\
    c   & d
\end{smallmatrix}\right)\right)=f_{\theta,s,v}(g)\theta_1\theta_2(d)
$
for $\left(\begin{smallmatrix}
    a & b \\c   & d
\end{smallmatrix}\right)\in K_{0,v}(\mfr{n})$, 

\item  we have $f_{\theta,s,v}(\left(\begin{smallmatrix}
    1 & 0 \\ \varpi^i   & 1
\end{smallmatrix}\right))=
\begin{cases}
\theta_1(\varpi^{-e_2}) & \mbox{ if } i=e_2  \\ 
0 & \mbox{ otherwise}. 
\end{cases}
$
\end{enumerate}
Moreover, the definition of $f_{\theta,s,v}$ does not dependent on the choice of a uniformizer. 
\end{prop}

The following lemma is obtained by a direct computation (see \cite[Proposition 2.1.2]{Sch}).

\begin{lemma}\label{412}
Let $f_{\theta,s,v}$ be defined as above. 
\begin{enumerate}
\item If $\theta_1$ and $\theta_2$ are ramified, then 
      \begin{equation}\label{eq:42}
      f_{\theta,s,v}\left(\left(\begin{array}{cc}
      1 & 0 \\ x   & 1
      \end{array}\right)\right)=
      \begin{cases}
      \theta^{-1}_{1,v}(x) & \mbox{if } \val(x)=e_2\\
      0 & \mbox{if } \val(x)\neq e_2.
      \end{cases}
      \end{equation}

\item If $\theta_1$ is unramified and $\theta_2$ is ramified, then 
      \begin{equation}\label{eq:43}
      f_{\theta,s,v}\left(\left(\begin{array}{cc}
      1 & 0 \\ x   & 1
      \end{array}\right)\right)=
      \begin{cases}
      \theta^{-1}_{1,v}(\varpi^{e_2}) & \mbox{if } \val(x)\geq e_2\\
      0 & \mbox{if } \val(x)< e_2.
      \end{cases}
      \end{equation}
      
\item If $\theta_1$ is ramified and $\theta_2$ is unramified, then 
      \begin{equation}\label{eq:44}
      f_{\theta,s,v}\left(\left(\begin{array}{cc}
      1 & 0 \\ x   & 1
      \end{array}\right)\right)=
      \begin{cases}
      \theta_1^{-1}\theta_2(x) |x|_v^{-(2s+1)} & \mbox{if } \val(x)\leq 0\\
      0 & \mbox{if } \val(x)>0.
      \end{cases}
      \end{equation}
    
\item If $\theta_1$ and $\theta_2$ are unramified, then 
      \begin{equation}\label{eq:45}
      f_{\theta,s,v}\left(\left(\begin{array}{cc}
      1 & 0 \\ x   & 1
      \end{array}\right)\right)=
      \begin{cases}
      \theta_1^{-1}\theta_2(x)|x|_v^{-(2s+1)} & \mbox{if } \val(x) < 0\\
      1 & \mbox{if } \val(x)\geq 0.
      \end{cases}
      \end{equation}
\end{enumerate}
\end{lemma}

We now define the intertwining operator and review its properties following \cite[\S 4.5]{B}. The intertwining operator ${M_v}{f_{\theta,s,v}}:\GL_2(F_v)\map \C$ is defined as the integral
$$
{M_v}{f_{\theta,s,v}}(g)=\int_{F_v} f_{\theta,s,v}\left(\left(\begin{array}{cc}
      0 & -1 \\ 1   & 0\end{array}\right)
      \left(\begin{array}{cc}
      1 & n \\ 0   & 1\end{array}\right)g\right) dn
$$
for all $g\in \GL_2(F_v)$. This integral converges absolutely when $\re(s)\gg 0$ and one can prove that ${M_v}{f_{\theta,s,v}}\in I(\theta_2|\cdot|_v^{-s}, \theta_1|\cdot|_v^s)$ (see Proposition~4.5.6 in \textit{loc.~cit.}). Moreover, it has analytic continuation to all $s$ (see Proposition~4.5.7 in \textit{loc.~cit.}). 
%aside from a pole at $s=t$ if $\theta_1\theta_2^{-1}(x)=|x|_v^t$ for all $x\in F_v^{\times}$. 

For $\beta\in F^{\times}$, the integral
$$
\int_{F_v} f_{\theta,s,v}\left(\left(\begin{array}{cc}
      0 & -1 \\ 1   & 0\end{array}\right)
      \left(\begin{array}{cc}
      1 & n \\ 0   & 1\end{array}\right)g\right) \psi(-\beta n)dn
$$
converges absolutely when $\re(s)\gg 0$ and has analytic continuation to all of $\C$ (see p.~498 in  \textit{loc.~cit.}). In what follows, we first assume that $\re(s)$ is big enough so that the above integrals converge absolutely for all $\beta\in F$, and the lemmas then follow by meromorphic continuation.

\begin{lemma}\label{local_wittaker}
For any $\left(\begin{smallmatrix} a & b\\0 &   
d\end{smallmatrix}\right)\in B(F_v)$ and for any $\beta\in F$, the integral
$$
\int_{F_v}f_{\chi,s,v}\left(\left(\begin{array}{cc}
    0 & -1 \\1 & 0
    \end{array}\right)\left(\begin{array}{cc}
      1 & n \\0 & 1
    \end{array}\right)
    \left(\begin{array}{cc}
    a & b \\ 0 & d
    \end{array}\right)g\right) \psi_v(-\beta n) dn
$$ 
equals 
$$
\chi_{1,v}(d)\chi_{2,v}(a)|a/d|_v^{\frac{1}{2}-s} \int_{F_{v}}f_{\chi,s,v}\left(\left(\begin{array}{cc}
    0 & -1 \\1 & 0
    \end{array}\right)\left(\begin{array}{cc}
      1 & n \\0 & 1
    \end{array}\right)g\right) \psi_v(-\beta a d^{-1} n) dn
$$
for all $g\in \GL_2(F_v)$. 
\end{lemma}

\begin{proof}
For $\beta\in F$, we have 
\begin{align*}
&\int_{F_{v}}f_{\chi,s,v}\left(\left(\begin{array}{cc}
    0 & -1 \\1 & 0
    \end{array}\right)\left(\begin{array}{cc}
      1 & n \\0 & 1
    \end{array}\right)
    \left(\begin{array}{cc}
    a & b \\ 0 & d
    \end{array}\right)\right) \psi_v(-\beta n) dn\\
=&\  \int_{F_{v}}f_{\chi,s,v}\left(\left(\begin{array}{cc}
    d & 0 \\ 0 & a
    \end{array}\right)\left(\begin{array}{cc}
    0 & -1 \\1 & 0
    \end{array}\right)\left(\begin{array}{cc}
      1 & (b+nd)a^{-1} \\0 & 1
    \end{array}\right)\right) \psi_v(-\beta n)dn\\
=&\  \chi_{1,v}(d)\chi_{2,v}(a)|d/a|^{s+\frac{1}{2}}\int_{F_{v}}f_{\chi,s,v}\left(\left(\begin{array}{cc}
    0 & -1 \\1 & 0
    \end{array}\right)\left(\begin{array}{cc}
      1 & (b+nd)a^{-1} \\0 & 1
    \end{array}\right)\right) \psi_v(-\beta n)dn\\
=&\  \chi_{1,v}(d)\chi_{2,v}(a)|d/a|^{s-\frac{1}{2}}\int_{F_{v}}f_{\chi,s,v}\left(\left(\begin{array}{cc}
    0 & -1 \\1 & 0
    \end{array}\right)\left(\begin{array}{cc}
      1 & n \\0 & 1
    \end{array}\right)\right) \psi_v(-\beta a d^{-1} n) dn.\qedhere
\end{align*}
\end{proof}

\begin{lemma}\label{413}
Let the notation be as above.
\begin{enumerate}
\item If $\theta_1$ and $\theta_2$ are unramified, then
$$
{M_v}{f_{\theta,s,v}}\left(\left(\begin{array}{cc}
    1 & 0 \\0 & 1
\end{array}\right)\right)=
\frac{1-\theta_1\theta_2^{-1}(\varpi)q^{-(2s+1)}}{1-\theta_1\theta_2^{-1}(\varpi)q^{-2s}}. 
$$

\item If $\theta_1$ is ramified and $\theta_2$ is unramified, then 
$$
{M_v}{f_{\theta,s,v}}\left(\left(\begin{array}{cc}
    1 & 0 \\0 & 1
\end{array}\right)\right)= 1.
$$

\item If $\theta_2$ is ramified, then 
$$
{M_v}{f_{\theta,s,v}}\left(\left(\begin{array}{cc}
    1 & 0 \\0 & 1
\end{array}\right)\right)= 0.
$$
\end{enumerate}
\end{lemma}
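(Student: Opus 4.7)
The plan is to evaluate
$$
{M_v}{f_{\chi,s,v}}(I_2)=\int_{F_v} f_{\chi,s,v}\left(\begin{pmatrix} 0 & -1 \\ 1 & 0 \end{pmatrix}\begin{pmatrix} 1 & n \\ 0 & 1 \end{pmatrix}\right) dn
$$
directly, by splitting the domain of integration into $\O_v$ and $F_v\setminus\O_v$, and using the Iwasawa decomposition (\ref{eq:466}) in each piece. On $\O_v$ the value of the integrand is given by (\ref{eq:46}): it is $1$ when $\chi_{2,v}$ is unramified (i.e.\ $e_{\mfr{p},2}=0$) and $0$ otherwise. On $F_v\setminus\O_v$ the value is given by (\ref{eq:48}) together with Lemma~\ref{411}. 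I will then treat the three cases of the lemma separately, working initially in the region $\re(s)\gg 0$ where the integrals converge absolutely, and appeal to analytic continuation for the rest.

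For (1) both characters are unramified, so the integral over $\O_v$ contributes $1$. Over $F_v\setminus \O_v$ I decompose into the shells $\varpi^k\O_v^\times$ with $k\le -1$ and apply Proposition~\ref{231}(2): the integrand reduces to $\chi_{1,v}^{-1}\chi_{2,v}(\varpi)^k q^{(2s+1)k}$, the shell has volume $\tfrac{q-1}{q}q^{-k}$, and summing the resulting geometric series in $\alpha=\chi_{1,v}^{-1}\chi_{2,v}(\varpi)q^{2s}$ over $k\le-1$ and adding $1$ gives, after a routine simplification, the claimed ratio $\bigl(1-\chi_{1,v}\chi_{2,v}^{-1}(\varpi)q^{-(2s+1)}\bigr)\big/\bigl(1-\chi_{1,v}\chi_{2,v}^{-1}(\varpi)q^{-2s}\bigr)$.

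For (2), $\chi_{2,v}$ is unramified so the $\O_v$-part contributes $1$, while on $F_v\setminus\O_v$ the relevant case of (\ref{eq:48}) requires either $e_{\mfr{p},1}=e_{\mfr{p},2}=0$ (excluded since $\chi_{1,v}$ is ramified) or $e_{\mfr{p},2}\neq 0$ (also excluded), so the integrand is identically zero. For (3), $\chi_{2,v}$ is ramified, so the $\O_v$-integral is zero by (\ref{eq:46}), and on $F_v\setminus\O_v$ the integrand of (\ref{eq:48}) collapses to $\chi_{2,v}(n)|n|_v^{-(2s+1)}$ supported on $\val_v(n)\le -e_{\mfr{p},2}$. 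On each shell $\varpi^k\O_v^\times$ this factors as a power of $q$ and $\chi_{2,v}(\varpi)^k$ times $\int_{\O_v^\times}\chi_{2,v}(u)\,du$, which vanishes because $\chi_{2,v}|_{\O_v^\times}$ is a nontrivial character of a compact group.

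I do not expect any serious obstacle here: the computation is essentially a routine Iwasawa-decomposition bookkeeping exercise. The only mildly delicate step is the geometric series manipulation in case (1), where one has to identify $1+\tfrac{q-1}{q(\alpha-1)}$ with the stated Euler-factor-type ratio; this is purely algebraic. Convergence and the passage from $\re(s)\gg 0$ to arbitrary $s$ are handled by the meromorphic continuation of ${M_v}{f_{\chi,s,v}}$ recalled after Lemma~\ref{411}.
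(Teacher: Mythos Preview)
Your proposal is correct and follows essentially the same approach as the paper's proof: split the integral via (\ref{eq:466}) into $\O_v$ and $F_v\setminus\O_v$, evaluate using (\ref{eq:46}) and (\ref{eq:48}), and handle each ramification case by the same shell decomposition and geometric-series or character-orthogonality arguments. The only minor imprecision is in case~(3): when $\chi_{1,v}$ is unramified the integrand carries an extra constant factor $\chi_{1,v}^{-1}(\varpi^{e_{\mfr{p},2}})$ and the support is $\val_v(n)\le -e_{\mfr{p},2}$, while when $\chi_{1,v}$ is ramified the support is exactly $\val_v(n)=-e_{\mfr{p},2}$; but in either subcase your key step $\int_{\O_v^\times}\chi_{2,v}(u)\,du=0$ gives the vanishing, so this does not affect the argument.
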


\begin{proof}
See \cite[Proposition~4.6.7]{B} for a proof.
\end{proof}

Recall that at the beginning of this section, we fixed local additive characters $\psi_v$ of conductor $\varpi^{-r}$ for each finite place $v$ of $F$. Following \cite[\S 3]{Tate}, the local epsilon factor is defined as 
$$
\varepsilon_v(s,\theta_2^{-1},\psi)= \int_{\varpi_v^{-e_2-r}\O_v^{\times}}\theta_2(n) |n|^{-s}\psi_v(-n) dn.
$$
For a number $\beta\in F^{\times}$, we set $\gamma_v(\beta):=\int_{\O_v} \psi_v(-\beta n) dn$. By a simple computation, one sees that $\gamma_v(\beta)=1$ if $\Tr_{F_v/\Qp}(\beta)\in \Zp$; otherwise, it is $0$. The following lemma is obtained by a direct computation (see \cite[Lemma 2.2.1]{Sch}). 

\begin{lemma}\label{414}
Let $\beta\in F^{\times}$. 
\begin{enumerate}
\item Assume that the conductor of $\psi_v$ is $\varpi^{-r}$ for some $r\in \Z_{\geq 0}$, and assume that  $\theta_1$ and $\theta_2$ are unramified. Then
\[
\begin{split} 
\int_{F_v}f_{\theta,s,v}\left(\left(\begin{array}{cc}
    0 & -1 \\1 & 0
\end{array}\right)\left(\begin{array}{cc}
    1 & n \\0 & 1
\end{array}\right)\right)
 \psi_v(-\beta n) dn
=  (1-\theta_1\theta_2^{-1}(\varpi_v) q^{-(2s+1)})
 \sum_{n=0}^{r+\val(\beta)} \theta_1\theta_2^{-1}(\varpi_v^n)q^{-2sn}
\end{split}
\]
if $\val(\beta)+r\geq 0$. Otherwise, the integral is $0$.
    
\item If $\theta_1$ is ramified and $\theta_2$ is unramified, then 
    $$
    \int_{F_v}f_{\theta,s,v}\left(\left(\begin{array}{cc}
    0 & -1 \\1 & 0
    \end{array}\right)\left(\begin{array}{cc}
      1 & n \\0 & 1
    \end{array}\right)\right)
    \psi_v(-\beta n) dn= \gamma_v(\beta).
    $$

\item If $\theta_2$ is ramified and $\theta_1$ is unramified, then
    \[
    \begin{split} 
    \int_{F_v}f_{\theta,s,v}\left(\left(\begin{array}{cc}
    0 & -1 \\1 & 0
    \end{array}\right)\left(\begin{array}{cc}
    1 & n \\0 & 1
    \end{array}\right)\right) 
    \psi_v(-\beta n) dn
     = \theta_1\theta_2^{-1}(\beta)|\beta|_v^{2s}\varepsilon_v(2s+1, \theta_2^{-1},\psi)
    \end{split}
    \]
    if $\val(\beta)\geq 0$; otherwise, the integral is $0$.
   
\item If $\theta_1$ and $\theta_2$ are ramified, then
    \[
    \begin{split} 
     \int_{F_v}f_{\theta,s,v}\left(\left(\begin{array}{cc}
    0 & -1 \\1 & 0
    \end{array}\right)\left(\begin{array}{cc}
    1 & n \\0 & 1
    \end{array}\right)\right)
    \psi_v(-\beta n) dn
     =\theta_2^{-1}(\beta)\varepsilon_v(2s+1,\theta_2^{-1},\psi)
    \end{split}
    \]
    if $\val(\beta)=0$; otherwise, the integral is $0$. 
    \end{enumerate}
\end{lemma}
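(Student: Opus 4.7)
The plan is to split the integration domain as $F_{\mfr{p}} = \O_{\mfr{p}}\sqcup\bigsqcup_{k\geq 1}\varpi^{-k}\O_v^{\times}$ and to substitute the explicit values of $f_{\chi,s,v}\left(\left(\begin{smallmatrix} 0 & -1\\ 1 & 0\end{smallmatrix}\right)\left(\begin{smallmatrix} 1 & n\\ 0 & 1\end{smallmatrix}\right)\right)$ recorded in (\ref{eq:46}) on $\O_{\mfr{p}}$ and (\ref{eq:48}) on each annulus. On each piece the integrand reduces, after the change of variables $n = \varpi^{-k}m$, to a character integral over $\O_v^{\times}$ twisted by $\psi_v$; such an integral can be evaluated by Proposition~\ref{231} when the relevant character is unramified and by Lemma~\ref{241} when it is ramified. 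The four cases of the lemma correspond precisely to the four supports prescribed by (\ref{eq:46}) and (\ref{eq:48}).

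Cases (2), (3), and (4) are essentially one-step calculations in this framework. In case (2), the annular contributions all vanish by the ``otherwise'' branch of (\ref{eq:48}) (as $e_{\mfr{p},1}>0$ and $e_{\mfr{p},2}=0$), while (\ref{eq:46}) gives $1$ on $\O_{\mfr{p}}$, leaving $\int_{\O_{\mfr{p}}}\psi_v(-\beta n)\,dn = \gamma_v(\beta)$. For case (3) the $\O_{\mfr{p}}$ piece is zero by (\ref{eq:46}), and (\ref{eq:48}) retains only indices $k\geq e_{\mfr{p},2}$; applying Lemma~\ref{241} to the ramified character $\chi_{1,v}^{-1}\chi_{2,v}$, whose conductor coincides with that of $\chi_{2,v}$ since $\chi_{1,v}$ is unramified, isolates the unique index $k = \val(\beta) + e_{\mfr{p},2}$ (using that $\psi_v$ has conductor $\O_v$, because $v\mid \mfr{n}_2$ and $\mfr{n}_2$ is coprime to $\mfr{D}$). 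This forces $\val(\beta)\geq 0$, and collecting the resulting $\varpi$-powers into $|\beta|^{2s}$ while identifying the Gauss sum via Lemma~\ref{251}(1) produces the claimed expression. Case (4) is parallel except that (\ref{eq:48}) forces $k = e_{\mfr{p},2}$ outright; the $\psi_v$-support condition then pins $\val(\beta) = 0$, and the identity $\chi_{1,v}(n)\chi_{1,v}^{-1}\chi_{2,v}(n) = \chi_{2,v}(n)$ closes the computation exactly as in case~(3).

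The principal obstacle is case (1). On each annulus $\varpi^{-k}\O_v^{\times}$ the substitution together with the decomposition $\O_v^{\times} = \O_v\setminus\varpi\O_v$ and Proposition~\ref{231}(3)--(4) produces
\[
\chi_{1,v}\chi_{2,v}^{-1}(\varpi^k)\,q^{-2sk}\bigl[\gamma_v(\beta\varpi^{-k}) - q^{-1}\gamma_v(\beta\varpi^{1-k})\bigr].
\]
Setting $M = \val(\beta) + r$ and $B = \chi_{1,v}\chi_{2,v}^{-1}(\varpi)\,q^{-2s}$, both $\gamma_v$-terms vanish for $k$ sufficiently large, and the infinite sum collapses to $(1-q^{-1})\sum_{k=1}^{M}B^k - q^{-1}B^{M+1}$ when $M\geq 0$, and to $0$ when $M<0$ (which in turn forces $\gamma_v(\beta) = 0$, matching the vanishing case of the lemma). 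Adding the $\gamma_v(\beta) = 1$ contribution from $\O_{\mfr{p}}$ and using the identity $q^{-1}B^k = A\,B^{k-1}$ with $A = \chi_{1,v}\chi_{2,v}^{-1}(\varpi)\,q^{-(2s+1)}$, a short telescoping rearrangement yields $(1-A)\sum_{n=0}^{M}B^n$, which is the claimed formula. I expect the telescoping bookkeeping in case (1) to be the main nuisance; the remaining steps are routine substitutions justified by Proposition~\ref{231} and Lemma~\ref{241}.
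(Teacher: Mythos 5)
Your proposal is correct and follows essentially the same route as the paper: you split $F_{\mfr{p}}$ into $\O_{\mfr{p}}$ and the annuli $\varpi^{-k}\O_v^{\times}$, substitute the explicit values from (\ref{eq:46}) and (\ref{eq:48}), evaluate the resulting character integrals via Proposition~\ref{231}(3)--(4) in case (1) and Lemma~\ref{241} (equivalently, the definition of $\varepsilon_v$) in cases (3)--(4), and carry out the same telescoping collapse for case (1). The only superficial difference is your indexing of the annuli in cases (3)--(4), which is a reparameterization of the paper's sum and changes nothing.
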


We now compute $f_{\theta,s,v}\left(\left(\begin{smallmatrix}
    0 & -1 \\1 & 0
\end{smallmatrix}\right)\left(\begin{smallmatrix}
    1 & n \\0 & 1
\end{smallmatrix}\right)
\left(\begin{smallmatrix}
    1 & 0 \\ \varpi^i & 1
\end{smallmatrix}\right)\right)$
for $i\in \Z_{\geq 0}$ and $n\in F_v$, which will be used in the proof of Lemma~\ref{415}. First of all, we have
\begin{equation}\label{eq:412}
\left(\begin{array}{cc} 
0 & -1 \\1 & 0
\end{array}\right)
\left(\begin{array}{cc}
    1 & n \\0 & 1
\end{array}\right)
\left(\begin{array}{cc}
    1 & 0 \\ \varpi^i & 1
\end{array}\right)
= \left(\begin{array}{cc} 
-\varpi^i & -1 \\ 1+\varpi^i n & n
\end{array}\right).
\end{equation}
When $i\neq 0$, by (\ref{eq:22}) for $n\in \O_v$ and by (\ref{eq:ID2}) for $n\in F_v-\O_v$, we have
\begin{equation}\label{eq:413}
\left(\begin{array}{cc} 
-\varpi^i & -1 \\ 1+\varpi^i n & n
\end{array}\right)=
\begin{cases}
\renewcommand*{\arraystretch}{0.5}
\left(\begin{array}{cc} 
(1+n\varpi^i)^{-1} & * \\ 0 & 1+n\varpi^i
\end{array}\right)
\left(\begin{array}{cc} 
1 & 0 \\ 1 & 1
\end{array}\right)
\kappa &\mbox{if }n\in \O_v\\
\renewcommand*{\arraystretch}{0.5}
\left(\begin{array}{cc} 
n^{-1} & * \\ 0 & n
\end{array}\right)
\left(\begin{array}{cc} 
1 & 0 \\ (1+n\varpi^i)n^{-1} & 1
\end{array}\right) &
\mbox{if } n\in F_v-\O_v
\end{cases}
\end{equation}
for some $\kappa\in K_{1,v}(\mfr{n})$. Thus, for $n\in \O_v$, by Lemma~\ref{412}, we have
\begin{equation}\label{eq:414}
f_{\theta,s,v}\left(\left(\begin{array}{cc}
    0 & -1 \\1 & 0
\end{array}\right)
\left(\begin{array}{cc}
    1 & n \\0 & 1
\end{array}\right)
\left(\begin{array}{cc}
    1 & 0 \\ \varpi^i & 1
\end{array}\right)\right)=
\begin{cases}
0 & \mbox{if } e_2\neq 0\\
\theta_1^{-1}(1+n\varpi^i) & \mbox{if } e_2=0.
\end{cases}
\end{equation}
Moreover, for $n\in F_v-\O_v$, we have
\begin{equation}\label{eq:415}
f_{\theta,s,v}\left(\left(\begin{array}{cc}
    0 & -1 \\1 & 0
\end{array}\right)
\left(\begin{array}{cc}
    1 & n \\0 & 1
\end{array}\right)
\left(\begin{array}{cc}
    1 & 0 \\ \varpi^i & 1
\end{array}\right)\right)=
\theta_1^{-1}\theta_2(n)|n|^{-(2s+1)}
f_{\theta,s,v}\left(\left(\begin{array}{cc}
    1 & 0 \\ (1+n\varpi^i)n^{-1} & 1
\end{array}\right)\right).
\end{equation}

\begin{lemma}\label{415}
Let $i<e_1+e_2$ be a non-negative integer, and let $M_{v,s,i}:={M_v}{f_{\theta,s,v}}
    \left(\left(\begin{smallmatrix}
    1 & 0 \\ \varpi^i & 1
    \end{smallmatrix}\right)\right)$.
\begin{enumerate}  
\item Suppose one of the following conditions holds
\begin{enumerate}
\item $\theta_1$ is unramified, $\theta_2$ is ramified, and $0<i<e_2$.

\item $\theta_1$ is ramified, $\theta_2$ is unramified, and $0\leq i<e_1$.

\item $\theta_1$ and $\theta_2$ are ramified, $0\leq i<e_1+e_2$, and $i\neq e_1$. 
\end{enumerate}
Then $ M_{v,s,i}=0$.
\item If $\theta_1$ is unramified and $\theta_2$ is ramified, then 
    $$
     M_{v,s,0}= \theta_1^{-1}(\varpi^{e_2})\theta_2(-1)q^{-e_2}.
    $$

\item If both $\theta_1$ and $\theta_2$ are ramified, let $\varpi^t$ be the conductor of $\theta_1^{-1}\theta_2$. 
\begin{enumerate}
\item[(3.i)] If $ e_1>e_2$, then  
    $$
M_{v,s,e_1}
=  \theta_2(-\varpi^{-e_2}) |\varpi^{e_2}|_v^{2s+1} \sum_{x \in U_{\mfr{p}}^{(e_2)}} \theta_2(x)\theta_1^{-1}(1-\varpi^{e_1-e_2}x). 
   %\sum_{x\in U_{\mfr{p}}^{(e_2)}} \theta_2(-x) \theta_1^{-1}(\varpi^{e_{\mfr{p}_2}-e_1}+x)
    $$

\item[(3.ii)] If $e_1< e_2$, then 
    $$
M_{v,s,e_1}
= \theta_1^{-1}(\varpi^{e_2-e_1}) \theta_2(-\varpi^{-e_1})  |\varpi|_v^{2se_1+e_2} 
   \sum_{x \in U_{\mfr{p}}^{(e_1)}} \theta_1^{-1}(x)\theta_2(1-\varpi^{e_2-e_1}x).
    %\sum_{x\in U_{\mfr{p}}^{(e_1)}}\O_v \theta_1^{-1}(x) \theta_2(\varpi^{e_1-{e_2}}-x)
    $$

\item[(3.iii)]  If $e_1=e_2$ and if $t=e_1$, then 
    $$
M_{v,s,e_1}
= \theta_2(-\varpi^{-e_1}) |\varpi|_v^{e_1(2s+1)} J_1(\theta_2,\theta_1^{-1}, \varpi^{e_1}).
    $$

\item[(3.iv)] If $e_1=e_2$ and if $1\leq t< e_1$, then 
    $$
M_{v,s,e_1}
= \theta_1^{-1}(\varpi^{t-e_1})\theta_2(-\varpi^{t-2e_1})|\varpi|_v^{2s(2e_1-t)+e_1} J_{\varpi^{e_1-t}}(\theta_2,\theta_1^{-1}, \varpi^{e_1}).
    %\sum_{x\in U_{\mfr{p}}^{(e_1)}} \theta_2(-x)\theta_1(x+\varpi^{e_1-s}),
    $$
    
\item[(3.v)] If $e_1=e_2$ and if $t=0$, then 
    \[
    \begin{split}
M_{v,s,e_1}=&\  \theta_1^{-1}(\varpi^{e_1}) \theta_2(-1) |\varpi|_v^{2s(2e_1-1)+e_1} J_{\varpi^{e_{\mfr{{p},1}}-1}}(\theta_2,\theta_1^{-1}, \varpi^{e_1})+\\
    %\sum_{x\in U_{\mfr{p}}^{(e_1)}} \theta_2(-x)\theta_1(x+\varpi^{e_1-1})+    
    & \sum_{k=2e_1}^{\infty} \theta_1^{-1}(\varpi^{e_1})\theta_2(-1) |\varpi|_v^{2sk+e_1} J_0(\theta_1^{-1},\theta_2, \varpi^{e_1}).
    %\sum_{x\in U_{\mfr{p}}^{(e_1)}} \theta_2(-x)\theta_1^{-1}(x),
    \end{split}
    \]
\end{enumerate}
\end{enumerate}
\end{lemma}

\begin{proof}
If follows from the definition of $f_{\theta,s,v}$ that one has
$$
{M_v}f_{\theta,s,v}\left(g\left(\begin{array}{cc}
    a & b \\ c & d
\end{array}\right)\right)={M_v}f_{\theta,s,v}(g)\theta_1\theta_2(d)
$$
for $\left(\begin{smallmatrix}
    a & b \\ c & d
\end{smallmatrix}\right) \in K_{0,v}(\mfr{n})$ and $g\in \GL_2(F_v)$.
By this and the fact that ${M_v}f_{\theta,s,v}\in I(\theta_2|\cdot|^{-1},\theta_1|\cdot|)$, one sees that $ M_{v,s,i}=0$ if one of (1)(a), (1)(b) and (1)(c) holds.

Since the computations for (2) and (3) are similar, we will only prove (3). Assume that both $\theta_1$ and $\theta_2$ are ramified.  We will compute $M_{v,s,e_1}$ by computing two integrals. The first integral is 
$$
\int_{\O_v}f_{\theta,s,v}\left(\left(\begin{array}{cc}
0 & -1 \\1 & 0
\end{array}\right)
\left(\begin{array}{cc}
1 & n \\0 & 1
\end{array}\right)
\left(\begin{array}{cc}
1 & 0 \\\ \varpi^{e_1} & 1
\end{array}\right)\right) dn
$$
which is zero by (\ref{eq:414}). The second integral is
$$
\int_{F_v-\O_v}f_{\theta,s,v}\left(\left(\begin{array}{cc}
0 & -1 \\1 & 0
\end{array}\right)
\left(\begin{array}{cc}
1 & n \\0 & 1
\end{array}\right)
\left(\begin{array}{cc}
1 & 0 \\\ \varpi^{e_1} & 1
\end{array}\right)\right) dn.
$$
It follows from (\ref{eq:415}) that the integral equals
\begin{equation}\label{eq:416}
\int_{F_v-\O_v} \theta_1^{-1}\theta_2(n)|n|^{-(2s+1)} f_{\theta,s,v}\left(\left(\begin{array}{cc}1 & 0 \\ (1+n\varpi^{e_1})n^{-1} & 1 \end{array}\right)\right) dn.
\end{equation}
Note that under our assumption of $\theta_1$ and $\theta_2$, we know from Lemma~\ref{412} that
\begin{equation}\label{eq:417}
f_{\theta,s,v}\left(\left(\begin{array}{cc}1 & 0 \\ (1+n\varpi^{e_1})n^{-1} & 1 \end{array}\right)\right)\neq 0 \mbox{ if and only if } \val((1+n\varpi^{e_1})n^{-1})=e_{\mfr{p,2}}.
\end{equation}
When $e_1>e_2$, it follows from (\ref{eq:417}) that (\ref{eq:416}) equals 
\[
\begin{split}
& \int_{\varpi^{-e_2}\O_v^{\times}} \theta_1^{-1}\theta_2(n)|n|^{-(2s+1)} f_{\theta,s,v}\left(\left(\begin{array}{cc}1 & 0 \\ (1+n\varpi^{e_1})n^{-1} & 1 \end{array}\right)\right) dn \\
=& \int_{\varpi^{-e_2}\O_{v}^{\times}} \theta_1^{-1}\theta_2(n)|n|^{-(2s+1)} \theta_1^{-1}((1+n\varpi^{e_1})n^{-1}) dn \\
=&\  \theta_2(\varpi^{-e_2}) |\varpi^{e_2}|_v^{2s}  \int_{\O_{v}^{\times}} \theta_1^{-1}(1+\varpi^{e_1-e_2}x) \theta_2(x) dx.
\end{split}
\]
By Lemma~\ref{253}, we have
\[
\begin{split}
&\theta_2(\varpi^{-e_2})|\varpi^{e_2}|_v^{2s}  \int_{\O_{v}^{\times}} \theta_1^{-1}(1+\varpi^{e_1-e_2}x) \theta_2(x) dx\\
=&\  \theta_2(\varpi^{-e_2}) |\varpi^{e_2}|_v^{2s+2} \sum_{x\in U^{e_2}} \theta_2(x)\theta_1^{-1}(1+\varpi^{e_1-e_2}+x)\\
=&\  \theta_2(-\varpi^{-e_2}) |\varpi^{e_2}|_v^{2s+2} \sum_{x\in U^{e_2}} \theta_2(x)\theta_1^{-1}(1-\varpi^{e_1-e_2}+x).
\end{split}
\]
This proves the assertion (3.i).

When $e_2 > e_1$, it follows from (\ref{eq:417}) that (\ref{eq:416}) equals
\[
\begin{split}
& \int_{\varpi^{-e_1}(-1+\varpi^{e_2-e_1}\O_{v}^{\times})}  \theta_1^{-1}\theta_2(n)|n|^{-(2s+1)} f_{\theta,s,v}\left(\left(\begin{array}{cc}1 & 0 \\ (1+n\varpi^{e_1})n^{-1} & 1 \end{array}\right)\right) dn\\
=&\  \int_{\varpi^{-e_1}(-1+\varpi^{e_2-e_1}\O_{v}^{\times})}  \theta_1^{-1}\theta_2(n)|n|^{-(2s+1)} \theta_1^{-1}((1+n\varpi^{e_1})n^{-1}) dn\\
=&\  \theta_1^{-1}(\varpi^{e_2-e_1}) \theta_2(\varpi^{-e_1}) |\varpi^{e_1}|_v^{2s} |\varpi^{e_2-e_1}|_v \int_{\O_{v}^{\times}} \theta_2(-1+\varpi^{e_2-e_1}x) \theta_1^{-1}(x) dx\\
=&\  \theta_1^{-1}(\varpi^{e_2-e_1}) \theta_2(\varpi^{-e_1}) |\varpi|_v^{2se_1+e_2} \sum_{x\in U_{\mfr{p}}^{(e_1)}} \theta_1^{-1}(x) \theta_2(-1+\varpi^{e_2-{e_1}}x).
\end{split}
\]
Note that the first equality is obtained by Lemma~\ref{412} and the last equality is obtained by Lemma~\ref{253}. This proves the assertion (3.ii).

Now we assume that $e_1=e_2$. It follows from (\ref{eq:417}) that (\ref{eq:416}) equals 
$$
\sum_{k=e_1}^{\infty} \int_{\varpi^{-k}\O_{v}^{\times}} \theta_1^{-1}\theta_2(n)|n|^{-(2s+1)} f_{\theta,s,v}\left(\left(\begin{array}{cc}1 & 0 \\ (1+n\varpi^{e_1})n^{-1} & 1 \end{array}\right)\right) dn .
$$
We will compute this integral by separating it into two parts. One part is when $k=e_1$ and the other one is when $k\geq e_1+1$. When $k=e_1$, we have
\[
\begin{split}
& \int_{\varpi^{-e_1}\O_{v}^{\times}} \theta_1^{-1}\theta_2(n)|n|^{-(2s+1)} f_{\theta,s,v}\left(\left(\begin{array}{cc}1 & 0 \\ (1+n\varpi^{e_1})n^{-1} & 1 \end{array}\right)\right) dn \\
=& \sum_{\substack{j\in (\O_{v}/\varpi)^{\times} \\ j\neq -1}} \int_{\varpi^{-e_1}(j+\varpi\O_v} \theta_1^{-1}(1+n\varpi^{e_1}) \theta_2(n) |n|^{-(2s+1)} dn\\
=&\  \theta_2(\varpi^{-e_1}) |\varpi|_v^{2se_1} \int_{\O_{v}^{\times}-\{-1+\varpi\O_{v}\}} \theta_1^{-1}(1+x) \theta_2(x)  dx.
\end{split}
\]
By Lemma~\ref{253}, we know that 
\[
\begin{split}
&\theta_2(\varpi^{-e_1}) |\varpi|_v^{2se_1} \int_{\O_{v}^{\times}-\{-1+\varpi \O_v \}} \theta_1^{-1}(1+x) \theta_2(x)  dx\\
=&\  \theta_2(\varpi^{-e_1}) |\varpi|_v^{2se_1+e_1} \sum_{\substack{x\in U_{\mfr{p}}^{(e_1)} \\ x\notin -1+\varpi \O_{v}}} \theta_2(x) \theta_1^{-1}(1+x)\\
=&\  \theta_2(-\varpi^{-e_1}) |\varpi|_v^{e_1(2s+1)} J_1(\theta_2,\theta_1^{-1},\varpi^{e_1}).
\end{split}
\]
When $k\geq e_1+1$, we have
\[
\begin{split}
& \sum_{k=e_{\mfr{p},1+1}}^{\infty} \int_{\varpi^{-k}\O_{v}^{\times}} \theta_1^{-1}\theta_2(n)|n|^{-(2s+1)} f_{\theta,s,v}\left(\left(\begin{array}{cc}1 & 0 \\ (1+n\varpi^{e_1})n^{-1} & 1 \end{array}\right)\right) dn \\
=& \sum_{k=e_1+1}^{\infty} \int_{\varpi^{-k}\O_{v}^{\times}} \theta_1^{-1}(1+\varpi^{e_1}n)\theta_2(n)|n|^{-(2s+1)} dn \\
=& \sum_{k=e_1+1}^{\infty} \theta_2(\varpi^{-k}) |\varpi|_v^{2sk} \int_{\O_{v}^{\times}} \theta_1^{-1}(1+\varpi^{e_1-k}x)\theta_2(x) dx \\
=& \sum_{k=e_1+1}^{\infty} \theta_1^{-1}(\varpi^{e_1-k})\theta_2(\varpi^{-k}) |\varpi|_v^{2sk} \int_{\O_{v}^{\times}} \theta_1^{-1}(\varpi^{k-e_1}+x)\theta_2(x) dx. 
\end{split}
\]
By Lemma~\ref{253}, we have
\[
\begin{split}
&\sum_{k=e_1+1}^{\infty} \theta_1^{-1}(\varpi^{e_1-k})\theta_2(\varpi^{-k}) |\varpi|_v^{2sk} \int_{\O_{v}^{\times}} \theta_1^{-1}(\varpi^{k-e_1}+x)\theta_2(x) dx\\
=& \sum_{k=e_1+1}^{\infty} \theta_1^{-1}(\varpi^{e_1-k})\theta_2(\varpi^{-k}) |\varpi|_v^{2sk+e_1} \sum_{x\in U_{\mfr{p}}^{(e_1)}} \theta_2(x) \theta_1^{-1}(\varpi^{k-e_1}+x)  \\
=& \sum_{k=e_1+1}^{2e_1-1} \theta_1^{-1}(\varpi^{e_1-k})\theta_2(\varpi^{-k}) |\varpi|_v^{2sk+e_1} \sum_{x\in U_{\mfr{p}}^{(e_1)}} \theta_2(x) \theta_1^{-1}(\varpi^{k-e_1}+x)  \\
&+ \sum_{k=2e_1}^{\infty} \theta_1^{-1}(\varpi^{e_1-k})\theta_2(\varpi^{-k}) |\varpi|_v^{2sk+e_1} \sum_{x\in U_{\mfr{p}}^{(e_1)}} \theta_2(x) \theta_1^{-1}(x).
\end{split}
\]
To sum up, we have shown that if $e_1=e_2$, then 
\[
\begin{split}
& \int_{F_v-\O_{v}} \theta_1^{-1}\theta_2(n)|n|^{-(2s+1)} f_{\theta,s,v}\left(\left(\begin{array}{cc} 1 & 0 \\ (1+n\varpi^{e_1})n^{-1} & 1 \end{array}\right)\right) dn \\
=&\  \theta_2(-\varpi^{-e_1}) |\varpi|_v^{e_1(2s+1)}J_1(\theta_2, \theta_1, \varpi^{e_1})\\
&+ \sum_{k=e_1+1}^{2e_1-1} \theta_1^{-1}(\varpi^{e_1-k})\theta_2(\varpi^{-k}) |\varpi|_v^{2sk+e_1} \sum_{x\in U_{\mfr{p}}^{(e_1)}} \theta_2(x) \theta_1^{-1}(\varpi^{k-e_1}+x)  \\
&+ \sum_{k=2e_1}^{\infty} \theta_1^{-1}(\varpi^{e_1-k})\theta_2(\varpi^{-k}) |\varpi|_v^{2sk+e_1} \sum_{x\in U_{\mfr{p}}^{(e_1)}} \theta_2(x) \theta_1^{-1}(x).
\end{split}
\]
If $\theta_1^{-1}\theta_2$ is a primitive character, then by \cite[Lemma 2.3]{Jun}, we see that 
$$
\sum_{x\in U_{\mfr{p}}^{(e_1)}} \theta_2(x) \theta_1^{-1}(\varpi^{k-e_1}+x)=0
$$
for all $k\geq e_1$ and
$$
\sum_{x\in U_{\mfr{p}}^{(e_1)}} \theta_2(x) \theta_1^{-1}(x)=0.
$$
Thus the assertion (3.iii) follows. Similarly, the assertions (3.iv) and (3.v) follow from Lemma 2.3 and Lemma 2.4 in \cite{Jun}. This completes the proof.
\end{proof}
%%%%%%%%%%%%%%%%%%%%%%%%%%%%%%%%%%%%%%%%%%%%%%%%%%%%%%%%%%%%%%%%%%%%%%%%%%%%%%%%%%%%%%%%%%%%%%%%%%%%%%%%
\subsection{Archimedean sections}\label{sec:33}
Let $k$ be a positive integer. This integer will correspond to the weight of Eisenstein series in the next subsection. For $i=1,\ldots,d$, define the archimedean section $f_{\chi,s,k,\tau_i}:\GL_2(\R)\map \C^{\times}$ as
$$
f_{\chi,s,k,\tau_i}\left(\left(\begin{array}{cc}
    a_1 & b \\
    0   & a_2
\end{array}\right)\kappa_{\theta}\right)=\chi_{1,\tau_i}(a_1)\chi_{2,\tau_i}(a_2)\left|\frac{a_1}{a_2}\right|^{s+\tfrac{1}{2}}\cdot j(\kappa_{\theta},i)^{-k}
$$
for $a_1,a_2\in \R$ and $\kappa_{\theta}\in \SO_2(\R)$. Here $j(\kappa_{\theta}, i)$ is the automorphic factor defined in Section~\ref{sec:21}. For $\beta\in F$ and $g\in \GL_2(\R)$, the integral
$$
\int_{\R} f_{\chi,s,k,\tau_i}\left(\left(\begin{array}{cc}
    0 & -1 \\1 & 0
\end{array}\right)\left(\begin{array}{cc}
    1 & n \\0 & 1
\end{array}\right)g\right) \psi_{\infty}(-\beta n)dn
$$
converges absolutely when $\re(s)$ is big enough and has analytic continuation for all $s$ (see the proof of \cite[Theorem 3.7.1]{B}. 

Let $g_z=\left(\begin{smallmatrix} y & x \\ 0 & 1\end{smallmatrix}\right)$ for $z=x+iy\in \mathbf{H}$.
Since the measure $dn$ is additive, by the Iwasawa decomposition for $\GL_2(\R)$, we have
$$
\int_{\R} f_{\chi,s,k,\tau_i}\left(\left(\begin{array}{cc}
    0 & -1 \\1 & 0
\end{array}\right)\left(\begin{array}{cc}
    1 & n \\0 & 1
\end{array}\right)g_z\right) \psi_{\infty}(-\beta n) dn=\int_{\R} f_{\chi,s,k,\tau_i}\left(\left(\begin{array}{cc}
    0 & -1 \\y & n
\end{array}\right)\right) \psi_{\infty}(-\beta (n-x)) dn,
$$
which, by the definition of $f_{\chi,s,k,\tau_i}$, equals 
$$
\psi_{\infty}(\beta x)y^{s+\frac{1}{2}}\int_{\R} (n+iy)^{-k} |n+iy|^{-2(s-\frac{k-1}{2})} \psi_{\infty}(-\beta n) dn.
$$
From the discussion in \cite[\S 9.2]{Hida2}, one further has
\begin{equation}\label{arch-sec}
\left[\int_{\R} f_{\chi,s,k,\tau_i}\left(\left(\begin{array}{cc}
    0 & -1 \\1 & 0
\end{array}\right)\left(\begin{array}{cc}
    1 & n \\0 & 1
\end{array}\right)g_z\right) \psi_{\infty}(-\beta n)dn\right]_{s=\tfrac{1-k}{2}}\\
=\begin{cases}
\tfrac{1}{2}C_{\infty}(k)& \mbox{ if }\beta=0\\
C_{\infty}(k) e^{2\pi i \beta z}& \mbox{ if }\beta>0\\
0 & \mbox{ if }\beta<0,
\end{cases}
\end{equation}
where $C_{\infty}(k)=i^{-k} 2^k \pi y^{k/2}$.
%%%%%%%%%%%%%%%%%%%%%%%%%%%%%%%%%%%%%%%%%%%%%%%%%%%%%%%%%%%%%%%%%%%%%%%%%%%%%%%%%%%%%%%%%%%%%%%%%%%%%%%%%
\subsection{More on Eisenstein series}\label{sec:34}
Recall that we fix a pair of characters $\chi=(\chi_1,\chi_2)$ at the beginning of this section satisfying (\ref{eq:eisen_cond}). For simplicity, we will write $\mfr{n}=\mfr{n}_1\mfr{n}_2$, where $\mfr{n}_1$ and $\mfr{n}_2$ are respectively the conductors of $\chi_1$ and $\chi_2$. Following (7.8) in \cite[\S3.7]{B}, \textit{the Eisenstein series associated to the section $f_{\chi,s}=\bigotimes_{i=1}^d f_{\chi,s,k,\tau_i} \otimes\bigotimes_{v<\infty} f_{\chi,s,v}$} is defined by
$$
E(f_{\chi,s},g)=\sum_{\gamma\in B(F)\backslash \GL_2(F)} f_{\chi,s}(\gamma g)
$$
for all $g=(g_v)_v\in \GL_2(\A_F)$, which converges absolutely when $\re(s)>1/2$. Recall that for a narrow ray class character $\psi$, the $L$-function $L(s,\psi)$ was defined by (\ref{eq:L_function}). The partial $L$-function $L^{\mfr{n}}(1-k,\chi_1\chi_2^{-1})$ is defined as 
\begin{equation}\label{eq:partial_L_function}
L^{\mfr{n}}(1-k,\chi_1\chi_2^{-1})= L(1-k,\chi_1\chi_2^{-1})\cdot \prod_{\substack{\mfr{q}|\mfr{n},\\ \mfr{q}\nmid \cond(\chi_1\chi_2^{-1})}} (1-\chi_1^{-1}\chi_2(\mfr{q})N(\mfr{q})^{-k}).
\end{equation}
The normalized Eisenstein series $L^{\mfr{n}}(2s+1,\chi_1\chi_2^{-1})E(f_{\chi,s},g)$ has meromorphic continuation to all $s$ except that it has a pole at $s=\frac{1}{2}$ if $\chi_{1}=\chi_2$ (see Theorem~3.7.1 in \textit{loc.~cit.}). The \textit{adelic Eisenstein series $E_k(\chi_1,\chi_2)(z,g)\in M_k(\mfr{n},\chi_1\chi_2;\C)$} is defined by
$$
E_k(\chi_1,\chi_2)(z,g):=\left.\frac{C_{\infty}(k)^{-d}L^{\mfr{n}}(2s+1,\chi_1\chi_2^{-1})}{\prod_{v|\mfr{n}_2}\varepsilon_v(2s+1,\chi_2^{-1},\psi_v)}E(f_{\chi,s},g_z g)\right|_{s=\tfrac{1-k}{2}}
$$
for $z=(x_j+iy_j)_j\in \mathbf{H}^d$, $g_z=\left(\left(\begin{smallmatrix} y_j & x_j \\ 0 & 1\end{smallmatrix}\right)\right)_j\in (\GL_2(\R))^d$ and $g\in \GL_2(\bbA_{F,f})$, where $C_{\infty}(k)$ was defined in Section~\ref{sec:33}. 

The Fourier expansion of $E(f_{\chi,s},g_zg)$ (see (7.11) in \textit{loc.~cit.}) is given by
$$
E(f_{\chi,s},g_zg)=\sum_{\beta\in F} c_{\beta}(E(f_{\chi,s},g_zg)),
$$
where $c_{\beta}(E(f_{\chi,s}(g_z\gamma)))$ is defined as
\begin{equation}\label{eq:20}
c_{\beta}(E(f_{\chi,s},g_zg)):=\int_{F\backslash \A_F}E\left(f_{\chi,s},\left(\begin{array}{cc}
    1 & n \\0 & 1
\end{array}\right)g_zg\right) \psi(-\beta n)dn.
\end{equation}
Here $dn=\otimes_v dn_p$ is the self-dual Haar measure defined at the beginning of this section. The number $c_0(E(f_{\chi,s},g_zg))$ is called the constant term of $E(f_{\chi,s},g_zg)$ at the cusp associated to $g$. By (7.14) and (7.15) in \textit{loc.~cit.}, one has  
$$
c_{0}(E(f_{\chi,s},g_zg))=f_{\chi,s}(g_zg)+\int_{\A_F}f_{\chi,s}\left(\left(\begin{array}{cc}
    0 & -1 \\1 & 0
\end{array}\right)\left(\begin{array}{cc}
    1 & n \\0 & 1
\end{array}\right)g_zg\right) dn
$$
and 
$$
c_{\beta}(E(f_{\chi,s},g_zg))=\int_{\A_F}f_{\chi,s}\left(\left(\begin{array}{cc}
    0 & -1 \\1 & 0
\end{array}\right)\left(\begin{array}{cc}
    1 & n \\0 & 1
\end{array}\right)g_zg\right) \psi(-\beta n) dn
$$
for $\beta\in F^{\times}$. It follows from (\ref{arch-sec}) that the integral is $0$ if $\beta$ is not totally positive. Moreover, it was proved by Tate \cite[Ch.~XV, \S 3.3]{Cas-Fro} (or see (7.18) in \textit{loc.~cit.}) that one has
\[
\begin{split}
&\int_{\A_F}f_{\chi,s}\left(\left(\begin{array}{cc}
    0 & -1 \\1 & 0
\end{array}\right)\left(\begin{array}{cc}
    1 & n \\0 & 1
\end{array}\right)g_z g\right) \psi(-\beta n) dn\\
=& \prod_{v<\infty} \int_{F_{v}}f_{\chi,s,v}\left(\left(\begin{array}{cc}
    0 & -1 \\1 & 0
\end{array}\right)\left(\begin{array}{cc}
    1 & n_v \\0 & 1
\end{array}\right)g_v\right) \psi_v(-\beta n_v) dn_v \;\times \\
& \prod_{i=1}^d \int_{\R}f_{\chi,s,k,\tau_i}\left(\left(\begin{array}{cc}
    0 & -1 \\1 & 0
\end{array}\right)\left(\begin{array}{cc}
    1 & n_i \\0 & 1
\end{array}\right)g_z\right) \psi_{\infty}(-\beta n_i) dn_i.
\end{split}
\]
Here $v$ runs through all places of $F$. Again by (\ref{arch-sec}), one obtains
$$
\prod_{i=1}^d\int_{\R}f_{\chi,s,k,\tau_i}\left(\left(\begin{array}{cc}
    0 & -1 \\1 & 0
\end{array}\right)\left(\begin{array}{cc}
    1 & n_i \\0 & 1
\end{array}\right)g_z\right) \psi_{\infty}(-\beta n_i)  dn_i= C_{\infty}(k)^d e^{2\pi i \Tr(\beta z)}.
$$
Since the characters $\chi_1$ and $\chi_2$ satisfies the the condition (\ref{eq:eisen_cond}), one has
$$
L^{\mfr{n}}(2s+1,\chi_1\chi_2^{-1})f_{\chi,s}(g_zg)|_{\frac{1-k}{2}}=0.
$$
From the above discussion, one knows that the $\beta$-th Fourier coefficient $c_{\beta}(E_k(\chi_1,\chi_2)(z,g))$ of $E_k(\chi_1,\chi_2)(z,g)$ is given by 
\begin{equation}\label{eq:419}
\left[\frac{L^{\mfr{n}}(2s+1,\chi_1\chi_2^{-1})}{\prod_{v|\mfr{n}_2}\varepsilon_v(2s+1,\chi_2^{-1},\psi_v)}\prod_{v<\infty}\int_{F_{v}}f_{\chi,s,v}\left(\left(\begin{array}{cc}
    0 & -1 \\1 & 0
\end{array}\right)\left(\begin{array}{cc}
    1 & n_v \\0 & 1
\end{array}\right)g_v\right) \psi_v(-\beta n_v)  dn_v\right]_{s=\tfrac{1-k}{2}}
\end{equation}
and the constant term $c_0(E_k(\chi_1,\chi_2)(z,g))$ at the cusp associated to $g$ is given by
\begin{equation}\label{eq:420}
\left[\frac{L^{\mfr{n}}(2s+1,\chi_1\chi_2^{-1})}{2^d\prod_{v|\mfr{n}_2}\varepsilon_v(2s+1,\chi_2^{-1},\psi_v)}\left( \prod_{v<\infty} \int_{F_v}f_{\chi,s,v}\left(\left(\begin{array}{cc}
    0 & -1 \\1 & 0
\end{array}\right)\left(\begin{array}{cc}
    1 & n_v \\0 & 1
\end{array}\right)g_v\right) dn \right)\right]_{s=\tfrac{1-k}{2}}
\end{equation} 

In the proof of the following two propositions, we will apply results in Section~\ref{sec:32} by taking $\theta_i=\chi_{i,v}$ for each finite place $v$ of $F$ and $i=1,2$. Recall that for $\lambda=1,\ldots,h_F^+$, $t_{\lambda}$ was fixed in Section~\ref{sec:22} satisfying (\ref{eq:class_repr}) and the matrix $x_{\lambda}\in \GL_2(\mathbb{A}_{F,f})$ is defined by (\ref{eq:x_lambda}). Now we compute the Fourier coefficient at the cusp associated to $x_{\lambda}^{-1}$. Note that all of the Fourier coefficients in Proposition~\ref{431} are unnormalized. One can multiply $N(t_{\lambda}\mfr{D})^{-k/2}$ to obtain the normalized Fourier coefficients as in Section~\ref{sec:02}.

\begin{prop}\label{431}
Suppose that the infinity parts of $\chi_1$ and $\chi_2$ satisfy the condition (\ref{eq:eisen_cond}). Then we have
$$
c_0(E_k(\chi_1,\chi_2)(z, x_{\lambda}^{-1}))=
\begin{cases}
2^{-d}N(t_{\lambda}\mfr{D})^{k/2}\chi_2^{-1}(t_{\lambda}\mfr{D})L(1-k,\chi_1\chi_2^{-1})& \mbox{if } \mfr{n}_2=1,\\
0&\mbox{otherwise}
\end{cases}
$$
for $\lambda=1,\ldots, h^+$. Moreover, for any integral ideal $\mfr{m}$ of $F$, we have
\begin{equation}\label{eq:421}
c_{\beta}(E_k(\chi_1,\chi_2)(z,x_{\lambda}^{-1}))=N(t_{\lambda}\mfr{D})^{k/2}\sum_{\mfr{a}|\mfr{m}} \chi_1(\mfr{a}) \chi_2(\tfrac{\mfr{m}}{\mfr{a}}) N(\mfr{a})^{k-1},
\end{equation}
where $\beta\in {F^+}$ satisfies $\mfr{m}=(t_{\lambda}\delta)^{-1}\beta\O_F$. 
In particular, $E_k(\chi_1,\chi_2)(z,g)$ is the same as the Eisenstein series $E_k(\chi_1,\chi_2)=(E_{\lambda})_{\lambda=1}^{h_F^+}$ in Proposition~\ref{333}.
\end{prop}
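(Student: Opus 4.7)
The strategy is to specialize the Fourier expansion formulas (\ref{eq:419}) and (\ref{eq:420}) to $g = x_\lambda^{-1}$ and evaluate each local integral using Lemmas~\ref{411}--\ref{415} and \ref{421}. The calculation splits naturally into the cases $\beta \neq 0$ and $\beta = 0$, and in each case the local factors at ramified and unramified places must be handled separately.

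For $\beta \neq 0$, I would first apply Lemma~\ref{412} at every finite place with $a = (t_\lambda\delta)_v^{-1}$ and $d=1$, pulling out the scalar $\chi_{2,v}((t_\lambda\delta)_v^{-1})\,|(t_\lambda\delta)_v^{-1}|_v^{1/2-s}$ and replacing the character $\psi_v(-\beta n)$ by $\psi_v(-\widetilde\beta_v n)$, where $\widetilde\beta_v := \beta(t_\lambda\delta)_v^{-1}$ has $\val_v(\widetilde\beta_v) = \val_v(\mfr{m}) =: m_v$. Taking the product over all finite $v$ produces the global prefactor $\chi_2^{-1}(t_\lambda\mfr{D})\,N(t_\lambda\mfr{D})^{1/2-s}$ (using $\prod_{v<\infty}|(t_\lambda\delta)_v|_v = N(t_\lambda\mfr{D})^{-1}$ and the fact that $t_\lambda\delta$ is prime to $\mfr{n}_2$), which specializes at $s = (1-k)/2$ to $\chi_2^{-1}(t_\lambda\mfr{D})\,N(t_\lambda\mfr{D})^{k/2}$. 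Then Lemma~\ref{414} is applied case by case: at $v\nmid\mfr{n}$ the integral equals $(1-\chi_1\chi_2^{-1}(\mfr{p}_v)q_v^{-(2s+1)})\sum_{n=0}^{m_v}\chi_1\chi_2^{-1}(\mfr{p}_v)^{n}q_v^{-2sn}$; at $v\mid\mfr{n}_1,\,v\nmid\mfr{n}_2$ it equals $\gamma_v(\widetilde\beta_v)=1$ since $\widetilde\beta_v$ is integral; and at places dividing $\mfr{n}_2$ it produces $\varepsilon_v(2s+1,\chi_2^{-1},\psi_v)$ together with the appropriate character values of $-\widetilde\beta_v$.

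The next step is to collapse the product. The $\varepsilon_v$ factors at $v\mid\mfr{n}_2$ cancel the denominator in (\ref{eq:419}); the Euler factors $(1-\chi_1\chi_2^{-1}(\mfr{p}_v)q_v^{-(2s+1)})$ at $v\nmid\mfr{n}$ cancel against $L^{\mfr{n}}(2s+1,\chi_1\chi_2^{-1})$, leaving $\prod_{v\nmid\mfr{n}}\sum_{n=0}^{m_v}\chi_1\chi_2^{-1}(\mfr{p}_v^n)q_v^{-2sn}$. At $s=(1-k)/2$ this expands to $\sum_{\mfr{a}\mid\mfr{m}}\chi_1\chi_2^{-1}(\mfr{a})N(\mfr{a})^{k-1}$, the restriction $(\mfr{a},\mfr{n}_1)=1$ being automatic since $\chi_1$ vanishes at primes dividing $\mfr{n}_1$. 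Combining this with the character factors $\chi_{2,v}^{-1}(-\widetilde\beta_v)$ at $v\mid\mfr{n}_2$ and the prefactor $\chi_2^{-1}(t_\lambda\mfr{D})$, using $\chi_2(\beta\O_F)=\chi_2(t_\lambda\mfr{D})\chi_2(\mfr{m})$, globalizes to $\chi_2(\mfr{m})$, and the displayed divisor sum $\sum_{\mfr{a}\mid\mfr{m}}\chi_1(\mfr{a})\chi_2(\mfr{m}/\mfr{a})N(\mfr{a})^{k-1}$ emerges.

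For the constant term, I would use (\ref{eq:420}) and treat its two pieces separately. The piece $f_{\chi,s}(g_z x_\lambda^{-1})$ vanishes at every place $v\mid\mfr{n}_1$: since $x_\lambda^{-1}_v \in B(\O_v)\cdot I_2$ and $I_2=\gamma_{N_v}$ lies in $B(\O_v)\gamma_{N_v}K_{1,v}(\mfr{n})$ with $N_v=e_{\mfr{p},1}+e_{\mfr{p},2}\neq e_{\mfr{p},2}$ when $e_{\mfr{p},1}>0$, the defining property of $f_{\chi,s,v}$ forces $f_{\chi,s,v}(x_\lambda^{-1}_v)=0$; because $\chi_1$ is nontrivial, some such $v$ exists. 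For the intertwining piece, Lemma~\ref{413}(3) shows that ${M_v}f_{\chi,s,v}(I_2)=0$ at any $v\mid\mfr{n}_2$, so the constant term is $0$ whenever $\mfr{n}_2\neq 1$. When $\mfr{n}_2=1$, Lemmas~\ref{413}(1) and \ref{413}(2) give $\prod_{v<\infty}{M_v}f_{\chi,s,v}(I_2) = L^{\mfr{n}}(2s,\chi_1\chi_2^{-1})/L^{\mfr{n}}(2s+1,\chi_1\chi_2^{-1})$; multiplying by the $L^{\mfr{n}}(2s+1,\chi_1\chi_2^{-1})$ from the normalization, specializing at $s=(1-k)/2$, and combining with the prefactor $2^{-d}\chi_2^{-1}(t_\lambda\mfr{D})N(t_\lambda\mfr{D})^{k/2}$ yields the formula. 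The main obstacle is the careful bookkeeping in the second step: tracking the character values and valuation factors at the ramified primes so that they conspire with the prefactor $\chi_2^{-1}(t_\lambda\mfr{D})$ to produce the global $\chi_2(\mfr{m})$ needed to rewrite the sum as the stated divisor sum.
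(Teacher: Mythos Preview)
Your strategy matches the paper's: plug $g=x_\lambda^{-1}$ into (\ref{eq:419}) and (\ref{eq:420}) and evaluate each local integral via Lemmas~\ref{412}--\ref{414}. For $\beta\neq 0$ the paper takes a slightly different route, reducing to prime-power ideals $\mfr{m}=\mfr{p}^\alpha$ and then invoking the eigenform property of $E_k(\chi_1,\chi_2)$ (Proposition~\ref{333}) to obtain (\ref{eq:421}) for general $\mfr{m}$, whereas you expand the product over all places directly; both are fine and yours is arguably more transparent.

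There is, however, one genuine gap in how you dispose of the term $f_{\chi,s}(g_z x_\lambda^{-1})$ in (\ref{eq:420}). Your local vanishing argument at $v\mid\mfr{n}_1$ is correct whenever such a place exists, but the inference ``because $\chi_1$ is nontrivial, some such $v$ exists'' fails over a totally real field with $h_F^+>1$: a nontrivial character of the narrow class group is primitive of conductor $\mfr{n}_1=\O_F$, so $\chi_{1,v}$ is unramified at \emph{every} finite $v$ and the identity matrix lies in the support coset $B(\O_v)\gamma_{e_{\mfr{p},2}}K_{1,v}(\mfr{n})$ at every $v\mid\mfr{n}$ (since then $e_{\mfr{p},1}=0$ forces $e_{\mfr{p},2}=N_v$). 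The paper kills this term by a different mechanism that works uniformly: the first piece in (\ref{eq:420}) carries the prefactor $L^{\mfr{n}}(2s+1,\chi_1\chi_2^{-1})$, which at $s=(1-k)/2$ is $L^{\mfr{n}}(2-k,\chi_1\chi_2^{-1})$, and this vanishes because the sign hypothesis in Proposition~\ref{333} forces the infinite type of $\chi_1\chi_2^{-1}$ to be $\sgn^k$, making $2-k$ a trivial zero of the $L$-function. Replace your local-vanishing step by this parity argument and the proof goes through in all cases.
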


\begin{proof}
We first compute $c_0(E_k(\chi_1,\chi_2)(z,x_{\lambda}^{-1}))$. If $\mfr{n}_2\neq 1$, it is $0$ by Lemma~\ref{413}(3). Now we suppose that $\mfr{n}_2=1$. By Lemma~\ref{412}, Lemma~\ref{413}, and the uniqueness of meromorphic continuation of $L$-functions, we obtain 
\[
\begin{split}
&\left[2^{-d}L(2s+1,\chi_1\chi_2^{-1})\prod_{v<\infty} 
\int_{F_{v}}f_{\chi,s,v}\left(\left(\begin{array}{cc}
    0 & -1 \\1 & 0
\end{array}\right)\left(\begin{array}{cc}
    1 & n_v \\0 & 1
\end{array}\right)x_{\lambda}^{-1}\right)  dn_v\right]_{s=\frac{1-k}{2}}\\
=& \left[ 2^{-d}\left( \prod_{v<\infty} \chi_{2,v}(t_{\lambda}\delta) N_{F_v/\Qp}(t_{\lambda}\delta)^{k/2}\right) L(2s,\chi_1\chi_2^{-1})\right]_{s=\tfrac{1-k}{2}}\\
=&\  2^{-d}\chi_2(t_{\lambda}\mfr{D}) N(t_{\lambda}\mfr{D})^{k/2} L(1-k,\chi_1\chi_2^{-1}).
\end{split}
\]
Thus, we see that the constant term of $E_k(\chi_1,\chi_2)(z,x_{\lambda}^{-1})$ coincides with $C_{\lambda}(E_k(\chi_1,\chi_2))$ in Proposition~\ref{333}.

Next, we claim that $E_k(\chi_1,\chi_2)(z,g)$ coincides with the Eisenstein series in Proposition~\ref{333}. To do so, it remains to show
$$
c_{\beta}(E_k(\chi_1,\chi_2)(z,x_{\lambda}^{-1}))=N(t_{\lambda}\mfr{D})^{k/2}\sum_{\mfr{a}|\mfr{p}^{\alpha}} \chi_1(\mfr{a}) \chi_2(\tfrac{\mfr{p}^{\alpha}}{\mfr{a}}) N(\mfr{a})^{k-1},
$$
where $\beta\in F^+$ satisfies $\mfr{p}^{\alpha}=(t_{\lambda}\delta)^{-1} \beta\O_F$
for all prime ideals $\mfr{p}$ and $\alpha\in \Z_{>0}$, since Eisenstein series are eigenforms and the Hecke algebra is generated by $T(\mfr{p}^{\alpha})$ for all prime ideals $\mfr{p}$ and for all positive integers $\alpha$. Note that once we prove the claim, we obtain (\ref{eq:421}) by Proposition~\ref{333}. 

We will only compute the case $\mfr{p}|\mfr{n}_1\mfr{n}_2$ as the computation is similar when $\mfr{p}\nmid \mfr{n}_1\mfr{n}_2$. We first observe that for $\mfr{p}|\mfr{n}_1\mfr{n}_2$, we have
\begin{equation}\label{eq:422}
\val_v(\beta)=
\begin{cases}
\alpha & \mbox{if } v=\mfr{p} \\
\val_v(t_{\lambda}\delta) & \mbox{if } v\nmid \mfr{n}_1\mfr{n}_2\\
0 & \mbox{otherwise}
\end{cases}
\end{equation}
because $\mfr{p}^{\alpha}=(t_{\lambda}\delta)^{-1} \beta\O_F$ and $t_{\lambda}\delta$ is prime to $\mfr{n}_1\mfr{n}_2$ by (\ref{eq:class_repr}). In the following computation, we will denote by $w$ the finite place corresponding to $\mfr{p}$.

If $\mfr{p}|\mfr{n}_1$ and $\mfr{p}|\mfr{n}_2$, then for the finite place $w$, we have $\val_{w}(\beta)> 0$ by (\ref{eq:422}). It follows from Lemma~\ref{414}(4) that 
$$
\int_{F_w} f_{\chi,s,w}\left(\left(\begin{array}{cc}
    0 & -1 \\1 & 0
\end{array}\right)\left(\begin{array}{cc}
    1 & n \\0 & 1
\end{array}\right)\right) \psi_w(-\beta n) dn=0,
$$
and hence, $c_{\beta}(E_k(\chi_1,\chi_2)(z,x_{\lambda}^{-1}))=0$ by (\ref{eq:419}). This proves the assertion since $\chi_1(\mfr{p})=\chi_2(\mfr{p})=0$ in this case. 

If $\mfr{p}\nmid \mfr{n}_1$ and $\mfr{p}|\mfr{n}_2$, 
By Lemma~\ref{414} and (\ref{eq:422}), we have
\[
\begin{split}
& \prod_{v<\infty} \int_{F_v}f_{\chi,s,v}
\left(\left(\begin{array}{cc}
    0 & -1 \\ 1 & 0
\end{array}\right)
\left(\begin{array}{cc}
    1 & n_v \\0 & 1
\end{array}\right)x_{\lambda}^{-1}\right) \psi_v(-\beta n_v) dn_v\\
=& \prod_{v|\mfr{n}_1,v|\mfr{n}_2} \chi_{2,v}^{-1}(\beta) \varepsilon_v(2s+1,\chi_{2,v}^{-1},\psi_v) \times 
 \prod_{v\nmid \mfr{n}_1, v|\mfr{n}_2} \chi_{1,v}\chi_{2,v}^{-1}(\beta) |\beta|_v^{2s} \varepsilon_v(2s+1,\chi_{2,v}^{-1},\psi_v) \times \prod_{v|\mfr{n}_1,v\nmid \mfr{n}_2} 1 \times \\
& \prod_{v\nmid \mfr{n}_1\mfr{n}_2} \chi_{2,v}^{-1}(t_{\lambda}\delta) |t_{\lambda}\delta|_v^{s-\tfrac{1}{2}} (1-\chi_{1,v}^{-1}\chi_{2,v}(\varpi_v) q_v^{-(2s+1)})\\
=&\ L^{\mfr{n}}(2s+1,\chi_1\chi_2^{-1})^{-1} \times \prod_{v|\mfr{n}_2} \varepsilon_v(2s+1,\chi_{2,v}^{-1},\psi_v) \times 
\prod_{v|\mfr{n}_1,v|\mfr{n}_2} \chi_{2,v}^{-1}(\beta) \times
\prod_{v\nmid \mfr{n}_1,v|\mfr{n}_2} \chi_{1,v}\chi_{2,v}^{-1}(\beta) |\beta|_v^{2s} \times\\ 
&\prod_{v\nmid \mfr{n}_1\mfr{n}_2} \chi_{2,v}^{-1}(t_{\lambda}\delta)|t_{\lambda}\delta|_v^{s-\tfrac{1}{2}}, 
\end{split}
\]
which, by (\ref{eq:419}), implies that we have 
\[
\begin{split}
c_{\beta}(E_k(\chi_1,\chi_2)(z,x_{\lambda}^{-1}))
&= \left[\prod_{v|\mfr{n}_2} \chi_{2,v}^{-1}(\beta) \times  \chi_{1,w}(-\beta) |\beta|_w^{2s} \times \prod_{v\nmid \mfr{n}_1\mfr{n}_2} |t_{\lambda}\delta|_v^{s-\tfrac{1}{2}} \chi_{2,v}^{-1}(t_{\lambda}\delta)\right]_{s=\tfrac{1-k}{2}}\\
&= \prod_{v\nmid \mfr{n}_1\mfr{n}_2} \chi_{2,v}^{-1}(\beta^{-1} t_{\lambda}\delta) \times N(t_{\lambda}\mfr{D})^{k/2}\chi_1(\mfr{p}^{\alpha}) N(\mfr{p}^{\alpha})^{k-1}\\
&= N(t_{\lambda}\mfr{D})^{k/2} \chi_1(\mfr{p}^{\alpha}) N(\mfr{p}^{\alpha})^{k-1}.
\end{split}
\]
Note that the last equality is obtained by the assumption that $\mfr{p}\nmid \mfr{n}_1$ and $\mfr{p}|\mfr{n}_2$. 

If $\mfr{p}|\mfr{n}_1$ and $\mfr{p}\nmid \mfr{n}_2$, then by a similar computation, one obtains
$$
c_{\beta}(E_k(\chi_1,\chi_2)(z,x_{\lambda}^{-1}))=N(t_{\lambda}\mfr{D})^{k/2} \chi_2(\mfr{p}^{\alpha}).\qedhere
$$
\end{proof}

Recall that we denote by $\chi_{i,f}$ the finite part of $\chi_i$ for $i=1,2$. We will write $\mfr{n}_i=\mfr{n}'_i\times \mfr{m}_i$ with $\mfr{n}'_i=\prod_{\mfr{p}\nmid \gcd(\mfr{n}_1,\mfr{n}_2)} \mfr{p}^{e_{\mfr{p},i}}$ and $\mfr{m}_i= \prod_{\mfr{p}|\gcd(\mfr{n}_1,\mfr{n}_2)} \mfr{p}^{e_{\mfr{p},i}}$ for $i=1,2$. 
We will compute the constant term  of $E_k(\chi_1,\chi_2)(z,x_{\lambda}^{-1}g)$ for some $g=(g_v)=\left(\left(\begin{smallmatrix}
a_v & b_v \\ c_v & d_v\end{smallmatrix}\right)\right)_v \in \GL_2(\wh{\O}_F)$. By right multiplying some element in $K_1(\mfr{n})$, we may assume $\det g=1$. Moreover, since $K_{1,v}(\mfr{n})=\GL_2(\O_v)$ for $v\nmid \mfr{n}_1\mfr{n}_2$, we may assume further that $g_v$ is the identity matrix for $v\nmid \mfr{n}_1\mfr{n}_2$.

\begin{prop}\label{432}
Let the notation be as above, and let the assumptions be as in Proposition~\ref{431}. We set $c=(c_v)_v$ and $d=(d_v)_v$. Also, we set $c_{\mfr{n}_2}$ (resp.~$d_{\mfr{n}_1}$) be the $\mfr{n}_2$-part of $c$ (resp.~$\mfr{n}_1$-part of $d$). Assume further that the following conditions hold
\begin{enumerate}
\item $\val_v(c_v)\geq e_{\mfr{p},1}$ for all $v|\mfr{n}'_1$,

\item $\val_v(c_v)=0$ for all $v|\mfr{n}'_2$,

\item $\val_v(c_v)=e_{\mfr{p},1}$ for all $v|\gcd(\mfr{n}_1,\mfr{n}_2)$.
\end{enumerate}
Then the constant term $c_0(E_k(\chi_1,\chi_2)(z,x_{\lambda}^{-1}g))$ is 
\[
\begin{split}
&  \frac{1}{2^d} L^{\mfr{n}}(1-k,\chi_1\chi_2^{-1}) \chi_1^{-1}(\mfr{n}'_2) N(\mfr{n}'_2)^{-1}\times
\prod_{v|\gcd(\mfr{n}_1,\mfr{n}_2)} M_{v,\frac{1-k}{2},e_{\mfr{p},1}} \times 
\prod_{v|\mfr{n}_2} \varepsilon_v(2-k,\chi_{2,v}^{-1},\psi_v)^{-1}\times \\
& \chi_2^{-1}(t_{\lambda}\mfr{D}) N(t_{\lambda}\mfr{D})^{k/2} \chi_{1,f}^{-1}(d_{\mfr{n}_1}) \chi_{2,f}(-c_{\mfr{n}_2}\mfr{m}_1^{-1})
\end{split}
\]
where $M_{v,s,e_{\mfr{p},1}}$ was defined in Lemma~\ref{415}. Here $\chi_{1,f}(d_{\mfr{n}_1})$ and $ \chi_{2,f}(c_{\mfr{n}_2}\mfr{m}_1^{-1})$ are respectively defined by the isomorphisms $
\prod_{v|\mfr{n}_i} \O_v^{\times}/(1+\varpi^{e_{\mfr{p},i}}\O_v)\isom (\O/\mfr{n}_i\O)^{\times}$ for $i=1,2$.

Otherwise, if any of the above conditions (1)-(3) does not hold, then the constant term is $0$. 
\end{prop}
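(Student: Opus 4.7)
The starting point is the constant term formula~(\ref{eq:420}). Under the hypothesis of Proposition~\ref{333}, the archimedean part of $\chi_1\chi_2^{-1}$ equals $\sgn^k$, so $L^{\mfr{n}}(2-k,\chi_1\chi_2^{-1})$ vanishes; consequently the first summand $f_{\chi,s}(g_z x_\lambda^{-1} g)$ in~(\ref{eq:420}) contributes zero after multiplication by $L^{\mfr{n}}(2s+1,\chi_1\chi_2^{-1})$ and specialization at $s=(1-k)/2$, exactly as in the proof of Proposition~\ref{431}. It remains to analyze $\prod_{v<\infty} {M_v}{f_{\chi,s,v}}(x_\lambda^{-1} g_v)$.

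At each finite place $v\mid \mfr{n}$, I apply Lemma~\ref{223} to write $g_v = b_v \gamma_{i_v} \kappa_v$ with $b_v\in B(\O_v)$, $\gamma_{i_v}$ one of the lemma's representatives, and $\kappa_v\in K_{1,v}(\mfr{n})$. Since ${M_v}{f_{\chi,s,v}}\in I(\chi_{2,v}|\cdot|_v^{-s},\chi_{1,v}|\cdot|_v^s)$ by Lemma~\ref{412}, and it is right $K_{1,v}(\mfr{n})$-invariant (because $\chi_{1,v}\chi_{2,v}$ has conductor dividing $\mfr{n}_v$), and since $x_\lambda^{-1}$ is diagonal, we obtain
$$
{M_v}{f_{\chi,s,v}}(x_\lambda^{-1} g_v) = \chi_{2,v}((t_\lambda\delta)^{-1}_v a_v)\chi_{1,v}(d_v)\left|\frac{(t_\lambda\delta)^{-1}_v a_v}{d_v}\right|_v^{\tfrac{1}{2}-s} {M_v}{f_{\chi,s,v}}(\gamma_{i_v}),
$$
where $a_v,d_v$ are the diagonal entries of $b_v$. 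The crucial observation is that Lemma~\ref{415} (together with Lemma~\ref{413} applied in the case $i_v = N_v$, where $\gamma_{i_v} = I_2$) forces the local factor to vanish unless $i_v$ takes the specific value listed in the hypothesis: at $v\mid \mfr{n}'_1$ we need $i_v = N_v = e_{\mfr{p},1}$ (i.e., $\val_v(c_v)\geq e_{\mfr{p},1}$), at $v\mid \mfr{n}'_2$ we need $i_v=0$ (i.e., $\val_v(c_v)=0$), and at $v\mid \gcd(\mfr{n}_1,\mfr{n}_2)$ we need $i_v = e_{\mfr{p},1}$. If any of these conditions fails the product vanishes, giving the final assertion of the proposition.

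Under the three conditions, I use the explicit formulas~(\ref{eq:22}) and~(\ref{eq:23}) together with $\det g_v = 1$ to express $a_v$ and $d_v$ in terms of $c_v,d_v$: at $v\mid \mfr{n}'_1$ one obtains a factor $\chi_{1,v}(d_v)$ (the $\chi_{2,v}$-part being trivial by unramifiedness), at $v\mid \mfr{n}'_2$ one obtains $\chi_{2,v}(-c_v)$ multiplied by the constant $M_{v,(1-k)/2,0}=\chi_{1,v}^{-1}(\varpi_v^{e_{\mfr{p},2}})\chi_{2,v}(-1)q_v^{-e_{\mfr{p},2}}$ from Lemma~\ref{415}(2), and at $v\mid \gcd(\mfr{n}_1,\mfr{n}_2)$ one obtains a factor involving $\chi_{1,v}(d_v)$, $\chi_{2,v}(-c_v/\varpi_v^{e_{\mfr{p},1}})$, and the quantity $M_{v,(1-k)/2,e_{\mfr{p},1}}$. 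Translating these local Hecke values through the convention at the end of Section~\ref{sec:23} assembles them into the prescribed $\chi_{1,f}^{-1}(d_{\mfr{n}_1})\chi_{2,f}(-c_{\mfr{n}_2}\mfr{m}_1^{-1})$, and collecting the contributions from $v\mid \mfr{n}'_2$ produces the global factor $\chi_1^{-1}(\mfr{n}'_2)N(\mfr{n}'_2)^{-1}$.

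At the unramified finite places $v\nmid \mfr{n}$ (where $g_v$ may be taken to be $I_2$), Lemma~\ref{413}(1) gives the Euler factor $(1-\chi_1\chi_2^{-1}(\varpi_v)q_v^{-(2s+1)})/(1-\chi_1\chi_2^{-1}(\varpi_v)q_v^{-2s})$, and the product over these places telescopes to $L^{\mfr{n}}(2s,\chi_1\chi_2^{-1})/L^{\mfr{n}}(2s+1,\chi_1\chi_2^{-1})$. This cancels the $L^{\mfr{n}}(2s+1,\chi_1\chi_2^{-1})$ in~(\ref{eq:420}) and leaves $L^{\mfr{n}}(1-k,\chi_1\chi_2^{-1})$ after specialization. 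The product of diagonal shifts from $x_\lambda^{-1}$ across all finite places (noting that $\delta_v$ is a uniformizer power precisely at $v\mid \mfr{D}$ and $t_\lambda$ is prime to $\mfr{n}$) assembles into $\chi_2^{-1}(t_\lambda\mfr{D})N(t_\lambda\mfr{D})^{k/2}$, and the normalizing $\prod \varepsilon_v$ in~(\ref{eq:420}) survives as the $\prod_{v\mid \mfr{n}_2}\varepsilon_v(2-k,\chi_{2,v}^{-1},\psi_v)^{-1}$ appearing in the statement. The main obstacle is the bookkeeping at the ramified places: one must carefully track Hecke versus narrow ray class conventions and the sign arising from the $\chi_{2,v}(-1)$ inside $M_{v,(1-k)/2,0}$ in order to land on exactly $\chi_{1,f}^{-1}(d_{\mfr{n}_1})\chi_{2,f}(-c_{\mfr{n}_2}\mfr{m}_1^{-1})$, while leaving $M_{v,(1-k)/2,e_{\mfr{p},1}}$ in the unsimplified case-by-case form already furnished by Lemma~\ref{415}(3).
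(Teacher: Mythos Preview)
Your proposal is correct and follows essentially the same approach as the paper: start from~(\ref{eq:420}), kill the first summand via $L^{\mfr{n}}(2-k,\chi_1\chi_2^{-1})=0$, decompose each $g_v$ at $v\mid\mfr{n}$ via Lemma~\ref{223}, invoke Lemmas~\ref{413} and~\ref{415} to identify the vanishing conditions and the surviving local factors, and let the unramified places produce the $L$-ratio. The only difference is organizational: the paper groups the $x_\lambda^{-1}$ contribution together with the Euler product at $v\nmid\mfr{n}$ in a single step, whereas you pull off the diagonal shift separately and then assemble; both routes yield the same expression.
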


\begin{proof}
Recall that $c_0(E_k(\chi_1,\chi_2)(z,x_{\lambda}^{-1}g))$ is given by (\ref{eq:420}). It follows from Lemma~\ref{415} that if one of the three conditions does not hold, then the integral $\int_{F_v} f_{\chi,s,v}\left(\left(\begin{smallmatrix}
    0 & -1 \\1 & 0
\end{smallmatrix}\right)\left(\begin{smallmatrix}
    1 & n_v \\0 & 1
\end{smallmatrix}\right)\gamma\right) dn_v$ is $0$ for some $v|\mfr{n}_1\mfr{n}_2$. 

Now we assume all of the three conditions hold. For finite places $v\nmid \mfr{n}_1\mfr{n}_2$, by Lemma~\ref{413}(1), we have
$$
\left[ \prod_{v\nmid \mfr{n}_1\mfr{n}_2} \int_{F_{v}}f_{\chi,s,v}\left(\left(\begin{array}{cc}
    0 & -1 \\1 & 0
\end{array}\right)\left(\begin{array}{cc}
    1 & n_v \\0 & 1
\end{array}\right)x_{\lambda}^{-1}g_v\right)  dn_v\right]_{s=\frac{1-k}{2}}
=N(t_{\lambda}\mfr{D})^{k/2} \chi_2^{-1}(t_{\lambda}\mfr{D}) \frac{L^{\mfr{n}}(1-k,\chi_1\chi_2^{-1})}{L^{\mfr{n}}(2-k,\chi_1\chi_2^{-1})}.
$$
For finite places $v|\mfr{n}'_1$, by the above assumption (1) on $c_v$, we have
$$
\left[ \prod_{v|\mfr{n}_1'} \int_{F_{v}}f_{\chi,s,v}\left(\left(\begin{array}{cc}
    0 & -1 \\1 & 0
\end{array}\right)\left(\begin{array}{cc}
    1 & n_v \\0 & 1
\end{array}\right)g_v\right)  dn_v\right]_{s=\frac{1-k}{2}}
=\prod_{v|\mfr{n}_1'} \chi_{1,v}\chi_{2,v}(d_v)=\prod_{v|\mfr{n}_1'} \chi_{1,v}(d_v).
$$
For finite places $v|\mfr{n}'_2$, by Lemma~\ref{415}(2), we have
$$
\left[ \prod_{v|\mfr{n}_2'}\int_{F_{v}}f_{\chi,s,v}\left(\left(\begin{array}{cc}
    0 & -1 \\1 & 0
\end{array}\right)\left(\begin{array}{cc}
    1 & n \\0 & 1
\end{array}\right)g_v\right)  dn_v\right]_{s=\frac{1-k}{2}}
=\prod_{v|\mfr{n}_2'} \chi_{2,v}^{-1}(-c_v) \chi_{1,v}(\varpi^{-e_{\mfr{p},2}}) q_v^{-e_{\mfr{p},2}}
$$
which equals $\chi_1(\mfr{n}'_2)^{-1} N(\mfr{n}'_2)^{-1}\prod_{v|\mfr{n}'_2} \chi_{2,v}^{-1}(-c_v)$. 
For finite places $v|\gcd(\mfr{n}_1,\mfr{n}_2)$, since $\val_c(c_v)=e_{\mfr{p},1}$,  by (\ref{eq:23}), one can decompose $g_v$ as
$$
g_v= \left(\begin{array}{cc}
    a_v & b_v \\ c_v & d_v
\end{array}\right)=
\left(\begin{array}{cc}
    c_v^{-1}\varpi^{e_{\mfr{p},1}} & b_v \\ 0 & d_v
\end{array}\right)
\left(\begin{array}{cc}
    1 & 0 \\ \varpi^{e_{\mfr{p},1}} & 1
\end{array}\right)\kappa
$$
for some $\kappa\in K_{1,v}(\mfr{n})$. Then by Lemma~\ref{local_wittaker} and Lemma~\ref{415}(3), one has
\[
\begin{split}
\prod_{v|\gcd(\mfr{n}_1,\mfr{n}_2)} \int_{F_v} f_{\chi,s,v}\left(\left(\begin{array}{cc}
    0 & -1 \\1 & 0
\end{array}\right)\left(\begin{array}{cc}
    1 & n_v \\0 & 1
\end{array}\right)g_v\right)  dn_v
=  \prod_{v|\gcd(\mfr{n}_1,\mfr{n}_2)} \chi_{1,v}(d_v) \chi_{2,v}(c^{-1}_v\varpi^{e_{\mfr{p},1}}) M_{v,s,e_{\mfr{p},1}}.
\end{split}
\]
Finally, to finish the proof, we note that $\prod_{v|\mfr{n}_1} \chi_{1,v}(d_v)=\chi_{1,f}^{-1}(d_{\mfr{n}_1})$ and similarly, $\prod_{v|\mfr{n}_2} \chi_{2,f}(-c_v^{-1}\varpi^{e_{\mfr{p},2}})=\chi_{2,f}(-c_{\mfr{n}_2}\mfr{m}_1^{-1})$.
\end{proof}

%%%%%%%%%%%%%%%%%%%%%%%%%%%%%%%%%%%%%%%%%%%%%%%%%%%%%%%%%%%%%%%%%%%%%%%%%%%%%%%%%%%%%%%%%%%%%%%%%%%%%%%%%
\section{$\Lambda$-adic modular forms}\label{sec:04}
In this section, we first recall the definition of $\Lambda$-adic modular forms and $\Lambda$-adic Eisenstein series following \cite[\S 1.2]{W2}. The main goal is to compute the constant terms of $\Lambda$-adic Eisenstein series (Proposition~\ref{523}) using the results in Section~\ref{sec:34}. 

We now fix some notation that will be used throughout this section. Let $p$ be an odd prime unramified in $F$, and let $u=(1+p)\in \Zp^{\times}$. Let $\gamma$ be a topological generator of $\Gal(F_{\infty}/F)\isom \Zp$ such that $\gamma\cdot \zeta=\zeta^u$ for all $p$-power roots of unity $\zeta$. Let $\omega$ be the Teichm\"{u}ller character and $\<\cdot\>$ be the projection $\Zp^{\times}\surj 1+p\Zp$. Then, we have a canonical isomorphism
$$
\omega\otimes\<\cdot\>:\Zp^{\times}\isom (\Z/p\Z)^{\times}\times 1+p\Zp.
$$
Moreover, for $a\in \Zp^{\times}$, one can write $\<a\>=u^{s(a)}$ for some $s(a)\in \Zp$. For an integral ideal $\mfr{a}$ of $F$ prime to $p$, we set $s(\mfr{a}):=s(N(\mfr{a}))$. 
For simplicity, we put $\omega(\mfr{a})=\omega(N(\mfr{a}))$ for all ideals $\mfr{a}$ of $F$ prime to $p$.

For an integral ideal $\mfr{m}$, we denote by $I_{\mathfrak{m}}$ the set of fractional ideals of $F$ relatively prime to $\mathfrak{m}$. For a narrow ray class character $\chi$ with modulus $\n$ or $\mfr{n}p$, we associate a homomorphism,
\begin{equation}\label{eq:wt_chi}
\wt{\chi}:\varprojlim_r I_{\n p^r}\map \Zp[\chi][[T]];\; \mathfrak{a}\mapsto\chi(\mathfrak{a})(1+T)^{s(\mfr{a})}.
\end{equation}

Let $\mu_{p^{\infty}}$ be the group of all $p$-power roots of unity, and let $\wh{\mu}_{p^{\infty}}$ be the group of all characters of $\mu_{p^{\infty}}$ with values in $\C$. Let $\O_{\infty}\subset \C_p$ be a local complete valuation ring, whose valuation is compatible with the valuation of $\C_p$, containing $\mu_{p^{\infty}}$ and all values of narrow ray class characters with modulus $\mfr{n}p$ for a fixed integral ideal $\mfr{n}$ not divisible by $p$. We put $\Lambda=\O_{\infty}[[T]]$.
%%%%%%%%%%%%%%%%%%%%%%%%%%%%%%%%%%%%%%%%%%%%%%%%%%%%%%%%%%%%%%%%%%%%%%%%%%%%%%%%%%%%%%%%%%%%%%%%%%%%%%%%%
\subsection{$\Lambda$-adic modular forms}\label{sec:41}
 For each integer $k$ and for
each $\zeta \in \mu_{p^{\infty}}$, we define two evaluation maps
$$
v_{k,\zeta}, v'_{k,\zeta}:\Lambda\map \mathcal{O}_{\infty}
$$
by $v_{k,\zeta}(T)=\zeta u^{k-2}-1$ (resp.~$v'_{k,\zeta}(T)=\zeta u^{k}-1$). Note that for each $\zeta\in \mu_{p^{\infty}}$, there exists $\rho=\rho_{\zeta}\in \wh{\mu}_{p^{\infty}}$ such that $\rho(\gamma)=\zeta$. We will also write $v_{k,\zeta}$ (resp.~$v'_{k,\zeta}$) as $v_{k,\rho}$ (resp.~$v'_{k,\rho}$). For simplicity, we will write $\rho(\<N(\mfr{a})\>)$ as $\rho(\mfr{a})$ for all $\mfr{a}\in I_p$. Let $$\mathfrak{X}:=\{(k,\zeta) \mid k\geq 2,
\zeta^{p^r}=1\; \mbox{for some}\; r\geq 0\}
$$
be the set of classical weights.

\begin{defn}\label{511}\ 
\begin{enumerate}
\item Let $\n$ be an integral ideal of $\O_F$. A \textit{$\Lambda$-adic modular form $\mathcal{F}$ over $F$ of level $\n p$} is
      a set of elements of $\Lambda$
      $$
      \left\{\begin{array}{cc}
      C(\mathfrak{a},\mathcal{F}) & \mbox{for all nonzero integral ideals}\; \mathfrak{a}\; \mbox{of}\; \O_F \\
      C_{\lambda}(0,\mathcal{F}) & \mbox{for } \lambda=1,\ldots, h^+_F
      \end{array}\right \}
      $$
      with the property that for all but finitely many
      $(k,\zeta)\in \mathfrak{X}$, there is an adelic modular form $f$ of
      weight $k$ and level $\n p^r$ such that for each integral ideal $\mfr{a}$ of $F$, the normalized $\mfr{a}$-th Fourier coefficient satisfies $C(\mfr{a},f)=v_{k,\zeta}(C(\mathfrak{a},\mathcal{F}))$ and constant terms satisfy $C_{\lambda}(0,f)=v_{k,\zeta}(C_{\lambda}(0, \mathcal{F}))$ for all $\lambda=1,\ldots,h^+_F$.

\item A $\Lambda$-adic form is said to be \textit{a cusp form} if $v_{k,\zeta}(\mathcal{F})$ is a cusp form for almost all $(k,\zeta)\in \mfr{X}$.

\item Let $\chi$ be a narrow ray class character with modulus $\mfr{n}p$. We say that $\mathcal{F}$ is \textit{of character $\chi$} if $v_{k,\zeta}(\mathcal{F})$ has character $\chi\omega^{2-k}\rho_{\zeta}$ for almost all $(k,\zeta)\in \mfr{X}$.
\end{enumerate}
\end{defn}

We denote by $M(\n,\chi;\Lambda)$  and $S(\n,\chi;\Lambda)$ the space of $\Lambda$-adic modular forms
and the space of $\Lambda$-adic cusp forms of level $\n$ and
character $\chi$, respectively.

One can define $\Lambda$-adic modular forms and $\Lambda$-adic cusp forms with respect to the specialization $v'_{k,\zeta}$ in the same manner. Under this definition, we say that a $\Lambda$-adic modular form $\mathcal{F}$ is of character $\chi$ if $v'_{k,\zeta}(\mathcal{F})$ is of character $\chi\omega^{-k}\rho$. We denote by $M'(\n,\chi;\Lambda)$  and $S'(\n,\chi;\Lambda)$ the corresponding spaces of $\Lambda$-adic modular forms and $\Lambda$-adic cusp forms, respectively. 

The reason to mention different specializations is that both specializations are used in the literature. For example, in \cite{Hida2} and \cite{Hida3}, Hida used the specialization $v'_{k,\zeta}$, while Wiles used the specialization $v_{k,\zeta}$ in \cite{W2} and \cite{W3}. Indeed, these specializations are equivalent, which will be addressed in the following lemma.

\begin{lemma}\label{512}
We have a canonical isomorphism of $\Lambda$-modules
$$
M'(\n,\chi\omega^2;\Lambda)\isom M(\n,\chi;\Lambda);\; \mathcal{F}'(T) \mapsto \mathcal{F}'(u^{2}(1+T)-1).
$$
\end{lemma}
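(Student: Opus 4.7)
The proof is a simple change-of-variables argument: the two specializations $v_{k,\zeta}$ and $v'_{k,\zeta}$ differ only by a rescaling of $1+T$, so the two spaces become identified once we perform the corresponding ring automorphism on $\Lambda$. The first step is to introduce the continuous $\mathcal{O}_{\infty}$-algebra automorphism $\tau:\Lambda\to\Lambda$ determined by $\tau(1+T)=u^{2}(1+T)$, and check that $v_{k,\zeta}\circ\tau=v'_{k,\zeta}$ for every $(k,\zeta)\in\mathfrak{X}$. This is immediate from
$$v_{k,\zeta}\bigl(u^{2}(1+T)\bigr)=u^{2}\cdot\zeta u^{k-2}=\zeta u^{k}=v'_{k,\zeta}(1+T),$$
and both sides extend by continuity and multiplicativity.

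Once $\tau$ is in hand, I would define $\Phi:M(\mathfrak{n},\chi;\Lambda)\to M'(\mathfrak{n},\chi\omega^{2};\Lambda)$ by applying $\tau^{-1}$ coefficient-wise:
$$C(\mathfrak{a},\Phi(\mathcal{F})):=\tau^{-1}\bigl(C(\mathfrak{a},\mathcal{F})\bigr),\qquad C_{\lambda}(0,\Phi(\mathcal{F})):=\tau^{-1}\bigl(C_{\lambda}(0,\mathcal{F})\bigr).$$
To see that $\Phi(\mathcal{F})$ indeed lies in $M'(\mathfrak{n},\chi\omega^{2};\Lambda)$, compute, for almost all $(k,\zeta)\in\mathfrak{X}$,
$$v'_{k,\zeta}\bigl(\Phi(\mathcal{F})\bigr)=(v'_{k,\zeta}\circ\tau^{-1})(\mathcal{F})=v_{k,\zeta}(\mathcal{F}),$$
which is by hypothesis an adelic modular form of weight $k$, level $\mathfrak{n}p^{r}$ and character $\chi\omega^{2-k}\rho_{\zeta}=(\chi\omega^{2})\omega^{-k}\rho_{\zeta}$, exactly matching the $M'$-convention. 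The inverse $\Phi^{-1}$ is constructed identically with $\tau$ in place of $\tau^{-1}$, so $\Phi$ is a bijection, and the same argument applied to the cuspidal subspaces gives $S'(\mathfrak{n},\chi\omega^{2};\Lambda)\cong S(\mathfrak{n},\chi;\Lambda)$.

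The main delicate point I expect is the $\Lambda$-linearity: on the nose $\Phi$ is only $\tau^{-1}$-semilinear, satisfying $\Phi(\lambda\mathcal{F})=\tau^{-1}(\lambda)\Phi(\mathcal{F})$. Since $\tau$ is a topological $\mathcal{O}_{\infty}$-algebra automorphism of $\Lambda$, this upgrades to a genuine $\Lambda$-module isomorphism once one re-identifies $\Lambda$ with itself via $\tau$; this is the standard dictionary between the Wiles and Hida conventions for the cyclotomic variable, and it is the identification implicit in the statement of the lemma. No further input from the theory of Hilbert modular forms is needed — the construction is purely formal from the identity $v_{k,\zeta}\circ\tau=v'_{k,\zeta}$.
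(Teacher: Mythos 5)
Your proof is correct and follows essentially the same change-of-variables argument as the paper: both use the substitution $T\mapsto u^{2}(1+T)-1$ (your automorphism $\tau$), which intertwines $v_{k,\zeta}$ and $v'_{k,\zeta}$; you write the map in the direction $M\to M'$ via $\tau^{-1}$, whereas the paper constructs the inverse $M'\to M$ by sending $\mathcal{F}'$ to $\mathcal{F}'(u^{2}(1+T)-1)$, but these are two descriptions of one and the same bijection. Your explicit remark that the map is a priori only $\tau^{-1}$-semilinear, and becomes a $\Lambda$-isomorphism only after re-identifying $\Lambda$ with itself via $\tau$, is a valid and worthwhile refinement of the paper's terse ``clearly this provides a bijection'' --- it is precisely the sense in which the Wiles and Hida normalizations match up, and nothing in the paper's later use of the lemma (e.g.\ in Corollary~\ref{655}, where only specializations and Hecke-equivariance are invoked) requires more than this semilinear identification.
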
 

\begin{proof}
The assertion is well-known. Given any $\mathcal{F}' \in M'(\mfr{n},\chi\omega^2;\Lambda)$, we set $\mathcal{F}(T)=\mathcal{F}'(u^{2}(1+T)-1)$. Then we have
$$
v_{k,\rho}(\mathcal{F}(T))=\mathcal{F}(\rho(u)u^{k-2}-1)=\mathcal{F}'(\rho(u) u^{k}-1)=v'_{k,\zeta}(\mathcal{F}'(T))
$$
which, by the definition of $M'(\mfr{n},\chi;\Lambda)$, is an adelic modular form of level $\mfr{n}p^r$ and character $\chi\omega^{2-k}\rho$ for almost all $(k,\zeta)\in \mfr{X}$. Therefore, $\mathcal{F}(T)$ is in  $M(\mfr{n},\chi;\Lambda)$, and clearly this provides a bijection.
\end{proof} 

\begin{remark}\label{513}
In this paper, we usually consider the space $M(\n,\chi;\Lambda)$. We will only use $M'(\n,\chi;\Lambda)$ in Section~\ref{sec:05}, where we will prove a control theorem for $M'(\n,\chi;\Lambda)$ for an arbitrary narrow ray class character $\chi$ with modulus $\n p$ and then deduce a control theorem for $M(\n,\chi;\Lambda)$ from Lemma~\ref{512}.
\end{remark}

In \cite[\S 1.2]{W2}, Wiles defined the Hecke actions on the space of $\Lambda$-adic modular forms, which commute with specialization map $v_{k,\zeta}$ and $v_{k,\zeta}'$. For details, we refer the reader to \textit{loc.~cit.}. Thus the Hida idempotent element $e$ acts on $M(\n,\chi;\Lambda)$ (resp.~$M'(\n,\chi;\Lambda)$) and preserves the subspace $S(\n,\chi;\Lambda)$ (resp.~$S'(\n,\chi;\Lambda)$). We define $M^{\ord}(\n,\chi;\Lambda)=e\cdot M(\n,\chi;\Lambda)$ and define $S^{\ord}(\n,\chi;\Lambda)$, ${M'}^{\ord}(\n,\chi;\Lambda)$, and ${S'}^{\ord}(\n,\chi;\Lambda)$  in the same manner. We denote by $\mathcal{H}^{\ord}(\mfr{n},\chi,\Lambda)\subset \End_{\Lambda}(M^{\ord}(\n,\chi;\Lambda))$ (resp.~$h^{\ord}(\mfr{n},\chi;\Lambda) \subset \End_{\Lambda}(S^{\ord}(\n,\chi;\Lambda))$ the Hecke algebra (resp.~cuspidal Hecke algebra) generated over $\Lambda$ by Hecke operators $T(\mfr{q})$, $S(\mfr{q})$ for all prime ideals $\mfr{q}$ not dividing $\mfr{n}p$ and $U(\mfr{p})$ for all prime ideals $\mfr{p}|p$.

Note that the isomorphism in Lemma~\ref{512} is Hecke-equivariant since specialization maps commute with Hecke operators and since for each $\mathcal{F}\in M(\n,\chi;\Lambda)$, if $\mathcal{F}'$ is the image of $\mathcal{F}$ under the isomorphism, we have $v_{k,\zeta}(\mathcal{F})=v'_{k,\zeta}(\mathcal{F}')$ for all $(k,\zeta)\in \mfr{X}$.
%%%%%%%%%%%%%%%%%%%%%%%%%%%%%%%%%%%%%%%%%%%%%%%%%%%%%%%%%%%%%%%%%%%%%%%%%%%%%%%%%%%%%%%%%%%%%%%%%%%%%%%
\subsection{$\Lambda$-adic Eisenstein series}\label{sec:42}
Eisenstein series provide interesting examples of $\Lambda$-adic modular forms. We recall their construction in this subsection. 

Let $\chi$ be an narrow ideal class character of conductor $\n$ or $\n p$. We assume that $\chi$ is even and is not of type $W$ in the sense of \cite{W3}, i.e., $F_{\chi}$ is not contained in $F_{\infty}$. Let $L_p(s,\chi)$ be the Deligne\textendash Ribet $p$-adic $L$-function (see \cite{DR} for the definition). It satisfies interpolation property
$$
L_p(1-k,\chi)=L(1-k,\chi\omega^{-k}) \prod_{\mfr{p}|p} (1-\chi\omega^{-k}(\mfr{p}) N(\mfr{p})^{k-1}).
$$
for positive integers $k$. Moreover, there exist relatively prime $G_{\chi}(T)$ and $H_{\chi}(T)$ in $\Lambda=\Zp[\chi][[T]]$ such that
$$
L_p(1-s,\chi)=G_{\chi}(u^s-1)/H_{\chi}(u^s-1),
$$
where $H_{\chi}(T)=1$ if $\chi$ is nontrivial; otherwise, $H_{\chi}(T)=T$. 

Let $\chi_1$ and $\chi_2$ be narrow ideal class characters of conductors $\n_1$ and $\n_2$, respectively, with associated signs $e_{1,\infty},e_{2,\infty}\in (\Z/2\Z)^d$ satisfying 
\begin{equation}\label{eq:sgn_cond}
e_{1,\infty}+e_{2,\infty}\equiv (0,\ldots,0) (\bmod\; 2\Z^d) 
\end{equation}
As the Teichm\"{u}ller character $\omega$ is totally odd, it follows from (\ref{eq:sgn_cond}) that the characters $\chi_1\omega^{2-k}$ and $\chi_2$ satisfy (\ref{eq:eisen_cond}). Therefore, the Eisenstein series $E_k(\chi_1\omega^{2-k},\chi_2)$ exists by Proposition~\ref{333} (or see Proposition~\ref{431}).

We now recall the definition of $\Lambda$-adic Eisenstein associated to $(\chi_1,\chi_2)$ following \cite[Proposition 1.3.1]{W2}.

\begin{prop}\label{lambda_adic_eisen}
Suppose $\chi_1$ and $\chi_2$ satisfy (\ref{eq:sgn_cond}) and the following properties
\begin{enumerate}
\item The character $\chi_1$ is nontrivial and $(\chi_1,\chi_2)\neq (\omega^{-2},\mathbbm{1})$.

\item We have $\mfr{n}_1\mfr{n}_2=\mfr{n}$ or $\mfr{n}p$ for some integral ideal $\mfr{n}$ prime to $p$.
\end{enumerate}
Then there exists a $\Lambda$-adic modular form $\mathcal{E}(\chi_1,\chi_2)\in M(\n,\chi_1\chi_2;\Lambda)$, called \textit{$\Lambda$-adic Eisenstein series}, satisfying
$v_{k,\zeta}(\mathcal{E}(\chi_1,\chi_2))=E_k(\chi_1\omega^{2-k}\rho_{\zeta},\chi_2)$. Moreover, its constant terms are defined as 
$$
C_{\lambda}(0,\mathcal{E}(\chi_1,\chi_2))=\delta(\chi_2)2^{-d} G_{\chi_1\chi_2^{-1}\omega^2}(u^2(T+1)-1),
$$
for $\lambda=1,\ldots, h^+_F$, where $\delta(\chi_2)=1$ if $\chi_2$ is a trivial character; otherwise, it is $0$. For an integral ideal $\mfr{m}$ of $\O_F$, its $\mfr{m}$-th Fourier coefficient is defined as
$$
C(\mfr{m},\mathcal{E}(\chi_1,\chi_2))=\sum_{\mfr{a}|\mfr{m},\gcd(p,\mfr{a})=1} \wt{\chi_1}(\mfr{a})\chi_2(\tfrac{\mfr{m}}{\mfr{a}}) N(\mfr{a}).
$$
Moreover, if $(\mfr{n}_2,p)=1$, then $\mathcal{E}(\chi_1,\chi_2)$ is in  $M^{\ord}(\mfr{n},\chi_1\chi_2;\Lambda)$.
\end{prop}

Let $p$ be an odd rational prime unramified in $F$. As in Proposition~\ref{432}, we write $\mfr{n}_i=\mfr{n}'_i\times \mfr{m}_i$ 
%with $\mfr{n}'_i=\prod_{\mfr{p}\nmid \gcd(\mfr{n}_1,\mfr{n}_2)} \mfr{p}^{e_{\mfr{p},i}}$ and $\mfr{m}_i= \prod_{\mfr{p}|\gcd(\mfr{n}_1,\mfr{n}_2)} \mfr{p}^{e_{\mfr{p},i}}$ 
for $i=1,2$. Also, we write the conductor of $\chi_1^{-1}\chi_2$ as $\mfr{n}_1'\mfr{n}_2'\mfr{m}$ for some integral ideal $\mfr{m}$ of $F$. For simplicity, we set 
\begin{equation}\label{eq:wh_G}
\wh{G}_{\chi_1\chi_2^{-1}}(T):=G_{\chi_1\chi_2^{-1}\omega^2}(u^2(T+1)-1).
\end{equation}

\begin{prop}\label{523}
Let $\wt{\mfr{n}_1}'=\lcm(\mfr{n}'_1,p)$ and let others notation and assumptions be as in Proposition~\ref{lambda_adic_eisen}. Assume further that $(\mfr{n}_2,p)=1$. Then the constant term of $\mathcal{E}(\chi_1,\chi_2)$ at the cusp associated to 
$x_{\lambda}^{-1}g=x_{\lambda}^{-1}\left(\begin{smallmatrix} a & b \\ c & d\end{smallmatrix}\right)$ is
\[
\begin{split}
C\times \wh{G}_{\chi_1\chi_2^{-1}}(T)\times \prod_{\mfr{q}|\mfr{n},\mfr{q}\nmid \cond(\chi_1\chi_2^{-1})} (1-\chi_1\chi_2^{-1}(\mfr{q})(1+T)^{-s(\mfr{q})}N(\mfr{q})^{-2})
\end{split}
\] 
if $g$ satisfies the following conditions
\begin{enumerate}

\item $\val_v(c_v)\geq \val_v(\wt{n_1}')$ for all $v|\wt{\mfr{n}_1}'$,

\item $\val_v(c_v)=0$ for all $v|\mfr{n}'_2$,

\item $\val_v(c_v)=e_{\mfr{p},1}$ for all $v|\gcd(\mfr{n}_1,\mfr{n}_2)$.
\end{enumerate}
Otherwise, it is $0$.
Here $C$ is a unit in $\Lambda$, and $s(\mfr{q})$ was defined in Section~\ref{sec:41}.
\end{prop}

\begin{proof}
To prove the assertion, we will compute the constant term of $v_k(\mathcal{E}(\chi_1,\chi_2))$ for all $k\geq 2$ with 
\begin{equation}\label{eq:cond_k}
\chi_1\omega^{2-k}|_{(\O_F/p\O_F)^{\times}}\neq \mathbbm{1}.
\end{equation}
Note that for all such $k$, the conductor of $\chi_1\omega^{2-k}$ is $\lcm(\mfr{n}_1,p)$ and by Proposition~\ref{lambda_adic_eisen}, one has $v_{k,1}(\mathcal{E}(\chi_1,\chi_2))=E_k(\chi_1\omega^{2-k},\chi_2)\in M_k^{\ord}(\mfr{n}p, \chi_1\omega^{2-k}\chi_2;\Lambda)$. If one of the above three conditions on $g$ does not hold, then the constant term of $E_k(\chi_1\omega^{2-k},\chi_2)$ is $0$ at the cusp associated to $x_{\lambda}^{-1}g$ by Proposition~\ref{432}. This yields that the constant term of $\mathcal{E}(\chi_1,\chi_2)$ at the cusp associated to $x_{\lambda}^{-1}g$ has infinitely many zeros, and hence, it has to be zero.

Now we assume that the above three conditions on $g$ hold. We claim that the constant term of $\mathcal{E}(\chi_1,\chi_2)$ at the cusp associated to $x_{\lambda}^{-1}g$ is
\[
\begin{split}
& C'\times (1+T)^{-s(\tfrac{\mfr{m}_1}{{\mfr{m}\mfr{n}'_2}})} \chi_2^{-1}(t_{\lambda}\mfr{D})  \wt{\chi_1}^{-1}(d_{\mfr{n}_1}) \chi_2(c_{\mfr{n}_2}\mfr{m}_1^{-1}) \times\\
& \wh{G}_{\chi_1\chi_2^{-1}}(T)\times \prod_{\mfr{q}|\mfr{n},\mfr{q}\nmid \cond(\chi_1\chi_2^{-1})} (1-\chi_1\chi_2^{-1}(\mfr{q})(1+T)^{-s(\mfr{q})}N(\mfr{q})^{-2})
\end{split}
\]
for some $p$-adic unit $C'$ in a finite cyclotomic extension of $\Q_p$. To see this, for $k\geq 2$ satisfying (\ref{eq:cond_k}), by Proposition~\ref{432}, the normalized constant term of $v_{k,1}(\mathcal{E}(\chi_1,\chi_2))=E_k(\chi_1\omega^{2-k},\chi_2)$ at the cusp associated to $x_{\lambda}^{-1}g$ is 
\[ 
\begin{split}
& C_1\times \omega^{k-2}(\mfr{n}_2') L(1-k, \chi_1^{-1}\omega^{2-k}\chi_2^{-1})\times \prod_{\mfr{q}|\mfr{n}, \mfr{q}\nmid \cond(\chi_1\chi_2^{-1})} (1-\chi_1\omega^{2-k}\chi_2^{-1}(\mfr{q}) N(\mfr{q})^{-k})\times \chi_2^{-1}(t_{\lambda}\mfr{D}) \times \\
& \chi_1\omega^{2-k}(d_{\mfr{n}_1}) \chi_2(c_{\mfr{n}_2}\mfr{m}_1^{-1}) \prod_{v|\mfr{n}_2} \varepsilon_v(2-k,\chi_{2,v}^{-1}\psi_v)^{-1} \times \prod_{v|\gcd(\mfr{n}_1,\mfr{n}_2)} M_{v,\tfrac{1-k}{2},e_{\mfr{p},1}}
\end{split}
\]
for some $p$-adic unit $C_1$ in a finite cyclotomic extension of $\Q_p$. Here $M_{v,\tfrac{1-k}{2},e_{\mfr{p},1}}$ was computed in Lemma~\ref{415}(3). Using Lemma~\ref{251}, one can simplify the above expression as follows
\[ 
\begin{split}
& C_2\times \omega^{k-2}(\tfrac{\mfr{m}_1}{{\mfr{m}\mfr{n}'_2}}) L(1-k, \chi_1^{-1}\omega^{2-k}\chi_2^{-1})\times \prod_{\mfr{q}|\mfr{n}, \mfr{q}\nmid \cond(\chi_1\chi_2^{-1})} (1-\chi_1\omega^{2-k}\chi_2^{-1}(\mfr{q}) N(\mfr{q})^{-k})\times \chi_2^{-1}(t_{\lambda}\mfr{D}) \times \\
& \chi_1\omega^{2-k}(d_{\mfr{n}_1}) \chi_2(c_{\mfr{n}_2}\mfr{m}_1^{-1}) \prod_{v|\mfr{n}_2} \varepsilon_v(2-k,\chi_{2,v}^{-1}\psi_v)^{-1} \times \prod_{v|\gcd(\mfr{n}_1,\mfr{n}_2)} \frac{\tau(\chi_{1,v})}{\tau(\chi_{1,v}^{-1}\chi_{2,v})}
\end{split}
\]
for some $p$-adic unit $C_2$ in a finite cyclotomic extension of $\Q_p$. It is easy to see that $v_{k,1}((1+T)^{-s(\tfrac{\mfr{m}_1}{{\mfr{m}\mfr{n}'_2}})})$ is a $p$-adic unit times $\omega^{k-2}(\tfrac{\mfr{m}_1}{{\mfr{m}\mfr{n}'_2}})$ and that $v_{k,1}(\wt{\chi_1}^{-1})(\mfr{a})$ is a $p$-adic unit times $\chi_1^{-1}\omega^{k-2}(\mfr{a})$ for all integral ideals $\mfr{a}$ prime to $\mfr{n}p$. Moreover, a direct computation yields that 
$$
v_{k,1}(1-\chi_1\chi_2^{-1}(\mfr{q})(1+T)^{-s(\mfr{q})}N(\mfr{q})^{-2})=1-\chi_1\omega^{2-k}\chi_2^{-1}(\mfr{q}) N(\mfr{q})^{-k}.
$$
To complete the proof, we claim that $\prod_{v|\mfr{n}_2} \varepsilon_v(2-k,\chi_{2,v}^{-1},\psi_v)$ and $\prod_{v|\gcd(\mfr{n}_1,\mfr{n}_2)} \tfrac{\tau(\chi_{1,v})}{\tau(\chi_{1,v}^{-1}\chi_{2,v})}$ are also $p$-adic units in a finite cyclotomic extension over $\Qp$. By the computation in \cite[p.~259]{Hida2} (see the discussion for the equation (4a) in \textit{loc.~cit.}), one can write 
$\varepsilon_v(2-k,\chi_{2,v}^{-1},\psi_v)$ as $\tau(\chi_{2,v})$ times a $p$-adic unit.
Then the claim follows from (\ref{eq:gauss_sum}) and the assumption that $\mfr{n}_2$ is prime to $p$.
\end{proof}
 
\begin{defn}\label{524}
Denote by $\mfr{P}$ the maximal ideal of $\O_{\infty}$, and let the notation be as above. Assume that $\n_1\n_2=\n p$ or $\n$.
\begin{enumerate}
\item The \textit{Eisenstein ideal $\mathcal{I}(\chi_1,\chi_2)$ associated with the pair of characters $(\chi_1,\chi_2)$} is defined as the kernel of the $\Lambda$-module homomorphism 
$$
\mathcal{H}^{\ord}(\n,\chi_1\chi_2;\Lambda)\map \Lambda;\; T\mapsto C(1,T\cdot \mathcal{E}(\chi_1,\chi_2)).
$$
We denote by $\mfr{M}(\chi_1,\chi_2):=(\mathcal{I}(\chi_1,\chi_2), \mfr{P}, T)$ the maximal ideal of $\mathcal{H}^{\ord}(\n,\chi_1\chi_2;\Lambda)$ containing $\mathcal{I}(\chi_1,\chi_2)$. We denote by $I(\chi_1,\chi_2)$ (resp.~$m(\chi_1,\chi_2)$) the image of $\mathcal{I}(\chi_1,\chi_2)$ (resp.~$\mfr{M}(\chi_1,\chi_2)$) in the cuspidal Hecke algebra $h^{\ord}(\n,\chi_1\chi_2;\Lambda)$.

\item We say that a pair of narrow ideal class characters $(\chi_1,\chi_2)$ is \textit{not exceptional} if the maximal ideal $\mfr{M}(\chi_1,\chi_2)$ does not contain any Eisenstein ideal other than $\mathcal{I}(\chi_1,\chi_2)$. 
\end{enumerate}
\end{defn}

\begin{prop}\label{525}
Let $(\chi_1,\chi_2)$ and $(\chi_1',\chi_2')$ be two pairs of narrow ray class characters of conductors $\n_i$ and $\n'_i$ for $i=1,2$, respectively. Assume that $\n_1\n_2=\n'_1\n'_2=\n$ or $\n p$. Then two Eisenstein sereis $\mathcal{E}(\chi_1,\chi_2)$ and $\mathcal{E}(\chi'_1,\chi'_2)$ are the same modulo $(\mfr{P}, T)$ if and only if 
$$
\begin{cases}
\chi_1\equiv\chi'_1 \mbox{ and } \chi_2\equiv\chi'_2 \bmod \mfr{P} ;\mbox{ or}\\
\chi_1\equiv\chi'_2\omega^{-1},\; \chi_2\equiv\chi'_1\omega,\mbox{ and } \chi_1\omega\chi_2^{-1}(\mfr{p})\equiv1 \bmod \mfr{P} \mbox{ for all } \mfr{p}|p.
\end{cases}
$$

If we assume further that $p\nmid \phi(N(\n)) h_F$, then $\mathcal{E}(\chi_1,\chi_2)$ and $\mathcal{E}(\chi'_1,\chi'_2)$ are the same modulo $(\mfr{P}, T)$ if and only if 
\begin{equation}
\begin{cases}
\chi_1=\chi'_1 \mbox{ and } \chi_2=\chi'_2;\; \mbox{or}\\
\chi_1=\chi'_2\omega^{-1},\; \chi_2=\chi'_1\omega,\mbox{ and } \chi_1\omega\chi_2^{-1}(\mfr{p})=1 \mbox{ for all } \mfr{p}|p.
\end{cases}
\end{equation}
\end{prop}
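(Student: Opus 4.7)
The plan is to reduce the $T(\mfr{q})$-eigenvalues of the $\Lambda$-adic Eisenstein series modulo $(\mfr{P},T)$ and then exploit linear independence of characters to extract the two alternatives. From the proof of Proposition~\ref{522}, for any prime ideal $\mfr{q}$ the eigenvalue of $T(\mfr{q})$ on $\mathcal{E}(\chi_1,\chi_2)$ is $a(\mfr{q}) = \ol{\chi_1}(\mfr{q})N(\mfr{q}) + \chi_2(\mfr{q}) \in \Lambda$. Since $\ol{\chi_1}(\mfr{q}) = \chi_1(\mfr{q})(1+T)^{s(\mfr{q})} \equiv \chi_1(\mfr{q})\pmod{T}$ and $N(\mfr{q}) = \omega(\mfr{q})\<N(\mfr{q})\> \equiv \omega(\mfr{q})\pmod{\mfr{P}}$ for $\mfr{q}\nmid p$, the reduction of $a(\mfr{q})$ modulo $(\mfr{P},T)$ is $\chi_2(\mfr{q}) + \chi_1\omega(\mfr{q})$ when $\mfr{q}\nmid p$, and is $\chi_2(\mfr{q})$ when $\mfr{q}\mid p$ (where both $\ol{\chi_1}(\mfr{q})$ and $\omega(\mfr{q})$ vanish by convention). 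The analogous formula holds for $(\chi_1',\chi_2')$.

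Now assume the eigenvalues agree modulo $(\mfr{P},T)$ for every prime $\mfr{q}$. For primes away from $p$ this yields the identity
\[
\ol{\chi_2}(\mfr{q}) + \ol{\chi_1\omega}(\mfr{q}) \equiv \ol{\chi_2'}(\mfr{q}) + \ol{\chi_1'\omega}(\mfr{q}) \pmod{\mfr{P}}.
\]
Viewing $\chi_1,\chi_2,\chi_1',\chi_2',\omega$ as characters of a common narrow ray class group $\Cl_F^+(\mfr{m})$ for a modulus $\mfr{m}$ divisible by all their conductors and by $p$, Chebotarev density guarantees that Frobenius classes of primes outside $\mfr{m}$ exhaust the group, so the above identity of functions extends from primes to the whole group. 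Linear independence of distinct characters $\Cl_F^+(\mfr{m})\to\ol{\F}_p^\times$ (Dedekind's theorem, valid in any characteristic) then forces the multiset equality $\{\ol{\chi_2},\ol{\chi_1\omega}\} = \{\ol{\chi_2'},\ol{\chi_1'\omega}\}$. This yields the stated dichotomy: either (i) $\chi_2\equiv\chi_2'$ and $\chi_1\equiv\chi_1'\pmod{\mfr{P}}$, or (ii) $\chi_2\equiv\chi_1'\omega$ and $\chi_1\equiv\chi_2'\omega^{-1}\pmod{\mfr{P}}$.

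In case (i) no further constraint is needed: the congruences of eigenvalues at primes $\mfr{p}\mid p$ (which reduce to $\chi_2(\mfr{p})\equiv \chi_2'(\mfr{p})$) are automatic. In case (ii), the congruence of eigenvalues at primes $\mfr{p}\mid p$ reads $\chi_2(\mfr{p})\equiv\chi_2'(\mfr{p})\pmod{\mfr{P}}$; substituting $\chi_2'\equiv\chi_1\omega$ turns this into $\chi_1\omega\chi_2^{-1}(\mfr{p})\equiv 1\pmod{\mfr{P}}$, producing exactly the extra hypothesis recorded in the statement. The converse implications in both cases are immediate from the explicit formula for $a(\mfr{q})$. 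For the strengthened statement, under $p\nmid \phi(N(\mfr{n}))h_F$ I would verify that the narrow ray class group $\Cl_F^+(\mfr{n}p)$ has order prime to $p$ (combining $p\nmid h_F$, $p\nmid \phi(N(\mfr{n}))$, and the fact that $\prod_{\mfr{p}\mid p}(N(\mfr{p})-1)$ is prime to $p$ since $p$ is unramified in $F$), so that reduction modulo $\mfr{P}$ is injective on characters of this group and the congruences of part~(1) become equalities.

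The main technical obstacle I anticipate is handling degenerate configurations in the linear independence step: if some of the four characters $\ol{\chi_2}$, $\ol{\chi_1\omega}$, $\ol{\chi_2'}$, $\ol{\chi_1'\omega}$ already coincide modulo $\mfr{P}$, the dichotomy must be recovered by collecting like terms in $\sum c_\psi\psi=0$ rather than by a direct multiset argument. However, every such collapse still forces one of the two alternatives (i) or (ii), and the constraint at primes above $p$ is extracted by the same substitution, so the conclusion persists.
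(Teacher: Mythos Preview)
Your proposal is correct and follows essentially the same approach as the paper: reduce the $T(\mfr{q})$-eigenvalue $\ol{\chi_1}(\mfr{q})N(\mfr{q})+\chi_2(\mfr{q})$ modulo $(\mfr{P},T)$ to $\chi_1\omega(\mfr{q})+\chi_2(\mfr{q})$, invoke linear independence of characters (the paper calls this ``Artin's lemma'') to obtain the dichotomy, and handle primes above $p$ separately. For the strengthened conclusion, your argument that $|\Cl_F^+(\mfr{n}p)|$ is prime to $p$ under the hypothesis is equivalent to the paper's observation that $\Qp[\chi_i,\chi_i']$ is unramified over $\Qp$, whence Teichm\"uller lifting upgrades congruences to equalities.
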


\begin{proof}
We follow the argument in \cite[Lemma 1.4.9]{Ohta3}. For any prime ideal $\mfr{q}$ not dividing $\n p$, we have
$$
\wt{\chi}_1(\mfr{q})N(\mfr{q})+\chi_2(\mfr{q})\equiv \wt{\chi'}_1(\mfr{q})N(\mfr{q})+\chi'_2(\mfr{q})\bmod (\mfr{P}, T).
$$
We obtain that
$$
\chi_1(\mfr{q})N(\mfr{q})+\chi_2(\mfr{q})\equiv \chi'_1(\mfr{q})N(\mfr{q})+\chi'_2(\mfr{q})\bmod \mfr{P}.
$$
Since $N(\mfr{q})\equiv \omega(\mfr{q}) \bmod \mfr{P}$, we have
$$
\chi_1(\mfr{q})\omega(\mfr{q})+\chi_2(\mfr{q})\equiv \chi'_1(\mfr{q})\omega(\mfr{q})+\chi'_2(\mfr{q})\bmod \mfr{P}.
$$
Thus by Artin's lemma on the linear independence of characters, we have
$$
\begin{cases}
\chi_1\omega\equiv \chi'_1\omega\mbox{ and } \chi_2\equiv \chi'_2 \bmod \mfr{P}, \mbox{ or }\\
\chi_1\omega\equiv \chi'_2 \mbox{ and } \chi_2\equiv \chi'_1\omega\bmod \mfr{P}. 
\end{cases}
$$
Since the narrow ray class number $h_F^+(\m)=|\Cl_F^+(\m)|$ divides $2^d\phi(N(\m))h_F$ for all integral ideals $\m$ of $\O_F$, the assumption that $p\nmid \phi(N(\n)) h_F$ implies that the the field $\Qp[\chi_i, \chi_i'\mid i=1,2]$ is unramified over $\Qp$.
Therefore, by the Teichm\"{u}ller lifting, we know that 
$$
\begin{cases}
\chi_1= \chi'_1\mbox{ and } \chi_2= \chi'_2, \mbox{ or }\\
\chi_1\omega= \chi'_2 \mbox{ and } \chi_2= \chi'_1\omega. 
\end{cases}
$$
To complete the proof, we recall that for each prime ideal $\mfr{p}$ dividing $p$, the $\mfr{p}$th Fourier coefficient of $\mathcal{E}(\chi_1,\chi_2)$ (resp.~$\mathcal{E}(\chi'_1,\chi'_2)$) is $\chi_2(\mfr{p})$ (resp.~$\chi'_2(\mfr{p})$). Hence, we have $\chi_1\omega\chi_2^{-1}(\mfr{p})\equiv 1 \bmod \mfr{P}$ and have $\chi_1\omega\chi_2^{-1}(\mfr{p})= 1$ if $p\nmid \phi(N(\n)) h_F$ for all $\mfr{p}|p$.
\end{proof}

The above proposition shows that a pair of characters $(\chi_1,\chi_2)$ is not exceptional if the following condition holds
\begin{equation}\label{eq:exception}
p\nmid N(\n)\phi(N(\n)) h_F \mbox{ and } \chi_1\omega\chi_2^{-1}(\mfr{p})\neq 1 \mbox{ for some }\mfr{p}|p.
\end{equation}
%%%%%%%%%%%%%%%%%%%%%%%%%%%%%%%%%%%%%%%%%%%%%%%%%%%%%%%%%%%%%%%%%%%%%%%%%%%%%%%%%%%%%%%%%%%%%%%%%%%%%%%
\section{Hilbert modular varieties and $p$-adic modular forms}\label{sec:05}
Throughout this section, we will write $\O=\O_F$ for simplicity and denote by $k$ a positive integer. Let $\mfr{n}$ be a nonzero integral ideal in $\O$. We set 
$$
\Gamma_1(\mfr{n})=\left\{\left(\begin{array}{cc} a & b\\ c & d\end{array}\right)\in \GL_2(\O_F)^+ \mid c \in \mfr{n}, d-1\in \mfr{n} \right\} \mbox{ and }
\Gamma_1^1(\mfr{n})=\left\{\left(\begin{array}{cc} a & b\\ c & d\end{array}\right)\in \Gamma_1(\mfr{n})\mid a-1\in \mfr{n} \right\}.
$$ 
In this section, we will first review moduli problems with different level structures and their compactifications. We then review the definition of $p$-adic modular forms. The main goal in this section is to prove a control theorem for $M^{\ord}(\mfr{n},\chi;\Lambda)$ (Corollary~\ref{655}) which is a key result in proving Theorem~\ref{711}. We are only able to prove such a theorem by using its geometric formulation (see (\ref{eq:geom_family})). This is the aim of the last section.
%%%%%%%%%%%%%%%%%%%%%%%%%%%%%%%%%%%%%%%%%%%%%%%%%%%%%%%%%%%%%%%%%%%%%%%%%%%%%%%%%%%%%%%%%%%%%%%%%%%%%
\subsection{Moduli problems with level structures}\label{sec:51}
In this subsection, we review moduli problems with different level structures. We refer the reader to \cite[\S 4.1.2]{Hida3} and \cite[Ch.~3, \S 6]{Gor} for more information. 

Recall that an abelian scheme $A$ with real multiplication (RM) by $\O$ over a base scheme $B$ is a proper smooth geometrically irreducible group scheme over $B$ together with an injection $\iota:\O\embed \End(A_{/B})$. We say that the abelian scheme $A$ satisfies the Rapoport condition if
\begin{equation}\tag{R}\label{R}
\Lie(A) \mbox{ is locally free } \O\otimes_{\Z} \O_{B}\mbox{-module of rank }1.
\end{equation}

\begin{defn}
A \textit{Hilbert-Blumenthal abelian variety (HBAV) $A$ over a scheme $B$} is an abelian scheme $A$ with RM by $\O$ over $B$ of relative dimension $d=[F:\Q]$ satisfying the condition (\ref{R}).  
\end{defn}

We will denote by $A^t$ the dual abelian scheme of $A$. Let $\mfr{c}$ be a fractional ideal of $F$. A $\mfr{c}$-polarization is an $\O$-linear isomorphism 
$\lambda:(\mathcal{M}_A,\mathcal{M}_A^+)\isom (\mfr{c},\mfr{c}^+)$ of sheaves in the \'{e}tale topology, where $\mathcal{M}_A=\Hom_{\O}(A,A^t)^{\sym}$. For the definition of $\Hom_{\O}(A,A^t)^{\sym}$, see \cite[Ch.~3, \S 6]{Gor}.    

For each integral ideal $\mfr{n}$ of $F$, \textit{a $\mu_{\mfr{n}}$-level structure (also, called by $\Gamma_1^1(\mfr{n})$-level structure) on a HBAV} $A_{/B}$ is an $\O$-linear closed immersion 
$$
\iota_{\mfr{n}}:\mu_{\mfr{n}}\otimes_{\Z} \mfr{D}^{-1}\embed A
$$ 
of group schemes over $B$. Here ${\mu_{\mfr{n}}}\otimes_{\Z} \mfr{D}^{-1}$ is the $\mfr{n}$-torsion points of $\bbG_m\otimes_{\Z}\mfr{D}^{-1}$. That is 
$$
(\mu_{\mfr{n}}\otimes_{\Z} \mfr{D}^{-1})(R)=\{x\in \bbG_m(R)\otimes_{\Z} \mfr{D}^{-1}\mid \mfr{n}\cdot x=0\}
$$ 
for $\O$-algebra $R$. A \textit{$\mu_{p^{\infty}}$-level structure} is a compatible sequence of $\mu_{p^n}$-level structures for $n\in \Z_{>0}$. Note that an abelian variety over a field of \ch $p$ with real multiplication by $\O$ and with $\mu_{p^n}$-level structure is ordinary in the sense that the connected component $A[p^r]^{\circ}$ of $A[p^r]$ is isomorphic to $\mu_{p^r}^d$ \'{e}tale locally.

Let $\mfr{n}$ and $\mfr{c}$ be, respectively, a fixed integral ideal and a fractional ideal of $F$ such that $\Gamma_1^1(\mfr{n})$ is neat, i.e., each test object $(A,\lambda,\iota_{\mfr{n}})$ of $\Gamma_1^1(\mfr{n})$-level structure does not have any nontrivial automorphism, where $\lambda$ is a $\mfr{c}$-polarization (for example, $\mfr{n}$ is generated by a positive integer $N\geq 4$). The functor assigning to a $\Z$-scheme $B$ the set of isomorphism classes of tuples $(A,\lambda,\iota_{\mfr{n}})_{/B}$ (resp.~$(A,(\O^{\times})^+\lambda,\iota_{\mfr{n}})_{/B}$) is representable by a geometrically connected, quasi-projective scheme $\mfr{M}(\mfr{c},\Gamma_1^1(\mfr{n}))$ (resp.~$\mfr{M}(\mfr{c},\Gamma_1(\mfr{n}))$) over $\Z$, which is smooth over $\Z[\frac{1}{N(\mfr{nD})}]$. Note that the coarse moduli schemes $\mfr{M}(\mfr{c},\Gamma_1^1(\mfr{n}))$ and $\mfr{M}(\mfr{c},\Gamma_1(\mfr{n}))$ exist for all integral ideals $\mfr{n}$.
%%%%%%%%%%%%%%%%%%%%%%%%%%%%%%%%%%%%%%%%%%%%%%%%%%%%%%%%%%%%%%%%%%%%%%%%%%%%%%%%%%%%%%%%%%%%%%%%%%%%%%%
\subsection{Geometric modular forms}\label{sec:52}
In this subsection, we review the definition of geometric modular forms of (parallel) weight following \cite[\S 4.1]{Hida3} and \cite[Ch.~5, \S 1]{Gor}. 

Let $\vartheta:(\mathcal{A}(\mfr{c}), \iota_{\mfr{n}})\map (\mfr{M}(\mfr{c},\Gamma_1(\mfr{n})),\iota_{\mfr{n}})$ be the universal abelian scheme with real multiplication by $\O$ with $\mu_{\mfr{n}}$-level structure (see \cite[\S 4.1]{DT} for more details). We denote by $\ul{\omega}=\det \vartheta_*\Omega_{\mathcal{A}(\mfr{c})/\mfr{M}(\mfr{c},\iota_{\mfr{n}})}$ the determinant of the pushforward of the sheaf of relative differentials on $\mathcal{A}(\mfr{c})$. For a $\Z[\frac{1}{N(\mfr{nD})}]$-algebra $R$, set $\mfr{M}(\mfr{c},\Gamma_1(\mfr{n}))_{/R}:=\mfr{M}(\mfr{c},\Gamma_1(\mfr{n}))\times_{\Spec(\Z[\frac{1}{N(\mfr{nD})}])} \Spec(R)$. 

\begin{defn}\label{621}
Let $R$ be a $\Z[\frac{1}{N(\mfr{nD})}]$-algebra. A \textit{$\mfr{c}$-Hilbert modular form $f$ over $R$ of level $\Gamma_1(\mfr{n})$  and weight $k$} is a global section of $\ul{\omega}^k$ on $\mfr{M}(\mfr{c},\Gamma_1(\mfr{n}))_{/R}$.
\end{defn}

By Definition~\ref{621}, we know that the space of $\mfr{c}$-Hilbert modular forms over $R$ of level $\Gamma_1(\mfr{n})$ and weight $k$ is $H^0(\mfr{M}(\mfr{c},\Gamma_1(\mfr{n}))_{/R},\ul{\omega}^k)$.
By the discussion in the previous section, we know that the HBAV $A$ over $\Spec(R)$ satisfies the Rapoport condition, i.e., $\ul{\omega}$ is a free $\O\otimes_{\Z} R$-module of rank $1$. A generator $\omega_0\in \ul{\omega}$ is called a non-vanishing differential. The following definition is equivalent to Definition~\ref{621} (see \cite[\S 1.2]{Kat}).

\begin{defn}
A \textit{$\mfr{c}$-Hilbert modular form $f$ over $R$ of level $\Gamma_1(\mfr{n})$ and weight $k$} is a rule
$$
(A,(\O^{\times})^+\lambda,\iota_{\mfr{n}},\omega_0)_{/\Spec(R)} \mapsto f(A,(\O^{\times})^+\lambda,\iota_{\mfr{n}},\omega_0)\in R
$$
satisfying the following properties:
\begin{enumerate}
\item for any $R$-algebra $R'$, one has
$$
f((A,(\O^{\times})^+\lambda,\iota_{\mfr{n}},\omega_0)\times_{\Spec(R)} \Spec(R'))=f(A,(\O^{\times})^+\lambda,\iota_{\mfr{n}},\omega_0)\otimes_R R',
$$

\item the value of $f$ on $(A,(\O^{\times})^+\lambda,\iota_{\mfr{n}},\omega_0)_{/\Spec(R)}$ only depends on its isomorphism class,

\item for $\alpha\in \O^{\times}$, we have
$$
f(A,(\O^{\times})^+\lambda,\iota_{\mfr{n}},\alpha^{-1}\omega_0)=N_{F/\Q}(\alpha)^k f(A,(\O^{\times})^+\lambda,\iota_{\mfr{n}},\omega_0).
$$
\end{enumerate}
We denote by $G_k(\mfr{c},\Gamma_1(\mfr{n});R)$ the space of modular forms over $R$ of level $\Gamma_1(\mfr{n})$ and weight $k$. 
\end{defn}

By the above two definitions, we obtain the equality 
\begin{equation}\label{eq:geom_form}
H^0(\mfr{M}(\mfr{c},\Gamma_1(\mfr{n}))_{/R},\ul{\omega}^k)=G_k(\mfr{c},\Gamma_1(\mfr{n});R)
\end{equation}
for all $\Z[\frac{1}{N(\mfr{nD})}]$-algebras $R$.
 
Let $\chi_0:(\O/\mfr{n}\O)^{\times}\map \C^{\times}$ be a character of finite order, and assume that $R$ contains all of the values of $\chi_0$. We say that $f\in G_k(\mfr{c},\Gamma_1(\mfr{n});R)$ is \textit{of type $\chi_0$} if 
$$
a\cdot f(A,(\O^{\times})^+\lambda,\iota_{\mfr{n}},\omega_0)=f(A,(\O^{\times})^+\lambda,a\iota_{\mfr{n}},\omega_0)=\chi_0(a)f,
$$
for all $a\in (\O/\mfr{n}\O)^{\times}$. We denote by $G_k(\mfr{c},\mfr{n},\chi_0;R)$ and $H^0(\mfr{M}(\mfr{c},\Gamma_1(\mfr{n}))_{/R},\ul{\omega}^k)^{(\chi_0)}$ the space of $\mfr{c}$-Hilbert modular forms of level $\Gamma_1(\mfr{n})$, type $\chi_0$, and weight $k$.

When $R=\C$, it is known \cite[\S 4.1.3]{Hida3} that there are canonical isomorphisms
$$
M_k(\Gamma_1(\mfr{c},\mfr{n});\C)\isom G_k(\mfr{c},\Gamma_1(\mfr{n});\C).
$$
Recall that the space $M_k(\Gamma_1(\mfr{c},\mfr{n});\C)$ was defined in Section 2. This isomorphism is obtained by the fact that for all $z\in \mathbf{H}^d$, one can construct a complex HBAV $A_z$, and all complex HBAVs are of this form. Here $\mathbf{H}$ is the complex upper half plane.

Now, we fix an odd rational prime $p$ unramified in $F$. Let $\mfr{n}$ be an integral ideal prime to $p$. 
Recall that Deligne\textendash Ribet \cite[\S 5]{DR} proved that $q$-expansion principle holds for modular forms of level $\Gamma_1^1(\mfr{n})$, and hence, it also holds for modular forms of level $\Gamma_1(\mfr{n})$. It asserts that the $q$-expansion of a modular form at the cusp $\infty$ determines the modular form, i.e., if all of the coefficients of $f$ are in a $\Zp$-algebra $R$, then $f\in G_k(\mfr{c},\Gamma_1(\mfr{n});R)$. Thus, we have the following isomorphisms:
$$
M_k(\Gamma_1(\mfr{c},\mfr{n});R)\isom G_k(\mfr{c},\Gamma_1(\mfr{n});R).
$$  

Let $\{t_1,\ldots, t_{h^+_F}\}$ be a fixed set of representatives of $\Cl_F^+$ such that $t_{\lambda}$ and $\mfr{n}p$ are coprime for all $\lambda=1,\ldots,,h^+_F$. We set 
\begin{equation}\label{eq:adelic_geom_form}
G_k(\mfr{n};R):=\bigoplus_{\lambda=1}^{h^+_F} G_k(t_{\lambda}\mfr{D},\Gamma_1(\mfr{n});R).
\end{equation}
The space $G_k(\mfr{n};R)$ coincides with the space $M_k(K_1(\mfr{n});R)$ defined in Section~\ref{sec:02}.

In Section~\ref{sec:02}, we reviewed the Hecke action on $M_k(K_1(\mfr{n});R)$. This action induces a Hecke action on $G_k(\mfr{n};R)$. One can also define the action geometrically (see \cite[\S 4.1.10]{Hida3} for the definition). Let $e^{\circ}=\lim_{n\to\infty} \prod_{\mfr{p}|p} T(\mfr{p})^{n!}$, and let $e=\lim_{n\to\infty} \prod_{\mfr{p}|p} U(\mfr{p})^{n!}$. We set 
$$
G_k^{\ord}(\mfr{n};R)=e^{\circ}\cdot G_k(\mfr{n};R)
\mbox{ and }
G_k^{\ord}(\mfr{n}p^r;R)=e\cdot G_k(\mfr{n}p^r;R)
$$
for $r\in \Z_{>0}$.
Then we have
\begin{equation}\label{eq:ord_forms}
G_k^{\ord}(\mfr{n}p^r;R)=M^{\ord}_k(K_1(\mfr{n}p^r);R)
\end{equation}
for all $r\in \Z_{\geq 0}$. 
%Note that when $r>0$, we only obtain the space of ordinary modular forms since all the test objects are ordinary modulo primes dividing $p$. We also define $G_k^{\ord}(\mfr{c},\Gamma;R)$ for $\Gamma=\Gamma(N)$, $\Gamma_1^1(\mfr{n})$, and $\Gamma_1(\mfr{n})$ in the same manner.
%%%%%%%%%%%%%%%%%%%%%%%%%%%%%%%%%%%%%%%%%%%%%%%%%%%%%%%%%%%%%%%%%%%%%%%%%%%%%%%%%%%%%%%%%%%%%%%%%%%%%%%%%
\subsection{Toroidal compactification and minimal compacification}\label{sec:53}
From now on, we fix an integral ideal $\mfr{n}$ and a fractional ideal $\mfr{c}$ such that $\mfr{M}(\mfr{c},\Gamma_1(\mfr{n}))$ is a fine moduli scheme. For simplicity, we write $\mfr{M}(\mfr{c},\Gamma_1(\mfr{n}))$ as $\mfr{M}$.
The existence of toroidal compactifications and minimal compactification of Hilbert modular varieties $\mfr{M}$ has been proved in \cite{Dim}. We refer the reader to \textit{loc.~cit.} for more details.  

To a smooth rational cone decomposition $\Sigma$ of $F^+$ (see \textit{loc.~cit.} for the definition), one can attach the toroidal compactification $M=M_{\Sigma}(\mfr{c},\Gamma_1(\mfr{n}))$, which is proper smooth scheme over $\Z[\frac{1}{N(\mfr{nD})}]$ containing $\mfr{M}$ as an open dense subscheme. The boundary $M-\mfr{M}$ is a divisor with normal crossing. Moreover, there is a tuple $(\mathcal{G},\lambda, \eta)$ over $M$, where $\pi:\mathcal{G}\map M$ is a semi-abelian scheme with $\O$-action, $\lambda:\mathcal{G}\map \mathcal{G}^t$ is a homomorphism such that the pullback of $\mfr{M}\subset M$ in $\mathcal{G}$ is $\mathcal{A}(\mfr{c})$, and $\eta$ is the corresponding level structure. We also denote by $\ul{\omega}=\det \pi_*\Omega_{\mathcal{G}/M}$ the determinant of the sheaf of relative differentials on $M$, which extends the sheaf of relative differentials on $\mfr{M}$. 
The Koecher's principle \cite[Theorem~7.1]{Dim} asserts that for each positive integer $k$, if $F\neq \Q$, one has
$$
H^0(M,\ul{\omega}^{k})=H^0(\mfr{M}, \ul{\omega}^{k}).
$$
Therefore, $H^0(M,\ul{\omega}^{k})$ is independent of the choice of the cone decomposition $\Sigma$ for all positive integers $k$.  

%%%%%%%%%%%%%%%%%%%%%%%%%%%%%%%%%%%%%%%%%%%%%%%%%%%%%%%%%%%%%%%%%%%%%%%%%%%%%%%%%%%%%%%%%%%%%%%%%%%%%%%%%
%\subsection{Minimal compactification}\label{sec:54}
%Let the notation be in the previous section. The minimal compactification of $\mfr{M}$ was established in \cite{Dim}. 

We put $\O_{\mathcal{M}}=\bigoplus_{k\geq 0} H^0(\mfr{M}, \ul{\omega}^{k})$. The minimal compactification of $\mfr{M}$ is given by $M^*:=\Proj(\O_{\mathcal{M}})$. It is projective, normal, and flat over $\Z[\frac{1}{N(\mfr{nD})}]$. On $\C$-points, it is obtained by adjoining one point at each cusp, i.e., $M^*(\C)=\mfr{M}(\C)\cup \{\mbox{cusps}\}$.  The invertible sheaf $\ul{\omega}$ on $\mfr{M}$ extends to an ample line bundle \cite[Lemma 2.1]{DW}, also denoted by $\ul{\omega}$, on $M^*$. 

We now fix an odd rational prime $p$ unramified in $F$ and relatively prime to $\mfr{n}$ and $\mfr{c}$. Then for $a\in \Z_{>0}$ big enough, we have
$$
H^0(M^*_{/W}, \ul{\omega}^{a(p-1)})\otimes_W \bbF=H^0(M^*_{/\bbF}, \ul{\omega}^{a(p-1)}),
$$
where $W$ is a $p$-adically complete DVR. One obtains a lifting $E\in H^0(M^*_{/W}, \ul{\omega}^{a(p-1)})$ of $\Ha^a$, where $\Ha\in H^0(M^*_{/\bbF}, \ul{\omega}^{(p-1)})$ is the Hasse invariant. Recall that the Hasse invariant $\Ha$ satisfies the property that 
\begin{equation}\label{eq:Hasse}
\Ha\equiv 1 \bmod p.
\end{equation}
See \cite[\S 4.1]{Hida3} for more details of $\Ha$ and $E$. Since the Hasse invariant is a nontrivial section on $\ul{\omega}^{p-1}$, the ordinary locus of $M^*$, which is denoted by $S^*=M^*[\frac{1}{E}]\subset M^*$, is defined by $S^*:=\Spec(\O_{\mathcal{M}}/(E-1))$.
% Moreover, it is affine and irreducible (see \textit{loc.~cit.}). 
We put $S=M[\frac{1}{E}]$ and $\mfr{S}=\mfr{M}[\frac{1}{E}]$. For $r\in \Z_{>0}$, one can view $E$ as an element in $H^0(\mfr{M}(\mfr{c}, \Gamma_1(\mfr{n}p^r))_{/W},\ul{\omega}^{a(p-1)})$ via the natural embedding 
$$
H^0(\mfr{M}_{/W},\ul{\omega}^{a(p-1)})\embed H^0(\mfr{M}(\mfr{c}, \Gamma_1(\mfr{n}p^r))_{/W},\ul{\omega}^{a(p-1)})
$$ 
induced by the natural forgetful morphism $\mfr{M}(\mfr{c}, \Gamma_1(\mfr{n}p^r))\to \mfr{M}$.
We set $\mfr{S}(\mfr{c},\Gamma_1(\mfr{n}p^r))=\mfr{M}(\mfr{c}, \Gamma_1(\mfr{n}p^r))[\frac{1}{E}]$. Note that $S$ and $\mfr{S}$ are not affine. 
%From the discussion in the previous section, one has
%\begin{equation}\label{ord_form}
%eH^0(\mfr{S}(\mfr{c}, \Gamma_1(\mfr{n}p^r)),\ul{\omega}_{/W}^{k})=G^{\ord}_k(\mfr{c},\Gamma_1(\mfr{n}p^r);W).
%\end{equation}

Recall that we have a canonical morphism $\pi: M\map M^*,$
which induces a canonical morphism $\pi:S\map S^*$ \cite[Theorem 8.6]{Dim}.
\begin{lemma}\label{54}
Let the notation be as above, and let $W_m=W/p^mW$. Suppose that $\mfr{n}$ is divisible by a positive integer $N\geq 3$. Then we have
$$
\pi_*(\ul{\omega}^k_{/W})\otimes_W W_m\isom\pi_*(\ul{\omega}^k_{/W}\otimes_W W_m)
$$
for $k\in \Z_{>1}$.
\end{lemma}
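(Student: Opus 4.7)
The plan is to translate the desired isomorphism into a cohomology vanishing statement via the long exact sequence associated to multiplication by $p^m$. Tensoring the short exact sequence
$$
0\to \ul{\omega}^k_{/S}\xrightarrow{p^m}\ul{\omega}^k_{/S}\to \ul{\omega}^k_{/S}\otimes_W W_m\to 0
$$
with $\pi_*$ produces a long exact sequence from which the proposed isomorphism $\pi_*(\ul{\omega}^k_{/S})\otimes_W W_m\isom \pi_*(\ul{\omega}^k_{/S}\otimes_W W_m)$ is equivalent to the vanishing of $(R^1\pi_*\ul{\omega}^k_{/S})[p^m]$. It therefore suffices to show that $R^1\pi_*\ul{\omega}^k_{/S}$ is $p$-torsion free; in fact I expect it to vanish identically.

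To prove the vanishing, I would first note that $\pi$ restricts to an isomorphism on the open part $\mfr{S}\subset S$ onto its image in $S^*$, so $R^1\pi_*\ul{\omega}^k_{/S}$ is supported on the finite set of cusps $S^*\setminus \mfr{S}$. By the theorem on formal functions, its stalk at a cusp is the inverse limit of $H^1$ of $\ul{\omega}^k_{/S}$ along infinitesimal thickenings of the contracted fiber. Using the toroidal description of $S$ near the cusps from \cite{Dim} --- in which $S$ is étale locally given by an affine chart attached to a smooth rational cone decomposition of $F^+$ and $\pi$ collapses the toric boundary to a point --- one can compute these $H^1$'s. The assumption that $\mfr{n}$ contains a rational integer $N\geq 3$ is crucial here as it kills automorphisms of the test objects, so the formal-local picture is an honest toric affine scheme rather than a quotient.

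A cleaner route is via the projection formula. Since $S^*=\Proj(\O_{\mathcal{M}})$ and the ample line bundle on $S^*$ pulls back via $\pi$ to $\ul{\omega}_{/S}$, the projection formula gives
$$
\pi_*\ul{\omega}^k_{/S}=\ul{\omega}^k_{/S^*}\otimes \pi_*\mathcal{O}_S,\qquad \pi_*(\ul{\omega}^k_{/S}\otimes_W W_m)=\ul{\omega}^k_{/S^*}\otimes_W W_m\otimes \pi_*\mathcal{O}_{S/W_m},
$$
reducing the lemma to the base change $\pi_*\mathcal{O}_S\otimes_W W_m\isom \pi_*\mathcal{O}_{S/W_m}$. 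Since $\pi$ is proper birational and $S^*$ is normal, Zariski's main theorem gives $\pi_*\mathcal{O}_S=\mathcal{O}_{S^*}$, and the remaining task is the vanishing of $R^1\pi_*\mathcal{O}_S$, i.e.\ that the cusp singularities of $S^*$ are rational.

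The main obstacle in either approach is controlling the cohomology of the formal neighborhood of a cusp in $S^*$. In the toroidal model this amounts to computing $H^1$ of a formal affine toric scheme with coefficients in a line bundle trivialized by a character of the cone's stabilizer; the vanishing (and in particular the absence of $p$-torsion) should follow from the explicit toric combinatorics together with the Koecher-type argument underlying the extension $H^0(\mfr{M},\ul{\omega}^k)=H^0(M,\ul{\omega}^k)$ recalled in Section~\ref{sec:53}. Where possible, I would prefer to quote the relevant vanishing and base-change results directly from \cite{Dim} rather than reprove them, so that the proof of the lemma reduces to the projection-formula reduction sketched above followed by a citation for the rationality of the cusp singularities (after inverting the bad primes, which here are absorbed into the hypothesis that $p$ is unramified in $F$ and prime to $\mfr{n}$).
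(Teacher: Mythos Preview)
Your approach is valid in outline but genuinely different from the paper's, and it outsources the hard step rather than doing it.

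The paper does not pass through $R^1\pi_*$ or rationality of singularities at all. Instead it works directly at the completed stalk at each cusp, using the explicit $q$-expansion description from \cite{Dim}: for a cusp $c$ attached to ideals $\mfr{b},\mfr{b}'$,
\[
\wh{\pi_*(\ul{\omega}^k_{/R})}_c=\Bigl\{\textstyle\sum_{\xi\in(\mfr{cbb}')^+\cup\{0\}}a(\xi)q^{\xi}\ :\ a(u^2\varepsilon\xi)=N_{F/\Q}(\varepsilon u^2)^{k/2}a(\xi)\ \text{for all }(u,\varepsilon)\in\O_c\Bigr\}.
\]
The whole point is that this constraint is \emph{vacuous}. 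For $\xi\neq 0$ the stabilizer satisfies $u^2\varepsilon=1$, so the relation reads $a(\xi)=a(\xi)$. For $\xi=0$ one needs $N(\varepsilon u^2)^{k/2}=1$; here $\varepsilon\gg 0$ gives $N(\varepsilon)=1$, and $u\equiv 1\pmod N$ with $N\geq 3$ forces $N(u)=+1$ (a unit with norm $\equiv 1\pmod 3$). Hence the stalk is simply the module of power series with coefficients constant along orbits, and the isomorphism $\wh{\pi_*(\ul{\omega}^k)}_c\otimes_W W_m\isom\wh{\pi_*(\ul{\omega}^k\otimes_W W_m)}_c$ is immediate. This is precisely where $N\geq 3$ enters --- not to kill automorphisms of abelian varieties in the moduli-theoretic sense you describe, but to force $N(u)=1$ for the units appearing in the cusp stabilizer.

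Your cohomological reduction (long exact sequence, projection formula, $R^1\pi_*\mathcal{O}_S=0$) would work, but it replaces a two-line stalk computation with an appeal to integral rationality of Hilbert modular cusp singularities over $W$, which you yourself flag as something you would hope to cite rather than prove. The paper's route is strictly more elementary and entirely self-contained given the toroidal local description already quoted from \cite{Dim}.
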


\begin{proof}
We follow the argument in \cite[p.~120]{Hida3}. Since $\pi$ is an isomorphism outside $S-\mfr{S}$, it suffices to show the assertion on the stalk at each cusp in $S^*$.  For each cusp $c\in S^*$ associated to integral ideals $\mfr{b}$ and $\mfr{b}'$ (see \cite[Definition~3.2]{Dim} for the definition), let $R$ be a $W[\frac{1}{N(\mfr{nD})},\zeta_c]$-algebra, where $\zeta_c$ is a $N_c$th root of unity for some positive integer $N_c$ prime to $p$. By \cite[\S 8]{Dim} and the proof of Proposition 3.3 in \textit{loc.~cit.}, the stalk of $\pi_*(\ul{\omega}^k_{/S})$ at $c$ is
$$
\wh{\pi_*(\ul{\omega}^k_{/R})}_c
= \left\{\sum_{\xi\in (\mfr{cbb}')^+\cup \{0\}} a(\xi)q^{\xi} \;\bigg|\; a(\xi)\in R, a(u^2\varepsilon\xi)=N_{F/\Q}(\varepsilon u^2)^{k/2} a(\xi) \mbox{ for all } (u,\varepsilon)\in \O_c\right\},
$$
where $\O_c=\{(u,\varepsilon)\in \O^{\times}\times (\O^{\times})^+\mid u-1\in \mfr{nbb'}^{-1}, u\varepsilon-1\in \mfr{bb'}^{-1} \}$ and $\q^{\xi}=e^{2\pi i\tr(\xi z)}$. When $u^2\varepsilon=1$, we have
$$
a(\xi)=N(\varepsilon u^2)^{k/2} a(\xi)=a(\xi).
$$
Moreover, when $\xi=0$, we have
$$
a(0)= N(\varepsilon u^2)^{k/2} a(0)=a(0),
$$
since $\varepsilon u^2\in (\O^{\times})^+$. From the argument in \cite[p.~120]{Hida3}, one sees that the above observation yields isomorphisms 
$$
\wh{\pi_*(\ul{\omega}^k_{/R})}_c\otimes_W W_m\isom \wh{\pi_*(\ul{\omega}^k_{/R}\otimes_W W_m)}_c
$$ 
for all cusps $c\in S^*$, and hence, the assertion follows.
\end{proof}

\begin{cor}\label{53}
Let the notation and the assumptions be as in Lemma~\ref{54}. Then we have
$$
H^0(S_{/W},\ul{\omega}^k)\otimes_W W_m\isom H^0(S_{/W},\ul{\omega}^k\otimes_W W_m).
$$
\end{cor}

\begin{proof}
The assertion is (Hp1) in \cite[p.~120]{Hida3}. From the discussion in  \textit{loc.~cit.}, one sees that the assertion follows from Lemma~\ref{54}.
\end{proof}

%%%%%%%%%%%%%%%%%%%%%%%%%%%%%%%%%%%%%%%%%%%%%%%%%%%%%%%%%%%%%%%%%%%%%%%%%%%%%%%%%%%%%%%%%%%%%%%%%%%%%%%
\subsection{$p$-adic modular forms}\label{sec:55}
In this section, we review the definition of $p$-adic modular forms following \cite[\S 4.1]{Hida3} and prove a control theorem for later use. For simplicity, we set $\Gamma=\Gamma_1(\mfr{n})$ and set $\Gamma^r=\Gamma\cap \Gamma_1(p^r)$.

We first review the definition of Igusa tower following \cite[\S 4.1.6]{Hida3}. Let $p$ be an odd rational prime unramified in $F$, and let $\mfr{n}$ and $\mfr{c}$ be as in the previous subsection. Let $W$ be a $p$-adically complete DVR, and set $W_m:=W/p^mW$. The \textit{Hilbert modular Igusa tower} $T_{m,n}(\mfr{c},\Gamma_1(\mfr{n}))_{/W_m}$ is the moduli stack over $W_m$ that parameterizes isomorphism classes of tuples $(A, (\O^{\times})^+\lambda, \iota_{\mfr{n}}, \iota_{p^n})_{/B}$ over a $W_m$-scheme $B$, where
\begin{itemize}
\item $A\map B$ is a HBAV,

\item $\lambda:A\map A^t$ is a $\mfr{c}$-polarization,

\item $\iota_{\mfr{n}}$ and $\iota_{p^n}$ are respectively $\mu_{\mfr{n}}$ and $\mu_{p^n}$-level structure.
\end{itemize} 
The Igusa tower $T_{m,n}(\mfr{c},\Gamma_1(\mfr{n}))$ is an \'{e}tale covering over $S\otimes_W W_m$ with Galois group $(\O/p\O)^{\times}$.

Following \textit{loc.~cit.}, we define 
$$
V_{m,n}(\mfr{c},\Gamma):=H^0(T_{m,n}(\mfr{c},\Gamma)_{/W_m},\O_{T_{m,n}}),\;
V_{m,\infty}(\mfr{c},\Gamma):=\bigcup_n V_{m,n}(\mfr{c},\Gamma),
$$
$$
V(\mfr{c},\Gamma):=\varprojlim_m V_{m,\infty}(\mfr{c},\Gamma), \mbox{ and }
\mathcal{V}(\mfr{c},\Gamma):=\varinjlim_m V_{m,\infty}(\mfr{c},\Gamma).
$$ 
Here the projective limit is with respect to the natural isomorphisms $V_{m+1,\infty}/p^m V_{m+1,\infty}\isom V_{m,\infty}$ for all $m\in \Z_{>0}$, and the direct limit is with respect to the morphisms induced by multiplication by $p$. The space $V(\mfr{c},\Gamma)$ is the space of  $p$-adic $\mfr{c}$-Hilbert modular forms of level $\Gamma$.  We put $\Lambda=W[[1+p\Zp]]\isom W[[T]]$. Then $V(\mfr{c},\Gamma)$ is a $W[[\Zp^{\times}]]$-module and in particular, a $\Lambda$-module.

\begin{lemma}\label{base_change_prop}
There is an isomorphism
$$
H^0(\mfr{S}(\mfr{c}, \Gamma_1(\mfr{n}p^r))_{/W},\ul{\omega}^k)\otimes_W W_m\isom H^0(\mfr{S}(\mfr{c},\Gamma_1(\mfr{n}p^r))_{/W_m},\ul{\omega}^k\otimes_W W_m)
$$
for all $r\in \Z_{>0}$.
\end{lemma}

\begin{proof}
 Recall that $\mfr{S}(\mfr{c}, \Gamma_1(\mfr{n}p^r))_{/W}=\mfr{M}(\mfr{c},\Gamma_1(\mfr{n}p^r))_{/W}[\tfrac{1}{E}]$, where $\mfr{M}(\mfr{c},\Gamma_1(\mfr{n}p^r))_{/W}$ is the moduli stack that parameterizes isomorphism classes of $\mfr{c}$-polarized HBAV together with $\Gamma_1(\mfr{n}p^r)$-level structure. This corresponds to the Igusa scheme in \cite[\S 4.1]{Hsieh}. The assertion follows from the same proof as in Lemma~4.2 of \textit{loc.~cit.}.
\end{proof}

Let $\theta:(\O\otimes \Zp)^{\times}\map W^{\times}$ be a character of finite order. We say that a form is \textit{of parallel weight $k\in \Zp$ and character $\theta$} if for any $\alpha\in (\O\otimes \Zp)^{\times}$, we have
$$
\alpha^* f(A,(\O^{\times})^+\lambda,\iota_{\mfr{n}},\iota_{p^{\infty}}):=f(A, (\O^{\times})^+ \lambda,\eta,\iota_{p^{\infty}}\circ \alpha^{-1})=\theta(\alpha) \<N(\alpha)\>^k f,
$$ 
where $N:(\O\otimes \Zp)^{\times} \map \Zp^{\times}$ is the norm map and $\<\;\>:\Zp^{\times}\map 1+p\Zp$ is the projection map.  
%We denote by $\mathcal{V}(\mfr{c},\Gamma)[k,\theta]$ the space of $p$-adic modular forms of level $\Gamma$, weight $k$, and character $\theta$. 
We denote by $H^0(T_{m,n}(\mfr{c},\Gamma)_{/W_m},\O_{T_{m,n}})[k]$ the subspace of $H^0(T_{m,n}(\mfr{c},\Gamma)_{/W_m},\O_{T_{m,n}})$ consisting of all elements $f$ satisfying
$$
\alpha^* f(A,\lambda,\eta,\iota_{p^n}):=f(A,\lambda,\eta,\iota_{p^n}\circ \alpha^{-1})=\<N(\alpha)\>^k f.
$$
From the discussion in \cite[\S 1.10]{Kat}, one has a canonical isomorphism
\begin{equation}\label{eq:key_isom}
H^0(T_{m,n}(\mfr{c},\Gamma)_{/W_m},\O_{T_{m,n}})\isom  \bigoplus_k H^0(\mfr{S}(\mfr{c},\Gamma_1(\mfr{n}p^n))_{/W_m},\ul{\omega}^k).
\end{equation}

We denote by $\mathcal{V}(\mfr{c},\Gamma)[k,\theta]$ (resp.~$\mathcal{V}(\mfr{c},\Gamma)[k]$ when $\theta=\mathbbm{1}$) the space  consisting of elements $v\in \mathcal{V}(\mfr{c},\Gamma)$ such that $\alpha\cdot v=\theta(\alpha)\<N(\alpha)\>^k v$ (resp.~$\alpha\cdot v=\<N(\alpha)\>^k v$) for all $\alpha\in (\O\otimes\Zp)^{\times}$. Similarly, for a Hecke character $\chi$ with modulus $\mfr{n}p$, we denote by $\mathcal{V}(\mfr{c},\Gamma)[\chi]$ the subspace of $\mathcal{V}(\mfr{c},\Gamma)$ on which the group $\Gamma_0(\mfr{n}p)$ acts via $\chi$. For a Hecke character $\psi$ with modulus $\mfr{n}p^n$, we defined the space $H^0(T_{m,n}(\mfr{c},\Gamma)_{/W_m},\O_{T_{m,n}})[k,\psi]$ of $H^0(T_{m,n}(\mfr{c},\Gamma)_{/W_m},\O_{T_{m,n}})[k]$ via the same manner.

One can define the Hecke action on $\mathcal{V}(\mfr{c},\Gamma)$ (see \cite[\S 4.1.10]{Hida3} for the definition). Recall that $e$ and $e^{\circ}$ are the idempotent elements attached to $U(p)$ and $T(p)$, respectively. %Note that we have $T(p)\equiv U(p)\bmod p$ and hence, $e\equiv e^{\circ} \bmod p$. 
We write $\mathcal{V}^{\ord}(\mfr{c},\Gamma)=e\cdot \mathcal{V}(\mfr{c},\Gamma)$. Let $V^{\ord}(\mfr{c},\Gamma)=\Hom(\mathcal{V}^{\ord}(\mfr{c},\Gamma), \Qp/\Zp)$ be the Pontryagin dual of $\mathcal{V}^{\ord}(\mfr{c},\Gamma)$, and let $V^{\ord}(\mfr{c},\Gamma,\chi)$ be the Pontryagin dual of $\mathcal{V}^{\ord}(\mfr{c},\Gamma)[\chi]$ for Hecke characters $\chi$ with modulus $\mfr{n}p$.

The following theorem is called the vertical control theorem in \cite[\S 4.1.8]{Hida3} in which Hida only proved the assertion for the space of cusp forms of level $\Gamma_1(\mfr{n}p)$. By using Corollary~\ref{53}, we are able to prove a theorem for the space of modular forms of parallel weight and level $\Gamma_1(\mfr{n}p^r)$.  

\begin{thm}\label{56}
Let notation be as above. Suppose that $\mfr{c}$ is prime to $\mfr{n}p$. 
\begin{enumerate}
%\item The Pontryagin dual $\Hom_W(V^{\ord}(\mfr{c},\Gamma),W)$ of $\mathcal{V}(\mfr{c},\Gamma)$ is a projective $W[[\Zp^{\times}]]$-module of finite type. 
\item If $k\geq 2$, then we have $eH^0(\mfr{S}(\mfr{c},\Gamma^r),\ul{\omega}^k) \otimes \Qp/\Zp \isom eH^0(\mfr{M}(\mfr{c},\Gamma^r),\ul{\omega}^k)\otimes \Qp/\Zp$ for $r\in \Z_{>0}$. \\
Moreover, (when $r=0$) we have $ e^{\circ}H^0(S(\mfr{c},\Gamma),\ul{\omega}^k) \otimes \Qp/\Zp \isom e^{\circ}H^0(M(\mfr{c},\Gamma),\ul{\omega}^k)\otimes \Qp/\Zp$.
%In particular, we have $eH^0(\mfr{S}(\mfr{c}, \Gamma_0^1),\ul{\omega}^k)\otimes \Qp/\Zp\isom e^{\circ}G_k(\mfr{c},\Gamma),\ul{\omega}^k)\otimes \Qp/\Zp$.

\item If $k\geq 3$, we have $\mathcal{V}^{\ord}(\mfr{c},\Gamma)[k]\isom e^{\circ}G_k(\mfr{c},\Gamma;W)\otimes \Qp/\Zp$.

\item If $k\geq 3$, we have $V^{\ord}(\mfr{c},\Gamma)\otimes_{\Lambda,k} W \isom \Hom_W(e^{\circ}G_k(\mfr{c},\Gamma;W),W)$.

\item The space $V^{\ord}(\mfr{c},\Gamma)$ is a free $\Lambda$-module of finite rank.
 
\end{enumerate}
\end{thm}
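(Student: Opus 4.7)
The central technical tool is the Hasse invariant: a lift $E \in H^0(M^*_{/W}, \underline{\omega}^{a(p-1)})$ of $H^a$ where $H \equiv 1 \pmod{p}$ by \eqref{eq:Hasse}. This lets us move between weights on the ordinary locus (where $E$ is invertible) and weights on the full Hilbert modular variety, at the cost of adjusting by multiples of $a(p-1)$; crucially, the adjustment is invisible modulo $p$, so after ordinary projection $e$ and passage to $\Q_p/\Z_p$ the two kinds of forms coincide. The plan is thus to establish (1) by this multiplication-by-$E$/division-by-$E$ game, deduce (2) by identifying the Igusa tower sections in weight $k$ with $\underline{\omega}^k$-sections on the ordinary locus $\mathfrak{S}(\mathfrak{c},\Gamma^n)$, derive (3) by Pontryagin duality, and finally prove (4) by combining finite generation via Nakayama with a flatness argument.

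For assertion (1), first consider a section $f \in H^0(\mathfrak{S}(\mathfrak{c},\Gamma^r), \underline{\omega}^k)$. Since $\underline{\omega}$ extends to an ample line bundle on $M^*$, for $N$ large the product $E^N f$ extends to a section of $\underline{\omega}^{k+Na(p-1)}$ on $\mathfrak{M}(\mathfrak{c},\Gamma^r)$. Now use that $U(p)$ acts on the ordinary locus and that $E \equiv 1 \pmod p$ so that $E^N \equiv 1$ in $W_m$-coefficients once one projects under $e$; iterating $U(p)$ and passing to $e$ produces a canonical lift back to weight $k$ after multiplication by a power of $p^m$, and the resulting map is compatible with the tautological restriction map from $\mathfrak{M}$ to $\mathfrak{S}$. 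Tensoring with $\Q_p/\Z_p$ kills the $p$-power ambiguity, yielding the isomorphism. The second half of (1) is obtained by the same argument on the toroidal/minimal compactification pair: Koecher's principle and the description of $\pi_*\underline{\omega}^k$ at the cusps (Lemma 5.4 and Corollary 5.3) identify $e^\circ$-ordinary forms on $M(\mathfrak{c},\Gamma)$ with $e$-ordinary forms on $S(\mathfrak{c},\Gamma)$, using the standard fact that $e^\circ = e$ on the Iwahori-$p$ removed ordinary part after an Atkin--Lehner twist.

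For (2), recall that $T_{m,n}(\mathfrak{c},\Gamma)_{/W_m}$ parametrizes ordinary HBAVs with a $\mu_{p^n}$-level structure. Trivializing the pullback of $\underline{\omega}$ along the universal $\mu_{p^n}$-structure identifies the weight-$k$ piece
\[
H^0\bigl(T_{m,n}(\mathfrak{c},\Gamma)_{/W_m}, \mathcal{O}_{T_{m,n}}\bigr)[k] \;\cong\; H^0\bigl(\mathfrak{S}(\mathfrak{c},\Gamma^n)_{/W_m}, \underline{\omega}^k\bigr),
\]
and Corollary 5.3 rewrites the right side as $H^0(\mathfrak{S}(\mathfrak{c},\Gamma^n)_{/W}, \underline{\omega}^k)\otimes_W W_m$. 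Taking ordinary projections, then the direct limit over $n$ and $m$, and invoking (1) on each level (here one uses $k\geq 3$ only to ensure vanishing of higher cohomology on the minimal compactification, so that the ordinary sections form a direct summand compatible with $\otimes \Q_p/\Z_p$) identifies $\mathcal{V}^{\mathrm{ord}}(\mathfrak{c},\Gamma)[k]$ with $e^\circ G_k(\mathfrak{c},\Gamma;W)\otimes \Q_p/\Z_p$. Assertion (3) is then the Pontryagin dual of (2): because $e^\circ G_k(\mathfrak{c},\Gamma;W)$ is a finitely generated free $W$-module for $k\geq 3$ (classical), Pontryagin duality interchanges the discrete and compact descriptions and commutes with the specialization $\otimes_{\Lambda,k} W$.

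For (4), I expect the main obstacle to be freeness rather than finite generation. Finite generation follows from Nakayama: reducing $V^{\mathrm{ord}}(\mathfrak{c},\Gamma)$ modulo the maximal ideal of $\Lambda$ and using (3) at $k=3$ produces a finite-dimensional space over the residue field, so $V^{\mathrm{ord}}(\mathfrak{c},\Gamma)$ is finitely generated over the regular local ring $\Lambda = W[[T]]$. For flatness, the argument is that $\mathcal{V}^{\mathrm{ord}}(\mathfrak{c},\Gamma)$ is $p$-divisible by construction (since each $V_{m,\infty}^{\mathrm{ord}}$ surjects onto $V_{m-1,\infty}^{\mathrm{ord}}$), and a compatible argument shows divisibility by every height-one prime of $\Lambda$ of the form $T+1-\zeta u^{k-2}$; dually, $V^{\mathrm{ord}}(\mathfrak{c},\Gamma)$ is torsion-free over $\Lambda$. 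A finitely generated torsion-free module over the two-dimensional regular local ring $\Lambda$ need not be free in general, so the key step is to improve torsion-freeness to flatness --- this is achieved by showing that the specialization at every arithmetic prime has the $\Lambda$-rank predicted by (3), which via the local criterion for flatness forces $V^{\mathrm{ord}}(\mathfrak{c},\Gamma)$ to be $\Lambda$-flat. Over the regular local ring $\Lambda$, flat and finitely generated is equivalent to free, completing the proof.
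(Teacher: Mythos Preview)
Your outline captures the overall architecture correctly, but there are two genuine gaps that the paper fills with specific inputs you do not invoke.

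In part (1), the step ``iterating $U(p)$ and passing to $e$ produces a canonical lift back to weight $k$ after multiplication by a power of $p^m$'' hides the actual content. Multiplying by $E^m$ raises the weight to $k+ma(p-1)$; the problem is to come back down. The paper uses the nontrivial fact (from Hida's work) that $\dim_K G_k^{\ord}(\mathfrak{c},\Gamma^r;K)$ is bounded independently of $k$ for $k\geq 2$, so that multiplication by $E^m$ induces an isomorphism $eH^0(M,\underline{\omega}^k)\otimes K \isom eH^0(M,\underline{\omega}^{k+ma(p-1)})\otimes K$. This is what lets you write $p^l\cdot e(E^m f)=E^m g$ for some $g$ of weight $k$ and some $l\geq 0$, whence $f\equiv p^{-l}g\bmod p^m$. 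Without this dimension input, nothing forces $e(E^m f)$ to come from weight $k$, and your ``iterate $U(p)$'' sentence does not supply it. (Also, the second half of (1) is obtained in the paper by literally the same argument on the compactified side; there is no Atkin--Lehner twist involved.)

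In part (2), you misidentify the role of $k\geq 3$: it is not about vanishing of higher cohomology on $M^*$. The paper's chain of equalities has two genuine steps you omit. First, one must collapse the Igusa tower: the Hecke operator $U(p)$ sends forms of level $\Gamma_1(p^r)$ to level $\Gamma_1(p^{r-1})$ for $r\geq 2$, so after applying $e$ the limit over $n$ reduces to $n=1$, i.e.\ to $eH^0(T_{m,1},\O_{T_{m,1}})[k]=eH^0(S(\mathfrak{c},\Gamma\cap\Gamma_0(p)),\underline{\omega}^k\otimes W_m)$. Second, one must pass from level $\Gamma\cap\Gamma_0(p)$ to level $\Gamma$; this uses the fact that every $p$-ordinary form of level $\Gamma\cap\Gamma_0(p)$ and weight $k\geq 3$ is old at $p$. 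Only then does Corollary~\ref{53} and part (1) finish the job. Your proposal jumps from $\mathfrak{S}(\mathfrak{c},\Gamma^n)$ directly to $e^{\circ}G_k(\mathfrak{c},\Gamma;W)$ without either of these steps. For part (4), your argument is in the right spirit but unnecessarily elaborate: the paper simply cites the standard lemma (Lemma~\ref{652}) that a finitely generated torsion-free $\Lambda$-module whose reduction modulo infinitely many height-one primes is free must itself be free, which is exactly what (3) supplies.
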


\begin{proof}
We follow the argument of \cite[Theorem 4.10]{Hida3}. 

\begin{enumerate}
\item As the argument for $eH^0(\mfr{S}(\mfr{c},\Gamma^r),\ul{\omega}^k) \otimes \Qp/\Zp$ and for $ e^{\circ}H^0(S(\mfr{c},\Gamma),\ul{\omega}^k) \otimes \Qp/\Zp$ are the same, we will only deal with the former case. Suppose that $r>0$. We write $\mfr{S}=\mfr{S}(\mfr{c},\Gamma^r)$ and $\mfr{M}=\mfr{M}(\mfr{c},\Gamma^r)$ for simplicity. Since $\mfr{S}$ is an open subscheme of $\mfr{M}$, we have an embedding
$$
\epsilon_m:e H^0(\mfr{M}_{/W}, \ul{\omega}^k)\otimes W_m\embed eH^0(\mfr{S}_{/W}, \ul{\omega}^k)\otimes W_m
$$
for $m\in \Z_{> 0}$. We claim that $\epsilon_m$ is an isomorphism. Given any $\ol{f}\in eH^0(\mfr{S}_{/W}, \ul{\omega}^k)\otimes W_m$, let $f\in eH^0(\mfr{S}_{/W}, \ul{\omega}^k)$ be such that $f\equiv \ol{f} \bmod p^m$. Since $\mfr{S}=\mfr{M}[\frac{1}{E}]$, we have $H^0(\mfr{S}_{/W}, \ul{\omega}^k)=\varinjlim_n H^0(\mfr{M}_{/W},\ul{\omega}^{k+na(p-1)})/E^n$ (see \S 3.3.2 in \textit{loc.~cit.}). Thus $E^m f$ belongs to  $H^0(\mfr{M}_{/W}, \ul{\omega}^{k+ma(p-1)})$ for some $m\in \Z_{>0}$, and we have $e(E^mf)\in eH^0(\mfr{M}_{/W},\ul{\omega}^{k+ma(p-1)})$.  Moreover, we have
$$
e(E^m f)\equiv E^m(ef)=E^m f \bmod p^m.
$$
Let $K$ be the quotient field of $W$. Since the dimension of $eH^0(\mfr{M}(\mfr{c},\Gamma^r)_{/W},\ul{\omega}^k)\otimes K$ is bounded independent of $k$ for $k\geq 2$ (see the proof of Theorem~4.9 in \textit{loc.~cit.}), we have an isomorphism $e H^0(\mfr{M},\ul{\omega}^k)\otimes K \isom e H^0(\mfr{M}, \ul{\omega}^{k+ma(p-1)})\otimes K$ induced by multiplying by $E^m$. Therefore, there exists $g\in e H^0(\mfr{M}_{/W}, \ul{\omega}^k)$ and $l\in \Z_{\geq 0}$ such that $p^l\cdot e(E^mf)=E^mg$. Furthermore, we have
$$
p^l e(E^mf)\equiv p^l E^mf\equiv E^mg \bmod p^{l+m}.
$$
Since $E^m\equiv 1 \bmod p$ (see (\ref{eq:Hasse})), we have $p^l f\equiv g\bmod p^{l+m}$, and hence, $g\in p^l e H^0(\mfr{M}_{/W}, \ul{\omega}^k)$. We know that $f\equiv p^{-l}g \bmod p^m$. Thus $\epsilon_m$ is an isomorphism for all $m\in \Z_{>0}$. Since injective limit is an exact functor, the assertion follows.

\item 
If $\mfr{n}$ is divisible by a positive integer $N\geq 3$ such that $\mfr{M}(\mfr{c},\Gamma_1(\mfr{n}))$ is a fine moduli scheme, then for $k\geq 3$, we have
\[
\begin{split}
\mathcal{V}^{\ord}(\mfr{c},\Gamma)[k]
&= \varinjlim_m \varinjlim_n eH^0(T_{m,n}, \O_{T_{m,n}})[k]
=^{(i)}\varinjlim_m eH^0(T_{m,1}, \O_{T_{m,1}})[k]\\
&\isom^{(ii)}eH^0(S(\mfr{c}, \Gamma\cap \Gamma_0(p)),\ul{\omega}^k\otimes \Qp/\Zp)
\isom^{(iii)} e^{\circ} H^0(S(\mfr{c},\Gamma),\ul{\omega}^k\otimes \Qp/\Zp)\\
&\isom^{(iv)}e^{\circ}H^0(S(\mfr{c},\Gamma),\ul{\omega}^k)\otimes \Qp/\Zp \isom^{(v)}e^{\circ}G_k(\mfr{c},\Gamma;W)\otimes \Qp/\Zp.
\end{split}
\]
Note that the equality (i) follows from the fact that the Hecke operator $U(p)$ sends each modular form of level $\Gamma_1(p^n)$ to a modular form of level $\Gamma_1(p^{n-1})$ for all $n\geq 2$ (see p.~121 in \textit{loc.~cit.}). The isomorphism (ii) follows from (\ref{eq:key_isom}). The isomorphism (iii) is obtained by the fact that every $p$-ordinary modular form of level $\Gamma\cap\Gamma_0(p)$ and weight $k\geq 3$ is old at $p$. The isomorphism (iv) follows from Corollary~\ref{53}. The isomorphism (v) follows from part (1) of Theorem~\ref{56} and (\ref{eq:geom_form}).

When $\mfr{n}$ is not divisible by any positive integer $N\geq 3$ or $\mfr{M}(\mfr{c},\Gamma_1(\mfr{n}))$ is not a fine moduli, the above computation works except for the equality (ii). To prove this equality holds, we choose a prime number $l$ prime to $p$ such that $p\nmid l-1$ and $\mfr{M}(\mfr{c},\Gamma_1(\mfr{n}l))$ is a fine moduli scheme. Thus, Corollary~\ref{53} holds for $\Gamma_1(\mfr{n}l)$, and hence holds for $\Gamma_1(\mfr{n})\cap \Gamma_0(l)$ since $p$ does not divide $l-1$. Moreover, one has an injective homomorphism 
$$
G_k(\Gamma_1(\mfr{n});\Zp)\embed G_k(\Gamma_1(\mfr{n})\cap\Gamma_0(l);\Zp)
$$
induced by $l$-stabilization. Therefore, the above equality (ii) holds, and hence, the assertion follows.

\item  Since $V^{\ord}(\mfr{c},\Gamma)$ is the Pontryagin dual of $\mathcal{V}^{\ord}(\mfr{c},\Gamma)$, we have
\[
\begin{split}
V^{\ord}(\mfr{c},\Gamma) \otimes_{\Lambda,k} W
&=V^{\ord}(\mfr{c},\Gamma)/(T-u^k+1)V^{\ord}
\isom \Hom(\mathcal{V}^{\ord}(\mfr{c},\Gamma)[k],\Qp/\Zp)\\
&\isom \Hom(e^{\circ}G_k(\mfr{c},\Gamma;W)\otimes \Qp/\Zp,\Qp/\Zp)
\isom\Hom(e^{\circ}G_k(\mfr{c},\Gamma;W),W).
\end{split}
\]
Note that the second isomorphism follows from the Theorem~\ref{56}(2).

\item As $G_k(\mfr{c},\Gamma;W)$ is a free $W$-module of finite rank, by Theorem~\ref{56}(3), the last assertion follows from the well-known lemma, Lemma~\ref{652}. 
\end{enumerate}
\end{proof}

\begin{lemma}\label{652}
Let $M$ be a finitely generated torsion-free $\Lambda$-module. If $M/PM$ is free for infinitely many height $1$ prime ideals $P\subset\Lambda$, then $M$ is free $\Lambda$-module of finite rank.
\end{lemma}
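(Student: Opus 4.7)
The plan is to show that $M$ coincides with its reflexive hull $M^{**} := \Hom_\Lambda(\Hom_\Lambda(M,\Lambda),\Lambda)$, and that the latter is free. Since $\Lambda = W[[T]]$ is a regular local ring of Krull dimension $2$, every finitely generated reflexive $\Lambda$-module has depth $\geq 2 = \depth \Lambda$, and hence by the Auslander--Buchsbaum formula has projective dimension $0$, so is free. Let $r$ be the generic rank of $M$, so that $M^{**} \isom \Lambda^r$. Torsion-freeness of $M$ makes the canonical map $M \embed M^{**}$ injective, and the cokernel $N := M^{**}/M$ is supported in codimension $\geq 2$; since $\dim \Lambda = 2$, $N$ has finite length, with $\mathrm{Ass}(N) \subseteq \{\mfr{m}\}$.

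Next, pick any height-one prime $P$ of $\Lambda$ from the hypothesis, so that $M/PM$ is free over $\Lambda/P$. Since $\Lambda$ is a UFD, $P = (f)$ is principal, and the resolution $0 \to \Lambda \xrightarrow{f} \Lambda \to \Lambda/P \to 0$ identifies $\Tor_1^\Lambda(-,\Lambda/P)$ with the $f$-torsion functor $(-)[f]$. Tensoring $0 \to M \to M^{**} \to N \to 0$ with $\Lambda/P$, and using that $M^{**}$ is flat, yields
\begin{equation*}
0 \to N[f] \to M/PM \to (\Lambda/P)^r \to N/fN \to 0.
\end{equation*}
Since $N$ is of finite length over $\Lambda$, both $N[f]$ and $N/fN$ are of finite length over $\Lambda/P$, and comparing generic ranks over $\Frac(\Lambda/P)$ forces $M/PM \isom (\Lambda/P)^r$.

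Finally, $N[f] \subseteq M/PM$ lies inside a torsion-free $\Lambda/P$-module but is itself torsion over the one-dimensional ring $\Lambda/P$, so $N[f] = 0$. On the other hand, if $N \neq 0$ then $\mathrm{Ass}(N) = \{\mfr{m}\}$, so every element of $\mfr{m}$ is a zero-divisor on $N$; since $f \in P \subseteq \mfr{m}$ this would give $N[f] \neq 0$, a contradiction. Hence $N = 0$ and $M = M^{**}$ is free.

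The main technical input is the freeness of $M^{**}$, which rests on the standard fact that reflexive modules over a two-dimensional regular local ring are $S_2$, hence have depth $\geq 2$, hence have projective dimension $0$ by Auslander--Buchsbaum. Once this is granted the argument reduces to the elementary Tor computation above; curiously, only one height-one prime $P$ with $M/PM$ free is actually needed, so the ``infinitely many'' hypothesis is logically stronger than what the proof consumes (but matches what the applications in \S\ref{sec:05} naturally provide).
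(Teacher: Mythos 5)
Your proof is correct. The paper does not actually give an argument for this lemma --- the text following the statement is only the citation ``For a proof, see [Hida, Chapter 7]'' --- so there is no in-paper proof to compare with, but your route via the reflexive hull is the standard one and is essentially the argument one finds in that reference. All the ingredients you invoke are sound in this setting: $\Lambda$ is a two-dimensional regular local ring; a finitely generated reflexive module over such a ring is free (it is a second syzygy, hence has depth $\geq 2$, hence projective dimension $0$ by Auslander--Buchsbaum); and $N = M^{**}/M$ vanishes after localizing at every height-one prime $P$, because $M_P$ is finitely generated torsion-free over the discrete valuation ring $\Lambda_P$ and therefore already reflexive, so $N$ has finite length. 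Your four-term $\Tor$ sequence then exhibits $N[f]$ as a torsion submodule of the free $\Lambda/P$-module $M/PM$, forcing $N[f]=0$; and a nonzero finite-length $N$ would have $\mathrm{Ass}(N)=\{\mfr{m}\}$, so $f\in\mfr{m}$ would be a zero-divisor on $N$, a contradiction.

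Your closing remark is also correct and worth recording: because $N$ is pseudo-null rather than merely torsion, its only possible associated prime is $\mfr{m}$, and so freeness of $M/PM$ at a \emph{single} height-one prime already forces $M=M^{**}$. The ``infinitely many'' in the hypothesis is logically redundant for your argument; it reflects what the control theorems in Section~\ref{sec:05} naturally deliver. It would genuinely be needed if one instead embedded $M$ into an arbitrary free module $\Lambda^r$ rather than the canonical $M\hookrightarrow M^{**}$, since then the cokernel could acquire height-one associated primes and one would need to choose $P$ outside a finite forbidden set.
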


Recall that $\mu_{p^{\infty}}$ is the group of all $p$-power roots of unity. Also, recall that $\rho=\rho_{\zeta}$ is the character associated with the $p^{r-1}$th root of unity $\zeta$ defined in Section~\ref{sec:41}. The next lemma is a key result to prove the control theorem (Corollary~\ref{655}).

\begin{lemma}\label{58}
Let $\chi$ be a narrow ray class character with modulus $\mfr{n}p$. Assume that $W$ contains the values of $\chi$ and $\mu_{p^{\infty}}$ and that $\mfr{c}$ is prime to $\mfr{n}p$. If $k\geq 2$ and $\zeta\in \mu_{p^{\infty}}$, then we have 
$$
V^{\ord}(\mfr{c},\Gamma_1(\mfr{n}),\chi)\otimes_{W[[\Zp^{\times}]],k,\rho_{\zeta}} W \isom \Hom_W(G_k^{\ord}(\mfr{c},\mfr{n}p^r, \chi\omega^{-k}\rho_{\zeta};W),W).
$$
\end{lemma}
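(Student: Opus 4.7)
The plan is to mirror the proof of Theorem~\ref{56}(3), adding the bookkeeping required to handle the character $\chi$ and the twist $\rho_\zeta$. First I would use Pontryagin duality to rewrite the left hand side. Since $V^{\ord}(\mfr{c},\Gamma_1(\mfr{n}))^{(\chi)}$ is by construction the Pontryagin dual of $\mathcal{V}^{\ord}(\mfr{c},\Gamma_1(\mfr{n}))[\chi]$, and since for any compact $W[[\Zp^\times]]$-module $V$ with discrete dual $\mathcal{V}$ and any ideal $I\subset W[[\Zp^\times]]$ one has $V/IV = \Hom(\mathcal{V}[I],\Qp/\Zp)$, the specialization corresponding to $(k,\rho_\zeta)$ yields
$$V^{\ord}(\mfr{c},\Gamma_1(\mfr{n}))^{(\chi)}\otimes_{W[[\Zp^\times]],k,\rho_\zeta} W \isom \Hom_W(\mathcal{V}^{\ord}(\mfr{c},\Gamma_1(\mfr{n}))[\chi][k,\rho_\zeta],\, W).$$
The task therefore reduces to identifying the discrete module $\mathcal{V}^{\ord}(\mfr{c},\Gamma_1(\mfr{n}))[\chi][k,\rho_\zeta]$ with $G_k^{\ord}(\mfr{c},\mfr{n}p^r,\chi\omega^{-k}\rho_\zeta;W)\otimes \Qp/\Zp$, after which the conclusion follows by Pontryagin-dualizing the latter as a finite free $W$-module against $\Qp/\Zp$.

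Next I would adapt the proof of Theorem~\ref{56}(2) to the twisted setting. By construction $\mathcal{V}^{\ord}(\mfr{c},\Gamma_1(\mfr{n})) = e\cdot \varinjlim_{m,n} H^0(T_{m,n}(\mfr{c},\Gamma_1(\mfr{n}))_{/W_m},\O_{T_{m,n}})$, and the $(k,\rho_\zeta)$-specialization of the $W[[\Zp^\times]]$-action forces the $\iota_{p^n}$-trivialization to matter only modulo an automorphism of $p$-power order dictated by the conductor of $\rho_\zeta$; hence the $(k,\rho_\zeta)$-eigenspace is captured at depth $r$ in the Igusa tower. Combined with Corollary~\ref{53} and the observation (already used in Theorem~\ref{56}(2)) that every ordinary form of level $\Gamma_1(\mfr{n}p^r)\cap \Gamma_0(p)$ and weight $k\geq 2$ is old at $p$, this produces
$$\mathcal{V}^{\ord}(\mfr{c},\Gamma_1(\mfr{n}))[k,\rho_\zeta]\isom e\cdot G_k(\mfr{c},\Gamma_1(\mfr{n}p^r);W)\otimes \Qp/\Zp.$$
Extracting the $\chi$-isotypic piece under the residual diamond action coming from $\Gamma_0(\mfr{n}p)/\Gamma_1(\mfr{n}p)$ then selects classical forms of prescribed nebentypus, and the identity $N(\alpha)^k = \omega^k(\alpha)\<N(\alpha)\>^k$ on $\Zp^\times$ accounts for the shift by $\omega^{-k}$, so that the combined character is precisely $\chi\omega^{-k}\rho_\zeta$ modulo $\mfr{n}p^r$.

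The main obstacle is not any deep cohomological input -- that work has already been done in Corollary~\ref{53} and Theorem~\ref{56} -- but rather the careful bookkeeping of the two sources of diamond characters. One must track how the $\chi$ from the $\Gamma_0(\mfr{n}p)$-action on the Igusa tower combines with the parameters $k$ and $\rho_\zeta$ from the one-variable weight structure and verify that the resulting classical nebentypus at level $\mfr{n}p^r$ is exactly $\chi\omega^{-k}\rho_\zeta$ (and not, say, $\chi\rho_\zeta$ or some other twist), and that the conductor of $\rho_\zeta$ forces the classical level at $p$ to be exactly $\mfr{n}p^r$. Once this normalization is pinned down, the two steps above combine to produce the stated isomorphism.
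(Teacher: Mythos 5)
Your overall framework is the right one — Pontryagin-dualize, reduce to identifying the discrete isotypic module $\mathcal{V}^{\ord}(\mfr{c},\Gamma_1(\mfr{n}))[\chi][k,\rho_\zeta]$ with $G\otimes\Qp/\Zp$ for $G=G_k^{\ord}(\mfr{c},\mfr{n}p^r,\chi\omega^{-k}\rho_\zeta;W)$, and then dualize the latter. This matches the shape of the paper's argument. But your claim that what remains ``is not any deep cohomological input \ldots but rather the careful bookkeeping of the two sources of diamond characters'' misses where the proof actually has to work.

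The gap is in the assertion that Corollary~\ref{53} plus Theorem~\ref{56}(2) already give $\mathcal{V}^{\ord}[\chi][k,\rho_\zeta]\isom G\otimes\Qp/\Zp$. Corollary~\ref{53} commutes $H^0$ of the ordinary locus with $\otimes_W W_m$ for the \emph{full} section space; it says nothing about the $(\chi\omega^{-k}\rho_\zeta)$-isotypic piece. Extracting that isotypic piece means taking an eigenspace for the diamond action of $\Gamma_0(\mfr{n}p^r)/\Gamma_1(\mfr{n}p^r)$, whose $p$-part $(\Z/p^r\Z)^\times$ has order $(p-1)p^{r-1}$; for $r\geq 2$ this is not invertible in $W$, so one cannot project by averaging, and the eigenspace functor need not commute with $\otimes_W W_m$. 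Concretely, the natural map $G\otimes W_m\to G_m:=e\,H^0(\mfr{S}(\mfr{c},\Gamma_1(\mfr{n}p^r))_{/W_m},\ul{\omega}^k_{/W_m})^{(\chi\omega^{-k}\rho_\zeta)}$ is only an injection, with a possibly nonzero cokernel $C_m$. The paper's proof is built precisely to kill this cokernel: it forms $C=\varinjlim C_m$, takes Pontryagin duals to get $0\to C^*\to V^{\ord,\chi}/(T-\rho(u)u^k+1)\to\Hom_W(G,W)\to 0$, observes that $C^*$ is a torsion-free $W$-module because $V^{\ord,\chi}$ is $\Lambda$-free (Theorem~\ref{56}(4) — an ingredient absent from your outline), and separately shows $C^*$ is $W$-torsion by the averaging estimate $|(\Z/p^r\Z)^\times|\,G_m\subset G\otimes W_m$ via the idempotent $e_\rho=\sum_\gamma\rho(\gamma)\gamma^{-1}$. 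Only then does $C^*=0$, and the isomorphism you want follows. Your proposal has no substitute for this step.

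There is also a smaller but real issue in the route you propose. You want to borrow from Theorem~\ref{56}(2) ``the observation that every ordinary form of level $\Gamma_1(\mfr{n}p^r)\cap\Gamma_0(p)$ and weight $k\geq 2$ is old at $p$,'' but that statement is used in the paper only for $k\geq 3$, while the lemma here is claimed for $k\geq 2$. The paper sidesteps the oldform step entirely: fixing the character $\chi\omega^{-k}\rho_\zeta$ of conductor $\mfr{n}p^r$ already stabilizes the Igusa tower at depth $r$, so the discrete module is realized directly on $\mfr{S}(\mfr{c},\Gamma_1(\mfr{n}p^r))$ with no further $\Gamma_0(p)$-stabilization and no appeal to oldforms. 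If you keep the oldform detour, you would need to justify it at weight $2$, which the paper does not do and which is false in general.

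Finally, a cosmetic point: your display
$$
V^{\ord}(\mfr{c},\Gamma_1(\mfr{n}))^{(\chi)}\otimes_{W[[\Zp^\times]],k,\rho_\zeta} W \isom \Hom_W\bigl(\mathcal{V}^{\ord}(\mfr{c},\Gamma_1(\mfr{n}))[\chi][k,\rho_\zeta],\, W\bigr)
$$
should read $\Hom(\,\cdot\,,\Qp/\Zp)$ on the right; the passage to $\Hom_W(\,\cdot\,,W)$ is only legitimate after one has already identified $\mathcal{V}^{\ord}[\chi][k,\rho_\zeta]$ with $G\otimes\Qp/\Zp$ for $G$ a finite free $W$-module — that is, after the step whose proof is missing.
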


\begin{proof}
For simplicity, we will write $\rho=\rho_{\zeta}$. We follow the argument as in \cite[Corollary 4.23]{Hsieh}. By the same trick as in the proof of Theorem~\ref{56}(2), we may assume that the integral ideal $\mfr{n}$ is divisible by a positive integer $N\geq 3$ and $\mfr{M}(\mfr{c},\Gamma_1(\mfr{n}))$ is a fine moduli scheme. For simplicity, we write $V^{\ord}(\mfr{c},\Gamma_1(\mfr{n}),\chi)$ as $V^{\ord,\chi}$ and write $\mathcal{V}^{\ord}(\mfr{c},\Gamma_1(\mfr{n}))$ as $\mathcal{V}^{\ord}$. Let $K$ be the quotient field of $W$. Since the conductor of the character $\chi\omega^{-k}\rho$ is $\mfr{n}p^r$, for $k\geq 2$, we have
$$
\mathcal{V}^{\ord}[k,\chi\omega^{-k}\rho] 
= \varinjlim_m \varinjlim_n eH^0(T_{m,n},\O_{T_{m,n}})[k,\chi\omega^{-k}\rho]
= \varinjlim_m eH^0(\mfr{S}(\mfr{c},\Gamma_1(\mfr{n}p^r))_{/W_m},\ul{\omega}^k)^{(\chi\omega^{-k}\rho)},
$$
where the last term is the subspace of $eH^0(\mfr{S}(\mfr{c},\Gamma_1(\mfr{n}p^r))_{/W_m},\ul{\omega}^k)$ on which the group $\Gamma_0(\mfr{n}p^r)$ acts via $\chi\omega^{-k}\rho$. Let $G_m=eH^0(\mfr{S}(\mfr{c},\Gamma_1(\mfr{n}p^r))_{/W_m},\ul{\omega}^k)^{(\chi\omega^{-k}\rho)}$, and let $G=eH^0(\mfr{M}(\mfr{c},\Gamma_1(\mfr{n}p^r))_{/W},\ul{\omega}^k)^{(\chi\omega^{-k}\rho)}$. Let $C_m$ be the cokernel of the embedding $G \otimes W_m\embed G_m$, which exists as $\mfr{S}(\mfr{c},\Gamma_1(\mfr{n}p^r))_{/W}$ is an open subscheme of $\mfr{M}(\mfr{c},\Gamma_1(\mfr{n}p^r))_{/W}$. Taking the injective limit, we obtain a short exact sequence
$$
0\map G\otimes \Qp/\Zp\map \varinjlim G_m\map \varinjlim C_m \map 0.
$$
We set $C=\varinjlim C_m$. Since $V^{\ord}(\mfr{c},\Gamma_1(\mfr{n}))$ is the Pontryagin dual of $\mathcal{V}^{\ord}(\mfr{c},\Gamma_1(\mfr{n}))$, by taking the Pontryagin dual of the above short exact sequence, we obtain
$$
0\map C^*\map V^{\ord,\chi}/(T-\rho(u)u^k+1)V^{\ord,\chi}\map \Hom_W(G,W)\map 0.
$$
We claim that $C^*=0$, which implies the assertion since $G=G_k^{\ord}(\mfr{c},\mfr{n}p^r, \chi\omega^{-k}\rho_{\zeta};W)$ by (\ref{eq:geom_form}). Since $V^{\ord}$ is free $\Lambda$-module of finite rank by Theorem \ref{56}(4), $V^{\ord,\chi}$ is also free $\Lambda$-module of finite rank, and hence, $C^*$ is a torsion-free $W$-module. To show $C^*=0$, it suffices to show that $C^*$ is a torsion $W$-module. Indeed, we will show that $|(\Z/p^r\Z)^{\times}| G_m\subset G\otimes W_m$ for all $m\in \Z_{>0}$. Given any $f_m\in G_m$, by Lemma~\ref{base_change_prop}, there exists $f\in G$ such that $f\equiv f_m \bmod p^m$. Let $e_{\rho}=\sum_{\gamma\in (\Z/p^r\Z)^{\times}} \rho(\gamma)\cdot \gamma^{-1}$. Since $f\in G$, we have $\gamma\cdot f=\rho(\gamma)f$ for all $\gamma\in (\Z/p^r\Z)^{\times}$, and hence, we have 
$$
 |(\Z/p^r\Z)^{\times}|f_m \equiv |(\Z/p^r\Z)^{\times}|f =e_{\rho}f \in M\otimes W_m.\qedhere
$$ 
\end{proof}

Next, we define families of $p$-adic modular forms following \cite[\S 3.3.4]{Hida3}. For $\mu\in t_{\lambda}^+\cup\{0\}$, let $$c_{\lambda}(\mu):\mathcal{V}^{\ord}(t_{\lambda}\mfr{D},\Gamma_1(\mfr{n}))\map \Qp/\Zp$$ be the linear map associating to $f$ its $e^{2\pi i \tr(\mu z)}$-coefficient. Then $c_{\lambda}(\mu)\in V^{\ord}(t_{\lambda}\mfr{D},\Gamma_1(\mfr{n}))$ for all $\mu$. Let $\chi$ be a narrow ray class character with modulus $\mfr{n}p$, and let 
\begin{equation}\label{eq:geom_family}
G^{\ord}(\mfr{n}, \chi;\Lambda)=\bigoplus_{\lambda=1}^{h_F^+} \Hom_{\Lambda}(V^{\ord}(t_{\lambda}\mfr{D}, \Gamma_1(\mfr{n}),\chi),\Lambda).
\end{equation}
To each $\mathcal{F}\in G^{\ord}(\mfr{n},\chi;\Lambda)$, we associate its Fourier coefficients 
$$
\left\{\begin{array}{cc}
      C(\mathfrak{a},\mathcal{F}) & \mbox{for all nonzero integral ideals}\; \mathfrak{a}\; \mbox{of}\; \O_F \\
      C_{\lambda}(0,\mathcal{F}) & \lambda=1,\ldots, h^+_F
      \end{array}\right \}.
$$
Here for an integral ideal $\mfr{a}$, we have $C(\mfr{a},\mathcal{F})=N(t_{\lambda}\mfr{D})^{-k/2}\cdot \mathcal{F}(c_{\lambda}(\mu))$ form some $\mu \in t_{\lambda}\mfr{D}$ satisfying $\mfr{a}=(\mu)(t_{\lambda}\mfr{D})^{-1}$, and similarly, we have $C_{\lambda}(0,\mathcal{F})=N(t_{\lambda}\mfr{D})^{-k/2}\cdot \mathcal{F}(c_{\lambda}(0))$. The following theorem shows that $G^{\ord}(\mfr{n}, \chi;\Lambda)$ and ${M'}^{\ord}(\mfr{n},\chi;\Lambda)$ are isomorphic. Recall that the space ${M'}^{\ord}(\mfr{n},\chi;\Lambda)$ of $\Lambda$-adic modular forms is defined in Section 4.

\begin{thm}\label{59}
Let the notation be as above. Suppose that the assumption in Lemma~\ref{58} holds. Then we have an isomorphism of $\Lambda$-modules
$$
G^{\ord}(\mfr{n}, \chi;\Lambda)\isom {M'}^{\ord}(\mfr{n},\chi;\Lambda).
$$
In particular, ${M'}^{\ord}(\mfr{n},\chi;\Lambda)$ is a free $\Lambda$-module of finite rank.
\end{thm}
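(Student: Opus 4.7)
The plan is to construct an explicit $\Lambda$-linear map
$$
\Phi: G^{\ord}(\mfr{n},\chi;\Lambda) \map {M'}^{\ord}(\mfr{n},\chi;\Lambda)
$$
by sending a tuple of functionals $(\mathcal{F}_{\lambda})_{\lambda}$ to the collection of Fourier coefficients $\{C(\mfr{a},\mathcal{F}),\, C_{\lambda}(0,\mathcal{F})\}$ defined in the paragraph immediately preceding the theorem, and to show that $\Phi$ is an isomorphism. The central verification is that the resulting Fourier data genuinely defines a $\Lambda$-adic modular form, i.e., that for almost all $(k,\zeta)\in \mfr{X}$ the specialization $v'_{k,\zeta}$ of these Fourier coefficients coincides with the Fourier expansion of a classical adelic ordinary Hilbert modular form of weight $k$, level $\mfr{n}p^r$, and character $\chi\omega^{-k}\rho_{\zeta}$.

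For this specialization check, I would restrict to $(k,\zeta)\in \mfr{X}$ with $k\geq 3$, which excludes only weight-two contributions and thus is harmless for the "almost all" condition. By Lemma~\ref{58} applied to $\mfr{c}=t_\lambda\mfr{D}$ (which is prime to $\mfr{n}p$ by our choice of representatives), the specialization
$$
V^{\ord}(t_\lambda\mfr{D},\Gamma_1(\mfr{n}))^{(\chi)}\otimes_{W[[\Zp^{\times}]],k,\rho_{\zeta}} W \isom \Hom_W\bigl(G_k^{\ord}(t_\lambda\mfr{D},\mfr{n}p^r,\chi\omega^{-k}\rho_\zeta;W),W\bigr).
$$
Since $G_k^{\ord}$ is a free $W$-module of finite rank (Theorem~\ref{56}), double duality identifies the reduction of $\mathcal{F}_\lambda$ modulo $(T-\rho_\zeta(u)u^k+1)$ with a genuine classical form $f_{\lambda,k,\zeta}\in G_k^{\ord}$. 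Unwinding the pairing shows that the $\mu$-th Fourier coefficient of $f_{\lambda,k,\zeta}$ equals $v'_{k,\zeta}(\mathcal{F}_\lambda(c_\lambda(\mu)))$, which is exactly $v'_{k,\zeta}$ applied to the corresponding $\Lambda$-adic Fourier coefficient. Aggregating over $\lambda$ via Proposition~\ref{324} produces the desired adelic form, so $\Phi$ is well-defined.

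Injectivity is essentially the $q$-expansion principle in disguise: the functionals $c_\lambda(\mu)$ for $\mu\in t_\lambda^+\cup\{0\}$ span $\mathcal{V}^{\ord}(t_\lambda\mfr{D},\Gamma_1(\mfr{n}))$ topologically, and hence, after dualizing and passing to the $\chi$-part, the elements $c_\lambda(\mu)$ generate $V^{\ord}(t_\lambda\mfr{D},\Gamma_1(\mfr{n}),\chi)$ as a $\Lambda$-module; thus the vanishing of all Fourier coefficients forces $\mathcal{F}_\lambda=0$ for each $\lambda$. For surjectivity--the step I expect to be the main obstacle--given $\mathcal{F}\in {M'}^{\ord}(\mfr{n},\chi;\Lambda)$, I would define $\mathcal{F}_\lambda$ on the generators $c_\lambda(\mu)$ by the prescribed Fourier coefficients and need to verify that every $\Lambda$-linear relation among the $c_\lambda(\mu)$ maps to the trivial relation in $\Lambda$. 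The idea is to test a purported relation under $v'_{k,\zeta}$ for $k\geq 3$: via Lemma~\ref{58} such a relation reduces to a linear relation among Fourier coefficients of $f_{\lambda,k,\zeta}$, which is satisfied because $v'_{k,\zeta}(\mathcal{F})$ is classical; as the pairs $(k,\zeta)$ for which the specialization is classical form a Zariski-dense subset of $\Spec \Lambda$, the $\Lambda$-valued relation itself must vanish. This produces the required element of $\Hom_\Lambda(V^{\ord}(t_\lambda\mfr{D},\Gamma_1(\mfr{n}),\chi),\Lambda)$ and inverts $\Phi$.

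The final assertion that ${M'}^{\ord}(\mfr{n},\chi;\Lambda)$ is free of finite rank over $\Lambda$ is then immediate: Theorem~\ref{56}(4) gives that $V^{\ord}(t_\lambda\mfr{D},\Gamma_1(\mfr{n}))$ is a free $\Lambda$-module of finite rank, its $\chi$-part is a direct summand and hence also free of finite rank, and therefore $\Hom_\Lambda(-,\Lambda)$ is free of finite rank; summing over the finitely many $\lambda$ yields the claim.
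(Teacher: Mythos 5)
Your strategy differs from the paper's: you try to build an explicit $\Lambda$-linear inverse to the coefficient map $\Phi$, while the paper proves bijectivity indirectly by first establishing the embedding $G^{\ord}(\mfr{n},\chi;\Lambda)\hookrightarrow {M'}^{\ord}(\mfr{n},\chi;\Lambda)$ from the $q$-expansion principle, comparing ranks over $Q(\Lambda)$ (using the bound from Wiles's argument together with Lemma~\ref{58}), showing ${M'}^{\ord}$ is a finitely generated free $\Lambda$-module, and then proving the torsion cokernel $N={M'}^{\ord}/G^{\ord}$ vanishes by showing it is flat (via $\Tor^1(N,\kappa)=0$, which comes from the injectivity of the $q$-expansion map modulo the maximal ideal). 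Your well-definedness argument via Lemma~\ref{58} and double duality, and the relation-checking by Zariski-density of the specializations $v'_{k,\zeta}$, are both sound and broadly parallel to ingredients in the paper.

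However, there is a genuine gap, and it sits exactly where the paper's flatness argument does the work. Both your injectivity step and your surjectivity step rely on the assertion that the Fourier functionals $c_\lambda(\mu)$ \emph{generate} $V^{\ord}(t_\lambda\mfr{D},\Gamma_1(\mfr{n}),\chi)$ as a $\Lambda$-module, which you justify only by saying they ``span topologically'' and then ``dualizing.'' The $q$-expansion principle says the $c_\lambda(\mu)$ separate points of the discrete module $\mathcal{V}^{\ord}$; by Pontryagin duality this means the \emph{closed} subgroup they generate is all of the compact dual $V^{\ord}$. That is a density statement, not an algebraic generation statement, and for a compact $\Lambda$-module density does not by itself imply generation. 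If the $\Lambda$-submodule $V_0$ spanned by the $c_\lambda(\mu)$ were proper, your construction in the surjectivity step would only define $\mathcal{F}_\lambda$ on $V_0$; since $V^{\ord}/V_0$ is torsion there is a unique $\Lambda$-linear extension $V^{\ord}\to Q(\Lambda)$, but that extension need not take values in $\Lambda$ (already for $V^{\ord}=\Lambda$ and $V_0=T\Lambda$ the extension of $T\mapsto 1$ is $1\mapsto T^{-1}\notin\Lambda$), so surjectivity would fail. To close the gap you would need to upgrade density to generation, for instance by showing the images of the $c_\lambda(\mu)$ span $V^{\ord}\otimes_\Lambda\kappa$ (a residual $q$-expansion principle, using Theorem~\ref{56}(2)) and then invoking topological Nakayama for the compact finitely generated $\Lambda$-module $V^{\ord}$. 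The paper avoids precisely this by working with the cokernel $N$ and the flatness criterion instead of attempting to invert $\Phi$ directly.
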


\begin{proof}
We follow the argument as in \cite[Theorem 4.25]{Hsieh}. By the $q$-expansion principle, we have a natural embedding
$$
G^{\ord}(\mfr{n}, \chi;\Lambda)\embed {M'}^{\ord}(\mfr{n},\chi;\Lambda).
$$
Recall that we denote by $Q(\Lambda)$ (resp.~$K$) the fraction field of $\Lambda$ (resp.~$W$). It follows from the proof of \cite[Theorem 1.2.2]{W2} that $\dim_{Q(\Lambda)}{M'}^{\ord}(\mfr{n},\chi;\Lambda)\otimes_{\Lambda} Q(\Lambda)$ is less than or equal to $\rank_W M^{\ord}_k(\mfr{n}p,\chi\omega^{-k},W)$. By Lemma~\ref{58} and (\ref{eq:geom_family}), we have
\[
\begin{split}
G^{\ord}(\mfr{n}, \chi;\Lambda)/(T-u^k+1)
\isom & \bigoplus_{\lambda=1}^{h_F^+} \Hom_W(\Hom_W(G_k^{\ord}(t_{\lambda}\mfr{D},\mfr{n}p, \chi\omega^{-k};W),W),W)\\
\isom & \bigoplus_{\lambda=1}^{h_F^+} G_k^{\ord}(t_{\lambda}\mfr{D},\mfr{n}p, \chi\omega^{-k};W).
\end{split}
\]
By (\ref{eq:adelic_geom_form}), one can identify the space $\bigoplus_{\lambda=1}^{h_F^+} G_k^{\ord}(t_{\lambda}\mfr{D},\mfr{n}p, \chi\omega^{-k};W)$ with the space $G_k^{\ord}(\mfr{n}p,\chi\omega^{-k},W)$ and hence, one has $\dim_K G_k^{\ord}(\mfr{n}p,\chi\omega^{-k},K)=\dim_{Q(\Lambda)} G^{\ord}(\mfr{n},\chi;\Lambda)\otimes_{\Lambda} Q(\Lambda)$. Moreover, from the discussion in Section~\ref{sec:52}, we know that $\dim_K M_k^{\ord}(\mfr{n}p,\chi\omega^{-k},K)=\dim_K G_k^{\ord}(\mfr{n}p,\chi\omega^{-k},K)$. Therefore, we obtain an isomorphism
$$
G^{\ord}(\mfr{n}, \chi;\Lambda)\otimes_{\Lambda} Q(\Lambda) \isom {M'}^{\ord}(\mfr{n},\chi;\Lambda)\otimes_{\Lambda} Q(\Lambda).
$$
Let $\{\mathcal{F}_1,\ldots, \mathcal{F}_s\}\subset {M'}^{\ord}(\mfr{n}, \chi;\Lambda)$ be a basis of ${M'}^{\ord}(\mfr{n}, \chi;\Lambda)\otimes_{\Lambda} Q(\Lambda)$. Thus, for every element $\mathcal{F}\in {M'}^{\ord}(\mfr{n}, \chi;\Lambda)$, one has $\mathcal{F}=\sum_{i=1}^s x_i \mathcal{F}_i$ for some $x_i\in Q(\Lambda)$. For any $s$ positive integral ideals $\mfr{a}_1,\ldots,\mfr{a}_s$, we have an equation of matrices $$AX=B$$ for $A=(C(\mfr{a}_i,\mathcal{F}_j))$, $X=(x_1,\ldots,x_s)^t$, and $B=(C(\mfr{a}_i,\mathcal{F})^t)$. Here $A^t$ is the transpose of a matrix $A$. Since $\{\mathcal{F}_1,\ldots, \mathcal{F}_s\}\subset {M'}^{\ord}(\mfr{n}, \chi;\Lambda)$ is a basis of ${M'}^{\ord}(\mfr{n},\chi;\Lambda)\otimes_{\Lambda}Q(\Lambda)$, one can pick integral ideals $\mfr{a}_1,\ldots,\mfr{a}_s$ of $F$ such that $a=\det A \neq 0\in \Lambda$. By multiplying the adjugate matrix of $A$ on both sides, we see that $a\mathcal{F}\in \Lambda\cdot \mathcal{F}_1+\cdots+\Lambda\cdot \mathcal{F}_s$. Therefore, we have $a{M'}^{\ord}(\mfr{n},\chi;\Lambda)\subset \mathcal{F}_1+\cdots+\Lambda\cdot \mathcal{F}_s$. In particular, ${M'}^{\ord}(\mfr{n},\chi;\Lambda)$ is a finitely generated $\Lambda$-module since $\Lambda$ is Noetherian. By the same argument as in \cite[p.~210]{Hida2}, we know that ${M'}^{\ord}(\mfr{n},\chi;\Lambda)/(T-u^{k}+1)$ is a free $W$-module of finite rank for almost all $k\in \Z_{\geq 2}$, and hence ${M'}^{\ord}(\mfr{n},\chi;\Lambda)$ is a free $\Lambda$-module of finite rank by Lemma~\ref{652}.

We set $N={M'}^{\ord}(\mfr{n},\chi;\Lambda)/G^{\ord}(\mfr{n}, \chi;\Lambda)$. Then $N$ is a torsion $\Lambda$-module. To prove the assertion, we will show that $N=0$ by showing that $N$ is a flat $\Lambda$-module. Let $\kappa$ be the residue field of $\Lambda$. Since ${M'}^{\ord}(\mfr{n},\chi;\Lambda)$ is a free $\Lambda$-module, we obtain the exact sequence
$$
0\map \Tor^1(N,\kappa)\map G^{\ord}(\mfr{n},\chi;\Lambda)\otimes \kappa
\xrightarrow{\iota} {M'}^{\ord}(\mfr{n},\chi;\Lambda)\otimes \kappa\map N\otimes \kappa\map 0.
$$
By the $q$-expansion principle again, $\iota$ is injection, and hence, $\Tor^1(N,\kappa)=0$. It follows that $N$ is a flat $\Lambda$-module since $\Lambda$ is a local Noetherian ring.
\end{proof}

\begin{cor}[Control theorem]\label{655}
Let the notation and the assumptions be as in Lemma~\ref{58}. Then for each integer $k\geq 2$, we have an isomorphism
$$
M^{\ord}(\mfr{n},\chi;\Lambda)/(T-\rho(u)u^{k-2}+1)\isom M_k^{\ord}(\mfr{n}p^r, \chi\omega^{2-k}\rho;W).
$$
Moreover, $M^{\ord}(\mfr{n},\chi;\Lambda)$ is free $\Lambda$-module of finite rank.
\end{cor}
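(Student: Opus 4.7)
\proof[Proof proposal]
The plan is to deduce Corollary~\ref{655} from Theorem~\ref{59} via the dictionary between $M$ and $M'$ recorded in Lemma~\ref{512}. First, the Hecke-equivariant isomorphism $\iota\colon M'(\mfr{n},\chi\omega^2;\Lambda)\isom M(\mfr{n},\chi;\Lambda)$ defined by $\mathcal{F}(T)=\mathcal{F}'(u^2(1+T)-1)$ preserves ordinary parts and satisfies $v_{k,\zeta}\circ\iota=v'_{k,\zeta}$. Consequently, $\iota$ identifies the ideal $(T-\rho(u)u^{k-2}+1)$ on the $M$-side with the ideal $(T-\rho(u)u^k+1)$ on the $M'$-side, since their respective generators differ by the unit $u^2\in\Lambda^{\times}$ (indeed $u^2(1+T)-1-(\rho(u)u^k-1)=u^2(T-\rho(u)u^{k-2}+1)$). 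Hence it suffices to prove
\[
M'^{\ord}(\mfr{n},\chi\omega^2;\Lambda)/(T-\rho(u)u^k+1)\isom M_k^{\ord}(\mfr{n}p^r,\chi\omega^{2-k}\rho;W).
\]

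By Theorem~\ref{59}, the left-hand side equals
$\bigoplus_{\lambda}\Hom_{\Lambda}\!\bigl(V^{\ord}(t_\lambda\mfr{D},\Gamma_1(\mfr{n}),\chi\omega^2),\Lambda\bigr)\otimes_{\Lambda}\Lambda/(T-\rho(u)u^k+1)$. As observed in the proof of Theorem~\ref{59}, each $V^{\ord}(t_\lambda\mfr{D},\Gamma_1(\mfr{n}),\chi\omega^2)$ is a free $\Lambda$-module of finite rank (being a direct summand of the free module provided by Theorem~\ref{56}(4)); so the formation of $\Hom_{\Lambda}(-,\Lambda)$ commutes with the indicated base change and yields $\Hom_W\!\bigl(V^{\ord}(t_\lambda\mfr{D},\Gamma_1(\mfr{n}),\chi\omega^2)\otimes_{\Lambda}W,\,W\bigr)$. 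Now Lemma~\ref{58} identifies $V^{\ord}(t_\lambda\mfr{D},\Gamma_1(\mfr{n}),\chi\omega^2)\otimes_{\Lambda}W$ with the Pontryagin dual $\Hom_W\!\bigl(G_k^{\ord}(t_\lambda\mfr{D},\mfr{n}p^r,\chi\omega^{2-k}\rho;W),W\bigr)$; since $G_k^{\ord}$ is $W$-free of finite rank, double duality returns $G_k^{\ord}(t_\lambda\mfr{D},\mfr{n}p^r,\chi\omega^{2-k}\rho;W)$, and summing over $\lambda$ recovers $M_k^{\ord}(\mfr{n}p^r,\chi\omega^{2-k}\rho;W)$.

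Finally, the freeness of $M^{\ord}(\mfr{n},\chi;\Lambda)$ as a $\Lambda$-module of finite rank is transported from $M'^{\ord}(\mfr{n},\chi\omega^2;\Lambda)$ through $\iota$, using the freeness already established in Theorem~\ref{59}. The main bookkeeping step is the first one, namely pinning down the precise correspondence between the specialization ideals on the two sides of $\iota$ and checking that $\iota$ preserves the ordinary decomposition; once this change of variables is in place, the rest of the argument is formal manipulation of $\Hom_{\Lambda}$ together with the freeness inputs drawn from Theorem~\ref{56}(4) and the double-duality for $W$-free modules.
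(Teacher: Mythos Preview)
Your proof is correct and follows essentially the same route as the paper: pass from $M$ to $M'$ via Lemma~\ref{512} (matching the specialization ideals), invoke Theorem~\ref{59} to rewrite as $\bigoplus_\lambda \Hom_\Lambda(V^{\ord},\Lambda)$, commute $\Hom$ with the base change using freeness, apply Lemma~\ref{58}, and finish by double duality. Your justification for the ideal correspondence and for commuting $\Hom_\Lambda$ with $\otimes_\Lambda W$ is in fact more explicit than the paper's chain of displayed isomorphisms, and your transport of freeness directly through $\iota$ from Theorem~\ref{59} is slightly more direct than the paper's appeal to Lemma~\ref{652}.
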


\begin{proof}
We follow the argument in \cite[Theorem 3.8]{Hida3}. 
By Lemma~\ref{512}, Lemma~\ref{58} and Theorem~\ref{59}, we have a series of isomorphisms
\[
\begin{split}
M^{\ord}(\mfr{n},\chi;\Lambda)/(T-\rho(u)u^{k-2}+1)\isom
&{M'}^{\ord}(\mfr{n},\chi\omega^2;\Lambda)/(T-\rho(u)u^{k}+1)\mbox{ (Lemma~\ref{512})}\\
\isom &  G^{\ord}(\mfr{n},\chi\omega^2;\Lambda)/(T-\rho(u)u^{k}+1) \mbox{ (Theorem~\ref{59})}\\
\isom &\left(\bigoplus_{\lambda=1}^{h^+_F}\Hom_{\Lambda}(V^{\ord}(t_{\lambda}\mfr{D}, \mfr{n},\chi\omega^2),\Lambda)\right)\otimes_{W[[\Zp^{\times}]],k,\rho} W \mbox{ (by (\ref{eq:geom_family}))}\\
\isom & \bigoplus_{\lambda=1}^{h^+_F}\left(\Hom_{\Lambda}(V^{\ord}_{\Lambda}(t_{\lambda}\mfr{D}, \mfr{n},\chi\omega^2),\Lambda)\otimes_{W[[\Zp^{\times}]],k,\rho} W\right)\\
\isom & \bigoplus_{\lambda=1}^{h^+_F}\Hom_W(V^{\ord}_{\Lambda}(t_{\lambda}\mfr{D}, \mfr{n},\chi\omega^2)\otimes_{W[[\Zp^{\times}]],k, \rho} W,W)\\
\isom & \bigoplus_{\lambda=1}^{h^+_F}\Hom_W(\Hom_W(G^{\ord}_k(t_{\lambda}\mfr{D}, \mfr{n}p^r,\chi\omega^{2-k}\rho ;W),W),W) \mbox{ (Lemma~\ref{58})} \\
\isom & M^{\ord}_k(\mfr{n}p^r,\chi\omega^{2-k}\rho ;W) \mbox{ by (\ref{eq:ord_forms})}.
\end{split}
\]
The second assertion follows from Lemma~\ref{652}.
\end{proof}

%%%%%%%%%%%%%%%%%%%%%%%%%%%%%%%%%%%%%%%%%%%%%%%%%%%%%%%%%%%
\section{Main results}\label{sec:06}
Let $p$ be an odd rational prime unramified in $F$, and let $\mfr{n}$ be an integral ideal prime to $p$.  Let $\O_{\infty}$ be as in Section~\ref{sec:04}, and let $\Lambda_{\infty}=\O_{\infty}[[T]]$. For simplicity, we write $M_{\Lambda_{\infty}}=M^{\ord}(K_1(\mfr{n});\Lambda_{\infty})$ and write $S_{\Lambda_{\infty}}$ in the same manner. We denote by $\Lambda_{\infty}[C_{\mfr{n}p}^*]$ the free abelian group generated by $C_{\mfr{n}p}^*$ over $\Lambda_{\infty}$, where $C_{\mfr{n}p}^*$ was defined in Section~\ref{sec:24}. Moreover, it follows from Theorem~\ref{ordcusp} that $e\cdot \Lambda_{\infty}[C_{\mfr{n}p}^*]$ is a free $\Lambda$-module.

%%%%%%%%%%%%%%%%%%%%%%%%%%%%%%%%%%%%%%%%%%%%%%%%%%%%%%%%%%
\subsection{Proof of main results: Part $1$}\label{sec:61}
Define a map  $C_0: M(K_1(\mfr{n});\Lambda_{\infty})  \rightarrow \Lambda_{\infty}[C_{\n p}^*]$ by sending $\mathcal{F} \in M(K_1(\mfr{n});\Lambda_{\infty})$ to $\sum_{\gamma\in C_{\mfr{n}p}} C_{\gamma}(0,\mathcal{F})\cdot I_{[\gamma]}$, where $C_{\gamma}(0,\mathcal{F})$ is the constant term of $\mathcal{F}$ at the cusp $\gamma$. It follows from Weierstrass preparation theorem that every element in $\Lambda_{\infty}$ only has finitely many zeros in $\C_p$. Hence, it follows from Definition~\ref{511}(2) that a $\Lambda$-adic modular form $\mathcal{F}\in M(K_1(\mfr{n});\Lambda_{\infty})$ is a cusp form if and only if $C_0(\mathcal{F})=0$. Therefore, we obtain a left exact sequence of $\Lambda$-modules
$$
0\map S(K_1(\mfr{n});\Lambda_{\infty})\map M(K_1(\mfr{n});\Lambda_{\infty}) \xrightarrow{C_0} \Lambda_{\infty}[C_{\n p}^*].
$$
Since ordinary subspaces of the the above spaces are the largest algebra direct summand on which $U(\mfr{p})$ acts as a unit for all $\mfr{p}|p$ (see \cite[\S 7.2]{Hida2}), taking the Hida's idempotent element $e$ is an exact  functor, and hence, we obtain the following exact sequence of flat $\Lambda$-modules 
%by taking ordinary projection at the above sequence 
$$
0\map S_{\Lambda_{\infty}}\map M_{\Lambda_{\infty}} \xrightarrow{C_0}e\cdot \Lambda_{\infty}[C_{\n p}^*].
$$
The following theorem shows that the map $C_0$ is surjective on the space $M_{\Lambda_{\infty}}$.

\begin{thm}\label{711}
Let the notation be as above. Then the map $C_0:M_{\Lambda_{\infty}} \rightarrow e\cdot \Lambda_{\infty}[C_{\n p}^*]$ is surjective. In particular, we have a short exact sequence of flat $\Lambda$-modules
$$
0\map S_{\Lambda_{\infty}}\map M_{\Lambda_{\infty}} \xrightarrow{C_0}e\cdot \Lambda_{\infty}[C_{\n p}^*]\map 0
$$
\end{thm}

\begin{proof}
We follow the argument of \cite[Theorem 4.26]{Hsieh}. For a free $\Lambda_{\infty}$-module $V$ and for a character $\psi$ of $G_F$, we will denote by $V^{(\psi)}$ the $\psi$-eigenspace of $V$. We have $M_{\Lambda_{\infty}}=\oplus_{\chi} M_{\Lambda_{\infty}}^{(\chi)}$ and $e\cdot \Lambda_{\infty}[C_{\n p}^*]=\oplus_{\chi} e\cdot \Lambda_{\infty}[C_{\n p}^*]^{(\chi)}$, where the direct sums run through all narrow ray class characters with modulus $p$. To show that $C_0$ is surjective, it suffices to show that the induced map $C_0: M_{\Lambda_{\infty}}^{(\chi)}\map e\cdot \Lambda_{\infty}[C_{\mfr{n}p}^*]^{(\chi)}$ is surjective for all characters $\chi$.
 
Let $\mfr{P}$ be the maximal ideal of $\O_{\infty}$, and let $\bbF=\O_{\infty}/\mfr{P}$ be its residue field. By Nakayama's lemma, it suffices to show that the map
$$
M_{\Lambda_{\infty}}^{(\chi)}/(\mfr{P},T) \xrightarrow{C_0}e\cdot \Lambda_{\infty}[C_{\n p}^*]^{(\chi)}/(\mfr{P},T)
$$
is surjective. By Theorem~\ref{56} and Corollary~\ref{655}, for $k\geq 3$ such that $\chi\omega^{2-k}$ is a trivial character, we have  
$$
M_{\Lambda_{\infty}}^{(\chi)}/(\mfr{P},T)\isom M_k^{\ord}(\mfr{n};\O_{\infty})\otimes_{\O_{\infty}} \bbF\isom eH^0(S^*_{/\O_{\infty}},\pi_*(\ul{\omega}^k))\otimes_{\O_{\infty}} \bbF.
$$
Moreover, we have
$$
e\cdot \Lambda_{\infty}[C_{\n p}^*]^{(\chi)}/(\mfr{P},T) \isom e\cdot \O_{\infty}[C_{\mfr{n}}^*]\otimes_{\O_{\infty}} \bbF =eH^0(\partial S^*_{/\O_{\infty}}, \pi_*(\ul{\omega}^k)) \otimes_{\O_{\infty}} \bbF.
$$
Since $S^*$ is affine and $\pi_*(\ul{\omega}^k)$ is an invertible sheaf, the map 
$$
C_0: eH^0(S^*_{/\O_{\infty}},\pi_*(\ul{\omega}^k)) \otimes_{\O_{\infty}} \bbF\map eH^0(\partial S^*_{/\O_{\infty}}, \pi_*(\ul{\omega}^k)) \otimes_{\O_{\infty}} \bbF
$$
is surjective. This shows that $C_0$ is surjective on each $\chi$-component.
\end{proof}

We now fix two primitive narrow ray class characters $\chi_1$ and $\chi_2$ of conductors $\n_1$ and $\n_2$, respectively. We assume that $\chi_1$ is not a trivial character and $(\chi_1,\chi_2)\neq (\omega^2,\mathbbm{1})$. In addition, we assume that $\n_1\n_2=\n$ or $\n p$ and $\n_2$ is prime to $p$.
Let $\Lambda=\Zp[\chi_1,\chi_2][[T]]$. It is known that $\Lambda_{\infty}$ is a faithfully flat $\Lambda$-module \cite[Lemma 2.1.1]{Ohta3}. Since the short exact sequence in Lemma~\ref{711} can be obtained by tensoring with $\Lambda_{\infty}$ over $\Lambda$, we obtain a short exact sequence of $\Lambda$-modules
\begin{equation}\label{FundSeq}
0\map S^{\ord}(K_1(\mfr{n});\Lambda)\map M^{\ord}(K_1(\mfr{n});\Lambda) \xrightarrow{C_0}e\cdot \Lambda[C_{\n p}^*]\map 0.
\end{equation}

Recall that we denote by $\mfr{M}(\chi_1,\chi_2)$ the maximal ideal of $\mathcal{H}^{\ord}(\mfr{n},\chi_1\chi_2;\Lambda)$ containing the Eisenstein ideal $\mathcal{I}(\chi_1,\chi_2)$ (see Definition~\ref{524}).

\begin{lemma}\label{712}
Assume that (\ref{eq:exception}) holds. Then $e\cdot \Lambda[C_{\n p}^*]_{\mfr{M}}$ is a free $\Lambda$-module of rank one. Moreover, the Hecke operator $U({\p})$ acts on $e\cdot \Lambda_{\infty}[C_{\n p}^*]_{\mfr{M}(\chi_1,\chi_2)}$ by multiplication by $\wt{\chi}_1(\p)N(\p)+\chi_2(\mfr{p})$ for all prime ideals $\p$ and $S(\q)$ acts on it by multiplication $\wt{\chi_1}\chi_2(\mfr{q})$ for all prime ideals $\q$ not dividing $\n p$.
\end{lemma}

\begin{proof}
We follow the argument in \cite[Proposition 3.1.2]{Ohta}. We will write $\mfr{M}=\mfr{M}(\chi_1,\chi_2)$ in this proof for simplicity. Note that $e\cdot\Lambda[C_{\n p}^*]_{\mfr{M}}$ is a free $\Lambda$-module, since $e\cdot\Lambda[C_{\n p}^*]$ is a free $\Lambda$-module (Theorem~\ref{ordcusp}). Suppose the rank of $e\cdot\Lambda[C_{\n p}^*]_{\mfr{M}}$ is greater or equal to two. Then $e\cdot \Lambda[C_{\n p}^*]_{\mfr{M}} \otimes_{\Lambda} Q(\Lambda)$ is a $Q(\Lambda)$-vector space of dimension greater than one. Then there exists an Eisenstein series $\mathcal{E}(\theta,\psi)\in M_{\Lambda}$ with $\theta\psi=\chi_1\chi_2$ and $(\theta,\psi)\neq (\chi_1,\chi_2)$. By Proposition~\ref{525}, we know that $\chi_1\omega\chi_2^{-1}(\mfr{p})=1$ for all prime ideals $\mfr{p}|p$, which contradicts to the assumption of (\ref{eq:exception}). Therefore, $e\cdot\Lambda[C_{\n p}^*]_{\mfr{M}}$ is free of rank one.
The second assertion follows from the fact that the map $C_0$ is Hecke-equivariant (Lemma~\ref{HeckeEquiv}). 
\end{proof}

When (\ref{eq:exception}) holds, it follows from the above lemma that the first short exact sequence in (\ref{eq:1}) is obtained by taking localization on (\ref{FundSeq}) at the maximal ideal $\mfr{M}(\chi_1,\chi_2)$ of $\mathcal{H}^{\ord}(\mfr{n},\chi_1\chi_2;\Lambda)$. In general, if (\ref{eq:exception}) does not hold, then $e\cdot \Lambda[C_{\n p}^*]_{\mfr{M}(\chi_1,\chi_2)}$ is a free $\Lambda$-module of rank greater than $1$. This means that the space $M^{\ord}(K_1(\mfr{n});\Lambda)_{\mfr{M}(\chi_1,\chi_2)}$ contains Eisenstein series other than $\mathcal{E}(\chi_1,\chi_2)$. We will define a subspace $\wt{M}_{\Lambda,(\chi_1,\chi_2)}$ of $M^{\ord}(K_1(\mfr{n});\Lambda)$ that does not contain any Eisenstein series except for $\mathcal{E}(\chi_1,\chi_2)$ as follows. From now on, we do not assume the condition (\ref{eq:exception}). Set $$A(\chi_1,\chi_2):=\prod_{\substack{\mfr{q}|\mfr{n}\\\mfr{q}\nmid \cond(\chi_1\chi_2^{-1})}}(1-\chi_1\chi_2^{-1}(\mfr{q})(1+T)^{-s(\mfr{q})}N(\mfr{q})^{-2})
\wh{G}_{\chi_1\chi_2^{-1}}(T),$$ 
where $s(\mfr{q})\in \Zp$ was defined in Section~\ref{sec:41} and $\wh{G}_{\chi_1\chi_2^{-1}}(T)$ was defined by (\ref{eq:wh_G}). By Proposition~\ref{523}, one sees that $A(\chi_1,\chi_2)$ is the common factor of the constant terms of $\mathcal{E}(\chi_1,\chi_2)$ at different cusps. It follows from the surjectivity of $C_0$ that there exists $\mathcal{G}_0:=\mathcal{G}_0(\chi_1,\chi_2)\in M^{\ord}(K_1(\mfr{n});\Lambda)$ such that $C_0(\mathcal{E}(\chi_1,\chi_2))=A(\chi_1,\chi_2) C_0(\mathcal{G}_0)$. The subspace of $e\cdot \Lambda[C_{\n p}^*]$ generated by $C_0(\mathcal{G}_0)$ is a free $\Lambda$-module of rank $1$, say $\Lambda\cdot c_{\infty}$ for some $c_{\infty}\in e\cdot \Lambda[C_{\n p}^*]$ such that $C_0(\mathcal{G}_0)=c_{\infty}$. We denote by $\wt{M}_{\Lambda}:=\wt{M}_{\Lambda,(\chi_1,\chi_2)}$ the preimage $C_0^{-1}(\Lambda\cdot c_{\infty})$ of $\Lambda\cdot c_{\infty}$ and set $\wt{S}_{\Lambda}= S^{\ord}(K_1(\mfr{n});\Lambda)$. Then, we obtain a short exact sequence of flat $\Lambda$-modules
\begin{equation}\label{eq:25}
0\map \wt{S}_{\Lambda} \map  \wt{M}_{\Lambda} \xrightarrow{C_0} \Lambda\cdot c_{\infty}\map 0
\end{equation}
Let $K$ be a field extension of $\Qp$. Since for all $k\in \Z_{>0}$, the space $M_k(K_1(\mfr{n}p);K)$ is a direct sum of the space of cusp forms and the space generated by Eisenstein series, we know that over $Q(\Lambda)$, the space of $\Lambda$-adic modular forms is a direct sum of the space of $\Lambda$-adic cusp forms and the space generated by $\Lambda$-adic Eisenstein series. From this and the definition of $\wt{M}_{\Lambda}$, we have 
\begin{equation}\label{eq:splitting}
\wt{M}_{\Lambda}\otimes_{\Lambda} Q(\Lambda)=(\wt{S}_{\Lambda}\otimes_{\Lambda} Q(\Lambda))\oplus (\mathcal{E}_{\Lambda}\otimes_{\Lambda} Q(\Lambda)),
\end{equation}
where $\mathcal{E}_{\Lambda}$ is the $\Lambda$-module generated by $\mathcal{E}(\chi_1,\chi_2)$.
Let 
$$
s:\Lambda\cdot c_\infty \otimes_{\Lambda} Q(\Lambda) \map \wt{M}_{\Lambda}\otimes_{\Lambda} Q(\Lambda)
$$ 
be the unique, up to a scalar, Hecke-equivariant splitting map whose image in $\wt{M}_{\Lambda}\otimes_{\Lambda} Q(\Lambda)$ is $\mathcal{E}_{\Lambda}\otimes_{\Lambda} Q(\Lambda)$ (for example, one can take $s$ sending $c_{\infty}$ to $\mathcal{E}(\chi_1,\chi_2)/A(\chi_1,\chi_2)$).  Note that this map is unique up to a scalar, since both of the domain and the range of $s$ are $1$-dimensional $Q(\Lambda)$-vector space. Also, note that $\mathcal{G}_0$ may not be in the image of $s$ as the $\mathcal{E}(\chi_1,\chi_2)/A(\chi_1,\chi_2)$ may not be in $\wt{M}_{\Lambda}$. Then, we obtain an exact sequence
\begin{equation}\label{eq:Lambda-splitting}
0\leftarrow \wt{S}_{\Lambda}\otimes_{\Lambda}Q(\Lambda) \xleftarrow{t}  \wt{M}_{\Lambda}\otimes_{\Lambda}Q(\Lambda) \xleftarrow{s} \Lambda\cdot c_{\infty}\otimes_{\Lambda} Q(\Lambda) \leftarrow 0
\end{equation}

\begin{thm}\label{713}
Let the notation be as above. Then the congruence module attached to the data (\ref{eq:25}) and (\ref{eq:Lambda-splitting}) is $\Lambda/(A(\chi_1,\chi_2))$. 
\end{thm}

\begin{proof}
From the definition of the splitting map $s$, we have 
$$
s(\Lambda\cdot c_\infty \otimes_{\Lambda} Q(\Lambda))\cap \wt{M}_{\Lambda}= \mathcal{E}(\chi_1,\chi_2)\Lambda,
$$
and hence, the congruence module is isomorphic to $\Lambda/C_0(\mathcal{E}(\chi_1,\chi_2))\Lambda=\Lambda/A(\chi_1,\chi_2)$.
\end{proof}

By the same argument as in Theorem~\ref{713}, one can show that the congruence module attached to the first short exact sequence of (\ref{eq:1}) is also $\Lambda/(A(\chi_1,\chi_2))$. This proves the first part of Theorem~\ref{11}.

%%%%%%%%%%%%%%%%%%%%%%%%%%%%%%%%%%%%%%%%%%%%%%%%%%%%%%%%%%%%%%%%%%%%%%%%%%%%%%%%%%%
\subsection{Proof of main results: part $2$}\label{sec:62}
Let the notation be as in the previous subsection.
%From now on, we assume that $A(\chi_1,\chi_2)$ is not a unit in $\Lambda$ since otherwise, it is not an interesting case as explained in the introduction.
%there is no non-trivial congruence between $\Lambda$-adic cusp forms and $\mathcal{E}(\chi_1,\chi_2)$, which is not an interesting case.
Let $\mathcal{H}\subset \End_{\Lambda}(\wt{M}_{\Lambda})$ and $h\subset \End_{\Lambda}(\wt{S}_{\Lambda})$ be the $\Lambda$-algebras generated by all Hecke operators. The goal of this subsection is to prove Theorem~\ref{12}. 
The first step is to construct a nice basis of $\wt{M}_{\Lambda}$. Since $\wt{S}_{\Lambda}$ is a finitely generated free $\Lambda$-module, we may let $\{\mathcal{F}_{1},\ldots,\mathcal{F}_{m}\}$ be a basis of $\wt{S}_{\Lambda}$ over $\Lambda$. Since $\Hom_{\Lambda}(\wt{S}_{\Lambda},\Lambda)\isom h$ \cite[\S 3]{Hida1}, there exists a $\Lambda$-basis $\{h_1,\ldots, h_m\}$ such that 
$$
C(1,h_j\cdot \mathcal{F}_i)=
\begin{cases}
1&\mbox{if } i\neq j\\
0&\mbox{otherwise}.
\end{cases}
$$
For each $i$, let $H_i\in \mathcal{H}$ map to $h_i$ via the natural projection $\mathcal{H}\surj h$. 
Let 
$$
\mathcal{F}_0=\mathcal{G}_0-\sum_{i=1}^m C(1, H_i\cdot \mathcal{F})\mathcal{F}_i\in \wt{M}_{\Lambda},
$$
where $\mathcal{G}_0$ was defined in the previous subsection satisfying $A(\chi_1,\chi_2)C_0(\mathcal{G}_0)=C_0(\mathcal{E}(\chi_1,\chi_2))$.
Then, we have 
\begin{equation}\label{eq:const_of_F0}
C_{\lambda}(0,\mathcal{F}_0)=C_{\lambda}(0,\mathcal{G}_0)= u
\end{equation}
for some $u\in \Lambda^{\times}$ and for all $\lambda=1,\ldots,h_F^+$. By the definition of $\wt{M}_{\Lambda}$ and $\wt{S}_{\Lambda}$,  the $\Lambda$-rank of $\wt{S}_{\Lambda}$ is one less than the $\Lambda$-rank of $\wt{M}_{\Lambda}$. Thus, it follows that the set $\{\mathcal{F}_0,\ldots, \mathcal{F}_m\}$ is a $\Lambda$-basis of $\wt{M}_{\Lambda}$. Before we move to the second step, we make some observations on $\mathcal{F}_0$.

\begin{prop}\label{721}
Let the notation be as above.
\begin{enumerate}
\item We have $C_0(A(\chi_1,\chi_2)\mathcal{F}_0-\mathcal{E}(\chi_1,\chi_2))=0$. 

\item We can write $\mathcal{F}_0=\frac{\mathcal{E}(\chi_1,\chi_2)-\mathcal{F}_S}{A(\chi_1,\chi_2)}$ for some $\mathcal{F}_S\in \wt{S}_{\Lambda}$.

\item Modulo $\wt{S}_{\Lambda}$, $\mathcal{F}_0$ is an eigenform whose eigenvalues are the same as those of $\mathcal{E}(\chi_1,\chi_2)$.

\item We have $C(1,H_i\cdot \mathcal{F}_0)=0$ for all $i=1,\ldots, m$.
\end{enumerate}
\end{prop}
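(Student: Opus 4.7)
The plan is to deduce the four assertions in sequence, each essentially reading off from the construction of $\mathcal{F}_0$ together with the exact sequence in Theorem~\ref{713} and Hecke equivariance of $C_0$. First I would verify (1) by direct computation: since each $\mathcal{F}_i$ (for $i\geq 1$) is a cusp form and $S_\Lambda\subset \ker C_0$, the subtraction $\mathcal{F}_0=\mathcal{F}-\sum_i C(1,H_i\mathcal{F})\mathcal{F}_i$ does not alter the image under $C_0$; thus $C_0(\mathcal{F}_0)=C_0(u\mathcal{F}')=u\cdot c_\infty$. On the other hand, Proposition~\ref{523}, packaged into the normalization introduced in the proof of Theorem~\ref{713}, gives $C_0(\mathcal{E}(\chi_1,\chi_2))=uA(\chi_1,\chi_2)\cdot c_\infty$. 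Multiplying $C_0(\mathcal{F}_0)$ by $A(\chi_1,\chi_2)$ yields (1).

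For (2), from (1) the element $A(\chi_1,\chi_2)\mathcal{F}_0-\mathcal{E}(\chi_1,\chi_2)$ lies in $\ker C_0\cap M_\Lambda=S_\Lambda$, using the exact sequence (\ref{eq:25}). Denoting its negative by $\mathcal{F}_S$ gives the displayed formula. This step uses no computation beyond the exactness already established.

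For (3), I would invoke Hecke equivariance of $C_0$ (Lemma~\ref{345}) together with Lemma~\ref{712}, which guarantees that the cusp $c_\infty$ is a common Hecke eigenvector whose eigenvalues at $T(\mathfrak{p})$ and $S(\mathfrak{q})$ coincide with those of $\mathcal{E}(\chi_1,\chi_2)$. Given any Hecke operator $T$ with $T\cdot\mathcal{E}(\chi_1,\chi_2)=\lambda(T)\mathcal{E}(\chi_1,\chi_2)$, the equivariance gives $C_0(T\mathcal{F}_0-\lambda(T)\mathcal{F}_0)=\lambda(T)u\cdot c_\infty-\lambda(T)u\cdot c_\infty=0$, so $T\mathcal{F}_0\equiv\lambda(T)\mathcal{F}_0\pmod{S_\Lambda}$. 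One should also check that $T\mathcal{F}_0$ still lies in $M_\Lambda=C_0^{-1}(\Lambda\cdot c_\infty)$, which is immediate since $\Lambda\cdot c_\infty$ is Hecke-stable by Lemma~\ref{712}.

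Assertion (4) is then simply the defining duality between the bases $\{\mathcal{F}_i\}$ and $\{h_i\}$: expanding
\[
C(1,H_i\cdot\mathcal{F}_0)=C(1,H_i\cdot\mathcal{F})-\sum_{j=1}^{m}C(1,H_j\cdot\mathcal{F})\,C(1,H_i\cdot\mathcal{F}_j),
\]
the relation $C(1,H_i\cdot\mathcal{F}_j)=\delta_{ij}$ forces the right-hand side to vanish. There is no genuine obstacle in the argument; the only point demanding mild care is verifying that each intermediate operation—subtracting cuspidal forms, applying $T$, and dividing by $A(\chi_1,\chi_2)$ in (2)—stays inside $M_\Lambda$ and is compatible with the localization at $\mathfrak{M}$, and this is immediate from Theorem~\ref{711}, Theorem~\ref{713}, and Lemma~\ref{712}.
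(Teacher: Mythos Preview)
Your proposal is correct and follows essentially the same route as the paper, which disposes of (1)--(3) in a single sentence (``(1) follows from the construction of $\mathcal{F}_0$; (2) and (3) follow from (1) and the exactness of (\ref{eq:25})'') and proves (4) by the identical dual-basis computation you wrote. The only cosmetic difference is in (3): you invoke Lemma~\ref{712} and Hecke equivariance of $C_0$ to see that $c_\infty$ is an eigenvector, whereas the paper deduces the same thing directly from (2) and the fact that $M_\Lambda/S_\Lambda\cong\Lambda$ is torsion-free of rank one (so the Hecke action, being by scalars, is pinned down by its effect on the nonzero image of $\mathcal{E}(\chi_1,\chi_2)$); this avoids the extra hypotheses carried by Lemma~\ref{712}.
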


\begin{proof}
(1) follows from the construction of $\mathcal{F}_0$. (2) and (3) follow from (1) and the exactness of (\ref{eq:25}). For (4), we have
\[
\begin{split}
C(1, H_i\cdot \mathcal{F}_0)
= C(1,H_i\cdot \mathcal{F})-\sum_{j=1}^m C(1, H_j\cdot \mathcal{F}) C(1, H_i\cdot \mathcal{F}_j)
= C(1,H_i\cdot \mathcal{F})-C(1, H_i\cdot \mathcal{F})
= 0
\end{split}
\]
for all $i=1,\ldots,m$.
\end{proof}

We now fix a cusp form $\mathcal{F}_S$ satisfying Proposition~\ref{721}(2). It is congruent to the Eisenstein series $\mathcal{E}(\chi_1,\chi_2)$ modulo $A(\chi_1,\chi_2)$. We define the surjective $\Lambda$-module homomorphism in Theorem~\ref{12} as
$$
\Psi:h/I\map \Lambda/A(\chi_1,\chi_2);\; H\mapsto C(1,H\cdot \mathcal{F}_S).
$$
Set
$$
\wt{M}'_{\Lambda}=\{\mathcal{F}\in \wt{M}_{\Lambda}\otimes_{\Lambda}Q(\Lambda)\mid C(\mfr{a},\mathcal{F})\in \Lambda \mbox{ for all nonzero integral ideals }\mfr{a}\}.
$$
The following theorem describes equivalent statements of $\Psi$ being an isomorphism when $\chi_2=\mathbbm{1}$.

\begin{thm}\label{723}
Let the notation be as above. Assume that $\chi_2=\mathbbm{1}$. Then the following are equivalent.
\begin{enumerate}
\item There exists an element $H\in \mathcal{H}$ such that $C(1,H\cdot \mathcal{F}_0)\in \Lambda^{\times}$.

\item There exists an element $H\in \mathcal{H}$ such that $C(1, H\cdot \mathcal{F})=C_{\lambda}(0,\mathcal{F})$ for all $\mathcal{F}\in \wt{M}_{\Lambda}$ and for all $\lambda=1,\ldots, h^+_F$.

\item The homomorphism $\Psi: h/I\surj \Lambda/\wh{G}_{\chi_1}(T)$ is an isomorphism.

\item We have $\wt{M}_{\Lambda}=\wt{M}'_{\Lambda}$. In particular, we have an isomorphism of $\Lambda$-modules $\Hom_{\Lambda}(\wt{M}_{\Lambda},\Lambda)\isom \mathcal{H}$.
\end{enumerate}
\end{thm}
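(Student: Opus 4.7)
The strategy is to establish the chain of equivalences $(1) \Leftrightarrow (2)$, $(2) \Leftrightarrow (4)$, and $(2) \Leftrightarrow (3)$, organized around the explicit $\Lambda$-basis $\{\mathcal{F}_0, \mathcal{F}_1, \ldots, \mathcal{F}_m\}$ of $M_\Lambda$ and its dual Hecke operators $H_1, \ldots, H_m$ introduced above, together with the identity $A(\chi_1,\mathbf{1})\mathcal{F}_0 = \mathcal{E}(\chi_1,\mathbf{1}) - \mathcal{F}_S$ of Proposition~\ref{721}. A preliminary input, used throughout, is that $u := C_\lambda(0, \mathcal{F}_0)$ is a unit in $\Lambda$ that is independent of $\lambda$: this follows from Proposition~\ref{523} combined with Proposition~\ref{721}(2), since the assumption $\chi_2 = \mathbf{1}$ forces $\mfr{n}_2 = \O$, collapsing $A(\chi_1,\mathbf{1})$ to $\widehat{G}_{\chi_1}(T)$ and eliminating the cusp-dependent local factors in the constant-term formula.

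For $(1) \Leftrightarrow (2)$: given $H$ from (1), the operator $H'' = H - \sum_{i=1}^{m} C(1, H \cdot \mathcal{F}_i) H_i$ satisfies $C(1, H'' \cdot \mathcal{F}_0) = C(1, H \cdot \mathcal{F}_0)$ and $C(1, H'' \cdot \mathcal{F}_j) = 0$ for $j \ge 1$ by the dual basis relations and Proposition~\ref{721}(4); rescaling $H''$ by the unit $u/C(1, H \cdot \mathcal{F}_0)$ yields an operator that computes $C(1, H \cdot \mathcal{F}) = a_0 u = C_\lambda(0, \mathcal{F})$ on $\mathcal{F} = a_0 \mathcal{F}_0 + \sum_{i \ge 1} a_i \mathcal{F}_i$, which is (2). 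The converse is immediate: evaluating the operator of (2) at $\mathcal{F}_0$ gives $C(1, H \cdot \mathcal{F}_0) = C_\lambda(0, \mathcal{F}_0) = u \in \Lambda^\times$.

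For $(2) \Leftrightarrow (4)$: I consider the $\Lambda$-linear map $\iota\colon \mathcal{H} \to \Hom_\Lambda(M_\Lambda, \Lambda)$ defined by $T \mapsto (\mathcal{F} \mapsto C(1, T \cdot \mathcal{F}))$; it is injective because $\mathcal{H} \hookrightarrow \End_\Lambda(M_\Lambda)$ by construction and a $\Lambda$-adic form is determined by its Fourier coefficients. Assuming (2), the identity $C_\lambda(0, \mathcal{G}) = C(1, H \cdot \mathcal{G})$ extends $Q(\Lambda)$-linearly to $\mathcal{G} \in M'_\Lambda$ and, via the Hecke-action formula on Fourier coefficients, expresses the left side as a $\Lambda$-combination of the integral coefficients $C(\mfr{a}, \mathcal{G})$, so $C_\lambda(0, \mathcal{G}) \in \Lambda$ and $M_\Lambda = M'_\Lambda$. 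Surjectivity of $\iota$ follows by combining Hida duality $\Hom_\Lambda(S_\Lambda, \Lambda) \cong h$ (which matches $\phi|_{S_\Lambda}$ with some $\tilde T_0 \in \mathcal{H}$) with the fact that $\iota(H)$ is a unit multiple of the projection $M_\Lambda \twoheadrightarrow M_\Lambda/S_\Lambda$: the residual discrepancy $\phi - \iota(\tilde T_0)$ factors through $M_\Lambda/S_\Lambda$ and is therefore realized by a suitable $\Lambda$-multiple of $H$. Conversely, (4) applied to the functional $C_\lambda(0, -) \in \Hom_\Lambda(M_\Lambda, \Lambda)$ produces the Hecke operator required by (2).

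For $(2) \Leftrightarrow (3)$: writing $\mathcal{F}_S = \mathcal{E}(\chi_1, \mathbf{1}) - A(\chi_1, \mathbf{1}) \mathcal{F}_0$ yields $\Psi(T) \equiv \lambda(\tilde T) \pmod{A(\chi_1, \mathbf{1})}$ for any lift $\tilde T \in \mathcal{H}$ of $T \in h$, where $\lambda\colon \mathcal{H} \twoheadrightarrow \Lambda$ is the Eisenstein-eigenvalue map (surjective with kernel $\mathcal{I}$ by the second sequence of~(\ref{eq:1})). Thus $\Psi$ factors through the natural surjection $\mathcal{H}/\mathcal{I} \cong \Lambda \twoheadrightarrow h/I$, and its injectivity is equivalent to $J := \lambda(\ker(\mathcal{H} \twoheadrightarrow h)) = A(\chi_1, \mathbf{1}) \Lambda$. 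The inclusion $J \subseteq A(\chi_1, \mathbf{1}) \Lambda$ holds unconditionally: any $H \in \ker(\mathcal{H} \twoheadrightarrow h)$ annihilates $\mathcal{F}_S$, so $A(\chi_1, \mathbf{1}) \cdot H \cdot \mathcal{F}_0 = \lambda(H) \mathcal{E}(\chi_1, \mathbf{1})$, and the integrality of $H \cdot \mathcal{F}_0 \in M_\Lambda$ forces $\lambda(H)/A(\chi_1, \mathbf{1}) \in \Lambda$. Under (2), the vanishing $C(1, H \cdot \mathcal{F}) = 0$ on $S_\Lambda$ places $H$ in $\ker(\mathcal{H} \twoheadrightarrow h)$ via Hida duality, and $\lambda(H) = C_\lambda(0, \mathcal{E}(\chi_1, \mathbf{1})) = A(\chi_1, \mathbf{1}) u$ shows $A(\chi_1, \mathbf{1}) \in J$; conversely, (3) furnishes $H \in \ker(\mathcal{H} \twoheadrightarrow h)$ with $\lambda(H) = A(\chi_1, \mathbf{1})$, so $H \cdot \mathcal{F}_0 = \mathcal{E}(\chi_1, \mathbf{1})$, and rescaling by $u$ recovers (2). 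The main conceptual obstacle is precisely this last equivalence: correctly identifying the kernel $J$ of $\mathcal{H}/\mathcal{I} \twoheadrightarrow h/I$ and establishing the automatic inclusion $J \subseteq A(\chi_1, \mathbf{1}) \Lambda$ via the integrality of $H \cdot \mathcal{F}_0$ is the technical heart, whereas the remaining implications reduce to basis manipulation and standard Hida-style duality.
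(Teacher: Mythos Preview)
Your argument is correct and complete; the only difference from the paper's proof is organizational. The paper runs the cycle $(1)\Rightarrow(2)\Rightarrow(3)\Rightarrow(1)$ and then $(3)\Leftrightarrow(4)$, whereas you hub everything through (2). Your $(1)\Leftrightarrow(2)$ matches the paper's $(1)\Rightarrow(2)$ essentially verbatim (the converse being trivial). Your $(2)\Rightarrow(4)$ is slightly slicker than the paper's $(3)\Rightarrow(4)$: you invoke the explicit Hecke formula $(\ref{eq:8})$ to see that $C(1,H\cdot\mathcal{G})$ is a $\Lambda$-combination of the $C(\mfr{a},\mathcal{G})$, hence integral for $\mathcal{G}\in M'_\Lambda$, and then the identity $C_\lambda(0,-)=C(1,H\cdot-)$ extends by $Q(\Lambda)$-linearity; the paper instead writes $\mathcal{G}=\tfrac{P}{Q}\mathcal{E}+\tfrac{S}{T}f$ and produces a surjection $h/I\twoheadrightarrow\Lambda/Q$ to force $Q\mid\widehat G_{\chi_1}$. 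For $(2)\Leftrightarrow(3)$ you recast the question as whether the ideal $J=\lambda(\ker(\mathcal{H}\twoheadrightarrow h))$ equals $A(\chi_1,\mathbf{1})\Lambda$, with the inclusion $J\subseteq A\Lambda$ coming from reading off the first Fourier coefficient of $H\cdot\mathcal{F}_0=(\lambda(H)/A)\,\mathcal{E}$; the paper's $(2)\Rightarrow(3)$ and $(3)\Rightarrow(1)$ achieve the same thing by explicit lifts $H''=H'-\tfrac{C(1,H'\cdot\mathcal{E})}{C(1,H_0\cdot\mathcal{E})}H_0$ and $\widetilde H\in\mathcal{I}$ lifting $\widehat G_{\chi_1}(T)H$. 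Your $J$-formulation makes the structure of the argument more transparent, while the paper's version stays closer to elementwise Hecke manipulations; neither requires any input the other lacks.
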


\begin{proof}
First, we prove $(1)\Rightarrow (2)$. Let $H\in \mathcal{H}$ satisfy $C(1, H\cdot \mathcal{F}_0)\in \Lambda^{\times}$. We define
$$
H_0=\frac{u}{C(1,H\cdot \mathcal{F}_0)}\left(H-\sum_{i=1}^{m} C(1, H\cdot \mathcal{F}_i) H_i\right)\in \mathcal{H}.
$$
We claim that $C(1,H_0\cdot F)=C_{\lambda}(0,F)$ for all $F\in \wt{M}_{\Lambda}$ and $\lambda=1,\ldots, h^+_F$. 
Since the set $\{\mathcal{F}_0,\ldots, \mathcal{F}_m\}$ is a $\Lambda$-basis of $\wt{M}_{\Lambda}$, it suffices to show that $C(1,H_0\cdot \mathcal{F}_j)=0$ for all $j=1,\ldots, m$ and $C(1, H_0\cdot \mathcal{F}_0)=C_{\lambda}(0,\mathcal{F}_0)$. For $1\leq j\leq m$, we have 
\[
\begin{split}
C(1, H_0\cdot \mathcal{F}_j)
&=\frac{u}{C(1, H\cdot \mathcal{F}_0)} \left(C(1,H\cdot \mathcal{F}_j)-\sum_{i=1}^m C(1, H\cdot \mathcal{F}_i) C(1,H_i\cdot \mathcal{F}_j)\right)\\
&=\frac{u}{C(1, H\cdot \mathcal{F}_0)}(C(1, H\cdot \mathcal{F}_j)-C(1, H\cdot \mathcal{F}_j))=0.
\end{split}
\]
Moreover, we have
\[
\begin{split}
C(1,H_0\cdot \mathcal{F}_0)
= \frac{u}{C(1, H\cdot \mathcal{F}_0)}\left(C(1, H\cdot \mathcal{F}_0)-\sum_{i=1}^m C(1, H\cdot \mathcal{F}_i) C(1,H_i\cdot \mathcal{F}_0)\right)
=^{(*)} u
= C_{\lambda}(0,\mathcal{F}_0)
\end{split}
\]
for all $\lambda=1,\ldots,h_F^+$. Note that the equality $(*)$ is obtained by Proposition~\ref{721}(4) and the last equality is obtained by (\ref{eq:const_of_F0}). Thus the statement (2) holds.

Next, we show that $(2)\Rightarrow (3)$. Let $H\in \mathcal{H}$ satisfy $C(1, H\cdot \mathcal{F})=C_{\lambda}(0,\mathcal{F})$ for all $\lambda=1,\ldots,h_F^+$ and for all $\mathcal{F}\in \wt{M}_{\Lambda}$. We define
$$
H_0=u^{-1}H\in \mathcal{H}.
$$
Then, one has $C(1,H_0\cdot \mathcal{S})=0$ for all $\mathcal{S}\in \wt{S}_{\Lambda}$. Moreover, one has 
$$
C(1, H_0\cdot \mathcal{F}_0)= u^{-1}C(1,H\cdot \mathcal{F}_0)=u^{-1}C_{\lambda}(0,\mathcal{F}_0)=1.
$$
Recall that the $\Lambda$-homomorphism 
$$
\Psi:h\map \Lambda/(\wh{G}_{\chi_1}(T));\; T\mapsto C(1,T\cdot \mathcal{F}_S)
$$
is surjective. Therefore, it suffices to show that $\ker\Psi=I$. It is clear that $I\subset \ker \Psi$. It remains to show $\ker\Psi\subset I$. Now given any $T_S\in \ker\Psi\subset h$, we let $H'\in \mathcal{H}$ be any lifting of $T_S$ via $\mathcal{H}\surj h$. We define
$$
H''=H'-\frac{C(1,H'\cdot \mathcal{E}(\chi_1,\mathbbm{1}))}{C(1,H_0\cdot \mathcal{E}(\chi_1,\mathbbm{1}))}H_0.
$$
We claim that $H''$ kills $\mathcal{E}(\chi_1,\mathbbm{1})$. To see this, we note that
$$
C(1,H''\cdot \mathcal{E}(\chi_1,\mathbbm{1}))=
C(1,H'\cdot \mathcal{E}(\chi_1,\mathbbm{1})) -\frac{C(1,H'\cdot \mathcal{E}(\chi_1,\mathbbm{1}))}{C(1,H_0\cdot \mathcal{E}(\chi_1,\mathbbm{1}))}C(1, H_0\cdot \mathcal{E}(\chi_1,\mathbbm{1}))=0.
$$
Since $H''$ projects to $T_S$, we obtain that $T_S\in I$. Thus the statement (3) holds.

Next, we show that $(3)\Rightarrow (4)$. It is clear that $\wt{M}_{\Lambda}\subset \wt{M}'_{\Lambda}$. We have to show that $\wt{M}'_{\Lambda}\subset \wt{M}_{\Lambda}$. Given any $\mathcal{F}\in M'_{\Lambda}$, we write $\mathcal{F}=\frac{P}{Q}\mathcal{E}(\chi_1,\mathbbm{1})+\frac{S}{R}f$ for some $f\in \wt{S}_{\Lambda}$ and $P,Q,S,R\in \Lambda$ with $(P,Q)=1$, $(S,R)=1$ and $Q,T\neq 0$. To show $\mathcal{F}\in \wt{M}_{\Lambda}$, it suffices to show that $C_{\lambda}(0,\mathcal{F})\in \Lambda$ for all $\lambda=1,\ldots, h^+_F$. Moreover, we know that $C_{\lambda}(0,\mathcal{F})=\frac{P}{Q}C_{\lambda}(0,\mathcal{E}(\chi,\mathbbm{1}))=\frac{P}{Q}\wh{G}_{\chi_1}(T)$ for all $\lambda=1,\ldots, h_F^+$, so it is enough to show that $Q$ divides $\wh{G}_{\chi_1}(T)$. We set $\mathcal{F}':=Q \cdot \frac{S}{R}f=Q\mathcal{F}-P\mathcal{E}(\chi_1,\mathbbm{1})\in \wt{S}_{\Lambda}$. Then $\mathcal{F}'$ has the same eigenvalues as those of $\mathcal{E}(\chi_1,\mathbbm{1})$ modulo $Q$. We obtain a surjective homomorphism of $\Lambda$-modules $h/I\surj P\cdot \Lambda/Q\isom \Lambda/Q$ defined by $H\mapsto C(1,H\cdot \mathcal{F}')$. Since the statement (3) holds, we have $\Lambda/\wh{G}_{\chi_1}(T)\isom h/I\surj \Lambda/Q$, which implies that  $Q$ divides $\wh{G}_{\chi_1}(T)$. 

Finally, we show that $(4)\Rightarrow (1)$. Since $\mathcal{H}\isom \Hom_{\Lambda}(\wt{M}_{\Lambda},\Lambda)$, there exist $H_0,\ldots, H_m$ in $\mathcal{H}$ satisfying $C(1, H_i\cdot \mathcal{F}_j)=\delta_{i,j}$ for $i,j=0,\ldots,m$, where $\delta_{i,j}$ is $1$ if $i=j$ and is $0$ if $i\neq j$. In particular, $C(1, H_0\cdot \mathcal{F}_0)=1\in \Lambda^{\times}$, and hence (1) holds.
\end{proof}

To complete the proof of Theorem~\ref{12}, we prove the following lemma.

\begin{lemma}\label{722}
Let the notation and assumption be as in Theorem~\ref{723}. Then there exists an element $H\in \mathcal{H}$ such that $C(1, H\cdot \mathcal{F}_0)\in \Lambda$ is a unit.
\end{lemma}

\begin{proof}
Suppose that $C(1,H\cdot \mathcal{F}_0)$ is not a unit in $\Lambda$ for all $H\in \mathcal{H}$. Note that $\Lambda$ is a local ring with maximal ideal $(T,\varpi)$, where $\varpi$ is a uniformizer of $\Zp[\chi_1]$. Also, note that $C(\mfr{a}, \mathcal{F}_0)=C(1,T({\mfr{a}})\cdot \mathcal{F}_0)\in (T,\varpi)$ for all integral $\mfr{a}$ of $\O_F$ since $\Lambda$ is a local ring and $C(1,T({\mfr{a}})\cdot \mathcal{F}_0)$ is not a unit. Moreover, since $C(\mfr{p},\mathcal{E}(\chi_1,\mathbbm{1}))=1$ for all $\p|p$, by Proposition~\ref{721}(2), one obtains that $U(\p)\cdot \mathcal{F}_0=\mathcal{F}_0+\mathcal{F}_{T{\p}}$ for some $\mathcal{F}_{U_{\p}}\in \wt{S}_{\Lambda}$. Thus for all integral ideals $\mfr{a}$ and for all prime ideals $\p|p$, we have
$$
C(1,T({\mfr{a}})U({\p})\cdot \mathcal{F}_0)=C(\mfr{a},U({\p})\cdot \mathcal{F}_0)=C(\mfr{a},\mathcal{F}_0)+C(\mfr{a}, \mathcal{F}_{U_{\p}}).
$$
Therefore, we know that $C(\mfr{a}, \mathcal{F}_{U_{\p}})\in (T,\varpi)$ for all integral ideals $\mfr{a}$ and prime ideals $\p|p$.

Let $f_2=v_{2,1}(\mathcal{F}_0)\in M_2^{\ord}(\n p,\chi_1;\Zp[\chi_1])$, and let $f_0=(C_{\lambda}(0, v_{2,1}(\mathcal{F}_0)))_{\lambda=1}^{h_F^+}$. By the construction of $\mathcal{F}_0$ and the assumption on its coefficients, we know that 
$$
f_2\equiv f_0\bmod \varpi\mbox{ and } S(\mfr{q})f_2\equiv \chi_1(\mfr{q})f_2\bmod \varpi,
$$
for all prime ideals $\mfr{q}$ coprime to $\mfr{n}p$. Thus for any prime ideal $\mfr{q}$ coprime to $\mfr{n}p$, we have
\[
\begin{split}
(\chi_1(\mfr{q})-1)f_0&= \chi_1(\mfr{q})f_0-f_0
\equiv \chi_1(\mfr{q})f_2-f_0%\bmod \varpi
%\equiv \chi_1(\mfr{q})f_2-f_0%\bmod \varpi
\equiv S(\mfr{q})(f_2-f_0)\equiv 0\bmod \varpi.
\end{split}
\]
Furthermore, since $\chi_1$ is not a trivial character, we can choose a prime ideal $\mfr{q}$ such that $\chi_1(\mfr{q})-1$ is not congruent to zero modulo $\varpi$. Hence, there exists an element $H\in \mathcal{H}$ such that $C(1,H\cdot \mathcal{F}_0)\in \Lambda^{\times}$.
\end{proof}

\begin{cor}\label{68}
The $\Lambda$-module homomorphism $\Psi:h/I\to \Lambda/(A(\chi_1,\chi_2))$ is an isomorphism. In particular, one has $\Hom_{\Lambda}(\wt{M}_{\Lambda},\Lambda)\isom \mathcal{H}$.
\end{cor}

\begin{proof}
When $\chi_2$ is a trivial character, the assertion follows from Theorem~\ref{723} and Lemma~\ref{722}.
When $\chi_2$ is not a trivial character, Theorem~\ref{723}(4) holds automatically since $C_{\lambda}(0,\mathcal{F})=0$ for all $\mathcal{F}\in \wt{M}_{\Lambda}$. Note that in the proof of Theorem~\ref{723}, the idea to prove $(2)\Rightarrow (3)$ is to construct $H_0\in \mathcal{H}$ such that $C(1, H_0\cdot \mathcal{F}_0)=1$, which automatically exists if Theorem~\ref{723}(4) holds. Thus, by the same argument as in Theorem~\ref{723}, we again see that $\Psi$ is an isomorphism if $\chi_2$ is nontrivial.
\end{proof}

Before we move on, we recall the following result of Ohta.

\begin{lemma} \cite[Lemma 1.1.4]{Ohta}\label{731}
Let $R$ be an integral domain with quotient field $Q(R)$, and let
$$
0\map A \xrightarrow{i} B\xrightarrow{p}  C \map 0.
$$
be a short exact sequence of flat $R$-modules. Assume that we are given splitting maps after tensoring with $Q(R)$
over $R$, i.e., we have
$$
0 \leftarrow A\otimes_R Q(R) \xleftarrow{t}  B\otimes_R Q(R) \xleftarrow{s} C\otimes_R Q(R) \leftarrow 0.
$$
Then we have an isomorphism of $R$-modules
$$
\mathcal{C}_s:=C/p(B\cap s(C\otimes 1))\isom t(B\otimes 1)/A.
$$
\end{lemma}

In the remainder of this paper, we discuss two applications of Theorem~\ref{723}. We first compute the congruence module attached to 
\begin{equation}\label{eq:cong2}
0\map \mathcal{I}\map \mathcal{H}\map \Lambda\map 0,
\end{equation}
where the surjecive homomorphism $\mathcal{H}\map \Lambda$ is defined by $T\mapsto C(1,T\cdot \mathcal{E}(\chi_1,\chi_2))$. Let $\mathcal{E}_{\Lambda}$ be as in the proof of Theorem~\ref{713}. For simplicity, we will write $\wt{M}_{Q(\Lambda)}=\wt{M}_{\Lambda}\otimes_{\Lambda} Q(\Lambda)$ and write $\mathcal{E}_{Q(\Lambda)}$ and $\wt{S}_{Q(\Lambda)}$ in the same manner. By (\ref{eq:splitting}), we have 
\begin{equation}\label{eq:73}
\mathcal{H}\otimes_{\Lambda} Q(\Lambda)=\Hom_{Q(\Lambda)}(\wt{M}_{Q(\Lambda)}, Q(\Lambda))=\Hom_{Q(\Lambda)}(\wt{S}_Q(\Lambda), Q(\Lambda))\oplus \Hom_{Q(\Lambda)}(\mathcal{E}_{Q(\Lambda)}, Q(\Lambda)).
\end{equation}
We consider the splitting $s:Q(\Lambda)\to\mathcal{H}\otimes_{\Lambda} Q(\Lambda)$ whose image is  $\Hom_{\Lambda}(\mathcal{E}_{\Lambda}, Q(\Lambda))\isom Q(\Lambda)$ (for example, one can take $s$ sending $1\in Q(\Lambda)$ to an element in $\Hom_{Q(\Lambda)}(\mathcal{E}_{Q(\Lambda)},Q(\Lambda))$ sending $\mathcal{E}(\chi_1,\chi_2)/A(\chi_1,\chi_2)$ to $1$). Then, we have 
\begin{equation}\label{eq:hekce_cong_module}
0\leftarrow \mathcal{I}\otimes_{\Lambda} Q(\Lambda) \xleftarrow{t}  \mathcal{H}\otimes_{\Lambda} Q(\Lambda) \xleftarrow{s} \Lambda\otimes_{\Lambda} Q(\Lambda) \leftarrow 0.
\end{equation}

\begin{cor}\label{main_thm2}
Let the notation be as above. Then the congruence module attached to the data, (\ref{eq:cong2}) and (\ref{eq:hekce_cong_module}),
is $\Lambda/A(\chi_1,\chi_2)$ 
\end{cor}

\begin{proof}
By Corollary~\ref{68}, it suffices to show that the congruence module associated to these data is $h/I$. It follows from (\ref{eq:73}) that we have
$$
\mathcal{I} \otimes_{\Lambda} Q(\Lambda)\isom \Hom_{Q(\Lambda)}(\wt{S}_{Q(\Lambda)}\otimes Q(\Lambda))\isom h\otimes_{\Lambda} Q(\Lambda).
$$
Moreover, the image of $\mathcal{H}$ in $h\otimes_{\Lambda} Q(\Lambda)$ is identified with $h\subset h\otimes_{\Lambda} Q(\Lambda)$, and the image of $\mathcal{I}$ in $h$ is identified with $I$. Thus, by Lemma~\ref{731}, we see that the congruence module is $t(\mathcal{H})/I=h/I$.
\end{proof}

\begin{cor}\label{610}
The $\Lambda$-module $\Ann_{\mathcal{H}}(\mathcal{I})$ is free of rank $1$.
\end{cor}

\begin{proof}
Note that one has $\mathcal{I}\cap \ker(\mathcal{H}\to h)=0$ as an operator in $\mathcal{H}$ that acts trivially on the space generated by Eisenstein series and acts trivially on the space of cusp forms has to be $0$. By Theorem~\ref{723}, one has the following commutative diagram:
$$
\xymatrix{
  &  0     \ar[r]\ar[d]   & \ker(\mathcal{H}\to h)\ar[r]\ar[d]  & (A(\chi_1,\chi_2))\ar[r]\ar[d] & 0 \\
0\ar[r] &\mathcal{I}\ar[r]\ar[d]   &  \mathcal{H} \ar[r]\ar[d]  &   \Lambda\ar[r]\ar[d]          & 0 \\
0\ar[r] & I        \ar[r]\ar[d]    &h      \ar[r]\ar[d]         & h/I\isom \Lambda/(A(\chi_1,\chi_2))   \ar[r]\ar[d]& 0\\
& 0 & 0 & 0 &,
}
$$
which yields an isomorphsim of $\Lambda$-modules $\Ann_{\mathcal{H}}(\mathcal{I})\isom \ker(\mathcal{H}\to h)\isom \Lambda\cdot A(\chi_1,\chi_2)$. Thus the assertion follows.
\end{proof}

%%%%%%%%%%%%%%%%%%%%%%%%%%%%%%%%%%%%%%%%%%%%%%%%%%%%%%%%%%%%%%%%%%%%%%%%%%%
%\newpage

\end{document}